\definecolor{webgreen}{rgb}{0,.5,0}
\definecolor{webbrown}{rgb}{.6,0,0}
\definecolor{RoyalBlue}{cmyk}{1, 0.50, 0, 0}
\newcommand{\R}		{\mathbb{R}}
\newcommand{\C}		{\mathbb{C}}
\newcommand{\N}		{\mathbb{N}}
\newcommand{\Z}		{\mathbb{Z}}
\newcommand{\ZZ}		{\mathbb{Z}_{\geq0}}
\newcommand{\cws}{\stackrel{*}{\to}}
\newcommand{\cp}{\mathrm{cap}}
\newcommand{\dist}{\mathrm{dist}}
\newcommand{\supp}{\mathrm{supp}}
\newcommand{\diag}{\mathrm{diag}}
\newcommand{\Arg}{\mathrm{Arg}}
\renewcommand{\det}{\mathrm{det}}
\newcommand{\qasq}{\quad \text{as} \quad}
\newcommand{\qandq}{\quad \text{and} \quad}
\newcommand{\qorq}{\quad \text{or} \quad}
\newcommand{\dd}{\mathrm{d}}
\newcommand{\ic}{\mathrm{i}}
\newcommand{\vc}    {{\vec c}}
\newcommand{\RS}{\boldsymbol{\mathfrak{R}}}
\newcommand{\z}	{{\boldsymbol z}}
\newcommand{\x}	{{\boldsymbol x}}
\newcommand{\n}	{{\vec n}}
\newcommand{\Par}	{\mathsf{P}}
\newcommand{\J}	{\mathcal{J}_{\vec\kappa}}
\newcommand{\rhy}   {\textnormal{RHP}-${\boldsymbol Y}$}
\newcommand{\rhx}   {\textnormal{RHP}-${\boldsymbol X}$}
\newcommand{\rhz}   {\textnormal{RHP}-${\boldsymbol Z}$}
\newcommand{\rhn}   {\textnormal{RHP}-${\boldsymbol N}$}
\newcommand{\rhp}   {\textnormal{RHP}-${\boldsymbol P}$}
\newcommand{\rhwphi}   {\textnormal{RHP}-${\widetilde{\boldsymbol \Phi}}$}
\newtheorem{theorem}{Theorem}[section]
\newtheorem{proposition}{Proposition}[section]
\newtheorem{lemma}{Lemma}[section]
\newtheorem*{definition}{Definition}
\numberwithin{equation}{section}
\begin{document}

\title[Jacobi matrices on trees generated by Angelesco systems]{ Jacobi matrices on trees generated by Angelesco systems:  asymptotics of coefficients and essential spectrum }

\author[A.I. Aptekarev]{Alexander I. Aptekarev}
\address{Keldysh Institute of Applied Mathematics, Russian Academy of Science, Miusskaya Pl. 4, Moscow, 125047 Russian Federation}
\email{\href{mailto:aptekaa@keldysh.ru}{aptekaa@keldysh.ru}}

\author[S. Denisov]{Sergey A. Denisov}
\address{Department of Mathematics, University of Wisconsin-Madison, 480n Lincoln Dr., Madison, WI 53706, USA}
\address{Keldysh Institute of Applied Mathematics, Russian Academy of Science, Miusskaya Pl. 4, Moscow, 125047 Russian Federation}
\email{\href{mailto:denissov@math.wisc.edu}{denissov@math.wisc.edu}}

\author[M. Yattselev]{Maxim L. Yattselev}
\address{Department of Mathematical Sciences, Indiana University-Purdue University Indianapolis, 402~North Blackford Street, Indianapolis, IN 46202, USA}
\address{Keldysh Institute of Applied Mathematics, Russian Academy of Science, Miusskaya Pl. 4, Moscow, 125047 Russian Federation}
\email{\href{mailto:maxyatts@iupui.edu}{maxyatts@iupui.edu}}

\thanks{The work of the first author was supported by a grant of the Russian Science Foundation (project RScF-19-71-30004). 
The research of the second author was supported by the Moscow Center for Fundamental and Applied Mathematics, project No. 20-03-01, grants NSF-DMS-1464479,  NSF DMS-1764245, and Van Vleck Professorship Research Award. 
The research of the third author was supported by the Moscow Center for Fundamental and Applied Mathematics, project No. 20-03-01, and grant CGM-354538 from the Simons Foundation.}

\begin{abstract} We continue  studying  the connection between  Jacobi matrices defined on a tree and  multiple orthogonal polynomials (MOPs) that was discovered in ~\cite{uApDenY}. In this paper, we consider  Angelesco systems formed by two analytic weights and obtain  asymptotics of the recurrence coefficients and strong asymptotics of MOPs  along all directions (including the marginal ones). These results are then applied to show that the essential spectrum of the related Jacobi matrix is the union of intervals of orthogonality.
\end{abstract}

\subjclass[2010]{Primary 47B36, 47A10, 42C05}

\keywords{Jacobi matrices on trees, essential spectrum, multiple orthogonal polynomials, Angelesco systems}

\maketitle

\setcounter{tocdepth}{2}
\tableofcontents

\section{Introduction}

It is well-known \cite{akh1} that the spectral theory of one-sided self-adjoint Jacobi matrices can be naturally studied in the context of orthogonal polynomials on the real line and, conversely, many results in the latter topic find an operator-theoretic interpretation. In \cite{uApDenY}, we discovered that a wide class of multiple orthogonal polynomials (MOPs), e.g., celebrated Angelesco systems, is connected to self-adjoint Jacobi matrices defined on rooted Cayley trees. The present paper makes a further step in this direction. We perform a case study of Angelesco systems with two measures of orthogonality given by analytic weights. Our analysis of the related matrix Riemann-Hilbert problem provides the asymptotics of the recurrence coefficients and strong asymptotics of MOPs for all large indices. One application of this precise asymptotic analysis is a characterization of the essential spectrum of the associated Jacobi matrix.

We  start this introduction by recalling some definitions and main
relations connecting Jacobi matrices on trees and
MOPs and then state the main results of the paper.
In what follows, we let \( \N:=\{1,2,\ldots\} \) and \(
\ZZ:=\{0,1,2,\ldots\} \). We write \( |\n| := n_1+\cdots+n_d \) for
\( \n=(n_1,\ldots, n_d)\in\ZZ^d \), and let \( \vec
e_1=(1,0,\ldots,0),\ldots, \vec e_d=(0,\ldots,0,1) \), \( \vec 1 =
(1,\ldots,1)= \vec e_1+\cdots+\vec e_d\). Given an operator $\mathcal{A}$ in the Banach space, the symbols $\sigma(\mathcal{A})$ and $\sigma_{\rm ess}(\mathcal{A})$ will denote its spectrum and essential spectrum, respectively \cite{rs1}. In a metric space, $B_r(X)$ denotes the closed ball with center at $X$ and radius $r$. For a complex number $z$, $\Re z$ and $\Im z$ are its real and imaginary parts, respectively. For a function $f(z)$, holomorphic in $\C^+$, the upper half-plane, its boundary values on $\R$ are denoted by $f_+(x)$.

\subsection{Jacobi matrices on trees}

Denote by \( \mathcal T \) an infinite $(d+1)$-homogeneous rooted tree
(rooted Cayley tree) and by \( \mathcal V \) the set of its vertices with
\( O \) being the root. On the lattice \( \N^d \), consider an infinite path $\{\n^{(1)},\n^{(2)}, \ldots\}$ that starts at $\vec 1$ (i.e., $\n^{(1)}=\vec 1$) and satisfies $\n^{(j+1)}=\n^{(j)}+\vec{e}_{k_j}, k_j\in \{1,\ldots, d\}$ for every $j=0,1,\ldots$. Clearly, these are paths for which, as we move from $\vec 1$ to infinity, the multi-index of each next vertex is increasing by $1$ at exactly one position.  Each such path can be mapped  to non-selfintersecting path in \( \mathcal T \) that starts at $O$   (see Figure~\ref{fig:inf-tree} for
$d=2$) and this map is one-to-one. This construction defines a projection $\Pi:\mathcal{V}\to\N^d$ as follows: given $X\in \mathcal{V}$ we consider a path from $O$ to $X$, map it to a path on \( \N^d \) and let $\Pi(X)$ be the endpoint of the mapped path.
 Every vertex $Y\in
\mathcal{V}, Y\neq O$, has the unique parent, which we denote by \(
Y_{(p)} \). This allows us to define the following index function:
\begin{equation}
\label{imath}
\imath:\mathcal V \to \{1,\ldots,d\}, \quad Y\mapsto \imath_Y \text{ such that }
\Pi(Y) = \Pi(Y_{(p)}) + \vec e_{\imath_Y},
\end{equation}
and therefore to distinguish the 
``children'' of each vertex \( Y \in \mathcal V \) by denoting \(
Z=Y_{(ch),\imath_Z} \)  when \( Y=Z_{(p)} \), see
Figure~\ref{fig:inf-tree} (for $d=2$).
\begin{figure}[h!]
\includegraphics[scale=.6]{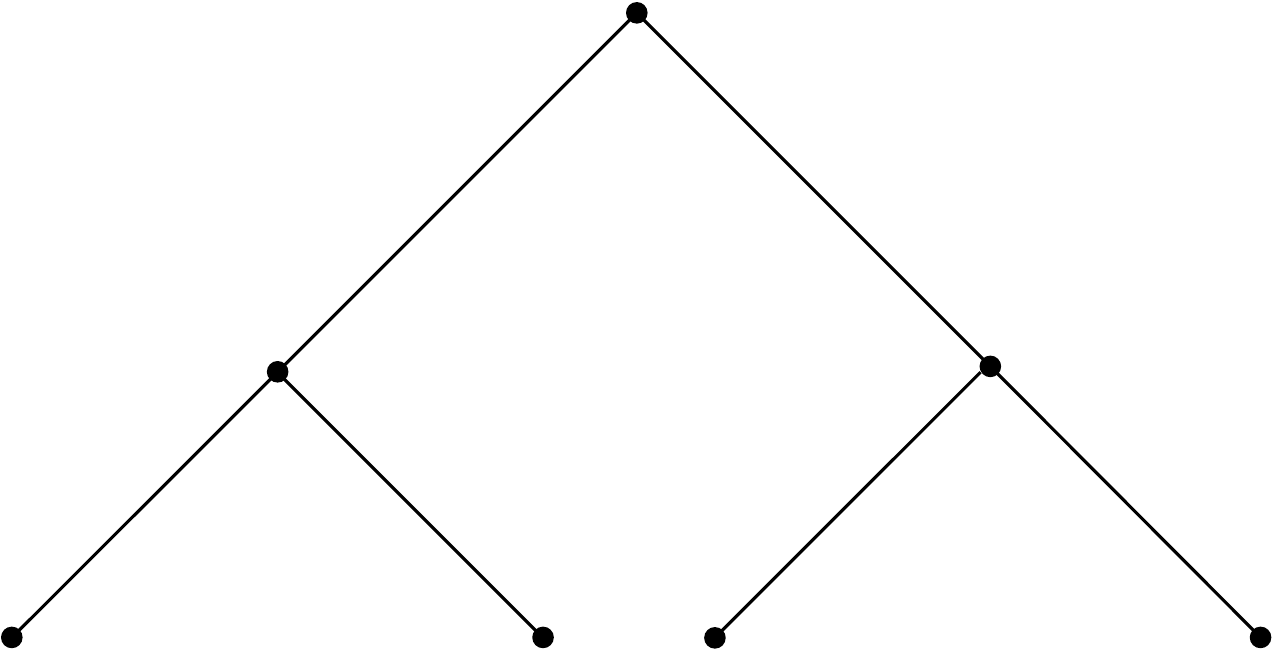}
\begin{picture}(0,0)
\put(-110,110){$(1,1)\sim O=Y_{(p)}$}
\put(-170,47){$(2,1)$}
\put(-48,47){$(1,2)\sim Y=O_{(ch),2}$}
\put(-215,0){$(3,1)$}
\put(-128,0){$(2,2)$}
\put(-95,0){$(2,2)\sim Y_{(ch),1}$}
\put(-2,0){$(1,3)\sim Y_{(ch),2}$}
\end{picture}
\caption{Three generations of \( \mathcal T \) (for $d=2$).}
\label{fig:inf-tree}
\end{figure}

Let \( \Par := \{a_{\n,i},b_{\n,i}\}_{\n\in\ZZ^d,\,i\in\{1,\ldots,d\}} \)
be a collection of real parameters satisfying conditions
\begin{equation}
\label{bounded}
\left\lbrace
\begin{array}{l}
0<a_{\n,i}\,\,{\rm for\,\, all}\,\, \n\in \N^d, \quad i\in \{1,\ldots,d\},\medskip\\
\sup \limits_{\n\in \N^d,i\in \{1,\ldots,d\}}a_{\n,i}<\infty\,,\,  \sup \limits_{\n\in\ZZ^d,i\in \{1\ldots,d\}}|b_{\n,i}|<\infty.
\end{array}
\right.
\end{equation}
For a function \( f \) on \( \mathcal V \), we denote by \( f_Y \)
its value at a vertex \( Y\in\mathcal V \). Given \( \Par \)
satisfying \eqref{bounded} and \( \vec\kappa\in\R^d \) with \(
|\vec\kappa|  = 1 \), we define the corresponding Jacobi operator
 \( \J \) by
\begin{equation}
\label{Jacobi}
\left\{
\begin{array}{ll}
(\J f)_Y := a_{\Pi(Y_{(p)}),\imath_Y}^{1/2}f_{Y_{(p)}} + b_{\Pi(Y_{(p)}),\imath_Y}f_Y
 + \sum_{i=1}^{d} a_{\Pi(Y),i}^{1/2}f_{Y_{(ch),i}}
 ,  & Y\neq O, \medskip \\
(\J f)_O := \sum_{i=1}^{d}\kappa_i b_{\vec{1}-\vec e_{i},i} f_O
+ \sum_{i=1}^{d} a_{\vec{1},i}^{1/2}f_{O_{(ch),i}}
, & Y=O.
\end{array}
\right.
\end{equation}
Thus defined operator \( \J \) is bounded and self-adjoint on \(
\ell^2(\mathcal V) \). 

The spectral theory of Jacobi matrices on trees enjoyed considerable progress in the last decade, see, e.g.,
\cite{t1,t5,dk,fs,t3,t2,t4,t6,at1,at2}. In this paper, we  study  Jacobi matrices on trees that are generated by multiple orthogonality conditions. For this class of Jacobi matrices, one can study subtle questions of spectral analysis, such as the spatial asymptotics of Green's function, by employing the powerful  asymptotical methods developed in the context of multiple orthogonal polynomials (see, e.g, formulas (4.30) and (4.31) in \cite{uApDenY}). In the current work, we focus on characterizing the so-called $\mathcal{R}$-limits and on detecting the essential spectrum in the case, when the multiple orthogonal polynomials are given by the Angelesco system with analytic weights.


\subsection{Multiple orthogonal polynomials and recurrence relations}

In \cite{uApDenY}, we investigated 
properties of the operator  \( \J \) in the case when the coefficients \( \Par
\) are the recurrence coefficients for
MOPs. We now recall some basic facts about multiple orthogonal polynomials.

 Let  $\vec{\mu}:=(\mu_1,\ldots,\mu_d), \, d\in\N$, be a vector
of positive finite Borel measures defined on $\mathbb{R}$ and \(\n\) be a given
a multi-index in \( \ZZ^d \), \( |\n|\geq1 \). Type I MOPs
$\big\{A_{\n}^{(j)}\big\}_{j=1}^d$ are not identically zero polynomial coefficients
of the linear form
$$Q_{\n}(x):=\sum\limits_{j=1}^d
A_{\n}^{(j)}(x)\dd \mu_j(x),\quad \deg A_\n^{(i)} < n_i, \quad  i\in\{1,\ldots, d\}, $$
defined by the conditions
\begin{equation}\label{typeI}
\int x^lQ_\n(x) = 0, \quad l<|\n|-1, \quad A_{\vec{1}-\vec{e}_{i}}^{(i)} \equiv 0.
\end{equation}
Type II MOPs \( P_\n(x) \), \( \deg\big(P_\n\big) \leq |\n| \), are not identically zero and defined by
\begin{equation}
\label{typeII}
\int P_{\n}(x)x^l \dd\mu_i(x) =0, \quad l<n_i, \quad i\in\{1,\ldots, d\}.
\end{equation}
The polynomials of both types always exist, but their uniqueness is not guaranteed. If $\deg(P_\n)=|\n|$ for every non-identically zero polynomial \( P_\n(x) \) satisfying \eqref{typeII}, then the multi-index $\vec{n}$ is called normal. In this case \( P_\n(x) \) is unique up to a multiplicative factor and we normalize it to be monic, i.e., \( P_\n(x)=x^{|\vec{n}|}+\cdots\). It turns out that $\n$ is normal  if and only if the linear form $ Q_{\n}(x) $ is defined uniquely up to multiplication by a constant. In this case, we will normalize it by
\begin{equation}\label{n_2}
\int x^{|\n|-1}Q_{\n}(x)=1\,.
\end{equation}
 We will say that a vector $\vec{\mu}$  is called \textit{perfect} if all the multi-indices
$\n\in \ZZ^d$ are normal.

When $\vec{\mu}$ is perfect, it is known \cite{VA11}  that   the
polynomials \( P_\n(x) \) and the forms \( Q_\n(x) \) satisfy the
following nearest-neighbor recurrence relations (NNRRs):
\begin{equation}
\label{recurrence}
\left\{
\begin{array}{l}
z P_\n(z) = P_{\n+\vec e_j}(z) + b_{\n,j}P_\n(z) +
\sum_{i=1}^{d} a_{\n,i}P_{\n-\vec e_i}(z), \medskip \\
z Q_\n(z) = Q_{\n-\vec e_j}(z) + b_{\n-\vec e_j,j}Q_\n(z) +
\sum_{i=1}^{d} a_{\n,i}Q_{\n+\vec e_i}(z),
\end{array}
\right. \,\, \text{for each} \,\, j\in\{1,\ldots, d\} .
\end{equation}
For the coefficients \(  \{a_{\n,i},b_{\n,i}\}
\), we have representations \cite{uApDenY}:
\begin{equation}\label{por1}
a_{\n,j}=\frac{
\int
P_\n(x)\,x^{n_j}\dd\mu_j(x)}{
\int
P_{\n-\vec{e}_j}(x)\, x^{n_j-1}\dd\mu_j(x)}, \qquad
b_{\n-\vec{e}_j,j}=\int
x^{|\n|}Q_{\n}(x)-\int
x^{|\n|-1}Q_{\n-\vec{e}_j}(x)\,.
\end{equation}
 If $d>1$, unlike in one-dimensional case, we can not prescribe $\{a_{\n,j}\}$
 and $\{b_{\n,j}\}$ arbitrarily. In fact, these coefficients
 satisfy the so-called ``consistency conditions'' which is a system of nonlinear difference equations.
 This  discrete integrable system and the associated Lax pair were studied in
 \cite{ADVA,VA11}.

 \subsection{Angelesco systems and ray limits of NNRR coefficients}
 We recall that $\vec\mu $ is an \textit{Angelesco} system of measures if
\begin{equation}\label{1.10}
\supp\,\mu_j=\Delta_j:=[\alpha_j,\beta_j]:\qquad
\Delta_i\cap\Delta_j=\varnothing,\quad i \neq j,\quad i,j\in\{1,\ldots,d\},
\end{equation}
i.e., the supports  of measures form a system of $d$ closed segments
separated by $d-1$ nonempty open intervals. We can always assume without loss of generality that $\beta_j<\alpha_{j+1}, j\in\{1,\ldots,d-1\}$.

Angelesco systems form an important subclass of the perfect systems.
They were  studied by Angelesco already in 1919, \cite{Ang19}.
 It is not  difficult to
see \cite{uApDenY}
that the corresponding NNRR
coefficients satisfy
conditions \eqref{bounded} and thus define the Jacobi matrix $\J$ by \eqref{Jacobi}.

The  asymptotic behavior of these  coefficients \(
\{a_{\n,j},b_{\n,j}\} \) for the \textit{ray sequences regime},
namely when
\begin{equation}
\label{multi-indices} \mathcal
N_\vc=\{\n\} \,:\qquad \quad
n_i = c_i|\n| + o\big(\n\big), \quad i\in\{1,\ldots,d\}, \quad  \quad |\:\vc\:|:=\sum_{i=1}^dc_i=1\,,
\end{equation}
was studied  in \cite{uApDenY} for
$\vec{c}=(c_1,\ldots,c_d)\in(0,1)^d $ (hereafter, \(\lim_{\mathcal N_{\vec c}} \) stands for the limit as \( |\n|\to\infty \), \( \n\in\mathcal N_{\vec c} \)). The following theorem was proved.

\begin{theorem}[\cite{uApDenY}]
\label{thm:recurrenceOld}
Let $\vec\mu $ be an Angelesco system \eqref{1.10} such that  for
each \( i\in\{1,\ldots,d\} \)  the measure $\mu_i$ is absolutely
continuous with respect to the Lebesgue measure on \( \Delta_i \)
and the density \( \mu_i^\prime(x):=\dd\mu_i(x)/\dd x \) extends to a
holomorphic and non-vanishing function in some neighborhood of
$\Delta_i$. Then the ray limits \eqref{multi-indices} of  coefficients
$\big\{a_{\n,i},b_{\n,i}\big\}$ from \eqref{recurrence} exist for
any  $\vec{c}\in(0,1)^d $:
\begin{equation}
\label{limit}
\lim_{\mathcal N_\vc}a_{\n,i} =A_{\vc,i} \quad \text{and} \quad \lim_{\mathcal N_\vc}b_{\n,i} =B_{\vc,i}, \quad i\in\{1,\ldots,d\}.
\end{equation}
\end{theorem}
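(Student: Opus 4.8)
The plan is to use the Riemann--Hilbert (RH) steepest-descent analysis of the Angelesco MOPs, which under the stated analyticity and non-vanishing hypotheses on the weights $\mu_i^\prime$ is available in full force, and to extract the recurrence coefficients from the large-$|\n|$ asymptotics of the RH solution $\boldsymbol{Y}_\n$ along the ray sequence $\mathcal N_{\vc}$. Concretely, I would first recall that, for a normal multi-index, the type II polynomial $P_\n$ and the type I form $Q_\n$ assemble into the solution of a $(d+1)\times(d+1)$ matrix RH problem whose jump is supported on $\Delta_1\cup\cdots\cup\Delta_d$ and is built from the weights; the coefficients $a_{\n,i}$ and $b_{\n,i}$ can then be read off from the first two terms in the expansion of $\boldsymbol{Y}_\n(z)$ as $z\to\infty$ (equivalently from \eqref{por1}, rewriting both the numerator/denominator integrals and the $\int x^{|\n|}Q_\n$ integrals in terms of the RH matrix entries at infinity).

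Second, I would carry out the standard sequence of transformations $\boldsymbol{Y}\mapsto\boldsymbol{T}\mapsto\boldsymbol{S}\mapsto\boldsymbol{R}$: (i) normalize at infinity using the vector equilibrium problem for the Angelesco system with masses $\vc=(c_1,\dots,c_d)$, which produces the logarithmic potentials $g_i$ and the constants that organize the exponential scales $e^{\pm|\n|(\cdots)}$; (ii) open lenses around each $\Delta_i$ using the fact that, thanks to holomorphicity of $\mu_i'$ near $\Delta_i$, the weight factors continue off the real axis; (iii) construct the global parametrix from the algebraic function / Riemann surface $\RS$ associated to the Angelesco equilibrium measures, together with Airy parametrices at the endpoints $\alpha_j,\beta_j$ (here non-vanishing of $\mu_i'$ guarantees clean square-root vanishing of the equilibrium densities and hence genuine Airy behavior); (iv) show the error matrix $\boldsymbol{R}$ solves a small-norm RH problem with jump $I+O(1/|\n|)$ uniformly for $\n\in\mathcal N_{\vc}$, so that $\boldsymbol{R}=I+O(1/|\n|)$ together with its first moment at infinity. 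Substituting this back, $a_{\n,i}$ and $b_{\n,i}$ become explicit functions of the equilibrium data evaluated at a point on $\RS$ over $z=\infty$, plus an $o(1)$ error along the ray; this yields \eqref{limit} with $A_{\vc,i}$ and $B_{\vc,i}$ given in closed form by the equilibrium/algebraic-function data.

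The main subtlety—and the step I expect to be the real obstacle—is the uniformity of the estimates \emph{along the ray sequence} rather than along the diagonal $n_i=c_i|\n|$ exactly. Because $\n\in\mathcal N_{\vc}$ only satisfies $n_i=c_i|\n|+o(|\n|)$, the relevant equilibrium problem is the one with masses $\vec n/|\n|\to\vc$, and one must show that the equilibrium measures, the $g$-functions, the "$\lambda$-functions" controlling the lens estimates, and the endpoints all depend continuously (indeed smoothly) on the mass vector, uniformly on compact subsets of $(0,1)^d$, so that the small-norm bound for $\boldsymbol{R}$ holds with a rate that is uniform in the perturbation. Equivalently: the exponentially small terms in the jumps of $\boldsymbol{S}$ stay exponentially small, with rate bounded below, for all $\vec n/|\n|$ in a small neighborhood of $\vc\in(0,1)^d$. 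This requires the structural stability of the Angelesco equilibrium problem (existence, uniqueness, regularity of the vector equilibrium measure as the masses vary in the open simplex), which is classical but must be invoked carefully; once it is in place, the continuity of $\vc\mapsto (A_{\vc,i},B_{\vc,i})$ is automatic and the theorem follows. A secondary, more routine point is verifying that the normalization conventions \eqref{n_2} for $Q_\n$ are consistent with the chosen RH normalization so that the formulas \eqref{por1} translate correctly into matrix-entry asymptotics.
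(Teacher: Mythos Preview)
Your proposal is correct and follows essentially the same route as the paper (and the work \cite{Y16,uApDenY} it cites): the $(d{+}1)\times(d{+}1)$ Riemann--Hilbert problem for $\boldsymbol Y_\n$, normalization via the vector equilibrium $g$-functions, lens opening, a global parametrix built from the Riemann surface $\RS_c$, local parametrices at the endpoints, and a small-norm argument for the error, with the recurrence coefficients read off from the expansion at infinity; you have also correctly flagged the key technical issue, namely that the analysis must be carried out at the moving parameter $\vec n/|\n|$ and requires continuity of the equilibrium data in $\vc$ on compacta of $(0,1)^d$.

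One technical correction: it is not true that the equilibrium densities have ``clean square-root vanishing'' at every endpoint, so the local parametrices are not all Airy. At the fixed endpoints $\alpha_1,\beta_d$ (and at $\beta_i,\alpha_{i+1}$ whenever $\supp\omega_{\vc,i}=\Delta_i$) the density has an inverse-square-root singularity---these are hard edges---and the paper uses Bessel-type parametrices there (see Section~\ref{sss:7.5.1}); Airy parametrices appear only at soft edges such as $\beta_{c,1}<\beta_1$ when $c<c^*$. This does not affect your overall scheme but does change which model RH problems you must invoke.
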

This result and expressions for  $A_{\vc,i}$ and $B_{\vc,i}$ were obtained from the strong asymptotics of the  MOPs also established in \cite{uApDenY} (along the ray \( \vc = (1/d,\ldots,1/d) \) the limits in \eqref{limit} can be deduced from the results in \cite{ApKalLLRocha06}). As it happens, the numbers $A_{\vc,i}$ and $B_{\vc,i}$ depend only on the vector \( \vc \) and the intervals  $\{\Delta_i\}_{i=1}^d$ (see \eqref{AngPar1} for the case \( d=2 \) where \( \vc=(c,1-c) \) and $A_{\vc,i}=A_{c,i}$, $B_{\vc,i}=B_{c,i}$).

\subsection{Results and structure of the paper} In this  paper, we restrict ourselves to the case $d=2$. Our main technical achievement is an extension of the results in \cite{uApDenY} on the strong asymptotics of the Angelesco MOPs  to the full range of $\vec{c}\,$:  $\vec{c}\in[0,1]^2 $. As a corollary of this extension, we get the following result.
 \begin{theorem}
\label{thm:recurrence}
Let $\vec\mu $ be as in Theorem~\ref{thm:recurrenceOld} with \( d=2 \). Then the ray limits
\begin{equation}
\label{AngPar}
\lim_{\mathcal N_c} a_{\n,i} = A_{c,i} \qandq \lim_{\mathcal N_c} b_{\n,i} = B_{c,i}
\end{equation}
exist for any \( c\in[0,1] \) and \( i\in\{1,2\} \), where \( \mathcal N_c := \mathcal N_{(c,1-c)}\) is any sequence satisfying \eqref{multi-indices}. 
\end{theorem}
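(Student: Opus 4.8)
The plan is to derive Theorem~\ref{thm:recurrence} as a corollary of the strong asymptotics of the type~II polynomials $P_\n$ and type~I forms $Q_\n$ that we establish (in the main body of the paper) via Riemann--Hilbert analysis, now extended from the open range $\vec c\in(0,1)^2$ covered by Theorem~\ref{thm:recurrenceOld} to the closed square $\vec c\in[0,1]^2$, i.e.\ including the endpoints $c=0$ and $c=1$. The two representation formulas in \eqref{por1},
\[
a_{\n,j}=\frac{\int P_\n(x)\,x^{n_j}\dd\mu_j(x)}{\int P_{\n-\vec e_j}(x)\,x^{n_j-1}\dd\mu_j(x)},\qquad b_{\n-\vec e_j,j}=\int x^{|\n|}Q_\n(x)-\int x^{|\n|-1}Q_{\n-\vec e_j}(x),
\]
reduce the problem to computing asymptotics of the scalar quantities $\int P_\n x^{n_j}\dd\mu_j$ and $\int x^{|\n|}Q_\n$. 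These integrals are read off directly from the large-$|\n|$ behavior of the solution of the matrix Riemann--Hilbert problem: the leading coefficients in the expansion of the relevant entries at $z=\infty$ are exactly the normalization-type integrals appearing above. So, once the RHP asymptotics are in hand uniformly for $\vec c$ in the closed square, taking ratios and differences yields \eqref{AngPar} together with explicit formulas for $A_{c,i}$ and $B_{c,i}$ in terms of the equilibrium problem for the two intervals $\Delta_1,\Delta_2$ (these coincide with \eqref{AngPar1} for $c\in(0,1)$ by continuity, and extend to $c\in\{0,1\}$).

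The key steps, in order, are as follows. First I would recall from the RHP analysis that the relevant entries of the solution matrix $\boldsymbol Y$ (or its transformed versions $\boldsymbol X$, $\boldsymbol A$, \dots) encode $P_\n$, $P_{\n-\vec e_j}$, and the forms $Q_\n$, $Q_{\n-\vec e_j}$, and that the subleading terms in their expansions at infinity produce precisely the integrals in \eqref{por1}. Second, I would invoke the uniform asymptotics of the global parametrix, built from the genus-zero (three-sheeted) Riemann surface associated with the vector equilibrium measure $(\omega_1,\omega_2)$ for the Angelesco system at parameter $\vec c$; the point is that this construction, and the associated Szeg\H{o}-type functions accounting for the analytic weights, depend continuously on $\vec c\in[0,1]^2$, including the marginal values. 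Third, I would combine the small-norm estimate for the error matrix (the final ratio $\boldsymbol R = I + O(|\n|^{-1})$, uniformly in $\vec c$ over the closed square, which is the content of the main asymptotic theorem of the paper) with the explicit parametrix to extract the leading behavior of each integral. Fourth, I would substitute into \eqref{por1}: the $a_{\n,i}$ limit comes out as a ratio of the corresponding leading constants, the $b_{\n-\vec e_i,i}$ limit as a difference, and in both cases the $|\n|\to\infty$ limit along $\mathcal N_c$ exists and is independent of the particular sequence because the parametrix data depend only on $\vec c$ (hence, for $d=2$, only on $c$). Finally I would note that the $o(\n)$ freedom in \eqref{multi-indices} is absorbed because the convergence of the equilibrium data is continuous in $\vec c$ and the error term is uniform.

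The main obstacle, and the reason this is not immediate from Theorem~\ref{thm:recurrenceOld}, is the behavior at the marginal directions $c=0$ and $c=1$. When, say, $c\to 0$, the multi-index $\n=(n_1,n_2)$ has $n_1=o(|\n|)$, so the "mass" on $\Delta_1$ degenerates: the equilibrium measure $\omega_1$ tends to $0$ and the band on $\Delta_1$ shrinks, which in the RHP picture means one of the local parametrices (near the endpoints of $\Delta_1$, and possibly a "hard-edge" or coalescing-bands phenomenon) must be handled with care, and one must check that the error term remains $O(|\n|^{-1})$ uniformly up to and including $c=0$ rather than blowing up as the band closes. Concretely, one has to verify that the local Airy (or Bessel) parametrices near $\alpha_1,\beta_1$ can still be matched to the global parametrix with uniformly controlled mismatch when the interval of $\Delta_1$ carrying positive equilibrium mass has length $\to 0$, possibly at a rate linked to $n_1/|\n|\to 0$; this uniformity, established in the body of the paper, is exactly what lets the ray limits in \eqref{AngPar} extend continuously to the endpoints. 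Everything else — differentiating the RHP solution at infinity, reading off the integrals, and forming ratios and differences — is routine bookkeeping once that uniform control is secured.
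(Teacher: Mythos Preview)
Your high-level plan --- extract the integrals in \eqref{por1} from the Riemann--Hilbert asymptotics and pass to the limit --- is the paper's strategy too, but two concrete steps in your outline would fail as written.

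First, the error in the RHP analysis is \emph{not} $O(|\n|^{-1})$ uniformly in $\vec c\in[0,1]^2$. The small-norm bound the paper proves is $O(\varepsilon_\n)$ with $\varepsilon_\n=1/\min\{n_1,n_2\}$ (and only $o(1)$ at $c^*,c^{**}$), see \eqref{vareps} and Theorems~\ref{thm:asymp2}--\ref{thm:asymp4}. This is the best one can get: when the band on $\Delta_1$ collapses, the Koebe-type lower bounds for the local conformal maps scale with $n_1$, not $|\n|$ (Lemmas~\ref{lem:4.1}--\ref{lem:4.2} and \eqref{rhpsi-2}, \eqref{rhphi-5}). Consequently the $3\times3$ RHP argument says nothing about \emph{fully marginal} sequences \eqref{fullymarginal}, where $n_1$ stays bounded and $\varepsilon_\n\not\to0$. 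The paper handles that case by a separate, essentially classical, argument: factor $P_\n=P_{\n,1}P_{\n,2}$, apply $2\times2$ Szeg\H{o}-type asymptotics to $P_{\n,2}$ (Theorem~\ref{thm:asymp1}), and show by an elementary extremal estimate that all zeros of $P_{\n,1}$ accumulate at $\alpha_1$ (Lemma~\ref{lem:6-1}). Your proposal does not account for this split.

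Second, even along non-fully marginal sequences with $c=0$, the limit of $b_{\n,1}$ does \emph{not} fall out of the leading asymptotics by ``routine bookkeeping.'' The formula $b_{\n,i}=-\lim_{z\to\infty}(P_{\n+\vec e_i}/P_\n-z)$ requires the \emph{subleading} coefficient of the ratio, and for $i=1$, $c=0$, the RHP error term is exactly the same size as the quantity one is trying to isolate (compare the $o(1)$ in the second displayed formula of Theorem~\ref{thm:asymp3}). The paper circumvents this by a measure-independence trick: it first takes $\mu_2$ to be the arcsine distribution \eqref{8.10}, for which an ad hoc $2\times2$ analysis gives \eqref{6.1} along \emph{all} marginal $\mathcal N_0$; this yields the Riemann-surface identity \eqref{8.13}, which involves only $\Phi_\n$ and $\tau_\n$ and hence holds for arbitrary analytic weights. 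Only then does \eqref{8.12} for general $\mu_1,\mu_2$ produce $b_{\n,1}\to B_{0,1}$. Without some device of this kind, your step ``substitute into \eqref{por1} and take differences'' stalls at $c\in\{0,1\}$.
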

 Theorem \ref{thm:recurrence} can be used to characterize the essential spectrum of the Jacobi operator $\J$, defined in \eqref{Jacobi}, generated by an Angelesco system. 
 
 \begin{definition}
  Let \( \Par := \{\widehat a_{\n,i},\widehat b_{\n,i}\}_{\n\in\ZZ^2,\,i=1,2}\) be a set of real numbers that satisfy \eqref{bounded} for $d=2$ and the constants \( \{A_{c,1},A_{c,2},B_{c,1},B_{c,2}\}_{c\in[0,1]} \) be limits from \eqref{AngPar} (notice that $\Par$ does not have to be a set of the recurrence coefficients of any Angelesco system, but the limits \( \{A_{c,1},A_{c,2},B_{c,1},B_{c,2}\}_{c\in[0,1]} \) are generated by some $\Delta_1$ and $\Delta_2$). We say that
\(\Par\in\Par_{Ang}(\Delta_1,\Delta_2) \) if \(\Par \) satisfies
\begin{equation}
\label{AngPar_1}
\lim_{\mathcal N_c} \widehat a_{\n,i} = A_{c,i} \qandq \lim_{\mathcal N_c} \widehat b_{\n,i} = B_{c,i}
\end{equation}
for any \( c\in[0,1] \) and \( i\in\{1,2\} \), where, again, \( \mathcal N_c := \mathcal N_{(c,1-c)}\) is any sequence satisfying \eqref{multi-indices}. 
 \end{definition}
 
 By Theorem~\ref{thm:recurrence}, the class \(\Par_{Ang}(\Delta_1,\Delta_2) \) is not empty since the recurrence coefficients of any Angelesco system with analytic weights supported on $\Delta_1$ and $\Delta_2$ belong in \(\Par_{Ang}(\Delta_1,\Delta_2) \). 
Consider Jacobi matrix \(\J\) defined in \eqref{Jacobi} with coefficients in \(\Par_{Ang}(\Delta_1,\Delta_2) \). The following result gives characterization of its essential spectrum.

\begin{theorem}
\label{thm:spectrum} 
Let \( \J \) be the Jacobi operator defined by \eqref{Jacobi} and corresponding to a collection of parameters \( \Par\in\Par_{Ang}(\Delta_1,\Delta_2) \), then \( \sigma_\mathrm{ess}(\J)=\Delta_1\cup\Delta_2\). In particular, the essential spectrum of the Jacobi matrix generated by an Angelesco system with analytic weights supported on $\Delta_1$ and $\Delta_2$ is $\Delta_1\cup\Delta_2$.
\end{theorem}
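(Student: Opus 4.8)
The plan is to establish the two inclusions $\sigma_{\rm ess}(\J)\subseteq\Delta_1\cup\Delta_2$ and $\Delta_1\cup\Delta_2\subseteq\sigma_{\rm ess}(\J)$ separately, exploiting the fact that the defining property of $\Par\in\Par_{Ang}(\Delta_1,\Delta_2)$ is purely asymptotic: the coefficients $\widehat a_{\n,i},\widehat b_{\n,i}$ converge along every ray direction $c\in[0,1]$ to the explicit constants $A_{c,i},B_{c,i}$. I would first recall (or record) the explicit formulas for $A_{c,i},B_{c,i}$ in terms of $\Delta_1,\Delta_2$, taken from the ray-limit analysis (the $d=2$ specialization of Theorem~\ref{thm:recurrenceOld}, i.e.\ \eqref{AngPar1}), and note in particular the boundary behavior at $c=0$ and $c=1$, where one of the two intervals effectively drops out and the problem degenerates to a one-interval (classical Jacobi) situation.

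For the inclusion $\sigma_{\rm ess}(\J)\subseteq\Delta_1\cup\Delta_2$, the idea is a localization/compactness argument. Partition $\ZZ^2$ (equivalently, the shells of the tree $\mathcal V$ under $\Pi$) into finitely many cones corresponding to a fine partition $0=c^{(0)}<c^{(1)}<\cdots<c^{(M)}=1$ of the ray parameter. On the cone around direction $c^{(k)}$, by \eqref{AngPar_1} the coefficients are within $\varepsilon$ of the constant-coefficient values $\{A_{c^{(k)},i},B_{c^{(k)},i}\}$ once $|\n|$ is large; the constant-coefficient Jacobi operator on the tree with those parameters has spectrum contained in a small neighborhood of $\Delta_1\cup\Delta_2$ (this is where one needs a lemma computing, or at least bounding, the spectrum of a Jacobi matrix on a homogeneous tree with periodic-in-the-tree-sense, here constant, coefficients, via its Green's function — the relevant computation is exactly the kind carried out in \cite{uApDenY}). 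Gluing these pieces: $\J$ differs from an operator that is block-diagonal over the cones, up to a perturbation supported near the cone boundaries and near the root, which is bounded and "thin". One then argues that this gluing perturbation does not move the essential spectrum outside an $O(\varepsilon)$-neighborhood of $\Delta_1\cup\Delta_2$; letting $M\to\infty$, $\varepsilon\to0$ gives $\sigma_{\rm ess}(\J)\subseteq\overline{\Delta_1\cup\Delta_2}=\Delta_1\cup\Delta_2$. A cleaner route, if available, is to phrase this via $\mathcal R$-limits: every $\mathcal R$-limit of $\J$ (a limit operator obtained along a sequence of vertices tending to infinity) is a constant-coefficient tree Jacobi matrix with parameters $\{A_{c,i},B_{c,i}\}$ for some $c\in[0,1]$, and $\sigma_{\rm ess}(\J)=\overline{\bigcup \sigma(\text{$\mathcal R$-limits})}$; then it suffices to show each such limit operator has spectrum in $\Delta_1\cup\Delta_2$, and conversely that these spectra exhaust $\Delta_1\cup\Delta_2$.

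For the reverse inclusion $\Delta_1\cup\Delta_2\subseteq\sigma_{\rm ess}(\J)$, I would use the strong asymptotics of the MOPs $P_\n$ established in the present paper (the extension to all $\vec c\in[0,1]^2$ announced in the "Results" subsection). Fix $x_0$ in the interior of, say, $\Delta_1$. Using the recurrence \eqref{recurrence} together with the known ray asymptotics of $P_\n(x_0)$ (and of $Q_\n$), one builds approximate eigenvectors ("Weyl sequences") of $\J$ at energy $x_0$: vectors supported on a large but finite portion of the tree along a fixed ray direction, with $\|(\J-x_0)\psi_N\|=o(\|\psi_N\|)$, and one makes them mutually orthogonal (hence witnessing essential spectrum) by supporting successive $\psi_N$ on disjoint, farther-and-farther shells. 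The endpoints of $\Delta_1,\Delta_2$ are then included by taking closures. For $x_0$ in the open gap between $\Delta_1$ and $\Delta_2$, the same asymptotics show exponential decay/growth dichotomy of formal solutions, so no Weyl sequence exists there — consistent with $\sigma_{\rm ess}=\Delta_1\cup\Delta_2$, though strictly one only needs the $\subseteq$-direction for this half.

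The main obstacle I anticipate is the first inclusion: controlling the gluing/localization perturbation on a tree (where the "boundary" between cones is itself an exponentially large set of vertices, unlike on $\Z$) and proving the spectral inclusion for the constant-coefficient tree Jacobi matrix sharply enough that the neighborhoods shrink to $\Delta_1\cup\Delta_2$. This is where the precise asymptotic formulas and the Riemann–Hilbert/Green's-function machinery of \cite{uApDenY} have to be invoked with care; the Weyl-sequence half, by contrast, is a fairly standard consequence of the strong asymptotics once those are in hand.
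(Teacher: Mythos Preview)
Your ``cleaner route'' via $\mathcal R$-limits is exactly what the paper does, and your cone--partition localization is not needed. The paper invokes the characterization $\sigma_{\rm ess}(\mathcal J)=\overline{\bigcup_{\mathcal J'} \sigma(\mathcal J')}$ over all $\mathcal R$-limits (Theorem~\ref{ref_d}, from \cite{BrDenEl18}). There is, however, a twist you did not anticipate: rather than directly computing the spectrum of each $\mathcal R$-limit (which lives on the \emph{unrooted} $3$-homogeneous tree), the paper observes that the set of $\mathcal R$-limits of $\J$ equals $\bigcup_{c\in[0,1]}\{\mathcal R\text{-limits of }\mathcal L_c^{(1)}\}$, where $\mathcal L_c^{(1)}$ is the constant-coefficient operator on the \emph{rooted} tree with parameters $A_{c,i},B_{c,i}$. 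Applying Theorem~\ref{ref_d} a second time, now to $\mathcal L_c^{(1)}$, turns the union of those spectra back into $\sigma_{\rm ess}(\mathcal L_c^{(1)})$, and this is computed to be $\Delta_{c,1}\cup\Delta_{c,2}$ by an explicit $m$-function analysis (Appendix, Theorem~\ref{sad15}). Then $\bigcup_{c}(\Delta_{c,1}\cup\Delta_{c,2})=\Delta_1\cup\Delta_2$ gives the upper bound. This double use of the $\mathcal R$-limit theorem sidesteps the need to analyze operators on the unrooted tree.

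For the inclusion $\Delta_1\cup\Delta_2\subseteq\sigma_{\rm ess}(\J)$ your Weyl-sequence construction has a gap: for a general $\Par\in\Par_{Ang}(\Delta_1,\Delta_2)$ there are no MOPs $P_\n$ to provide the asymptotics. The paper handles this more simply: since \eqref{AngPar_1} together with continuity of $c\mapsto(A_{c,i},B_{c,i})$ forces any two parameter sets in $\Par_{Ang}(\Delta_1,\Delta_2)$ to differ by a compact perturbation, Weyl's theorem reduces the question to a single Angelesco Jacobi matrix, for which $\Delta_1\cup\Delta_2\subseteq\sigma(\J)$ was already established in \cite{uApDenY}. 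Your approach could be repaired by the same reduction, but the detour through explicit Weyl sequences is then unnecessary.
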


We prove this theorem in Section~\ref{sec:10}. The necessary definitions and statements of the main results on strong asymptotics of MOPs
 are adduced in Section~\ref{sec:3}. Auxiliary results and their proofs are relegated to Sections~\ref{sec:4} and~\ref{sec:5}. Proofs of the main results can be found in Sections~\ref{sec:6} and~\ref{sec:8}.

\section{Expressions for the ray limits and proof of Theorem~\ref{thm:spectrum}}
\label{sec:10}

\subsection{Expressions for the ray limits}
\label{ss:21}
In this subsection we give formulas for the limits in \eqref{AngPar}.


Let \( \Delta_1=[\alpha_1,\beta_1] \) and \(
\Delta_2=[\alpha_2,\beta_2] \) be two intervals on the real line
such that \( \beta_1<\alpha_2 \). Denote by \( \omega_1 \) and \(
\omega_2 \) the arcsine distributions on \( \Delta_1 \) and \(
\Delta_2 \), respectively. It is known \cite{Ransford}
that
\[
E(\omega_i,\omega_i) \leq E(\nu,\nu), \quad E(\mu,\nu) := -\int\log|x-y|\dd\mu(x)\dd\nu(y),
\]
for any probability Borel \( \nu \) measure on \( \Delta_i \).  The
logarithmic potentials of these measures satisfy
\[
\ell_i - V^{\omega_i} \equiv 0 \quad \text{on} \quad \Delta_i,
\]
for some constants \( \ell_1 \) and \( \ell_2\), where \( V^\nu(z)
:= -\int\log|z-x|\dd\nu(x) \). Now, given \( c\in(0,1) \), define
\begin{equation}
\label{Mc}
M_c := \big\{(\nu_1,\nu_2):~\supp(\nu_i)\subseteq \Delta_i,~\|\nu_1\|=c,~\|\nu_2\|=1-c\big\}.
\end{equation}
It is known \cite{GRakh81} that there exists the unique pair
of measures \( (\omega_{c,1},\omega_{c,2})\in M_c \) such that
\begin{equation}
\label{vecI}
I(\omega_{c,1},\omega_{c,2}) \leq I(\nu_1,\nu_2), \quad I(\nu_1,\nu_2) := 2E(\nu_1,\nu_1) + 2E(\nu_2,\nu_2) + E(\nu_1,\nu_2) + E(\nu_2,\nu_1),
\end{equation}
for all pairs \( (\nu_1,\nu_2)\in M_c \). Moreover,  \(
\supp(\omega_{c,1})=[\alpha_1,\beta_{c,1}] =:\Delta_{c,1}\) and  \(
\supp(\omega_{c,2})=[\alpha_{c,2},\beta_2]=:\Delta_{c,2} \).
Similarly to the case of a single interval,
there exist constants \( \ell_{c,i}\), \( i\in\{1,2\} \), such that
\begin{equation}
\label{var}
\left\{
\begin{array}{lll}
\ell_{c,1} - V^{2\omega_{c,1}+\omega_{c,2}} \equiv 0 & \text{on} & \supp(\omega_{c,1}), \medskip \\
\ell_{c,2} - V^{\omega_{c,1}+2\omega_{c,2}} \equiv 0 & \text{on} & \supp(\omega_{c,2}).
\end{array}
\right.
\end{equation}
The
dependence of the intervals \( \Delta_{c,i} \) on the parameter \(
c \) is described in greater detail in Section~\ref{sec:4}.

Let $\RS_c$, \( c\in(0,1) \), be a 3-sheeted Riemann surface
realized as follows: cut a copy of \( \overline \C \) along  \(
\Delta_{c,1}\cup \Delta_{c,2} \), which henceforth is denoted by
$\RS_c^{(0)}$, the second copy of \( \overline\C \) is cut along \(
\Delta_{c,1} \) and is denoted by $\RS_c^{(1)}$, while the third
copy is cut along \( \Delta_{c,2} \) and is denoted by \(
\RS_c^{(2)} \). These copies are then glued to each other crosswise
along the corresponding cuts, see Figure~\ref{fig:surfaceA}.
\begin{figure}[h!]
\includegraphics[scale=.5]{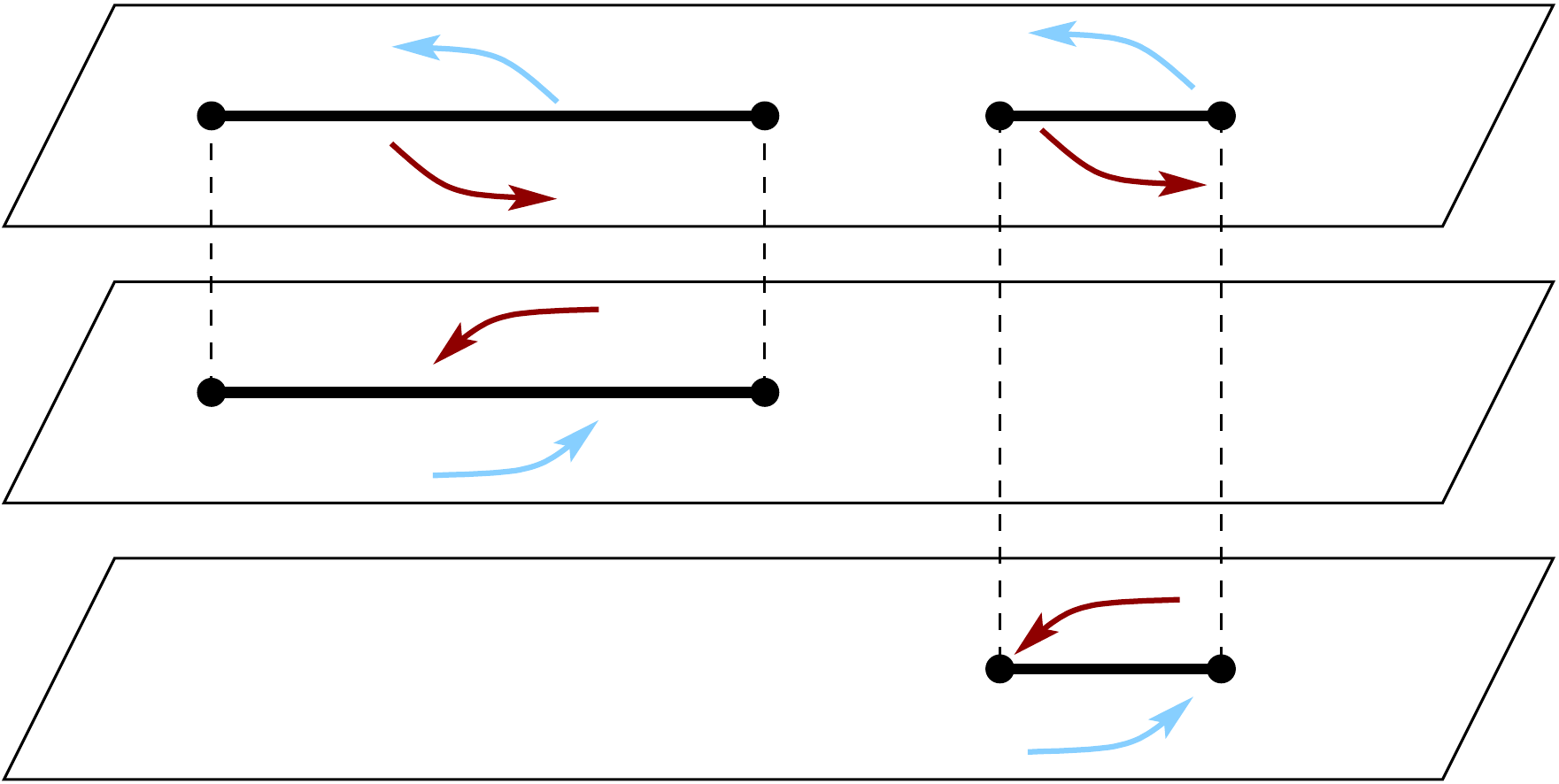}
\begin{picture}(0,0)
\put(-236,110){$\alpha_1$}
\put(-128,110){$\beta_1$}
\put(-110,110){$\alpha_2$}
\put(-53,110){$\beta_2$}
\put(-30,115){$\RS_c^{(0)}$}
\put(-30,70){$\RS_c^{(1)}$}
\put(-30,25){$\RS_c^{(2)}$}
\end{picture}
\caption{Surface \( \RS_c \) when \( \beta_{c,1}=\beta_1 \) and \( \alpha_{c,2}=\alpha_2\).}
\label{fig:surfaceA}
\end{figure}
It can be easily verified that thus constructed Riemann surface has genus
0. We denote by $\pi$ the natural projection from $\RS_c$ to
$\overline\C$ and employ the notation \( \z \) for a generic point
on \( \RS_c \) with \( \pi(\z)=z \) as well as $z^{(i)}$ for a
point on $\RS_c^{(i)}$ with $\pi(z^{(i)})=z$. We call a linear combination \( \sum n_i\z_i\), \( n_i\in\Z \), a \emph{divisor}. The degree of \( \sum n_i\z_i\) is defined as \( \sum n_i \). We say that \( \sum n_i\z_i\) is a zero/pole divisor of a rational function on \( \RS_c \) if this function has a zero at \( \z_i \) of multiplicity \( n_i \) when \( n_i>0 \), a pole at \( \z_i \) of order \( -n_i \) when \( n_i<0 \), and has no other zeros of poles. Zero/pole divisors necessarily have degree zero. Since $\RS_c$ has
genus zero, one can arbitrarily prescribe zero/pole divisors of
rational functions on $\RS_c$ as long as the degree of the divisor
is zero. A rational function with a given divisor is
unique up to multiplication by a constant.

\begin{proposition}
\label{prop:angelesco} Let \( \RS_c \), \( c\in(0,1) \), be as
above and \( \chi_c(\z) \) be the conformal map of \( \RS_c \) onto
\( \overline\C \) such that
\begin{equation}
\label{chi}
\chi_c\big( z^{(0)} \big) = z + \mathcal O\big(z^{-1} \big) \qasq z\to\infty.
\end{equation}
Further, let the numbers \( A_{c,1},A_{c,2},B_{c,1},B_{c,2} \) be defined by
\begin{equation}
\label{AngPar1}
\chi_c\big( z^{(i)} \big) =: B_{c,i} + A_{c,i}z^{-1} + \mathcal O\big(z^{-2} \big) \qasq z\to\infty, \; i\in\{1,2\}.
\end{equation}
Finally, let \( w_i(z):=\sqrt{(z-\alpha_i)(z-\beta_i)} \) be the
branch holomorphic outside of \( \Delta_i \) and normalized so that
\( w_i(z)/z\to 1 \) as \( z\to\infty \), in which case
\begin{equation}
\label{varphii}
\varphi_i(z) := \frac12\left(z-\frac{\beta_i+\alpha_i}2 + w_i(z)\right)
\end{equation}
is the conformal map of \( \overline\C\setminus\Delta_i \) onto the
complement of the disk \( B_{(\beta_i-\alpha_i)/4}(0) \)
satisfying \( \varphi_i(z) = z + \mathcal O(1) \) as \( z\to\infty
\). Then it holds that
\begin{equation}
\label{AngPar2}
\lim_{c\to0} \left\{
\begin{array}{lllll}
A_{c,2} & = & \big[(\beta_2-\alpha_2)/4\big]^2 &=:& A_{0,2}, \medskip \\
B_{c,2} & = & (\beta_2+\alpha_2)/2 &=:& B_{0,2}, \medskip \\
A_{c,1} & = & 0 &=:& A_{0,1}, \medskip \\
B_{c,1} & = & B_{0,2}+\varphi_2(\alpha_1) &=:& B_{0,1},
\end{array}
\right.
\end{equation}
and analogous limits hold when \( c\to1 \). Moreover, all the
constants  \( A_{c,1},A_{c,2},B_{c,1},B_{c,2} \) are continuous
functions of the parameter \( c\in[0,1] \).
\end{proposition}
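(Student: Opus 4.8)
The plan is to establish Proposition~\ref{prop:angelesco} by analyzing the degeneration of the Riemann surface \( \RS_c \) as \( c\to0 \). The heart of the matter is understanding what happens to the equilibrium problem \eqref{vecI} in this limit: as \( c\to0 \), the measure \( \omega_{c,1} \) has vanishing mass, so the interval \( \Delta_{c,1}=[\alpha_1,\beta_{c,1}] \) should shrink to the single point \( \{\alpha_1\} \), while \( \omega_{c,2} \) converges to the arcsine distribution \( \omega_2 \) on all of \( \Delta_2 \), so \( \Delta_{c,2}\to\Delta_2 \). I would first make this precise, drawing on the detailed description of the dependence \( c\mapsto\Delta_{c,i} \) promised in Section~\ref{sec:4}: namely that \( \beta_{c,1}\to\alpha_1 \) and \( \alpha_{c,2}\to\alpha_2 \) as \( c\to0 \), with these endpoints depending continuously (indeed real-analytically on \( (0,1) \)) on \( c \). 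Granting this, the surface \( \RS_c \) degenerates: the cut \( \Delta_{c,1} \) on sheets \( \RS_c^{(0)} \) and \( \RS_c^{(1)} \) closes up, so in the limit \( \RS_c^{(1)} \) detaches and becomes a separate copy of \( \overline\C \), while \( \RS_c^{(0)} \) and \( \RS_c^{(2)} \) glue crosswise along \( \Delta_2 \) to form the standard two-sheeted surface of \( w_2 \).

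Next I would identify the limiting conformal map. On the degenerate configuration, \( \chi_c \) restricted to the component \( \RS_c^{(0)}\cup\RS_c^{(2)} \) must converge to the conformal map of the two-sheeted \( w_2 \)-surface onto \( \overline\C \) normalized by \eqref{chi}; that map is exactly \( z^{(0)}\mapsto B_{0,2}+\varphi_2(z) \) on the top sheet and \( z^{(2)}\mapsto B_{0,2}+\) (the analytic continuation of \( \varphi_2 \) through \( \Delta_2 \)) on the bottom sheet. Since \( \varphi_2(z)=z+\mathcal O(1) \) and more precisely \( \varphi_2(z)=z-B_{0,2}+A_{0,2}z^{-1}+\mathcal O(z^{-2}) \) with \( A_{0,2}=[(\beta_2-\alpha_2)/4]^2 \) by a direct expansion of \eqref{varphii} (using \( w_2(z)=z-B_{0,2}-\frac{A_{0,2}}{z}\cdot\frac{4}{\beta_2-\alpha_2}\cdot\ldots \), i.e. the standard Laurent expansion of \( \sqrt{(z-\alpha_2)(z-\beta_2)} \)), one reads off the second and third lines of \eqref{AngPar2} from \eqref{AngPar1}. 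Meanwhile the detached sheet \( \RS_c^{(1)} \), carrying no cut in the limit, is mapped by the limiting \( \chi \) to a single point; that point is \( B_{0,1} \) and \( A_{c,1}\to0 \) because the map on that sheet becomes constant. To pin down the value \( B_{0,1}=B_{0,2}+\varphi_2(\alpha_1) \), I would use continuity of \( \chi_c \) across the shrinking cut \( \Delta_{c,1} \): the common boundary value of \( \chi_c \) on \( \Delta_{c,1} \) from the \( \RS_c^{(0)} \) side and the \( \RS_c^{(1)} \) side must agree, and as \( \Delta_{c,1}\to\{\alpha_1\} \) the \( \RS_c^{(0)} \)-side value tends to \( B_{0,2}+\varphi_2(\alpha_1) \) while the \( \RS_c^{(1)} \)-side value tends to the constant \( B_{0,1} \); hence they coincide.

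The final clause — continuity of all four constants on the closed interval \( [0,1] \) — follows by combining continuity (even analyticity) on the open interval \( (0,1) \), which comes from the smooth dependence of the genus-zero surface \( \RS_c \) and hence of \( \chi_c \) on \( c \) for \( c\in(0,1) \), together with the one-sided limits at the endpoints just computed (the \( c\to1 \) case being entirely symmetric under swapping the roles of \( \Delta_1 \) and \( \Delta_2 \)). For the interior continuity I would argue that the coefficients in \eqref{AngPar1} are obtained from \( \chi_c \) by contour integration of \( \chi_c(z^{(i)}) \) and \( z(\chi_c(z^{(i)})-B_{c,i}) \) over a fixed large circle, and \( \chi_c\to\chi_{c_0} \) uniformly on such a circle as \( c\to c_0\in(0,1) \) because the branch points move continuously and stay bounded away from the circle.

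The main obstacle I anticipate is making the degeneration argument rigorous rather than merely heuristic: one must show that \( \chi_c \), a priori defined on a moving Riemann surface, genuinely converges — and with control on the relevant Laurent coefficients — as the surface pinches. The clean way to handle this is to write \( \chi_c \) in closed form. Since \( \RS_c \) has genus zero, there is an explicit rational uniformization: a conformal map \( t\mapsto\z(t) \) from \( \overline\C \) onto \( \RS_c \), and \( \chi_c\circ\z(t) \) is then a degree-three rational function of \( t \) whose coefficients are algebraic functions of the branch points \( \alpha_1,\beta_{c,1},\alpha_{c,2},\beta_2 \). Tracking these coefficients as \( \beta_{c,1}\to\alpha_1 \) and \( \alpha_{c,2}\to\alpha_2 \) reduces the whole proposition to an elementary, if slightly tedious, limit computation with rational functions, at which point \eqref{AngPar2} and the continuity assertion both drop out. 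I would therefore organize the proof around producing that explicit uniformization (or citing it from the analysis in Section~\ref{sec:4}) and then passing to the limit in the resulting formulas.
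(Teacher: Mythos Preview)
Your heuristic picture is exactly right and matches the paper's: as \( c\to0 \) the cut \( \Delta_{c,1} \) collapses, sheet \( \RS_c^{(1)} \) pinches off, and on the surviving two-sheeted \( w_2 \)-surface the limiting map is \( z\mapsto B_{0,2}+\varphi_2(z) \), with \( B_{0,1} \) determined by continuity across the shrinking cut. Where you diverge is in the rigorous implementation. You propose to write down an explicit rational uniformization of \( \RS_c \) and track the coefficients of the resulting degree-three rational function as \( \beta_{c,1}\to\alpha_1 \); this would work but is laborious. The paper avoids explicit formulas entirely: it defines \( \varphi(\z) \) on \( \RS_c^{(0)}\cup\RS_c^{(2)} \) (the Joukowski map for \( \Delta_2 \)) and considers \( f_c(z):=(\chi_c(\varphi^{-1}(z))-B_{c,2})/z \), a holomorphic function on \( \overline\C\setminus I_{c,1} \) with \( f_c(\infty)=1 \), where \( I_{c,1}=\varphi(\boldsymbol\Delta_{c,1}) \) shrinks to a point. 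A Koebe \( 1/4 \)-theorem argument gives a uniform bound on \( \chi_c \) (hence on \( f_{c\pm} \)) over \( \boldsymbol\Delta_{c,1} \), so Cauchy's formula on \( I_{c,1} \) forces \( f_c\to1 \) locally uniformly. This yields \( \chi_c(\z)=B_{c,2}+(1+o(1))\varphi(\z) \) on \( \RS_c^{(0)}\cup\RS_c^{(2)} \) with no computation, and univalency of \( \chi_c \) then pins the image of \( \RS_c^{(1)} \) to \( B_{c,2}+\varphi(\boldsymbol\alpha_1) \). Your approach buys concreteness at the cost of algebra; the paper's buys brevity via a soft Cauchy-integral/normal-families argument that sidesteps any explicit parameterization.
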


Let us stress that the numbers \( A_{c,i}\) and \( B_{c,i} \) defined in \eqref{AngPar1} are precisely the ones appearing in \eqref{AngPar}. Even though the expression for \( B_{0,1} \) might seem strange, it has a meaning from the point of view of spectral theory of Jacobi matrices, see \eqref{mly4}.

We prove Proposition~\ref{prop:angelesco} in Section~\ref{sec:5}.
It is worth noting that the constants \( A_{c,1} \) and \( A_{c,2}
\) are always positive (except for \( A_{0,1} \) and \( A_{1,2} \), of course). Indeed, denote by \( \boldsymbol
\alpha_1,\boldsymbol\beta_{c,1},\boldsymbol\alpha_{c,2},\boldsymbol\beta_2\)
the ramification points of \( \RS_c \) with natural projections \(
\alpha_1,\beta_{c,1},\alpha_{c,2},\beta_2 \), respectively. Then
the symmetries of \( \RS_c \) and \( \chi_c(\z)\) yield that \(
\chi_c(\z) \) is real and changes from \( -\infty \) to \( \infty
\) when \( \z \)  moves along the cycle
\[
\infty^{(0)} \to \boldsymbol \alpha_1 \to \infty^{(1)} \to \boldsymbol \beta_{c,1} \to \boldsymbol \alpha_{c,2} \to \infty^{(2)} \to \boldsymbol \beta_2 \to \infty^{(0)}
\]
whose natural projection is the extended real line. Thus, \(
\chi_c(\z) \) is increasing when it moves past \( \infty^{(1)} \)
and \( \infty^{(2)} \), which yields the claim (this argument also
shows that \( B_{c,1}<B_{c,2} \)).

\subsection{Proof of Theorem \ref{thm:spectrum}}
\label{ss:2.2}

Our proof will be based on a characterization of the essential
support of a Jacobi matrix on a tree obtained in
\cite[Theorem~4]{BrDenEl18}. We need some preliminaries to
formulate this result. Suppose \( \mathcal T \) is a 3-homogeneous rooted
tree with root at $O$ (a binary tree), which means that $O$ has two neighbors and any other vertex has three neighbors.
Later in the text, we will use the notation
${Z}\sim {Y}$ to indicate that vertices ${Z}$
and ${Y}$ are neighbors and the symbol $\mathcal{V}$ will denote the set of all
 vertices of \( \mathcal T \).
Given a real function $V$ defined on \(\mathcal V\)  and a real positive function $W$ defined on all edges, we make an assumption
\begin{equation}
\label{Isad1}
\sup_{Y\in \mathcal V}|V_Y|<\infty,\,\, 0<W_{Z,Y},\,\, \sup_{Z\sim Y,Y\in \mathcal V}W_{Z,Y}<\infty\,,
\end{equation}
to introduce \( \mathcal J \), a bounded self-adjoint Jacobi matrix
\begin{equation}
\label{Isad2}
(\mathcal{J}f)_Y:= V_Yf_Y+\sum_{Z\sim Y}W_{Z,Y}^{1/2}f_Z, \quad {Y}\in \mathcal{V}\,,
\end{equation}
defined on $\ell^2( \mathcal{V})$.  One example one can think of is \(
\J \) introduced in \eqref{Jacobi}. Consider a set of distinct
vertices (a path) $\{{Y}_n\}, n\in \N$, in $\mathcal{V}$ such
that ${Y}_n\sim {Y}_{n+1}$ for every $n$. Clearly,
every such path on the tree  escapes to infinity, i.e., ${\rm
dist}(O,{Y}_n)\to\infty, n\to\infty$. We want
to define $\mathcal{R}$-limit (or ``right limit'') of $\mathcal{J}$ along this path. To
do that, suppose $\mathcal{G}$ is a 3-homogeneous tree (without a root),
${O}'$ is a fixed vertex on $\mathcal{G}$, and $\mathcal{J}'$ is a
bounded self-adjoint operator on $\mathcal{G}$. Recall that $B_r({Y})$ stands for the ball of radius
$r$ centered at ${Y}$ and denote the restriction operator to this ball by
$P_{B_r({Y})}$. Consider two finite matrices: $P_{B_r({Y}_{n_j})}\mathcal{J}P_{B_r({Y}_{n_j})}$ and $P_{B_r({O}')}\mathcal{J}'P_{B_r({O}')}$. If we identify $\ell^2(B_r({O}'))$ and $\ell^2(B_r({Y}_{n_j}))$ by following the structure of the tree (and there are many ways to do that), then these matrices are defined on the same finite dimensional Euclidean space. If this identification can be done so that all sections of  $\mathcal{J}'$ appear as the limits, we call  $\mathcal{J}'$ an 
 $\mathcal{R}$-limit or right limit:
\begin{definition}
We say that $\mathcal{J}'$ is an $\mathcal{R}$-limit of
$\mathcal{J}$ along $\{{Y}_n\}$ if there is a subsequence
$\{n_j\}$ such that
\[
P_{B_r({Y}_{n_j})}\mathcal{J}P_{B_r({Y}_{n_j})}\to
P_{B_r({O}')}\mathcal{J}'P_{B_r({O}')} \quad \text{as} \quad j\to\infty
\]
for every fixed $r\in \N$. Matrix $\mathcal{J}'$ is called simply an $\mathcal{R}$-limit of
$\mathcal{J}$ if there exists a path along which $\mathcal{J}'$ is an $\mathcal{R}$-limit of \( \mathcal J\).
\end{definition}

\noindent
{\bf Remark. }  For the rigorous definition
of $\mathcal{R}$-limit on more general graphs, see
\cite{BrDenEl18}.

\begin{theorem}[Theorem 4 in \cite{BrDenEl18}] We have
\[
\sigma_{\rm ess}(\mathcal{J})=\overline{\bigcup_{\mathcal{J}' {\rm \,is \, an\,} \mathcal{R}-{\,\rm limit \,of\,}\mathcal{J}} \sigma(\mathcal{J}')}\,.
\]
\label{ref_d}
\end{theorem}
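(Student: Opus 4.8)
The statement is an equality of two closed subsets of $\R$, so the plan is to prove the two opposite inclusions, in both cases arguing with singular Weyl sequences and exploiting that $\mathcal J$ has finite hopping range (nearest-neighbour structure). For the inclusion $\overline{\bigcup_{\mathcal J'}\sigma(\mathcal J')}\subseteq\sigma_{\rm ess}(\mathcal J)$: since the left-hand side is closed, it suffices to fix one $\mathcal R$-limit $\mathcal J'$ of $\mathcal J$ — realized along a path $\{Y_n\}$, a subsequence $\{n_j\}$, a homogeneous tree $\mathcal G$, and a base vertex $O'$ — and to show $\sigma(\mathcal J')\subseteq\sigma_{\rm ess}(\mathcal J)$. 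Given $\lambda\in\sigma(\mathcal J')$, I take a normalized Weyl sequence $\psi_m\in\ell^2(\mathcal G)$ for $\mathcal J'-\lambda$ and truncate so that $\supp\psi_m\subseteq B_{r_m}(O')$. For each $m$ choose $j=j(m)$ so large that $\dist(O,Y_{n_j})>r_m+1$ (hence $B_{r_m+1}(Y_{n_j})$ is graph-isomorphic to $B_{r_m+1}(O')$, being a ball that avoids the root-defect) and, by the definition of $\mathcal R$-limit, so that the entries of $P_{B_{r_m+1}(Y_{n_j})}\mathcal JP_{B_{r_m+1}(Y_{n_j})}$ differ from those of $P_{B_{r_m+1}(O')}\mathcal J'P_{B_{r_m+1}(O')}$ by at most $1/m$. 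Transplanting $\psi_m$ through the ball isomorphism yields $\widetilde\psi_m\in\ell^2(\mathcal V)$ with $\|\widetilde\psi_m\|=1$, $\|(\mathcal J-\lambda)\widetilde\psi_m\|\to0$, and $\supp\widetilde\psi_m$ escaping every finite set, so $\widetilde\psi_m\rightharpoonup0$; thus $\{\widetilde\psi_m\}$ is a singular Weyl sequence and $\lambda\in\sigma_{\rm ess}(\mathcal J)$.

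For the reverse inclusion, fix $\lambda\in\sigma_{\rm ess}(\mathcal J)$ and a singular Weyl sequence $g_k$: $\|g_k\|=1$, $g_k\rightharpoonup0$, $\|(\mathcal J-\lambda)g_k\|\to0$. First I pass to a finitely supported Weyl sequence: choosing radii $R_k\to\infty$ for which both $\|g_k-P_{B_{R_k}(O)}g_k\|$ and the $\ell^2$-mass of $g_k$ on the sphere of radius $R_k$ are $o(1)$ (the latter by a pigeonhole over disjoint spheres), the commutator $[\mathcal J,P_{B_{R_k}(O)}]$, supported on edges between spheres $R_k$ and $R_k+1$, is negligible, so $h_k:=P_{B_{R_k}(O)}g_k/\|P_{B_{R_k}(O)}g_k\|$ is again a normalized singular Weyl sequence, now finitely supported. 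Now fix $\rho\in\N$ and put on $B_{R_k}(O)$ an IMS-type partition of unity $\{\chi_{k,s}\}$, $\sum_s\chi_{k,s}^2\equiv1$, with cells of diameter $\le\rho$, bounded overlap, and Lipschitz constant $O(1/\rho)$. Using the localization identity together with $\|[\mathcal J,\chi_{k,s}]\|=O(1/\rho)$ gives $\sum_s\|(\mathcal J-\lambda)(\chi_{k,s}h_k)\|^2\le 2\|(\mathcal J-\lambda)h_k\|^2+O(1/\rho^2)$, whose right-hand side is at most some $\eta=\eta(\rho)=O(1/\rho^2)$ for $k$ large, while $\sum_s\|\chi_{k,s}h_k\|^2=1$. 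Since $g_k\rightharpoonup0$, the cells with centre inside any fixed $B_T(O)$ carry total mass $\le\|P_{B_{T+\rho}(O)}h_k\|^2\to0$, so the cells whose centre lies outside $B_{T_k}(O)$, with $T_k\to\infty$ slowly, carry mass $1-o(1)$; comparing the two displayed sums with threshold $\delta:=2\eta$, a contradiction ($\delta(1-o(1))\le\eta=\delta/2$) shows that for $k$ large there is such a far cell $s(k)$ with $\|(\mathcal J-\lambda)(\chi_{k,s(k)}h_k)\|\le\sqrt\delta\,\|\chi_{k,s(k)}h_k\|$. Setting $Y_k$ to be its centre and $f_k:=\chi_{k,s(k)}h_k/\|\chi_{k,s(k)}h_k\|$, I obtain $\|f_k\|=1$, $\supp f_k\subseteq B_\rho(Y_k)$, $\dist(O,Y_k)\to\infty$, and $\|(\mathcal J-\lambda)f_k\|\le C/\rho$.

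It remains to turn $\{f_k\}$ into an $\mathcal R$-limit. For each fixed $r$ and $k$ large, $B_r(Y_k)$ is graph-isomorphic to a ball $B_r(O')$ of a $3$-homogeneous tree $\mathcal G$; the matrices $P_{B_r(Y_k)}\mathcal JP_{B_r(Y_k)}$ have entries bounded in terms of $\sup|V|$ and $\sup W$ only, so by Bolzano--Weierstrass and a diagonal extraction over $r$ there is a subsequence along which $P_{B_r(Y_k)}\mathcal JP_{B_r(Y_k)}\to P_{B_r(O')}\mathcal J'_\rho P_{B_r(O')}$ for every $r$, where $\mathcal J'_\rho$ is a bounded self-adjoint Jacobi matrix on $\mathcal G$ that, by construction, is an $\mathcal R$-limit of $\mathcal J$. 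Transplanting $f_k$ to $\widehat f_k\in\ell^2(\mathcal G)$ with $\supp\widehat f_k\subseteq B_\rho(O')$, the action of $\mathcal J'_\rho-\lambda$ on $\widehat f_k$ involves only entries on the \emph{fixed} ball $B_{\rho+1}(O')$, which are limits of those of $\mathcal J$ on $B_{\rho+1}(Y_k)$, so $\limsup_k\|(\mathcal J'_\rho-\lambda)\widehat f_k\|\le C/\rho$; since $\mathcal J'_\rho$ is self-adjoint and $\|\widehat f_k\|=1$, this forces $\dist(\lambda,\sigma(\mathcal J'_\rho))\le C/\rho$. Letting $\rho\to\infty$ produces $\mathcal R$-limits $\mathcal J'_\rho$ of $\mathcal J$ with $\dist(\lambda,\sigma(\mathcal J'_\rho))\to0$, whence $\lambda\in\overline{\bigcup_{\mathcal J'}\sigma(\mathcal J')}$.

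The step I expect to be the main obstacle is the localization in the second inclusion. On $\Z$ a finitely supported $\ell^2$-vector lives in a single window that one simply recenters; on a tree the support ball $B_{R_k}(O)$ contains exponentially many branch points, so one must genuinely run an IMS-type partition and verify that a \emph{single} cell simultaneously remains an approximate null vector of $\mathcal J-\lambda$ up to an error that is small in $\rho$ but \emph{uniform in} $k$, and can be taken with centre escaping to infinity — which requires coordinating the cell size $\rho$, the overlap multiplicity, the rate $\|P_{B_r(O)}g_k\|\to0$, and the threshold $\delta$ against one another. The device that keeps these choices from becoming circular is to ask only that the localized vector be annihilated to within $O(1/\rho)$ and then to let $\rho\to\infty$, using the closedness of the target set, rather than insisting on an exact singular sequence for one fixed limit operator. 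The remaining ingredients — the finite-support truncation, the compactness construction of $\mathcal G$ and $\mathcal J'_\rho$, and the two transplantation estimates — are routine once the finite hopping range of $\mathcal J$ is invoked.
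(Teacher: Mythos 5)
The paper itself contains no proof of this statement: it is quoted verbatim from \cite{BrDenEl18} (with only the remark that the argument there extends to rooted trees), so there is nothing internal to compare your argument with and I judge it on its own terms. Your overall scheme — transplantation of truncated Weyl vectors for the inclusion $\overline{\bigcup\sigma(\mathcal J')}\subseteq\sigma_{\rm ess}(\mathcal J)$, and truncation plus localization plus a diagonal compactness extraction for the converse — is the natural one, and the first inclusion as you present it is correct. In the second inclusion, however, two steps are not covered by what you wrote. First, the existence of an IMS partition on a tree with cells of diameter $\le\rho$, Lipschitz constant $O(1/\rho)$ \emph{and bounded overlap} is precisely the nontrivial geometric input, and you only postulate it. The naive construction (bumps centered at a maximal $\rho$-separated net) fails here: a ball of radius $2\rho$ in the $3$-homogeneous tree contains on the order of $2^{3\rho/2}$ pairwise $\rho$-separated points, so the overlap multiplicity is exponential in $\rho$ and the error term $\sum_s\|[\mathcal J,\chi_{k,s}]h_k\|^2$ becomes useless. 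A correct construction does exist (the tree has asymptotic dimension one: cut into annuli $\{j\rho\le\dist(O,\cdot)<(j+1)\rho\}$ and split each annulus into branch classes according to where the geodesics to $O$ separate, giving a cover by sets of diameter $O(\rho)$ forming two $\rho$-disjoint families, hence a partition of unity of multiplicity $2$ with Lipschitz constant $O(1/\rho)$), but this has to be carried out; as written, the obstacle you yourself single out as the main one is not resolved.

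Second, and more seriously relative to the statement as formulated here: the centers $Y_k$ produced by your localization form an arbitrary sequence with $\dist(O,Y_k)\to\infty$, whereas the definition of an $\mathcal R$-limit adopted in this paper requires the centers to be a subsequence of a single path $Y_n\sim Y_{n+1}$ of distinct vertices, i.e.\ (eventually) of a geodesic ray. Your claim that the diagonally extracted $\mathcal J'_\rho$ is an $\mathcal R$-limit ``by construction'' is therefore unjustified: unlike on $\Z$, a sequence escaping to infinity in a tree need not stay within bounded distance of any single ray, so one cannot recentre the balls on a path. Concretely, labelling the two downward edges at each vertex by $L$ and $R$ and identifying vertices with words, the vertices $L^kR^k$ eventually lie farther than any prescribed $r$ from every geodesic ray, along every subsequence. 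The distinction is not cosmetic: for hopping $\equiv1$ and a potential equal to a large constant exactly at the vertices $L^kR^k$ and zero elsewhere, every path-based $\mathcal R$-limit is the free operator with spectrum $[-2\sqrt2,2\sqrt2]$, while transplanting the truncated bound state of the one-impurity operator to the balls $B_\rho(L^kR^k)$ places a point of $\sigma_{\rm ess}$ outside $[-2\sqrt2,2\sqrt2]$. So the hard inclusion cannot be obtained with limits taken only along paths; what your argument actually establishes is the version of the theorem in which $\mathcal R$-limits are taken along arbitrary vertex sequences escaping to infinity — the rigorous definition that the paper's own remark defers to \cite{BrDenEl18}. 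To close the gap you must either work with that definition throughout, or supply an additional argument (special to the operators considered) reducing sequence-limits to path-limits; no such reduction holds in general.
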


\noindent
{\bf Remark.} \cite[Theorem 4]{BrDenEl18} was stated for the regular trees only, but the proof is valid for rooted trees as well.\smallskip

{\bf Auxiliary operators $\mathcal{L}_c^{(1)}$ and $\mathcal{L}_c^{(2)}$.} Recall that $\mathcal{T}$ denotes the $3$-homogeneous rooted tree with the root denoted by $O$ and $\mathcal{V}$ stands for the set of all its vertices.  There are two edges meeting at the root $O$. We label one of them type 1 and the other one -- type 2. Now, consider two vertices that are at distance $1$ from  $O$. Each of them is coincident with exactly three edges. One of the edges for each vertex was labelled already, and we label the remaining two as an edge type 1 and an edge of type 2. We continue inductively by considering all edges that are at distance $2,3,$ etc. from $O$ and calling one of the unlabelled edges type 1 and the other one type 2. Now that all edges of $\mathcal{T}$ have types assigned to them, we continue by labeling the vertices. If a vertex $Y$ meets two edges of type 1 and one edge of type 2, we call it a vertex of type 1; otherwise, if it is incident with two edges of type 2 and one edge of type 1, we call it type 2. We do not need to assign any type to the root $O$. At a vertex \( Y\neq O \) of type \( \iota_Y \), see \eqref{imath}, we  define both operators  $\mathcal{L}_c^{(1)}$ and $\mathcal{L}_c^{(2)}$ by the same formula:
\begin{eqnarray}
\label{sad6}
(\mathcal{L}_c^{(l)}\psi)_{Y}=\sum_{j\in \{1,2\},{Y'}\sim {Y},\,{\rm type\, of\, edge}\, ({Y},{Y'})=j}\sqrt{A_{c,j}}\psi_{{Y}'}+{B_{c,\iota_Y}}\psi_{{Y}}, \quad l\in \{1,2\};
\end{eqnarray}
and at the root $O$ we define the operators  $\mathcal{L}_c^{(1)}$ and $\mathcal{L}_c^{(2)}$ differently from each other by
\begin{eqnarray*}
(\mathcal{L}_c^{(l)}\psi)_{O}=\sum_{j\in \{1,2\},{Y'}\sim {O},\,{\rm type\, of\, edge}\, ({O},{Y'})=j}\sqrt{A_{c,j}}\psi_{{Y}'}+{B_{c,l}}\psi_{{O}}\,, \quad l\in\{1,2\}.
\end{eqnarray*}
Notice that these operators represent Jacobi matrices on \( \mathcal T \) when $c\in (0,1)$. However, if $c\in \{0,1\}$ either $A_{c,1}$ or $A_{c,2}$ becomes zero and  $\mathcal{L}_c^{(1)}, \mathcal{L}_c^{(2)}$ are no longer  Jacobi matrices, strictly speaking. \smallskip
 
\noindent {\bf Remark.} The operators  $\mathcal{L}_c^{(1)}$ and $\mathcal{L}_c^{(2)}$ already appeared in \cite{uApDenY} as the strong limits of Jacobi matrices on finite trees that correspond to $\{P_{\vec{n}}\}$, the polynomials of the second type (see formula (3.3) and Subsection~4.5 in \cite{uApDenY}). We defined $\mathcal{L}_c^{(1)}$ and $\mathcal{L}_c^{(2)}$ by assigning the ``types'' to vertices of the tree and then defining the Jacobi matrix accordingly. This is an example of more general construction that generates trees satisfying a finite cone type condition. The Laplacian defined on trees with finite cone type and its perturbations  were studied in,  e.g., \cite{t6,at1,at2}. 

\begin{lemma}
If $\mathcal{J}$ has coefficients in \(\Par_{Ang}(\Delta_1,\Delta_2) \), then the $\mathcal{R}$-limits of $\mathcal{J}$ and the $\mathcal{R}$-limits of $\mathcal{L}^{(l)}_c$, \( l\in\{1,2\} \), are related by the following identity
\begin{equation}\label{sad1}
\bigcup_{c\in [0,1]} \left\{\mathcal J^\prime:\mathcal J^\prime~{\rm is \, an\,}\mathcal{R}-{\rm limit\, of \,}\mathcal{L}_c^{(l)}\right\} = \left\{\mathcal J^{\prime\prime}:\mathcal J^{\prime\prime}~{\rm is \, an\,}\mathcal{R}-{\rm limit\, of \,}\mathcal{J}\right\}.
\end{equation}
\end{lemma}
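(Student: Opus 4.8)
The plan is to prove the two-sided inclusion in \eqref{sad1} by analyzing what local structure appears near a vertex far from the root, and matching it against the local structure of the auxiliary operators $\mathcal{L}_c^{(l)}$. First I would note that the essential content is purely \emph{local}: an $\mathcal{R}$-limit is determined by the limits of the finite sections $P_{B_r(Y_{n_j})}\mathcal{J}P_{B_r(Y_{n_j})}$, so one only needs to understand, for a path $\{Y_n\}$ escaping to infinity, how the coefficients $a_{\Pi(Y),i}$, $b_{\Pi(Y),i}$ behave on a ball of fixed radius $r$ around $Y_n$. The key observation is that for a vertex $Y$ at large distance from $O$, the projection $\Pi(Y)=(m_1,m_2)$ has $|\Pi(Y)|=\mathrm{dist}(O,Y)+O(1)\to\infty$, and after passing to a subsequence we may assume $m_1/|\Pi(Y_{n_j})|\to c$ for some $c\in[0,1]$; moreover on a fixed ball $B_r(Y_{n_j})$ the ratios $n_i/|\n|$ of all the vertices involved converge to the same $c$ (they differ from $m_i/|\Pi(Y_{n_j})|$ by at most $r$, which is negligible). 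Hence, by \eqref{AngPar_1}, all coefficients $\widehat a_{\Pi(Y'),i}\to A_{c,i}$ and $\widehat b_{\Pi(Y'),i}\to B_{c,i}$ uniformly on $B_r(Y_{n_j})$ as $j\to\infty$.

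The next step is to identify the limiting finite section with a section of some $\mathcal{L}_c^{(l)}$. I would set up the bookkeeping so that the ``type'' function $\imath$ from \eqref{imath} on $\mathcal T$ (which records along which coordinate direction the multi-index increases when passing from a parent to a child) is exactly the edge-type labeling used to define $\mathcal{L}_c^{(1)},\mathcal{L}_c^{(2)}$; one checks this labeling is consistent with the inductive construction of edge types in the paragraph preceding \eqref{sad6}, because from any vertex exactly one child increases $n_1$ and one increases $n_2$, while the edge to the parent has whatever type it had. With this matching, formula \eqref{Jacobi} for $(\mathcal{J}f)_Y$ at $Y\neq O$ has precisely the same shape as \eqref{sad6}: the off-diagonal weights are $\sqrt{a_{\cdot,\imath}}$ attached to the two child-edges and $\sqrt{a_{\cdot,\imath_Y}}$ to the parent-edge, and the diagonal term is $b_{\Pi(Y_{(p)}),\imath_Y}$, which in the limit becomes $B_{c,\imath_Y}=B_{c,\iota_Y}$ (using that the vertex type and the index $\imath_Y$ agree in the relevant convention). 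Therefore, for a ball $B_r(Y_{n_j})$ that does not contain the root (which is automatic once $j$ is large, since $\mathrm{dist}(O,Y_{n_j})\to\infty$), the limiting section coincides with a section of $\mathcal{L}_c^{(l)}$ around a corresponding vertex. Running this in reverse: every $\mathcal{R}$-limit of $\mathcal{L}_c^{(l)}$ arises along a path in $\mathcal T$ whose sections converge; by the already-established convergence of $\mathcal{J}$'s coefficients to those of $\mathcal{L}_c^{(l)}$ along any path with $n_1/|\n|\to c$, one can diagonalize (choose the subsequence $\{n_j\}$ and radii $r$ simultaneously) to realize the same limit as an $\mathcal{R}$-limit of $\mathcal{J}$. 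This gives both inclusions in \eqref{sad1}.

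For the $\supseteq$ direction I would argue as follows. Let $\mathcal{J}''$ be an $\mathcal{R}$-limit of $\mathcal{J}$ along $\{Y_n\}$ with subsequence $\{n_j\}$. Pass to a further subsequence so that $m_1^{(j)}/|\Pi(Y_{n_j})|\to c\in[0,1]$, where $\Pi(Y_{n_j})=(m_1^{(j)},m_2^{(j)})$. Then along this path, by the uniform-on-balls convergence above, $P_{B_r(Y_{n_j})}\mathcal{J}P_{B_r(Y_{n_j})}$ and $P_{B_r(Y_{n_j})}\mathcal{L}_c^{(l)}P_{B_r(Y_{n_j})}$ have the same limit for every $r$; hence $\mathcal{J}''$ is also an $\mathcal{R}$-limit of $\mathcal{L}_c^{(l)}$, giving $\supseteq$. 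For $\subseteq$, if $\mathcal{J}'$ is an $\mathcal{R}$-limit of $\mathcal{L}_c^{(l)}$ along some path $\{Y_n\}$ in $\mathcal T$ with subsequence $\{n_j\}$, then — since along \emph{any} path escaping to infinity in $\mathcal T$ the coefficients of $\mathcal{J}$ converge to those of $\mathcal{L}_{c'}^{(l)}$ where $c'$ is the limit of $m_1/|\n|$ along that path — I need the path to have been chosen so that $c'=c$. This requires a small extra point: the path realizing an $\mathcal{R}$-limit of $\mathcal{L}_c^{(l)}$ can be taken to have $m_1/|\n|\to c$. This holds because $\mathcal{L}_c^{(l)}$ is \emph{translation-homogeneous in the tree sense} (its coefficients depend only on the vertex/edge types, not on the position), so any path works and in particular one may pick a path with the prescribed asymptotic direction $c$; then the coefficients of $\mathcal{J}$ along that path converge to those of $\mathcal{L}_c^{(l)}$, and a diagonal extraction over $r$ and $j$ produces $\mathcal{J}'$ as an $\mathcal{R}$-limit of $\mathcal{J}$.

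The main obstacle is the bookkeeping of \textbf{edge and vertex types} and making sure the matching between the tree structure underlying $\mathcal{J}$ (governed by $\imath$ and the parent/child formalism of \eqref{Jacobi}) and the edge-type labeling underlying $\mathcal{L}_c^{(l)}$ is exact — in particular that the diagonal entry $b_{\Pi(Y_{(p)}),\imath_Y}$ indeed tends to $B_{c,\iota_Y}$ with the \emph{correct} index (the distinction between ``type of the edge to the parent'' and ``type of the vertex'' must be reconciled), and that the root, which is handled by a different formula in both \eqref{Jacobi} and the definition of $\mathcal{L}_c^{(l)}$, never enters because balls of fixed radius around $Y_{n_j}$ eventually avoid $O$. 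The second, more technical, subtlety is the uniformity of the convergence $\widehat a_{\Pi(Y'),i}\to A_{c,i}$, $\widehat b_{\Pi(Y'),i}\to B_{c,i}$ over the finitely many vertices $Y'\in B_r(Y_{n_j})$: this follows from \eqref{AngPar_1} together with the elementary fact that all these vertices share the same leading asymptotics $n_i/|\n|\to c$ (up to the bounded perturbation $r$), but it should be stated carefully since \eqref{AngPar_1} is a statement about a single ray direction, and one must invoke it for the slightly perturbed sequences $\{\Pi(Y')\}$ which still satisfy \eqref{multi-indices} with the same $c$.
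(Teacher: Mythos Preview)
Your proposal is correct and follows the same approach as the paper. The paper's own proof is a single sentence invoking the definition of the $\mathcal{R}$-limit, the construction of $\mathcal{L}_c^{(1)},\mathcal{L}_c^{(2)}$, and assumption \eqref{AngPar_1}; your argument is a careful unpacking of precisely that, including the verification that the vertex type $\iota_Y$ coincides with the index $\imath_Y$ and that balls of fixed radius eventually avoid the root.
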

\begin{proof} This follows from the definition of the \(\mathcal R\)-limit, construction of $\mathcal{L}_c^{(1)}$ and $\mathcal{L}_c^{(2)}$, and from the assumption \eqref{AngPar_1}.
\end{proof}
We further study auxiliary operators $\mathcal{L}_c^{(1)}$ and $\mathcal{L}_c^{(2)}$ in Appendix~\ref{appendix}.

\begin{proof}[Proof of Theorem~\ref{thm:spectrum}]
 
Assumptions \eqref{AngPar_1} characterize the behavior of the coefficients at infinity. Thus, Weyl's theorem on the essential spectrum \cite{rs1} implies that any two Jacobi matrices with parameters in $ \Par_{Ang}(\Delta_1,\Delta_2) $ have the same essential spectra. Moreover, by the same Weyl's theorem, this essential spectrum is independent of the choice of parameter $\vec\kappa$ in \eqref{Jacobi}. Hence, it is enough to prove the theorem for the Jacobi matrix $\mathcal{J}_{\vec\kappa}$ generated by some Angelesco system with analytic weights and with $\vec\kappa=\vec{e}_2$. In \cite[Section 4]{uApDenY} we established that
$\Delta_1\cup\Delta_2\subseteq \sigma(\mathcal{J}_{\vec{e}_2})$. Thus, $\Delta_1\cup\Delta_2\subseteq \sigma_{\rm ess}(\mathcal{J}_{\vec{e}_2})$ as follows from the definition of the essential spectrum.

To prove the opposite inclusion, take any $\mathcal{J}$ for which the coefficients belong to $ \Par_{Ang}(\Delta_1,\Delta_2) $. The application of Theorem~\ref{ref_d} and Theorem~\ref{sad15} to  $\mathcal{L}_c^{(1)}$ gives
\[
\overline{\bigcup_{\mathcal{J}' {\rm \,is \, an\,} \mathcal{R}-{\,\rm limit \,of\,}\mathcal{L}_c^{(1)}} \sigma(\mathcal{J}')} = \sigma_{\rm ess}(\mathcal{L}_c^{(1)}) = \Delta_{c,1}\cup\Delta_{c,2},
\]
which yields an inclusion
\begin{equation}\label{sad3}
\bigcup_{c\in [0,1]}{\bigcup_{\mathcal{J}' {\rm \,is \, an\,} \mathcal{R}-{\,\rm limit \,of\,}\mathcal{L}_c^{(1)}} \sigma(\mathcal{J}')}\subseteq  \bigcup_{c\in [0,1]}\Bigl( \Delta_{c,1}\cup\Delta_{c,2}\Bigr)=\Delta_1\cup\Delta_2\,,
\end{equation}
where the last equality follows from the properties of $\Delta_{c,1}$ and $\Delta_{c,2}$ (which we also discuss later in Proposition \ref{prop:1}). Moreover, since
\[
\sigma_{\rm ess}(\mathcal{J})=\overline{\bigcup_{c\in [0,1]}{\bigcup_{\mathcal{J}' {\rm \,is \, an\,} \mathcal{R}-{\,\rm limit \,of\,}\mathcal{L}_c^{(1)}} \sigma(\mathcal{J}')}}\,
\]
by Theorem \ref{ref_d} and \eqref{sad1}, we get from \eqref{sad3} that \( \sigma_{\rm ess}(\mathcal{J})\subseteq \Delta_1\cup\Delta_2 \), which proves the theorem.
\end{proof}

\section{Multiple Orthogonal Polynomials for Angelesco Systems}
\label{sec:3}

In this section we state the results on asymptotic behavior of the forms \( Q_\n(x) \) and polynomials \( P_\n(x) \) defined in \eqref{typeI}
and \eqref{typeII}, respectively, along ray sequences \( \mathcal N_c=\mathcal N_{(c,1-c)}\) defined in \eqref{multi-indices} under the assumption that the measures of orthogonality are as in Theorem~\ref{thm:recurrenceOld}. The study of strong asymptotics of multiple orthogonal polynomials has a long history, see for example \cite{Kal79,NutTr87,Apt_Szego,Y16}. Below, we follow the Riemann-Hilbert approach used in \cite{Y16}, where the strong asymptotics of MOPs was derived for Angelesco systems with analytic weights for non-marginal ray sequences. Here, we extend the results of \cite{Y16} to marginal sequences, which is a  non-trivial problem requiring new ideas.

As before, we assume that the intervals \(\Delta_1=[\alpha_1,\beta_1] \) and \( \Delta_2=[\alpha_2,\beta_2] \) are disjoint and \( \beta_1<\alpha_2 \). In accordance with the definition of the intervals \( \Delta_{c,1},\Delta_{c,2} \) after \eqref{vecI}, we shall also set \( \Delta_{0,1}:=\{\alpha_1\},\Delta_{0,2}:=\Delta_2 \) and \( \Delta_{1,1}:=\Delta_1,\Delta_{1,2}:=\{\beta_2\} \).

Throughout the paper, we use the following notation: given a system of Jordan arcs and curves \( \Sigma \), we denote by \( \Sigma^\circ \) the subset of \( \Sigma \) consisting of points that possess a neighborhood that is separated by \( \Sigma \) into exactly two connected components. In particular, \( \Delta_i^\circ = (\alpha_i,\beta_i) \), \( i\in\{1,2\}\).

\subsection{Fully Marginal Ray Sequences}

In this subsection we consider solely infinite ray sequences of the form
\begin{equation}
\label{fullymarginal}
\mathcal N_{i-1}=\left\{\n:\text{ there exists } C>0 \text{ such that }n_i\leq C\right\}, \quad i\in\{1,2\}.
\end{equation}

To describe the asymptotics we need to introduce the so-called
Szeg\H{o} functions of the measures \( \mu_1,\mu_2 \). To this end,
let us set
\begin{equation}
\label{rhoi}
\rho_i(x) := -2\pi\ic\mu_i^\prime(x), \quad x\in\Delta_i.
\end{equation}
Observe that \( (\rho_iw_{i+})(x)>0 \) for \( x\in\Delta_i^\circ:=(\alpha_i,\beta_i) \), where \( w_i(z) \) was introduced in Proposition~\ref{prop:angelesco}. Put
\begin{equation}
\label{szego}
S_{\rho_i}(z) := \exp\left\{\frac{w_i(z)}{2\pi\ic}\int_{\Delta_i}\frac{\log(\rho_iw_{i+})(x)}{z-x}\frac{\dd x}{w_{i+}(x)}\right\}, \quad i\in\{1,2\}.
\end{equation}
Then each \( S_{\rho_i}(z) \) is a holomorphic and non-vanishing
function in \( \overline\C\setminus \Delta_i \) that is uniquely (up to a sign) characterized by the properties\footnote{\( A(z)\sim B(z) \) as \( z\to z_0 \) means that the ratio \( A(z)/B(z) \) is uniformly bounded away from zero and infinity as \( z\to z_0\).}
\begin{equation}
\label{szego-pts}
\left\{
\begin{array}{l}
(S_{\rho_i+}S_{\rho_i-})(x)(\rho_iw_{i+})(x) \equiv 1, \quad x\in\Delta_i^\circ, \medskip \\
|S_{\rho_i}(z)|\sim|z-x_*|^{-1/4}, \quad z\to x_*\in\{\alpha_i,\beta_i\}.
\end{array}
\right.
\end{equation}
Notice also that if \( \rho_i(x) \) is replaced by \( \rho_i(x)/w_{i+}(x) \) in \eqref{szego}, then \( S_{\rho_i/w_{i+}}(z) \) retains all the described properties except it is actually bounded around \( \beta_i \) and \( \alpha_i \). The following theorem holds.

\begin{theorem}
\label{thm:asymp1} Under the conditions of
Theorem~\ref{thm:recurrence}, it holds that
\[
P_\n(z) = (1+o(1)) \big(S_{\rho_2}(z)/S_{\rho_2}(\infty)\big)S^{n_1}(z;\alpha_1)(z-\alpha_1)^{n_1}\varphi_2^{n_2}(z)
\]
uniformly on bounded subsets of \(
\overline\C\setminus(\Delta_{0,1}\cup\Delta_2 ) \) along any \(
\mathcal N_0 \) satisfying \eqref{fullymarginal}, where \(
\varphi_2(z) \) was introduced in \eqref{varphii} and
\begin{equation}
\label{Sx0}
S(z;x_0) := \left(\frac{\varphi_2(z)-\varphi_2(x_0)}{\varphi_2(x_0)\varphi_2(z)-A_{0,2}}\frac{\varphi_2(x_0)\varphi_2(z)}{z-x_0}\right)^{1/2}, \quad z\in\overline\C\setminus\Delta_2,
\end{equation}
\( x_0\in(-\infty,\infty)\setminus\Delta_2 \) and the root is
chosen so that \( S(\infty;x_0) = 1 \). An analogous asymptotic
formula holds along \( \mathcal N_1 \) satisfying
\eqref{fullymarginal}.
\end{theorem}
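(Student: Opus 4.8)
The plan is to prove Theorem~\ref{thm:asymp1} via the Riemann--Hilbert (RH) / nonlinear steepest descent analysis applied to the matrix RH problem characterizing \( P_\n \), taking the limit along a fully marginal sequence \( \mathcal N_0 \) where \( n_2\to\infty \) but \( n_1\leq C \). The starting point is the standard \( (d+1)\times(d+1) \) RH problem \rhy\ for the type~II MOP \( P_\n \) (in our case a \( 3\times 3 \) problem), and the key conceptual simplification is that, because \( n_1 \) stays bounded, the orthogonality against \( \dd\mu_1 \) produces only a \emph{finite} number of constraints. So the asymptotics should be driven entirely by the single-interval equilibrium problem on \( \Delta_2 \) (with mass \( 1 \), reflecting \( n_1+n_2\approx n_2 \)), with the measure \( \mu_1 \) and the point \( \alpha_1 \) entering only through a rational/algebraic correction built from the geometry of \( \overline\C\setminus\Delta_2 \). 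This is why the limiting objects in the statement involve \( \varphi_2 \), the Szeg\H{o} function \( S_{\rho_2} \) on \( \Delta_2 \) alone, and the explicit elementary factor \( S(z;\alpha_1) \). Concretely, I expect \( \Delta_{0,1}=\{\alpha_1\} \) to collapse to a point, \( \Delta_{0,2}=\Delta_2 \), and \( \RS_0 \) (the \( c\to 0 \) limit of \( \RS_c \)) to degenerate so that \( \chi_0 \) restricted to the relevant sheets is expressible through \( \varphi_2 \); this already matches \eqref{AngPar2}.

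First I would write down \rhy\ and perform the usual first transformation \( \boldsymbol Y\mapsto \boldsymbol T \) using the \( g \)-functions: here one needs a \( g \)-function adapted to the degenerate regime, namely \( g_2(z):=\int\log(z-x)\dd\omega_2(x) \) (arcsine on \( \Delta_2 \)) carrying the full mass, and a \emph{bounded} logarithmic-type function handling the interval \( \Delta_1 \). Because \( n_1 \) is bounded, the latter is not a genuine equilibrium potential but rather the \( n_1 \)-th power of a fixed function, so the exponent \( n_1 \) remains finite and the factor \( S^{n_1}(z;\alpha_1)(z-\alpha_1)^{n_1} \) emerges as \( \big(\text{(fixed function)}(z)\big)^{n_1} \). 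After \( \boldsymbol T \), I would open lenses around \( \Delta_2 \) in the standard way (the \( \boldsymbol T\mapsto\boldsymbol S \) step), using analyticity of \( \mu_2' \) near \( \Delta_2 \) to get exponentially small jumps on the lens boundaries, while near \( \Delta_1 \) the lens opening is more delicate: one must show the jump contributions there are small \emph{uniformly}, exploiting that \( \alpha_1 \) sits strictly outside \( \Delta_2 \) so \( \varphi_2(\alpha_1) \) is a fixed point strictly inside the exterior disk and no coalescence of singularities occurs.

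Next I would build the global parametrix \( \boldsymbol N \): this is where the single-interval structure pays off. The outer parametrix should be an explicit \( 3\times 3 \) matrix whose entries are algebraic functions on \( \overline\C\setminus\Delta_2 \) (built from \( w_2 \), \( \varphi_2 \), and \( S_{\rho_2} \)), incorporating the fixed poles/zeros dictated by the bounded-\( n_1 \) constraints near \( \alpha_1 \); the precise prescription of its divisor on the degenerate surface \( \RS_0 \) is what produces the function \( S(z;\alpha_1) \) in \eqref{Sx0} — indeed \( S(z;\alpha_1)^2 \) is manifestly a rational function of \( \varphi_2(z) \) with a simple zero at \( z=\alpha_1 \) (coming from \( \varphi_2(z)-\varphi_2(\alpha_1) \)) and a pole at the \( \varphi_2 \)-conjugate point \( A_{0,2}/\varphi_2(\alpha_1) \), i.e. the image of \( \alpha_1 \) on the ``other side'' of \( \Delta_2 \), matched by the \( (z-x_0) \) denominator so that \( S(\infty;\alpha_1)=1 \). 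Then I would construct local Airy parametrices at the endpoints \( \alpha_2,\beta_2 \) of \( \Delta_2 \) (the hard edges), no local analysis being needed at \( \alpha_1 \) since nothing degenerates there. Finally I would do the small-norm argument for \( \boldsymbol R:=\boldsymbol S\boldsymbol N^{-1} \): its jumps are \( I+O(|\n|^{-1}) \) on the lens boundaries and \( I+O(|\n|^{-1}) \) near the Airy disks, uniformly as long as \( \mathcal N_0 \) satisfies \eqref{fullymarginal}; hence \( \boldsymbol R = I + o(1) \) uniformly on compacts of \( \overline\C\setminus(\{\alpha_2,\beta_2\}\cup\Delta_1) \). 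Unwinding the transformations \( \boldsymbol Y = \boldsymbol R\boldsymbol N\,(\text{undressing the }g\text{-functions}) \) and reading off the \( (1,1) \) entry yields the claimed formula, with \( \varphi_2^{n_2}(z) \) from \( e^{n_2 g_2} \), \( (z-\alpha_1)^{n_1}S^{n_1}(z;\alpha_1) \) from the finite-\( n_1 \) factor, and \( S_{\rho_2}(z)/S_{\rho_2}(\infty) \) from the outer parametrix normalization.

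The main obstacle I anticipate is the uniformity of the steepest-descent estimates in the \emph{marginal} direction — in the non-marginal analysis of \cite{Y16} both masses grow and the \( \mathcal R \)-surface \( \RS_c \) stays nondegenerate with uniformly separated sheets, whereas here \( \RS_c\to\RS_0 \) degenerates (two branch points \( \alpha_1 \) and \( \beta_{c,1} \) collide), so one must re-derive the global parametrix directly on the degenerate surface rather than taking a naive \( c\to 0 \) limit, and must check that the error matrix \( \boldsymbol R - I \) is small \emph{uniformly} for all \( \n\in\mathcal N_0 \), including the fact that the finitely many constraints near \( \alpha_1 \) do not blow up. A secondary technical point is verifying that the exponent \( n_1 \) bounded prevents the factor \( (z-\alpha_1)^{n_1}S^{n_1}(z;\alpha_1) \) from contributing any non-uniformity; since \( n_1 \) ranges over a finite set this is a compactness argument. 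Once these uniformity issues are handled, the identification of limiting constants with \eqref{AngPar2} and the explicit form \eqref{Sx0} is a matter of matching divisors on \( \RS_0 \) and normalizing at \( \infty^{(0)} \). The analogous statement along \( \mathcal N_1 \) follows by the symmetry \( \Delta_1\leftrightarrow\Delta_2 \), \( c\leftrightarrow 1-c \).
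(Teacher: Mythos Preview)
Your proposal takes a genuinely different route from the paper. You plan a full $3\times 3$ Riemann--Hilbert analysis adapted to the degenerate limit $c\to 0$, building a global parametrix on the collapsed surface $\RS_0$ and invoking Airy local parametrices at $\alpha_2,\beta_2$. The paper does none of this. Instead it exploits the factorization $P_\n = P_{\n,1}P_{\n,2}$, where $P_{\n,1}(x)=\prod_{i=1}^{n_1}(x-x_{\n,i})$ carries the (at most $C$) zeros on $\Delta_1$. Then $P_{\n,2}$ is a degree-$n_2$ polynomial orthogonal on $\Delta_2$ with respect to the varying analytic weight $P_{\n,1}\,\dd\mu_2$, and its asymptotics come directly from the known single-interval result \cite[Theorem~2.7]{BY10}, yielding formula~\eqref{6.1} with the product $\prod_i S(z;x_{\n,i})$. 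The only remaining work is Lemma~\ref{lem:6-1}: an elementary integral/comparison argument (splitting the orthogonality relation at the rightmost zero $x_{\n,n_1}$ and using monotonicity of $|\varphi_2|$ on $(-\infty,\alpha_2)$) shows $x_{\n,n_1}\to\alpha_1$, whence $\prod_i S(z;x_{\n,i})\to S^{n_1}(z;\alpha_1)$ and $P_{\n,1}(z)\to(z-\alpha_1)^{n_1}$.

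What each approach buys: the paper's route is short, avoids any $3\times3$ machinery, and in fact the zero-localization lemma is stated for general Szeg\H{o} weights $\mu_2$ and arbitrary marginal sequences $n_1/n_2\to 0$ (not just $n_1$ bounded). Your route would be self-contained but heavier, and there is a real gap in your sketch: you say the $\Delta_1$ lens opening is ``more delicate'' and that ``no local analysis [is] needed at $\alpha_1$'', but you never explain how the jump $I+\rho_1 E_{1,2}$ on all of $\Delta_1=[\alpha_1,\beta_1]$ is removed in the small-norm problem. With $n_1$ bounded there is no exponential suppression from an equilibrium measure on $\Delta_1$; after normalizing by a $g$-function supported only on $\Delta_2$, the $\Delta_1$ jump is \emph{not} close to the identity. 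One can absorb it into the parametrix (essentially encoding the $n_1$ zeros), but that step is exactly the content you have left vague, and carrying it out amounts to rediscovering the factorization the paper uses from the start.
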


Since \( \varphi_{2+}(x)\varphi_{2-}(x) \equiv A_{0,2} \) for \( x\in\Delta_2 \), an explicit computation shows that 
\[
S(x;x_0)_+S(x;x_0)_- = |S(x;x_0)_\pm|^2 \equiv -\varphi_2(x_0)(x-x_0)^{-1}, \quad x\in\Delta_2^\circ. 
\]
As \( S(z;x_0) \) is non-vanishing and holomorphic in \( \overline\C\setminus\Delta_2 \) as well as bounded around \( \alpha_2,\beta_2 \) a standard argument shows that
\[
S(z;x_0) = S_\varrho(z)/S_\varrho(\infty), \quad S_\varrho^{-1}(\infty)=\sqrt{-\varphi_2(x_0)}, \quad \varrho(x) := (x-x_0)/w_{2+}(x),
\]
where \( S_\varrho(\infty)>0 \) when \( x_0<\alpha_2 \) while \( S_\varrho(\infty)\in\ic\R \) when \( x_0>\beta_2 \) with the choice of the square root depending on the determination of \( \log(x-x_0) \) used. We prove Theorem~\ref{thm:asymp1} in Section~\ref{sec:6}.

\subsection{Szeg\H{o} Functions on \( \RS_c \)}

Let us set \( \boldsymbol\Delta_{c,i} := \pi^{-1}(\Delta_{c,i}) \),
\( i\in\{1,2\} \), and orient it so that \( \RS_c^{(0)} \) remains
on the left when the cycle is traversed in the positive direction.
Put
\begin{equation}
\label{wi}
w_{c,i}(z):=\sqrt{(z-\alpha_{c,i})(z-\beta_{c,i})} = z + \mathcal O(1), \quad z\to\infty,
\end{equation}
to be the branch holomorphic outside of $\Delta_{c,i}$. In what
follows, it will be convenient to introduce the following notation
\[
F^{(k)}(z) := F\big(z^{(k)} \big), \quad k\in\{0,1,2\},
\]
for a function \( F(\z) \) defined on \(
\RS_c\setminus(\boldsymbol\Delta_{c,1}\cup\boldsymbol\Delta_{c,2})
\). Then the following proposition holds.

\begin{proposition}
\label{prop:szego} Given \( c\in(0,1) \) and functions \(
\rho_1(x),\rho_2(x) \) as in \eqref{rhoi} and
Theorem~\ref{thm:recurrenceOld}, there exists a function $S_c(\z)$
non-vanishing and holomorphic in
$\RS_c\setminus(\boldsymbol\Delta_{c,1}\cup\boldsymbol\Delta_{c,2})$
such that
\begin{equation}
\label{szego-pts2}
\left\{
\begin{array}{l}
S_{c\pm}^{(i)}(x) = S_{c\mp}^{(0)}(x)(\rho_iw_{c,i+})(x), \quad x\in \Delta_{c,i}, \medskip \\
\big(S_c^{(0)}S_c^{(1)}S_c^{(2)}\big)(z) \equiv 1, \quad z\in \overline\C, \medskip \\
\big|S_c^{(0)}(z)\big| \sim |z-x_0|^{-1/4} \quad \text{as} \quad z\to x_0\in\big\{\alpha_1,\beta_{c,1},\alpha_{c,2},\beta_2\big\}.
\end{array}
\right.
\end{equation}
Properties \eqref{szego-pts2} determine \( S_c(\z) \) uniquely up
to a multiplication by a cubic root of unity. Moreover, if \( c\to
c_\star \in(0,1) \), then
\begin{equation}
\label{szego-limit1}
S_c^{(k)}(z) = \big[1+o(1)\big] S_{c_\star}^{(k)}(z),
\end{equation}
locally uniformly in \( \overline \C\setminus\Delta_{c_\star,k}\) when
\( k\in\{1,2\} \), and in \( \overline \C\setminus
(\Delta_{c_\star,1}\cup \Delta_{c_\star,2})\) when \( k=0 \). Furthermore,
it holds that
\begin{equation}
\label{szego-limit2}
\frac{S_c^{(k)}(z)}{S_c^{(k)}(\infty)} = \big(1+o(1)\big)
\left\{
\begin{array}{rl}
S_{\rho_2}(z)/S_{\rho_2}(\infty), & k=0, \medskip \\
1, & k=1, \medskip \\
S_{\rho_2}(\infty)/S_{\rho_2}(z), & k=2,
\end{array}
\right.
\end{equation}
 as \( c\to0 \), where \( o(1) \) holds locally uniformly in \( \overline\C\setminus\Delta_{0,1} \) when \( k\in\{0,1\} \) and uniformly in \( \overline\C \) when \( k=2 \) (that is, including the traces on \( \Delta_2 \)), while it also holds that
\begin{equation}
\label{szego-limit3}
\lim_{c\to0} S_c^{(0)}(\infty)c^{1/3}= VS_{\rho_2}(\infty), \quad \lim_{c\to0} S_c^{(1)}(\infty)c^{-2/3} = V^{-2}, \quad \text{and} \quad \lim_{c\to0} S_c^{(2)}(\infty)c^{1/3} = V/S_{\rho_2}(\infty),
\end{equation}
where \(  V:=\big(2\pi\mu^\prime_1(\alpha_1)|w_2(\alpha_1)|S_{\rho_2}(\alpha_1)\big)^{-1/3}\). Limits analogous to \eqref{szego-limit2} and \eqref{szego-limit3} also hold as \( c\to
1 \).
\end{proposition}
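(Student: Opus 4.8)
The plan is to exploit the fact that \( \RS_c \) has genus zero. The conformal map \( \chi_c \) from Proposition~\ref{prop:angelesco} identifies \( \RS_c \) with \( \overline\C \), so I would transplant everything to the plane of \( u=\chi_c(\z) \): the cuts \( \boldsymbol\Delta_{c,i} \) become two disjoint analytic arcs \( \gamma_i:=\chi_c(\boldsymbol\Delta_{c,i}) \) joining the images of the ramification points, and the first line of \eqref{szego-pts2} turns into a scalar \emph{multiplicative} Riemann-Hilbert problem on \( \gamma_1\cup\gamma_2 \) with \emph{positive} jump \( g_i \) (the transplant of \( \rho_iw_{c,i+} \)), the prescribed \( |\cdot|^{-1/4} \) blow-up at the four endpoints, and the natural (holomorphic, non-zero) behavior at the three preimages of \( \infty \). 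This I would solve exactly as the one-interval problem \eqref{szego}, by the Szeg\H{o}-function-on-an-arc formula: \( S_c \) is the exponential of the Cauchy transform of \( \log g_i \) over \( \gamma_1\cup\gamma_2 \) with the appropriate \( \sqrt{(u-a)(u-b)} \) prefactor on each arc; the fourth-root blow-up at a ramification point \( x_0 \) is inherited from the fact that \( \rho_iw_{c,i+} \) vanishes there like a square root, just as in \eqref{szego-pts}. Pulling back yields a function on \( \RS_c \) with the first and third properties in \eqref{szego-pts2}. The second property then holds \emph{up to a constant}: \( P(z):=(S_c^{(0)}S_c^{(1)}S_c^{(2)})(z) \) is invariant under the monodromy of \( \RS_c \) (both sheet interchanges are transpositions), hence single-valued on \( \overline\C \); it is bounded at each ramification point (the \( |z-x_0|^{-1/4} \) blow-up of one factor being offset by the \( |z-x_0|^{1/4} \) vanishing of another, forced by the jump relations) and bounded at \( \infty \), so \( P \) is a constant, and replacing \( S_c \) by \( P^{-1/3}S_c \) normalizes \( P\equiv1 \). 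The same computation yields uniqueness: two functions with the properties \eqref{szego-pts2} differ by a function holomorphic and non-vanishing on all of \( \RS_c \) (after removing removable singularities), i.e.\ a constant, which the normalization pins to a cube root of unity.

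The interior limit \eqref{szego-limit1} is the routine case. For \( c \) near \( c_\star\in(0,1) \), the four ramification points \( \alpha_1,\beta_{c,1},\alpha_{c,2},\beta_2 \) remain distinct and depend real-analytically on \( c \) (see Section~\ref{sec:4}), hence so do \( \chi_c \), the arcs \( \gamma_i \), the jumps \( g_i \), and therefore the integral representation above; this gives \( S_c^{(k)}=[1+o(1)]S_{c_\star}^{(k)} \) locally uniformly off \( \Delta_{c_\star,k} \) (and off \( \Delta_{c_\star,1}\cup\Delta_{c_\star,2} \) for \( k=0 \)) once the cube-root branch is chosen continuously in \( c \), which is possible as nothing degenerates near \( c_\star \).

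The marginal limits \eqref{szego-limit2}--\eqref{szego-limit3} require the analysis of the degeneration of \( \RS_c \) as \( c\to0 \). Then \( \beta_{c,1}\to\alpha_1 \) and \( \alpha_{c,2}\to\alpha_2 \), with the collapse rate \( \beta_{c,1}-\alpha_1 \) of order \( c \) (and with an explicit leading coefficient) available from Section~\ref{sec:4}. Consequently \( \boldsymbol\Delta_{c,1} \) collapses to the point \( \alpha_1 \), the sheet \( \RS_c^{(1)} \) pinches off — its \( \chi_c \)-image shrinks to the single point \( B_{0,1}=\varphi_2(\alpha_1)+B_{0,2} \) — while \( \RS_c^{(0)} \) and \( \RS_c^{(2)} \), still glued along \( \Delta_{c,2}\to\Delta_2 \), converge to the two-sheeted surface of \( \sqrt{(z-\alpha_2)(z-\beta_2)} \) underlying the classical Szeg\H{o} function \( S_{\rho_2} \); accordingly \( \chi_c^{(0)}\to\varphi_2+B_{0,2} \), \( \chi_c^{(2)}\to A_{0,2}/\varphi_2+B_{0,2} \), \( \chi_c^{(1)}\to B_{0,1} \). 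I would split the Cauchy transform representing \( S_c \) into its \( \gamma_2 \)-part and its \( \gamma_1 \)-part. The \( \gamma_2 \)-part converges to the one-interval Szeg\H{o} construction for \( \rho_2 \) on \( \Delta_2 \), which — once ratios \( S_c^{(k)}/S_c^{(k)}(\infty) \) are formed, so that both the \( \gamma_1 \)-part and the cube-root normalization drop out — yields the right-hand sides \( S_{\rho_2}(z)/S_{\rho_2}(\infty),\,1,\,S_{\rho_2}(\infty)/S_{\rho_2}(z) \) in \eqref{szego-limit2} for \( k=0,1,2 \); on sheet \( 1 \) the \( \gamma_2 \)-part is asymptotically \( z \)-independent (its value at the preimage of \( \infty^{(1)} \) coincides in the limit with its value at the collapse point \( z=\alpha_1 \)), which is why the ratio there tends to \( 1 \). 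The \( \gamma_1 \)-part is where the fractional powers of \( c \) enter: on the collapsing arc, \( \log g_1 \) is dominated by \( \tfrac12\log(\beta_{c,1}-\alpha_1)+\log\rho_1(\alpha_1)+o(1) \), and because that arc is shrinking, the factor \( w_{c,1}(z)/(z-x) \) in the integral representation tends to a sheet-dependent constant; hence the \( \gamma_1 \)-part adds to \( \log S_c^{(k)} \) an asymptotically \( z \)-independent term equal to a sheet-dependent combination of \( -\tfrac12\log(\beta_{c,1}-\alpha_1) \) and bounded quantities — that is, it multiplies \( S_c^{(0)} \) and \( S_c^{(2)} \) by a factor of order \( c^{-1/2} \) and \( S_c^{(1)} \) by a factor of order \( c^{1/2} \). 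The normalization \( S_c^{(0)}S_c^{(1)}S_c^{(2)}\equiv1 \) then redistributes this blow-up: the product of the three factors is of order \( c^{-1/2} \), its cube root of order \( c^{-1/6} \), and dividing through turns the three powers into \( c^{-1/3},c^{2/3},c^{-1/3} \), which is \eqref{szego-limit3}; tracking the accompanying constants and simplifying with \( \rho_1(\alpha_1)=-2\pi\ic\mu_1^\prime(\alpha_1) \), the local expansion of \( \chi_c \) near \( \alpha_1 \) (through \( \varphi_2^\prime(\alpha_1)=\varphi_2(\alpha_1)/w_2(\alpha_1) \), which brings in \( |w_2(\alpha_1)| \)), the leading-order rate of \( \beta_{c,1}-\alpha_1 \), and the value \( S_{\rho_2}(\alpha_1) \), one arrives at \( V=(2\pi\mu_1^\prime(\alpha_1)|w_2(\alpha_1)|S_{\rho_2}(\alpha_1))^{-1/3} \). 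The uniformity claims come from uniform control of the two Cauchy transforms away from the collapsing point \( \alpha_1 \), plus a separate local analysis on the shrinking gap \( [\alpha_2,\alpha_{c,2}] \) for the statement about traces on \( \Delta_2 \). The case \( c\to1 \) is entirely symmetric: \( \boldsymbol\Delta_{c,2} \) collapses to \( \beta_2 \), \( \RS_c^{(2)} \) pinches off, and \( \RS_c^{(0)},\RS_c^{(1)} \) merge to the surface underlying \( S_{\rho_1} \).

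The main obstacle will be precisely this \( c\to0 \) (equivalently \( c\to1 \)) degeneration — not the topological picture, which is clear, but the precise bookkeeping of the fractional powers of \( c \) and, above all, of the multiplicative constants as the surface pinches. This needs the collapse rate \( \beta_{c,1}-\alpha_1 \) with its leading coefficient from Section~\ref{sec:4}, a careful local analysis of \( \chi_c \) near \( \alpha_1 \) to control the mismatch between the \( z \)-scale and the \( u=\chi_c(\z) \)-scale there, and the interplay of the cube-root normalization with the blow-up, which is entirely concentrated on the collapsing cut. Establishing the claimed uniformity near the two competing loci — the collapsing point \( \alpha_1 \) and the fixed interval \( \Delta_2 \) — is where the estimates will be most delicate.
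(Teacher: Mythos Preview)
Your overall strategy---uniformize \( \RS_c \) by \( \chi_c \) and solve a scalar multiplicative boundary-value problem in the \( u \)-plane---is sound and close in spirit to the paper. There is, however, one genuine misconception and one structural difference from the paper's argument that you should be aware of.

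The misconception: \( \gamma_i:=\chi_c(\boldsymbol\Delta_{c,i}) \) is not an arc but a closed analytic Jordan curve (the paper says this explicitly when introducing \( \Gamma_{c,1},\Gamma_{c,2} \) in Section~\ref{sec:5}). The cycle \( \boldsymbol\Delta_{c,i} \) is a closed curve on \( \RS_c \), and \( \chi_c \) is a global conformal homeomorphism, so its image is a Jordan curve whose interior is \( \chi_c(\RS_c^{(i)}) \) and whose exterior contains \( \chi_c(\RS_c^{(0)}) \) and \( \chi_c(\RS_c^{(3-i)}) \). The images of the two ramification points are merely two distinguished smooth points on this curve. Hence the ``Szeg\H{o}-on-an-arc with \( \sqrt{(u-a)(u-b)} \) prefactor'' formula is the wrong model; the correct object in the \( u \)-plane is simply \( \exp\{\text{Cauchy integral of }\log g_i\text{ over the closed curve }\gamma_i\} \), and the quarter-root behavior must be extracted from the \emph{linear} vanishing of \( g_i \) (in the \( u \)-variable) at those two points. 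This is repairable, but your bookkeeping of the \( c \)-powers (the ``\( c^{-1/2} \) on sheets \( 0,2 \), \( c^{1/2} \) on sheet \( 1 \)'' heuristic) is tied to the arc picture and would have to be redone. A smaller slip: for \( c\to0 \) one has \( c<c^{**} \), so \( \alpha_{c,2}=\alpha_2 \) and there is no ``shrinking gap \( [\alpha_2,\alpha_{c,2}] \)''.

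The paper's route for the degeneration is genuinely different and cleaner. Existence, uniqueness, and the interior limit \eqref{szego-limit1} are quoted from \cite{Y16}. For \eqref{szego-limit2}--\eqref{szego-limit3} the paper does \emph{not} analyze a single Cauchy integral with a degenerating density; instead it writes \( S_c \) (formula \eqref{prop-szego7}) as a product of (i) explicit one-interval \( z \)-plane Szeg\H{o} functions \( D_{c,1},\,D_{c,\rho_1},\,S_{\rho_2} \), the first two living on the collapsing interval \( \Delta_{c,1} \), and (ii) two ``glue'' Cauchy integrals \( S_{c,1},S_{c,2} \) on the closed curves \( \Gamma_{c,1},\Gamma_{c,2} \) in the \( u \)-plane whose densities are \emph{bounded and non-degenerating}. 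All the blow-up is then carried by the explicit scalar factor \( \tfrac{\ic(\beta_{c,1}-\alpha_1)}{4}G_{c,\rho_1} \) appearing on sheet~\( 1 \); combined with \( G_{c,\rho_1}\to\rho_1(\alpha_1) \), the collapse rate \eqref{c-rate}, and a single residue-type evaluation \( S_{c,1}(\infty^{(1)})\to S_{\rho_2}(\alpha_1)/S_{\rho_2}(\infty) \), this yields \eqref{szego-limit3} directly. Your pure \( u \)-plane approach can in principle be pushed through, but the paper's hybrid factorization confines the singular \( c \)-dependence to elementary algebraic factors and keeps every Cauchy integral uniformly controlled---which is precisely what makes the constants in \eqref{szego-limit3} fall out without a delicate balance of diverging pieces.
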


The construction leading to Proposition~\ref{prop:szego} is not new. As soon as strong asymptotics of MOPs became a question of interest, it was well understood that classical Szeg\H{o} functions need to be replaced by solutions to a boundary value problem \eqref{szego-pts2}. The original approach reformulated \eqref{szego-pts2} as a certain extremal problem, see \cite{Apt_Szego}. Another approach using discontinuous Cauchy kernels on the corresponding Riemann surface was developed in \cite{ApLy10}. The latter construction is exactly the one we adopt in Section~\ref{sec:5} to prove Proposition~\ref{prop:szego}. Even though out of necessity, but unlike previous works, we do examine here what happens to the Szeg\H{o} functions \( S_c(\z) \) when one of the intervals \( \Delta_{c,1} \), \( \Delta_{c,2} \) is collapsing.

\subsection{Non-Fully Marginal and Non-Marginal Ray Sequences}
\label{sec:3.3}

In this section we assume that sequences \( \mathcal N_c \), \(
c\in[0,1] \), satisfy
\begin{equation}
\label{vareps}
\varepsilon_\n := 1/\min\{n_1,n_2\} \to 0 \quad as \quad |\n|\to\infty, \quad \n\in\mathcal N_c.
\end{equation}

We start by introducing an analog of the functions \(
\varphi_1(z),\varphi_2(z) \) in the non-fully marginal and non-marginal cases. Given a
multi-index \( \n \), let
\begin{equation}
\label{cn}
c_\n := n_1/|\n|.
\end{equation}
To alleviate the notation, in what follows we shall use the subindex \( \n \) instead of \( c_\n \) for quantities depending on \( c_n \) such that \( \RS_\n = \RS_{c_\n} \), \( S_\n(\z) = S_{c_\n}(\z) \), etc. We shall denote by $\Phi_\n(\z)$ a rational function on $\RS_\n$ which is non-zero and finite everywhere except at the points on top of infinity, has a pole of order $|\n|$ at
$\infty^{(0)}$, a zero of multiplicity $n_i$ at 
$\infty^{(i)}$ for each \( i\in\{1,2\} \), and satisfies
\begin{equation}
\label{normalization}
\big(\Phi_{\n}^{(0)}\Phi_{\n}^{(1)}\Phi_{\n}^{(2)}\big)(z) \equiv 1, \quad z\in\overline\C.
\end{equation}
Equality in \eqref{normalization} is a simple matter of a
normalization since the logarithm of the absolute value of the
left-hand side of \eqref{normalization} extends to a harmonic
function on $\C$ which has a well defined limit at infinity and
therefore is a constant.

\begin{theorem}
\label{thm:asymp2} Under the conditions of
Theorem~\ref{thm:recurrence}, let \( P_\n(z) \) be the polynomials
satisfying \eqref{typeII}. Given \( c\in[0,1] \), let \( \mathcal
N_c=\{\n\} \) be a sequence for which \eqref{vareps} holds. Then \( \n\in\mathcal N_c \) we have
that
\[
\left\{
\begin{array}{lll}
P_\n(z) &=& (1 + o(1)) \gamma_\n\big(S_\n\Phi_\n\big)^{(0)}(z), \medskip \\
P_\n(x) &=& (1 + o(1)) \gamma_\n\big(S_\n\Phi_\n\big)_+^{(0)}(x) + (1 + o(1)) \gamma_\n\big(S_\n\Phi_\n\big)_-^{(0)}(x),
\end{array}
\right.
\]
where the relations holds uniformly on closed subsets of \( \overline \C \setminus (\Delta_{c,1}\cup \Delta_{c,2}) \) and  compact subsets \( \Delta_{c,1}^\circ\cup\Delta_{c,2}^\circ \), respectively, and \( \gamma_\n \) is the constant such that
\[
\lim_{z\to\infty}\gamma_\n z^{|\n|}\big(S_\n\Phi_\n\big)^{(0)}(z)=1.
\]
When \( c\neq c^*,c^{**} \), see Proposition~\ref{prop:1} further below, the error rate \( o(1) \) can be replaced by \( \mathcal O_c(\varepsilon_\n) \), where the dependence of \( \mathcal O_c(\varepsilon_\n) \) on \( c \) is uniform for \( c \) on compact subsets \( [0,1]\setminus\{c^*,c^{**}\}\).
\end{theorem}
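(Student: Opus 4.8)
\noindent\textbf{Proof strategy for Theorem~\ref{thm:asymp2}.} The plan is to run a steepest descent analysis of the \( 3\times3 \) matrix Riemann-Hilbert problem \rhy{} for type II Angelesco MOPs, along the lines of \cite{Y16}, but carrying every contour, parametrix, and estimate uniformly in \( c_\n=n_1/|\n| \) as it is allowed to drift to \( 0 \) or \( 1 \) subject only to \eqref{vareps}. Here \( \boldsymbol Y \) is the \( 3\times3 \) matrix, analytic off \( \Delta_1\cup\Delta_2 \), whose \( (1,1) \) entry is \( P_\n \), carrying the standard rank-one jumps built from \( \rho_1,\rho_2 \) on \( \Delta_1^\circ,\Delta_2^\circ \), with \( \boldsymbol Y(z)=\big(I+\mathcal O(z^{-1})\big)\,\mathrm{diag}\big(z^{|\n|},z^{-n_1},z^{-n_2}\big) \) at infinity. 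A first transformation \( \boldsymbol Y\mapsto\boldsymbol T \) renormalizes at infinity by conjugating with \( \mathrm{diag}\big(\Phi_\n^{(0)},\Phi_\n^{(1)},\Phi_\n^{(2)}\big) \) — well defined by \eqref{normalization} — times scalar exponentials of the equilibrium potentials \( V^{2\omega_{c,1}+\omega_{c,2}} \) and \( V^{\omega_{c,1}+2\omega_{c,2}} \); by the variational identities \eqref{var} the jumps of \( \boldsymbol T \) become purely oscillatory on the bands \( \Delta_{c,i}^\circ \) and exponentially close to the identity off them, with a decay rate set by the depth of the relevant effective potential. It is this rate that collapses as \( c\to0,1 \) (one band contracts to a point) and as \( c\to c^\ast,c^{\ast\ast} \) (an endpoint of a band meets the fixed point \( \beta_1 \) or \( \alpha_2 \)), which is the origin of the \( c \)-dependence of the final error.

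Next I would open lenses \( \boldsymbol T\mapsto\boldsymbol S \) around \( \Delta_{c,1}^\circ \) and \( \Delta_{c,2}^\circ \) by the standard factorization of the oscillatory jump into two triangular factors (carrying \( I+(\text{exponentially small}) \) on the lens boundaries) and a constant antidiagonal factor; this uses the holomorphic, non-vanishing continuation of \( \rho_i \) off \( \Delta_i \) granted by the hypotheses of Theorem~\ref{thm:recurrenceOld}, with lens boundaries chosen inside the common strip of analyticity of the weights but allowed to shrink together with \( \Delta_{c,i} \), so that the boundary jumps remain \( I+\mathcal O\big(e^{-\kappa n_i\dist(\cdot,\Delta_{c,i})}\big) \) with a \( c \)-controlled \( \kappa>0 \). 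The global parametrix \( \boldsymbol N \) on \( \overline\C\setminus(\Delta_{c,1}\cup\Delta_{c,2}) \) is built from the Szeg\H{o} function \( S_\n(\z) \) of Proposition~\ref{prop:szego} and a basis of rational functions on \( \RS_\n \) (available since \( \RS_\n \) has genus \( 0 \)) realizing the constant antidiagonal jumps on the bands; after the infinity normalization one reads off \( \big[\boldsymbol N\big]_{1,1}(z)=\gamma_\n\big(S_\n\Phi_\n\big)^{(0)}(z)\big(1+o(1)\big) \), which is the announced leading term, while the on-band formula follows from the additive jump relation for the \( (1,1) \) entry built into the lens factorization on \( \Delta_{c,i}^\circ \). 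The key new input here is that Proposition~\ref{prop:szego} — through \eqref{szego-limit1}--\eqref{szego-limit3} — together with Proposition~\ref{prop:angelesco} and Proposition~\ref{prop:1} already supplies uniform control of \( S_\n \), \( \Phi_\n \), and the band endpoints throughout the degeneration \( c\to0,1 \), so that \( \boldsymbol N \) itself degenerates in a controlled manner.

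It remains to build local parametrices near the four ramification points \( \alpha_1,\beta_{\n,1},\alpha_{\n,2},\beta_2 \): Airy parametrices at the soft edges and Bessel parametrices at the edges abutting \( \beta_1 \) or \( \alpha_2 \). The main obstacle — and the reason the result is not a routine corollary of \cite{Y16} — is the non-fully marginal regime \( c_\n\to0 \) with \( n_1\to\infty \) (and symmetrically \( c_\n\to1 \)): then \( \ell_\n:=\beta_{\n,1}-\alpha_1\to0 \), the band \( \Delta_{\n,1} \) collapses, and disks of fixed radius around \( \alpha_1 \) and \( \beta_{\n,1} \) overlap. The resolution is to take the endpoint disks of radius proportional to \( \ell_\n \): since near \( \beta_{\n,1} \) the conformal map governing the Airy parametrix scales like \( (n_1^{2/3}/\ell_\n) \) times a translation — because \( |\n|\,\omega_{c,1} \) carries total mass \( n_1 \) on a band of length \( \ell_\n \) — the matching error on the shrinking disk boundary is still \( \mathcal O(1/n_1) \), and this survives as long as \( n_1\to\infty \), whatever the rate of \( c_\n\to0 \); an equivalent way to see this is to rescale \( z=\alpha_1+\ell_\n\zeta \), in which the rescaled \( \boldsymbol N \) converges to the single-interval global parametrix underlying Theorem~\ref{thm:asymp1} — whose entries involve \( S_{\rho_2}(z)/S_{\rho_2}(\infty) \), \( S^{n_1}(z;\alpha_1)(z-\alpha_1)^{n_1} \), and \( \varphi_2^{n_2}(z) \) — and the problem reduces to an ordinary single-interval orthogonality problem with \( n_1\to\infty \) zeros. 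Gluing these local parametrices to \( \boldsymbol N \) yields \( \boldsymbol P \), and \( \boldsymbol R:=\boldsymbol S\boldsymbol P^{-1} \) then has jumps \( I+\mathcal O_c(\varepsilon_\n) \) on the lens and disk boundaries and \( I+(\text{exponentially small}) \) elsewhere, uniformly for \( c \) away from \( c^\ast,c^{\ast\ast} \); hence \( \boldsymbol R=I+\mathcal O_c(\varepsilon_\n) \), and unwinding \( \boldsymbol R\mapsto\boldsymbol S\mapsto\boldsymbol T\mapsto\boldsymbol Y \) and extracting \( P_\n=\big[\boldsymbol Y\big]_{1,1} \) gives both asymptotic formulas with the stated error. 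At the critical values \( c=c^\ast,c^{\ast\ast} \) the relevant equilibrium density degenerates at the fixed endpoint (vanishing to higher order — a soft-to-hard edge transition), so neither the Airy nor the Bessel parametrix is available; a crude bounded approximation there, or a continuity argument in \( c \), still delivers the leading-order formula but only with error \( o(1) \), which is precisely the dichotomy in the statement. Throughout, the genuinely delicate point is the bookkeeping that keeps every constant, contour, and parametrix controlled in \( c \) right up to the degenerate regimes.
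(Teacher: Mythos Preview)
Your overall strategy matches the paper's --- \( 3\times 3 \) steepest descent, lenses, a global parametrix built from \( S_\n \) and \( \Phi_\n \) on \( \RS_\n \), local parametrices at the four ramification points, and shrinking neighborhoods of radius \( \sim c_\n \) (resp.\ \( 1-c_\n \)) to handle a collapsing band. The paper's bookkeeping differs only cosmetically: it opens the lenses first (\( \boldsymbol Y\to\boldsymbol X \)) and absorbs the \( g \)-function into the global parametrix \( \boldsymbol N=\boldsymbol C(\boldsymbol M\boldsymbol D) \) rather than doing a separate \( \boldsymbol Y\to\boldsymbol T \). One minor inaccuracy: \( \alpha_1 \) and \( \beta_2 \) are always hard edges (Bessel), not only the moving endpoints \( \beta_{c,1},\alpha_{c,2} \) when they collide with \( \beta_1,\alpha_2 \).

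The genuine gap is the treatment of \( c=c^\ast \) (and symmetrically \( c^{\ast\ast} \)). First, the equilibrium density does \emph{not} vanish to higher order there: by Proposition~\ref{prop:2} the floating zero \( \z_c \) of \( h_c \) still cancels the pole at \( \boldsymbol\beta_{c,1} \) when \( c=c^\ast \), so one has ordinary square-root (soft-edge) vanishing; the issue is that the soft edge has hit the endpoint \( \beta_1 \) of \( \supp\rho_1 \). Neither of your fixes works. A continuity-in-\( c \) argument fails because the admissible disk radius is forced to zero as \( c_\n\to c^{\ast\pm} \): on the Airy side (\( c_\n<c^\ast \)) one must keep \( \beta_1 \) outside \( U_{\beta_{\n,1}} \), so \( \delta c_\n\lesssim\beta_1-\beta_{\n,1}\to0 \) and the matching error \( \sim\varepsilon_\n\delta^{-3/2} \) blows up; on the Bessel side (\( c_\n>c^\ast \)) the conformal-map derivative \( |\zeta^{\prime}_{c_\n,\beta_1}(\beta_1)|\sim z_{c_\n}-\beta_1\to0 \), so the image of \( U_{\beta_1} \) shrinks and the error \( \sim\varepsilon_\n(\delta(z_{c_\n}-\beta_1))^{-1/2} \) again blows up. In both cases there is no control on the rate of \( c_\n\to c^\ast \), so nothing forces these to be \( o(1) \). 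And ``a crude bounded approximation'' is too vague: no off-the-shelf bounded model reproduces all the required jumps near \( \beta_1 \) in this regime. What the paper actually does is deploy a \emph{one-parameter family} of transition parametrices --- \( \boldsymbol\Phi_0(\zeta;s_\n) \) for \( c_\n\le c^\ast \) and \( \widetilde{\boldsymbol\Phi}(\zeta;s_\n) \) for \( c_\n>c^\ast \), built from Painlev\'e~XXXIV via \cite{IKOs08,IKOs09,XuZh11} --- and the decisive input is the uniform-in-\( s \) matching bounds \eqref{rhphi-2}--\eqref{rhphi-3} from \cite[Theorem~4.1]{Y16}, which give an \( o(1) \) (but not \( \mathcal O(\varepsilon_\n) \)) error on \( \partial U_{\beta_{\n,1}} \) regardless of whether \( s_\n \) stays bounded or drifts to \( \pm\infty \). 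That is precisely the dichotomy in the statement.
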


In the above theorem the functions \( S_\n^{(0)}(z) \) could be replaced by their limits as discussed in Proposition~\ref{prop:szego}. However, we can do this only at the expense of  the error rate \( \mathcal O_c(\varepsilon_\n) \).

To describe asymptotic behavior of the forms \( Q_\n(x) \), we need
to introduce one additional function. Let \( \Pi_\n(\z) \) be a
rational function on \( \RS_\n \) with the zero/pole divisor and
the normalization given by
\[
2\big(\infty^{(1)} + \infty^{(2)} \big) - \boldsymbol\alpha_1 - \boldsymbol\beta_{\n,1} - \boldsymbol\alpha_{\n,2} - \boldsymbol\beta_2 \qandq \Pi_\n^{(0)}(\infty) =1,
\]
where \(
\boldsymbol\alpha_1,\boldsymbol\beta_{\n,1},\boldsymbol\alpha_{\n,2},\boldsymbol\beta_2\)
are the ramification points of \( \RS_\n \). Then the following
theorem holds.

\begin{theorem}
\label{thm:asymp3} Under the conditions of
Theorem~\ref{thm:recurrence}, let \( A_\n^{(i)}(z) \) be the
polynomials defined in \eqref{typeI}, \( i\in\{1,2\} \). Given \(
c\in[0,1] \), let \( \mathcal N_c=\{\n\} \) be a sequence for which
\eqref{vareps} holds. Then for \(
\n\in\mathcal N_c \) we have that
\[
A_\n^{(i)}(z) =  -(1 + o(1))\frac{(\Pi_\n^{(i)}w_{\n,i})(z)}{\gamma_\n(S_\n\Phi_\n)^{(i)}(z)}, \medskip \\
\]
uniformly on closed subsets of \( \overline \C \setminus
\Delta_{c,i} \) for \( i\in\{1,2\} \) when \( c\in(0,1) \), \( i=2
\) when \( c=0 \), and \( i =1 \) when \( c=1 \), while
\[
A_\n^{(i)}(z) = o(1) \left(\tau_\n\big(w_{\n,i}\Phi_\n^{(i)}\big)(z)\right)^{-1},
\]
uniformly on closed subsets of \( \overline \C \setminus
\Delta_{0,1} \) for \( i=1 \) when \( c=0 \) and of \( \overline \C
\setminus \Delta_{1,2} \) for \( i=2 \) when \( c=1 \), where \(
\tau_\n:=\gamma_\n S_\n^{(0)}(\infty) \), i.e., it is a constant
such that \( \lim_{z\to\infty}\tau_\n|z|^{|\n|}\Phi_\n^{(0)}(z) =1
\). Moreover,
\[
A_\n^{(i)}(x) = -(1 + o(1)) \frac{(\Pi_\n^{(i)}w_{\n,i})_+(x)}{\gamma_\n(S_\n\Phi_\n)_+^{(i)}(x)}  - (1 + o(1)) \frac{(\Pi_\n^{(i)}w_{\n,i})_-(x)}{\gamma_\n(S_\n\Phi_\n)_-^{(i)}(x)},
\]
uniformly on compact subsets of \( \Delta_{c,i}^\circ \), \( i\in\{1,2\} \). As in the case of Theorem~\ref{thm:asymp2}, the error rate can be improved to \( \mathcal O_c(\varepsilon_\n) \) when \( c\in[0,1]\setminus\{c^*,c^{**}\} \) with dependence on \( c \) being locally uniform.
\end{theorem}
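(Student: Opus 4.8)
The plan is to derive Theorem~\ref{thm:asymp3} as part of the same nonlinear steepest descent analysis that produces Theorem~\ref{thm:asymp2}. Since an Angelesco system is perfect, there is a unique $3\times3$ matrix function $\boldsymbol Y=\boldsymbol Y_\n$, holomorphic off $\Delta_1\cup\Delta_2$ and normalized by $\boldsymbol Y_\n(z)=\big(I+\mathcal O(1/z)\big)\diag\big(z^{|\n|},z^{-n_1},z^{-n_2}\big)$ as $z\to\infty$, whose first row is $\big(P_\n,\ast,\ast\big)$ with the starred entries the Cauchy transforms of $P_\n\dd\mu_1$ and $P_\n\dd\mu_2$. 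By the standard duality between Type~I and Type~II multiple orthogonal polynomials, the form $Q_\n$ of \eqref{typeI} together with the normalization \eqref{n_2} --- up to the index shift $\n\mapsto\n+\vec 1$ that is absorbed into the normalization --- is recovered from the entries of $\boldsymbol Y_\n^{-1}$ after multiplication by $\rho_1,\rho_2$, and hence the polynomials $A_\n^{(1)},A_\n^{(2)}$ are read off from specific cofactors of $\boldsymbol Y_\n$. The key observation is that the chain of invertible transformations used to extract the asymptotics of the $(1,1)$-entry $P_\n$ acts on the \emph{entire} matrix $\boldsymbol Y_\n$, so that it suffices to track the entries responsible for $A_\n^{(i)}$.

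First I would carry out the two standard reductions $\boldsymbol Y\to\boldsymbol T\to\boldsymbol S$. The transformation $\boldsymbol Y\to\boldsymbol T$ uses the $g$-functions built from the logarithmic potentials $V^{\omega_{\n,1}},V^{\omega_{\n,2}}$ and the constants $\ell_{\n,i}$ from \eqref{var}; it renormalizes the problem at infinity and makes the jumps on $\Delta_{\n,1},\Delta_{\n,2}$ bounded. The transformation $\boldsymbol T\to\boldsymbol S$ opens lenses around $\Delta_{\n,1},\Delta_{\n,2}$, which is licit because the densities $\rho_1,\rho_2$ extend holomorphically and non-vanishingly. The global parametrix $\boldsymbol N$ is assembled from the Szeg\H{o} function $S_\n(\z)$ of Proposition~\ref{prop:szego}, the rational function $\Phi_\n$ of \eqref{normalization}, and --- this is the ingredient that produces the $A_\n^{(i)}$ asymptotics --- the rational function $\Pi_\n$ with divisor $2\big(\infty^{(1)}+\infty^{(2)}\big)-\boldsymbol\alpha_1-\boldsymbol\beta_{\n,1}-\boldsymbol\alpha_{\n,2}-\boldsymbol\beta_2$, which supplies precisely the algebraic factors $w_{\n,i}$ near the ramification points. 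At each genuine soft edge among $\alpha_1,\beta_{\n,1},\alpha_{\n,2},\beta_2$ one glues an Airy local parametrix, while near a shrinking endpoint (when $c_\n\to0$ or $c_\n\to1$) and at the transition values $c=c^*,c^{**}$ of Proposition~\ref{prop:1} a weaker local construction is needed. The error matrix $\boldsymbol R:=\boldsymbol S\cdot(\text{parametrix})^{-1}$ then solves a small-norm Riemann--Hilbert problem, which gives $\boldsymbol R=I+\mathcal O_c(\varepsilon_\n)$ away from $\{c^*,c^{**}\}$ and $\boldsymbol R=I+o(1)$ in general, with the stated uniformity in $c$.

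Unraveling the transformations in the $A_\n^{(i)}$-entries then proceeds exactly as for the $(1,1)$-entry in Theorem~\ref{thm:asymp2}. For $z$ on compact subsets of $\overline\C\setminus\Delta_{c,i}$ one obtains $A_\n^{(i)}(z)=-(1+o(1))(\Pi_\n^{(i)}w_{\n,i})(z)\big/\big(\gamma_\n(S_\n\Phi_\n)^{(i)}(z)\big)$, with the sign and the constant $\gamma_\n$ fixed by \eqref{n_2} and the leading-coefficient normalization, just as in the Type~II case. On $\Delta_{c,i}^\circ$ the matrix $\boldsymbol S$ carries a lens contribution in addition to $\boldsymbol N_\pm$, so adding boundary values from the two sides of the cut produces the two-term ``$+$ and $-$'' formula, by the same mechanism that gives the boundary asymptotics of $P_\n(x)$. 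In the marginal cases one argues separately: on the non-shrinking index --- $i=2$ when $c=0$, $i=1$ when $c=1$ --- the formula passes to the limit using the degeneration of $\RS_\n$ and the limits of $S_\n^{(k)}$ from Proposition~\ref{prop:szego}; on the shrinking index the form $A_\n^{(i)}$ becomes small, and combining $\big(\Phi_\n^{(0)}\Phi_\n^{(1)}\Phi_\n^{(2)}\big)\equiv1$ from \eqref{normalization} with the decay of $\Phi_\n^{(i)}$ near the collapsing point yields $A_\n^{(i)}(z)=o(1)\big(\tau_\n(w_{\n,i}\Phi_\n^{(i)})(z)\big)^{-1}$ with $\tau_\n=\gamma_\n S_\n^{(0)}(\infty)$.

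I expect the main obstacle to be the uniformity of the error estimates in the parameter $c$, in two regimes. At the transition values $c=c^*,c^{**}$, where the free endpoint $\beta_{c,1}$ meets (or detaches from) $\beta_1$ and $\alpha_{c,2}$ meets (or detaches from) $\alpha_2$, the soft-edge behavior degenerates into a hard edge, the Airy parametrix is no longer applicable, and one is left only with the $o(1)$ bound; its validity there has to be secured by a compactness argument together with the continuity of $c\mapsto\Delta_{c,i}$ from Proposition~\ref{prop:1}. In the marginal limits $c\to0,1$ the band $\Delta_{\n,1}$ (respectively $\Delta_{\n,2}$) shrinks at a rate comparable to $c_\n$, so that $|\n|\cdot\diam\Delta_{\n,1}$ is comparable to $\min\{n_1,n_2\}\to\infty$ and the local Airy parametrices at its two endpoints remain valid with error $\mathcal O(\varepsilon_\n)$; at the same time the global parametrix built on $\RS_\n$ genuinely blows up --- by Proposition~\ref{prop:szego} one has $S_\n^{(0)}(\infty)\sim Vc_\n^{-1/3}S_{\rho_2}(\infty)$ and the companion relations --- and the delicate point is to verify that these divergences cancel in the precise ratios entering the formulas for $A_\n^{(i)}$, which is exactly what the scaling relations \eqref{szego-limit3} are designed to guarantee. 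Carrying the shrinking band through the Riemann--Hilbert analysis is the genuinely new ingredient beyond \cite{Y16}, where only non-marginal ray sequences were treated.
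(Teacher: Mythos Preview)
Your overall route is the paper's: run the nonlinear steepest descent on the $3\times3$ problem for $\boldsymbol Y_\n$, pass to $\widehat{\boldsymbol Y}_\n=(\boldsymbol Y_\n^{\mathsf T})^{-1}$ so that $-A_\n^{(i)}=[\widehat{\boldsymbol Y}_\n]_{1,i+1}$ (see \eqref{hatY}--\eqref{hatY1}), and read the asymptotics from $\boldsymbol C^{-1}(\boldsymbol Z^{-1})^{\mathsf T}(\boldsymbol M^{-1})^{\mathsf T}\boldsymbol D^{-1}$. Two corrections. The global parametrix $\boldsymbol N=\boldsymbol C\boldsymbol M\boldsymbol D$ is built from the conformal maps $\Upsilon_{\n,1},\Upsilon_{\n,2}$, see \eqref{matrix-M}; the functions $\Pi_\n,\Pi_{\n,1},\Pi_{\n,2}$ are not inputs to $\boldsymbol N$ but emerge in $\boldsymbol M^{-1}$ through the algebraic identities \eqref{Up-Pi1}--\eqref{Up-Pi2} of Lemma~\ref{lem:aux2}. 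Also, the fixed endpoints $\alpha_1,\beta_2$ --- and $\beta_1$ once $c_\star>c^*$, $\alpha_2$ once $c_\star<c^{**}$ --- are hard edges where the equilibrium density blows up like an inverse square root; Bessel-type parametrices (the paper's $\boldsymbol\Psi$) are used there, not Airy.

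The substantive gap is your mechanism for the marginal bound $A_\n^{(1)}(z)=o(1)\big(\tau_\n w_{\n,1}\Phi_\n^{(1)}\big)^{-1}$ when $c=0$: it has nothing to do with \eqref{normalization} or ``decay of $\Phi_\n^{(1)}$''. In the exact representation \eqref{Y5} the bracketed factor contains the constant term $1$ and its remaining terms are $o(1)/c_\n$ at worst, so the bracket itself supplies no smallness. The $o(1)$ comes from the \emph{prefactor}: by \eqref{szego-limit2} one rewrites $(\Pi_\n^{(1)}w_{\n,1})/(\gamma_\n(S_\n\Phi_\n)^{(1)})=(1+o(1))\,s_{\n,1}\,(\Pi_\n^{(1)}w_{\n,1}^2)/(\tau_\n w_{\n,1}\Phi_\n^{(1)})$, and the decisive estimate is
\[
\big|\Pi_\n^{(1)}w_{\n,1}^2\big|=\big|(\Upsilon_{\n,2}^{(0)}\Upsilon_{\n,1}^{(2)}-\Upsilon_{\n,2}^{(2)}\Upsilon_{\n,1}^{(0)})\,w_{\n,1}/w_{\n,2}\big|=\mathcal O(c_\n^2)
\]
locally uniformly off $\Delta_{0,1}$, obtained from \eqref{Up-Pi2} together with the bounds $|\Upsilon_{\n,1}^{(0)}|,|\Upsilon_{\n,1}^{(2)}|\lesssim c_\n^2$ of Lemma~\ref{lem:aux1}. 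Combined with $s_{\n,1}\sim c_\n^{-1}$ from \eqref{szego-limit3} and $A_{\n,1}\sim c_\n^2$, every term of \eqref{Y5} contributes $o(1)$. This explicit $c_\n$-bookkeeping via Lemmas~\ref{lem:aux1}--\ref{lem:aux2} and \eqref{szego-limit3} is the actual new content beyond \cite{Y16} and must replace your $\Phi_\n$-based heuristic.
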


Let $\big(\widehat\mu_1,\widehat\mu_2\big)$ be a vector of Markov
functions of the measures \( \mu_i \), that is,
\[
\widehat\mu_i(z) := \int\frac{\dd\mu_i(x)}{z-x} = \frac1{2\pi\ic}\int_{\Delta_i}\frac{\rho_i(x)}{x-z}\:\dd x, \qquad z\in\overline\C\setminus\Delta_i, \,  i\in\{1,2\}.
\]
Observe also that \( (\widehat\mu_{i+} - \widehat\mu_{i-})(x) =
\rho_i(x) \), \( x\in \Delta_i^\circ \), by Sokhotski-Plemelj
formulae. Then one can deduce from orthogonality relations
\eqref{typeII} that there exist polynomials \( P_\n^{(i)}(z) \)
such that
\[
R_\n^{(i)}(z) := \big(P_\n \widehat\mu_i - P_\n^{(i)}\big)(z) = \mathcal{O}\big(z^{-(n_i+1)}\big) \quad \text{as} \quad z\to\infty,
\]
\( i\in\{1,2\} \). The vector of rational functions \(
\big(P_\n^{(1)}/P_\n,P_\n^{(2)}/P_\n\big) \) is called the
\emph{Hermite-Pad\'e approximant} for \(
\big(\widehat\mu_1,\widehat\mu_2\big) \) corresponding to the
multi-index \( \n \). It further can be shown that
\begin{equation}
\label{Rni}
R_\n^{(i)}(z) =  \frac1{2\pi\ic}\int\frac{(P_\n\rho_i)(x)}{x-z}\dd x, \quad z\in\overline\C\setminus\Delta_i, \,  i\in\{1,2\}.
\end{equation}
It also follows from \eqref{typeI} that there exists polynomial \(
A_\n(x) \) such that
\begin{equation}
\label{Ln}
\mathcal O\big(z^{-|\n|}\big)= \sum_{i=1}^2 \big(A_\n^{(i)}\widehat\mu_i\big)(z) - A_\n(z) =: L_\n(z)  = \int \frac{Q_\n(x)}{z-x},
\end{equation}
where the asymptotic formula is valid for \( z\to\infty \). Then
the following result holds.

\begin{theorem}
\label{thm:asymp4} Under the conditions of Theorems~\ref{thm:asymp2}--\ref{thm:asymp2}, it holds for \( \n\in\mathcal N_c \) that
\[
R_\n^{(i)}(z) = (1 + o(1)) \gamma_\n\big(S_\n\Phi_\n\big)^{(i)}(z)w_{\n,i}^{-1}(z),
\]
uniformly on closed subsets of \( \overline\C\setminus\Delta_{c,i}
\), that is, including the traces on \(
\Delta_i\setminus\Delta_{c,i} \) for \( i\in\{1,2\} \) when \(
c\in(0,1) \), for \( i=2 \) when \( c=0 \), and for \( i=1 \) when
\( c =1 \), while
\[
R_\n^{(i)}(z) = o(1) \tau_\n\Phi_\n^{(i)}(z)w_{\n,i}^{-1}(z)
\]
uniformly on closed subsets of \( \overline\C\setminus\Delta_{0,1} \) for \( i=1 \) when \( c=0 \) and of \(
\overline\C\setminus\Delta_{1,2} \) for \( i=2 \) when \( c=1 \).
Moreover,
\[
L_\n(z) = (1 + o(1))\frac{\Pi_\n^{(0)}(z)}{\gamma_\n(S_\n\Phi_\n)^{(0)}(z)},
\]
uniformly on closed subsets of \( \overline\C\setminus
(\Delta_{c,1}\cup\Delta_{c,2}) \).  As in Theorems~\ref{thm:asymp2}
and~\ref{thm:asymp3} the error rate can be improved to \( \mathcal
O_c(\varepsilon_\n) \) when \( c\in[0,1]\setminus\{c^*,c^{**}\} \) with dependence on \( c \) being locally uniform.
\end{theorem}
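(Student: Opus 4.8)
The plan is to obtain these asymptotics as a byproduct of the very Riemann--Hilbert analysis that yields Theorems~\ref{thm:asymp2} and~\ref{thm:asymp3}, not through a separate argument. Recall that, with the normalization of \eqref{Rni}, the functions \( R_\n^{(1)}(z),R_\n^{(2)}(z) \) are precisely the \( (1,2) \)- and \( (1,3) \)-entries of the matrix \( \boldsymbol Y \) solving the standard \( 3\times3 \) Riemann--Hilbert problem for the Angelesco Hermite--Pad\'e approximants whose \( (1,1) \)-entry is \( P_\n \); likewise, since \( \det\boldsymbol Y\equiv1 \), the function \( L_\n(z) \) from \eqref{Ln} is a cofactor of \( \boldsymbol Y \), i.e.\ an entry of \( \boldsymbol Y^{-1} \) (the type~I data \( A_\n^{(i)},Q_\n \) living in \( \boldsymbol Y^{-1} \) dually to the type~II data in \( \boldsymbol Y \); the proof of Theorem~\ref{thm:asymp3} already extracts the \( A_\n^{(i)} \)-entries). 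Thus, once the chain of transformations \( \boldsymbol Y\to\cdots \) has been carried out — opening lenses along \( \boldsymbol\Delta_{\n,1}\cup\boldsymbol\Delta_{\n,2} \), global parametrix \( \boldsymbol N \) assembled from \( S_\n \), \( \Phi_\n \) and \( \Pi_\n \), local parametrices at the four ramification points, and the small-norm conclusion that the correction matrix equals \( I+o(1) \), improvable to \( I+\mathcal O_c(\varepsilon_\n) \) uniformly for \( c \) on compacts of \( [0,1]\setminus\{c^*,c^{**}\} \) — all entries of \( \boldsymbol Y \) and of \( \boldsymbol Y^{-1} \) are controlled at once. This is the content of the Riemann--Hilbert analysis performed in Sections~\ref{sec:6} and~\ref{sec:8}.

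Concretely, on \( \overline\C\setminus(\Delta_1\cup\Delta_2) \) one has \( \boldsymbol Y=(I+o(1))\,\boldsymbol N\,\diag\big(z^{|\n|},z^{-n_1},z^{-n_2}\big) \) up to the fixed normalizing constants already recorded for \( P_\n \); reading off the \( (1,i+1) \)-entry gives \( R_\n^{(i)}(z)=(1+o(1))\,[\boldsymbol N(z)]_{1,i+1}\,z^{n_i} \), and by construction \( [\boldsymbol N]_{1,i+1} \) is \( (S_\n\Phi_\n)^{(i)}(z) \) divided by \( w_{\n,i}(z) \) and renormalized, which matches \( (1+o(1))\gamma_\n(S_\n\Phi_\n)^{(i)}(z)w_{\n,i}^{-1}(z) \). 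For \( L_\n \), since \( \det\boldsymbol N \) is a nonzero constant and \( \boldsymbol Y^{-1}=\diag\big(z^{-|\n|},z^{n_1},z^{n_2}\big)\boldsymbol N^{-1}(I+o(1)) \), the relevant entry of \( \boldsymbol N^{-1} \) is a \( 2\times2 \) minor of \( \boldsymbol N \); expanding it and using \( (\Phi_\n^{(0)}\Phi_\n^{(1)}\Phi_\n^{(2)})\equiv1 \) together with \eqref{szego-pts2} one identifies this minor with the rational function on \( \RS_\n \) carrying the divisor \( 2(\infty^{(1)}+\infty^{(2)})-\boldsymbol\alpha_1-\boldsymbol\beta_{\n,1}-\boldsymbol\alpha_{\n,2}-\boldsymbol\beta_2 \), i.e.\ with \( \Pi_\n^{(0)} \), divided by \( (S_\n\Phi_\n)^{(0)} \); this gives \( L_\n(z)=(1+o(1))\Pi_\n^{(0)}(z)/\big(\gamma_\n(S_\n\Phi_\n)^{(0)}(z)\big) \). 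The boundary formulas on the compacts of \( \Delta_{c,i}^\circ \) follow from the same product using the jump relations of \( \boldsymbol N \) on \( \boldsymbol\Delta_{\n,i} \).

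Two points require genuine care. First, the formula for \( R_\n^{(i)} \) is asserted including the traces on the ``void'' arcs \( \Delta_i\setminus\Delta_{\n,i} \), where \( R_\n^{(i)} \) has the non-small jump \( \rho_i \) (by Sokhotski--Plemelj) while the right-hand side \( (S_\n\Phi_\n)^{(i)}w_{\n,i}^{-1} \) is analytic there; one must check that along these arcs the lens opening absorbs that jump into the error. This is where the distinction \( c\in(0,1) \), \( c=0 \), \( c=1 \) enters, and where the second group of displayed formulas in the statement — those carrying \( \tau_\n \) and only an \( o(1) \) rate — records the size of the corresponding entries of \( \boldsymbol Y \). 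Indeed, on \( \Delta_i\setminus\Delta_{\n,i} \) the modulus of \( \Phi_\n^{(i)} \) dominates that of \( \Phi_\n^{(0)} \), so the jump contour there contributes exponentially little relative to \( (S_\n\Phi_\n)^{(i)} \); and for \( c\in\{0,1\} \) one of \( \Delta_{\n,i} \) collapses, so \( \boldsymbol N \) must be replaced by the degenerate parametrix whose entries are governed by \( \varphi_2 \), \( S_{\rho_2} \) and the constant \( V \) of Proposition~\ref{prop:szego}. Second — and this I expect to be the main obstacle — one must correctly match the \( 2\times2 \) minors of \( \boldsymbol N \) (and of the degenerate parametrix) to the explicitly prescribed functions \( \Pi_\n \) and \( w_{\n,i} \) on \( \RS_\n \), and keep the bookkeeping that renders the error \( \mathcal O_c(\varepsilon_\n) \) uniform for \( c \) away from \( c^*,c^{**} \); the remaining steps are routine once Theorems~\ref{thm:asymp2} and~\ref{thm:asymp3} are in hand.
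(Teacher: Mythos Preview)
Your overall strategy matches the paper's exactly: $R_\n^{(i)}=[\boldsymbol Y]_{1,i+1}$ and $L_\n=[\widehat{\boldsymbol Y}]_{1,1}=[(\boldsymbol Y^{\mathsf T})^{-1}]_{1,1}$ are read off from the same factorization $\boldsymbol Y=\boldsymbol C(\boldsymbol{ZMD})$ with $\boldsymbol Z=\boldsymbol I+o(1)$ already established for Theorems~\ref{thm:asymp2}--\ref{thm:asymp3}, and for $L_\n$ one uses the explicit formula~\eqref{M-inverse} for $\boldsymbol M^{-1}$ (which already has $\Pi_\n^{(0)}$ in the $(1,1)$-slot) rather than expanding minors.

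Two points in your write-up are off, however. First, the displayed relation $\boldsymbol Y=(I+o(1))\,\boldsymbol N\,\diag(z^{|\n|},z^{-n_1},z^{-n_2})$ is incorrect: $\boldsymbol N=\boldsymbol C\boldsymbol{MD}$ already carries the growth $z^{\sigma(\n)}$, so the extra diagonal (and the stray $z^{n_i}$ in your formula for $R_\n^{(i)}$) must be dropped. More substantively, the paper does \emph{not} write $\boldsymbol Y=(I+o(1))\boldsymbol N$ globally, because the off-diagonal entries of $\boldsymbol Z$ get multiplied by ratios $[\boldsymbol M]_{j,k}/[\boldsymbol M]_{1,k}=s_{\n,j}\Upsilon_{\n,j}^{(k-1)}$ that are \emph{not} uniformly bounded in the marginal regimes. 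The actual work is the entry-by-entry verification that each cross term $B_k(z)\,s_{\n,k}\Upsilon_{\n,k}^{(i)}(z)$ is $o(1)$, using the size estimates of Lemma~\ref{lem:aux1} for $\Upsilon_{c,k}$, Lemma~\ref{lem:aux3} for $s_{\n,k}$, and the maximum principle near $\infty^{(i)}$; this is the ``bookkeeping'' you flag at the end, but it is the content of the proof, not a formality. Second, your remark that for $c\in\{0,1\}$ ``$\boldsymbol N$ must be replaced by the degenerate parametrix'' misreads the construction: since \eqref{vareps} forces $c_\n\in(0,1)$ for every index, the \emph{same} parametrix $\boldsymbol N$ on $\RS_\n$ is used throughout. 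The second displayed formula in the statement (with $\tau_\n$ and bare $o(1)$) arises not from a different parametrix but because, when $c=0$ and $i=1$, the ``main'' term $\gamma_\n S_\n^{(1)}\sim c_\n\tau_\n$ and the cross terms are all $o(1)$ relative to $\tau_\n\Phi_\n^{(1)}w_{\n,1}^{-1}$ by \eqref{szego-limit2}--\eqref{szego-limit3} and \eqref{lem51-2}, so no leading $1$ survives.
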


Theorems~\ref{thm:asymp2}--\ref{thm:asymp4} are proven in
Chapter~\ref{sec:8}.

\section{On the Supports of the Equilibrium measures}
\label{sec:4}

In this section we discuss further properties of the  vector
equilibrium problem \eqref{vecI}--\eqref{var} as well as prove some auxiliary
lemmas needed later.

With the notation introduced in the beginning of Section~\ref{ss:21}, the following
proposition holds.

\begin{proposition}
\label{prop:1} There exist constants \( 0<c^* < c^{**}<1 \) such
that
\[
\left\{
\begin{array}{lll}
\beta_{c,1}<\beta_1, & \alpha_{c,2}=\alpha_2, &  0<c<c^*, \medskip \\
\beta_{c,1}=\beta_1, & \alpha_{c,2}=\alpha_2, & c^*\leq c \leq c^{**}, \medskip \\
\beta_{c,1}=\beta_1, & \alpha_{c,2}>\alpha_2, & 1>c>c^{**}.
\end{array}
\right.
\]
Moreover, it holds that\footnote{Given compactly supported measures \( \nu_n \), \( n\in\ZZ \), \( \nu_n\cws\nu_0 \) as \( n\to\infty \) means that \( \int f\dd\nu_n\to\int f\dd\nu_0 \) as \( n\to\infty \) for any compactly supported continuous function \( f \).}
\[
\begin{array}{lllllll}
\omega_{c,i} \cws \omega_{c_\star,i}, & \alpha_{c,2}\to \alpha_{c_\star,2}, & \beta_{c,1}\to \beta_{c_\star,1}, & \ell_{c,i}\to\ell_{c_\star,i}, & V^{\omega_{c,i}}\to V^{\omega_{c_\star,i}} & \text{as} & c\to c_\star\in(0,1)
\end{array}
\]
for \( i\in\{1,2\} \), where the convergence of potentials is uniform on compact subsets of $\C$. Furthermore,
\[
\left\{
\begin{array}{llllll}
\omega_{c,2} \cws \omega_2, & \beta_{c,1}\to \alpha_1, & \ell_{c,2}\to2\ell_2, &\ell_{c,1} \to V^{\omega_2}(\alpha_1) & \text{as} & c\to0, \medskip \\
\omega_{c,1} \cws \omega_1, & \alpha_{c,2}\to \beta_2, & \ell_{c,1}\to2\ell_1, &\ell_{c,2} \to V^{\omega_1}(\beta_2) & \text{as} & c\to1,
\end{array}
\right.
\]
and \( V^{\omega_{c,i}}\to V^{\omega_i} \) uniformly on compact subsets of \( \C \) as \( c\to 2-i \), \( i\in\{1,2\} \).
\end{proposition}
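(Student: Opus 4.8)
The plan is to analyze the vector equilibrium problem \eqref{vecI}--\eqref{var} as a perturbation problem in the parameter $c$, using a combination of variational (potential-theoretic) arguments and, where necessary, the explicit algebraic description of the Riemann surface $\RS_c$ from Section~\ref{ss:21}. First I would establish the \emph{monotonicity and structure of the supports}. Fix $0<c_1<c_2<1$ and compare $(\omega_{c_1,1},\omega_{c_1,2})$ with $(\omega_{c_2,1},\omega_{c_2,2})$. Since $\|\omega_{c,1}\|=c$ increases with $c$, the standard ``balayage/obstacle problem'' heuristic suggests $\Delta_{c,1}=[\alpha_1,\beta_{c,1}]$ grows with $c$ while $\Delta_{c,2}=[\alpha_{c,2},\beta_2]$ shrinks, so $\beta_{c,1}$ is nondecreasing and $\alpha_{c,2}$ nondecreasing in $c$. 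The key rigidity input is the variational characterization: at a right endpoint $\beta_{c,1}$ strictly inside $(\alpha_1,\alpha_2)$ one has the ``soft edge'' condition that the variational inequality $\ell_{c,1}-V^{2\omega_{c,1}+\omega_{c,2}}\le 0$ holds with equality up to and including $\beta_{c,1}$ and the derivative of the defect function vanishes there, whereas if the constraint $\beta_{c,1}\le\beta_1$ is active one gets a ``hard edge''. Because the total mass on $\Delta_1$ is $c$, for small $c$ the first measure is too light to fill $[\alpha_1,\beta_1]$, forcing $\beta_{c,1}<\beta_1$; symmetrically $\alpha_{c,2}=\alpha_2$ for small $c$ since $\omega_{c,2}$ is heavy (mass $1-c$ close to $1$). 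Defining $c^*:=\sup\{c:\beta_{c,1}<\beta_1\}$ and $c^{**}:=\inf\{c:\alpha_{c,2}>\alpha_2\}$, monotonicity plus continuity (proved next) gives the three-regime picture, and one must separately check $c^*<c^{**}$, i.e.\ that there is a genuine middle regime where both intervals are ``full'' — this follows because at $c^*$ we still have $\alpha_{c^*,2}=\alpha_2$ by an open-condition/continuity argument, and a small increase of $c$ cannot instantly push $\alpha_{c,2}$ off $\alpha_2$.

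Second, I would prove \emph{continuity of all the equilibrium data on $(0,1)$}. The functional $I(\nu_1,\nu_2)$ in \eqref{vecI} is strictly convex and lower semicontinuous on the weak-$*$ compact set $M_c$, and $M_c$ depends continuously on $c$ (one can map $M_{c_\star}$ into $M_c$ by rescaling masses). A standard $\Gamma$-convergence / Frostman-type argument then shows $\omega_{c,i}\cws\omega_{c_\star,i}$ as $c\to c_\star$, and uniqueness upgrades subsequential limits to a genuine limit. Convergence of the endpoints $\beta_{c,1},\alpha_{c,2}$ follows since $\supp(\omega_{c,i})$ is an interval determined by the (weak-$*$ continuous) measure, together with the fact that the mass cannot concentrate at an endpoint (the densities blow up only like an inverse square root). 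Uniform convergence of the potentials $V^{\omega_{c,i}}$ on compact subsets of $\C$ follows from weak-$*$ convergence plus uniform integrability of $\log|z-\cdot|$ against the equilibrium densities (which have the universal $1/\sqrt{\,}$ integrable singularities and supports staying in a fixed compact set). Continuity of the constants $\ell_{c,i}$ is then immediate from \eqref{var} by evaluating at a convenient fixed point in the (nonempty, for $c$ in a neighborhood of $c_\star$) support.

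Third, the \emph{marginal limits $c\to0$ and $c\to1$}. By the above, $\beta_{c,1}$ is monotone and bounded, hence has a limit $\beta_{0,1}^-$ as $c\to0$; since $\|\omega_{c,1}\|=c\to0$, the mass on $[\alpha_1,\beta_{c,1}]$ tends to zero, and because the equilibrium density on that interval has mass $c$ spread with a $1/\sqrt{\,}$ profile, the interval must collapse: $\beta_{c,1}\to\alpha_1$. Consequently $\omega_{c,1}\cws 0$ and $\omega_{c,2}\cws\omega_2$ (the arcsine/Robin measure of $\Delta_2$), the latter because in the limit the variational condition \eqref{var} for the second measure becomes $\ell-V^{2\omega_{c,2}}\equiv 0$ on $\Delta_2$, i.e.\ $\omega_{c,2}/2$ converges to... — more precisely the second line of \eqref{var} reads $\ell_{c,2}-V^{\omega_{c,1}+2\omega_{c,2}}\equiv 0$ on $\supp(\omega_{c,2})$, and since $V^{\omega_{c,1}}\to 0$ (mass going to $0$ with collapsing support, away from $\alpha_1$) we get $\tfrac12\ell_{c,2}-V^{\omega_{c,2}}\to 0$ on $\Delta_2$, so $\omega_{c,2}\to\omega_2$ and $\ell_{c,2}\to 2\ell_2$. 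For $\ell_{c,1}$ use the first line of \eqref{var}: it holds on $\supp(\omega_{c,1})$, which is shrinking to the point $\alpha_1$, so in the limit $\ell_{c,1}=V^{2\omega_{c,1}+\omega_{c,2}}(\beta_{c,1})\to V^{\omega_2}(\alpha_1)$, using $V^{\omega_{c,1}}(\beta_{c,1})\to 0$ (the self-potential of a measure of mass $c$ on an interval of length $\sim\epsilon(c)$ evaluated at its endpoint is $O(c\log(1/\epsilon(c)))$, which tends to $0$ since the density profile is integrable). The $c\to1$ statements are the mirror image under swapping the two intervals. Finally, the joint statement ``$V^{\omega_{c,i}}\to V^{\omega_i}$ uniformly on compacts as $c\to 2-i$'' combines the weak-$*$ convergence with the uniform-integrability argument from step two.

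The main obstacle, I expect, is making the collapsing-interval analysis near $c=0,1$ fully rigorous — in particular quantifying the rate at which $\beta_{c,1}\to\alpha_1$ and controlling $V^{\omega_{c,1}}$ uniformly near the collapsing point $\alpha_1$ (away from $\alpha_1$ it is trivially $o(1)$, but one needs that the constant $\ell_{c,1}$, which is defined via the value at the moving endpoint $\beta_{c,1}$, still converges; this requires knowing the self-potential contribution $V^{\omega_{c,1}}(\beta_{c,1})$ vanishes, not merely that the cross-potential $V^{\omega_{c,2}}(\beta_{c,1})\to V^{\omega_2}(\alpha_1)$). This is where I would bring in the explicit conformal map $\chi_c$ and the genus-zero uniformization of $\RS_c$ from Section~\ref{ss:21}: the endpoints $\beta_{c,1},\alpha_{c,2}$ are the branch points, and as $c\to0$ two of them ($\alpha_1$ and $\beta_{c,1}$) coalesce, which is an explicit algebraic degeneration one can track; this also dovetails with the already-stated limits \eqref{AngPar2} in Proposition~\ref{prop:angelesco}. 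I would use whichever of the two approaches (soft potential-theory vs.\ hard algebraic) is cleaner for each individual limit, cross-checking consistency with \eqref{AngPar2}.
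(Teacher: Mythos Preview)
Your overall architecture is sound, and you correctly isolate the collapsing-interval analysis as the delicate point. But two of your structural steps are genuine gaps, not just sketches.

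\textbf{Monotonicity of the endpoints.} The ``balayage/obstacle heuristic'' is not a proof. The difficulty is that $\omega_{c,1}$ is the weighted equilibrium on $\Delta_1$ in the external field $\tfrac1{2c}V^{\omega_{c,2}}$, and this field itself depends on $c$ through $\omega_{c,2}$; so increasing $c$ moves both the mass constraint and the external field, and there is no off-the-shelf monotonicity statement. The paper proves injectivity (hence monotonicity) of $c\mapsto\beta_{c,1}$ on $\{\beta_{c,1}<\beta_1\}$ by a Riemann-surface divisor count: if $\beta_{c_1,1}=\beta_{c_2,1}<\beta_1$ then $h_{c_1}$ and $h_{c_2}$ live on the same surface $\RS_c$, and $h_{c_1}-h_{c_2}$ would have at least four zeros (double at $\infty^{(0)}$, simple at $\infty^{(1)},\infty^{(2)}$) but at most three poles, forcing $c_1=c_2$. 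This is the key idea you are missing.

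\textbf{The strict inequality $c^*<c^{**}$.} Your ``open-condition'' argument does not work: knowing $\alpha_{c^*,2}=\alpha_2$ does not by itself prevent $\alpha_{c,2}$ from leaving $\alpha_2$ for $c$ arbitrarily close to $c^*$ from above, precisely because you have not established any one-sided continuity or monotonicity of $\alpha_{c,2}$ independent of the support structure you are trying to prove. The paper's argument is again algebraic: the auxiliary zero $\z_c$ of $h_c$ satisfies $z_{c^*}=\beta_1$ and $z_{c^{**}}=\alpha_2$, and since $\beta_1\neq\alpha_2$ one gets $c^*\neq c^{**}$; the inequality $c^*\le c^{**}$ comes from \eqref{special-zero}.

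\textbf{Endpoint continuity at interior $c_\star$.} Weak-$*$ convergence $\omega_{c,1}\cws\omega_{c_\star,1}$ only gives $\liminf\beta_{c,1}\ge\beta_{c_\star,1}$. Ruling out $\limsup\beta_{c,1}>\beta_{c_\star,1}$ requires the \emph{strict} variational inequality $\ell_{c_\star,1}-V^{2\omega_{c_\star,1}+\omega_{c_\star,2}}<0$ on $(\beta_{c_\star,1},\beta_1]$ together with the Principle of Descent; this is a short argument but it is not the ``support is determined by the measure'' statement you wrote.

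On the collapsing-interval side, the paper avoids your worry about $V^{\omega_{c,1}}(\beta_{c,1})$ altogether: instead of evaluating \eqref{var} at the moving endpoint, it sandwiches $\ell_{c,1}$ between $V^{2\omega_{c,1}+\omega_{c,2}}(\alpha_1\pm\epsilon)$ using the variational inequalities on $(-\infty,\alpha_1]$ and $[\beta_{c,1},\beta_1]$, then lets $c\to0$ and $\epsilon\to0$. The rate $\beta_{c,1}-\alpha_1\asymp c$ is obtained not from the Riemann surface but from a direct analysis of the Mhaskar--Saff functional $F_c(\beta)$ and an explicit formula for $F_c'(\beta)$, which is where the quantitative input comes from.
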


Further, recall the surface \( \RS_c \) constructed just before
Proposition~\ref{prop:angelesco}. Given a rational function \(
F(\z) \) on \( \RS_c \), we denote its divisor of zeros and poles by
\( (F) \) and write
\[
(F) = m_1\boldsymbol z_1 + \cdots + m_l \boldsymbol z_l - k_1 \boldsymbol p_1 - \cdots - k_t \boldsymbol p_t
\]
to mean that \( F(\z) \) has a zero of order \( m_i \) at \(
\boldsymbol z_i \) for each \( i\in\{1,\ldots,l\} \), a pole of order \( k_i \) at \(
\boldsymbol p_i \) for each \( i\in\{1,\ldots,t\} \), and otherwise it is non-vanishing and finite,
where necessarily \( \sum_{i=1}^l m_i = \sum_{i=1}^tk_i \).

It can be easily checked using Schwarz reflection principle, as it was done in \cite[Proposition~2.1]{Y16} for \( c \) rational, that the function
\begin{equation}
\label{Hc}
H_c(\z) :=
\left\{
\begin{array}{ll}
-V^{\omega_{c,1}+\omega_{c,2}}(z) + \frac{\ell_{c,1}+\ell_{c,2}}3, & \z\in\RS_c^{(0)}, \medskip \\
V^{\omega_{c,i}}(z) - \ell_{c,i} +  \frac{\ell_{c,1}+\ell_{c,2}}3, & \z\in\RS_c^{(i)}, \quad i\in\{1,2\},
\end{array}
\right.
\end{equation}
is harmonic on \(
\RS_c\setminus\big\{\infty^{(0)},\infty^{(1)},\infty^{(2)}\big\}
\). Therefore, the function \( h_c(\z) := 2\partial_z H_c(\z) \),
where \( 2\partial_z := \partial_x - \ic \partial_y \), is rational
on \( \RS_c \).  In fact, it holds that
\begin{equation}
\label{int-rep}
\left\{
\begin{array}{ll}
h_c^{(0)}(z) = \displaystyle \int\frac{\dd(\omega_{c,1}+\omega_{c,2})(x)}{z-x}, & z\in \C\setminus\big(\Delta_{c,1} \cup \Delta_{c,2} \big), \medskip \\
h_c^{(i)}(z) = \displaystyle \int\frac{\dd\omega_{c,i}(x)}{x-z}, & z\in \C\setminus \Delta_{c,i}, \quad i\in\{1,2\}.
\end{array}
\right.
\end{equation}
The importance of this function lies in the following: it was shown
in \cite[Propositions~2.1 and~2.3]{Y16} that
\begin{equation}
\label{PhiInt}
\Phi_\n(\z) = C_\n\exp\left\{|\n|\int_{\boldsymbol \beta_2}^\z h_{c_\n}(\x)\dd x \right\} \qandq \frac1{|\n|}\log\big|\Phi_{\n}(\z)\big| = H_{c_\n}(\z)
\end{equation}
for \( \z\in \RS_\n \), where the constant \( C_\n \) should be chosen so that \eqref{normalization} is satisfied.

\begin{proposition}
\label{prop:2} Let \( \mathcal D_c := \boldsymbol\alpha_1 +
\boldsymbol\beta_{c,1} + \boldsymbol\alpha_{c,2} +
\boldsymbol\beta_2 \) be the divisor of the ramification points of \( \RS_c \). It holds that
\begin{equation}
\label{divisor-hc}
(h_c) = \infty^{(0)} + \infty^{(1)} + \infty^{(2)} + \z_c - \mathcal D_c
\end{equation}
for some \( \z_c \in \RS_c^{(0)} \) such that \(
z_c\in[\beta_{c,1},\alpha_{c,2}] \). Moreover, \( z_c \) is a
continuous increasing function of \( c \) and
\[
\left\{
\begin{array}{ll}
z_c=\beta_{c,1}, &  c\leq c^*, \medskip \\
z_c=\alpha_{c,2}, & c\geq c^{**}.
\end{array}
\right.
\]
\end{proposition}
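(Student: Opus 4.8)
The plan is to analyze the rational function $h_c(\z)$ on $\RS_c$ directly from its representation \eqref{int-rep} together with the divisor-counting that the genus-zero surface permits. First I would count poles: from \eqref{int-rep}, $h_c^{(k)}(z)$ behaves like $\|\omega_{c,1}+\omega_{c,2}\|/z = 1/z$ near $\infty^{(0)}$ and like $-\|\omega_{c,i}\|/z$ near $\infty^{(i)}$, so $h_c$ has simple zeros at $\infty^{(0)},\infty^{(1)},\infty^{(2)}$ and no pole there. The only other candidate poles are the ramification points $\boldsymbol\alpha_1,\boldsymbol\beta_{c,1},\boldsymbol\alpha_{c,2},\boldsymbol\beta_2$: indeed $h_c^{(0)}$ and $h_c^{(i)}$ involve $V^{\cdot}$ whose $z$-derivative is a Cauchy transform of an equilibrium measure, and such a transform picks up a $(z-\text{endpoint})^{-1/2}$ singularity at a \emph{hard} endpoint of the corresponding support; after passing to the uniformizing coordinate on $\RS_c$ (where the square-root branch point is resolved), each of these becomes a simple pole of $h_c$. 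Since $\RS_c$ has genus $0$, the degree of the pole divisor equals the degree of the zero divisor; we already have three zeros at infinity and four poles at $\mathcal D_c$, so $h_c$ must have exactly one more zero, say at $\z_c$, giving \eqref{divisor-hc}.

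Next I would locate $\z_c$. The symmetry of $\RS_c$ under complex conjugation and the fact that $h_c$ is real-valued on $\pi^{-1}(\R)$ (visible from \eqref{int-rep}, since the Cauchy transforms are real off the cuts) force the extra zero $\z_c$ to lie over the real line. It cannot lie on $\RS_c^{(1)}$ or $\RS_c^{(2)}$: on $\RS_c^{(i)}$, $h_c^{(i)}(x)=\int \dd\omega_{c,i}(t)/(t-x)$ is strictly negative for $x>\beta_{c,i}$ and strictly positive for $x<\alpha_{c,i}$, hence nonzero on all of $\R\setminus\Delta_{c,i}$. On $\RS_c^{(0)}$, $h_c^{(0)}(x)=\int\dd(\omega_{c,1}+\omega_{c,2})(t)/(x-t)$ is positive for $x>\beta_2$ and negative for $x<\alpha_1$, but on the gap $(\beta_{c,1},\alpha_{c,2})$ it changes sign (it is $+\infty$ at $\beta_{c,1}^+$ — thinking of $\beta_{c,1}<\alpha_{c,2}$ — and $-\infty$ at $\alpha_{c,2}^-$, or more carefully it transitions through zero because of the balance between the masses on the two sides), so it has a zero there; that zero is $\z_c$ and $z_c\in[\beta_{c,1},\alpha_{c,2}]$.

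Finally, continuity and monotonicity of $z_c$, and the boundary identifications $z_c=\beta_{c,1}$ for $c\le c^*$ and $z_c=\alpha_{c,2}$ for $c\ge c^{**}$: continuity of $z_c$ follows from Proposition~\ref{prop:1} (weak-$*$ continuity of $\omega_{c,i}$ and convergence of the endpoints $\beta_{c,1},\alpha_{c,2}$), since $h_c^{(0)}(x)$ then depends continuously on $(c,x)$ and its zero in the closed gap moves continuously. For the boundary cases, recall from \eqref{var} that on $\supp(\omega_{c,1})$ one has $\ell_{c,1}=V^{2\omega_{c,1}+\omega_{c,2}}$; when $\beta_{c,1}<\beta_1$ (i.e.\ $c<c^*$, a \emph{soft} right endpoint of $\Delta_{c,1}$), the variational condition holds with equality slightly past $\beta_{c,1}$, which translates into $\partial_x(V^{2\omega_{c,1}+\omega_{c,2}})=0$, i.e.\ $2h_c^{(0)}-h_c^{(1)}$ vanishes at $\beta_{c,1}$; combined with the jump relation $h_{c+}^{(0)}+h_{c-}^{(0)}=h_c^{(1)}$ across $\Delta_{c,1}$ (equivalently the statement that $\boldsymbol\beta_{c,1}$ is a branch point glueing sheets $0$ and $1$), this forces $h_c^{(0)}(\beta_{c,1})=0$, so $\z_c=\boldsymbol\beta_{c,1}$ and $z_c=\beta_{c,1}$. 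The symmetric argument at $\alpha_{c,2}$ handles $c\ge c^{**}$. Monotonicity of $z_c$ then follows: as $c$ increases the mass $\omega_{c,1}$ grows and $\omega_{c,2}$ shrinks, pushing the balance point of $h_c^{(0)}$ in the gap to the right; this can be made rigorous by differentiating the defining relation $h_c^{(0)}(z_c)=0$ in $c$ (using the known $c$-derivatives of equilibrium measures, or a comparison/monotonicity argument for Cauchy transforms of measures with increasing first component), or, on the plateau $c^*\le c\le c^{**}$, by noting $z_c=\beta_{c,1}=\beta_1=\alpha_{c,2}=\alpha_2$... wait — on the plateau $\beta_{c,1}=\beta_1$ and $\alpha_{c,2}=\alpha_2$ are both fixed and $z_c$ moves from $\beta_1$ to $\alpha_2$ inside the fixed gap, and monotonicity there comes from the strict variation of $h_c^{(0)}(x)$ in $c$ at fixed $x$ in $(\beta_1,\alpha_2)$.

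The main obstacle I anticipate is the rigorous monotonicity of $z_c$ on the middle plateau and the matching of the definitions of $c^*,c^{**}$ (from Proposition~\ref{prop:1}, phrased via endpoint behavior of the supports) with the condition $z_c\in\{\beta_{c,1},\alpha_{c,2}\}$ (phrased via the zero of $h_c^{(0)}$): one must show these two thresholds genuinely coincide, which requires carefully translating the soft/hard endpoint dichotomy for $\Delta_{c,i}$ into the vanishing/non-vanishing of $h_c^{(0)}$ at the endpoint, using the variational equalities \eqref{var} and the branch structure of $h_c$ across the cuts.
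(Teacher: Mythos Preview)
Your overall plan parallels the paper (which cites \cite[Proposition~2.3]{Y16} for the divisor formula \eqref{divisor-hc} and the implication \eqref{special-zero}), but your argument for the boundary case $z_c=\beta_{c,1}$ when $c<c^*$ contains a genuine error. You claim that the variational equality at the soft right endpoint $\beta_{c,1}<\beta_1$ yields $h_c^{(0)}(\beta_{c,1})=0$. However $\partial_x V^{2\omega_{c,1}+\omega_{c,2}}=-(h_c^{(0)}-h_c^{(1)})$, so vanishing of this derivative gives only $h_c^{(0)}(\beta_{c,1})=h_c^{(1)}(\beta_{c,1})$, which is automatic at the ramification point $\boldsymbol\beta_{c,1}$ joining sheets $0$ and $1$. (Your relation ``$h_{c+}^{(0)}+h_{c-}^{(0)}=h_c^{(1)}$'' is not the correct gluing rule across $\Delta_{c,1}$ either.) In fact $h_c(\boldsymbol\beta_{c,1})\neq 0$ when $c<c^*$; see \eqref{4.2.2} and the discussion after it. The correct mechanism is that at a soft endpoint the equilibrium density behaves like $(\beta_{c,1}-x)^{1/2}$, so $h_c$ is \emph{regular} (no pole) at $\boldsymbol\beta_{c,1}$; your own pole count then gives three poles and three zeros at infinity with no extra finite zero, and \eqref{divisor-hc} holds with $\z_c=\boldsymbol\beta_{c,1}$ only as a formal cancellation in the divisor, not as a genuine zero of $h_c$. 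Relatedly, your ``four poles at $\mathcal D_c$'' and the $\pm\infty$ intermediate-value argument on the gap are valid only when both gap endpoints are hard, i.e., on the plateau $c\in[c^*,c^{**}]$.

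Your monotonicity argument on that plateau is also too vague to stand as a proof. The paper's device is worth knowing: for $c^*\le c_1<c_2\le c^{**}$ the surfaces coincide, and $h:=h_{c_2}-h_{c_1}$ is rational on $\RS_{c_1}$ with a double zero at $\infty^{(0)}$ and simple zeros at $\infty^{(1)},\infty^{(2)}$, hence four zeros, therefore exactly four poles (all at $\mathcal D_{c_1}$) and \emph{no} finite zero. Thus $h$ keeps constant sign on each real interval of each sheet; from the leading terms at infinity one reads $h^{(1)}<0$ on $(\beta_1,\infty)$ and $h^{(2)}<0$ on $(-\infty,\alpha_2)$, so $h^{(0)}=-h^{(1)}-h^{(2)}>0$ on $(\beta_1,\alpha_2)$, and since each $h_c^{(0)}$ is strictly decreasing there, $z_{c_1}<z_{c_2}$. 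For continuity at $c^*,c^{**}$ the paper characterizes $z_c$ as the minimizer of $V^{\omega_{c,1}+\omega_{c,2}}$ on $[\beta_1,\alpha_2]$ and invokes the Principle of Descent, which handles exactly the endpoint degeneration your implicit-function sketch leaves open.
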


This proposition has the following implication: \emph{point \( z_c \) uniquely determines the vector equilibrium measure \( (\omega_{c,1},\omega_{c,2}) \)}.  Indeed, choose \( z_\star\in(\alpha_1,\beta_2) \). Set \( \beta_{\star,1}=\min\{\beta_1,z_\star\} \) and \( \alpha_{\star,2}=\max\{\alpha_2,z_\star\} \). Construct Riemann surface \( \RS_\star \) with respect to the cuts \( [\alpha_1,\beta_{\star,1}] \) and \( [\alpha_{\star,2},\beta_2] \) as before. Let \( h_\star(\z) \) be a rational function on \( \RS_\star \) with the zero/pole divisor
\[
(h_\star) = \infty^{(0)} + \infty^{(1)} + \infty^{(2)} + \z_\star - \boldsymbol \alpha_1 - \boldsymbol \beta_{\star,1} - \boldsymbol \alpha_{\star,2} - \boldsymbol \beta_2,
\]
where \( \boldsymbol \alpha_1,\boldsymbol \beta_{\star,1},\boldsymbol \alpha_{\star,2},\boldsymbol \beta_2\) are the ramification points of \( \RS_\star \) and \( \z_\star \in\RS_*^{(0)} \). Clearly, \( h_\star(z^{(0)})+h_\star(z^{(1)})+h_\star(z^{(2)})\equiv0 \) as this sum must be an entire function that vanishes at infinity. Normalize \( h_\star(\z) \) so that \( h_\star(z^{(0)}) = 1/z + \mathcal O(1/z^2)\) as \( z\to\infty \). Set \( c_\star := -\lim_{z\to\infty}zh(z^{(1)}) \). Then \( \RS_\star = \RS_{c_\star} \), \( \z_\star = \z_{c_\star} \), and respectively \( h_\star(\z)=h_{c_\star}(\z) \). It further follows from Privalov's lemma \cite[Section~III.2]{Privalov} that
\[
\dd\omega_{c_\star,i}(x) =\left(h_{\star+}^{(i)}(x) - h_{\star-}^{(i)}(x)\right)\frac{\dd x}{2\pi\ic}, \quad i\in\{1,2\},
\]
and thus, we have recovered the vector equilibrium measure from \( z_\star \).

\begin{proof}[Proof of Propositions~\ref{prop:1} and~\ref{prop:2}]

Besides relations \eqref{var}, it also holds that the left hand sides of \eqref{var} are strictly less than zero on \( \Delta_1\setminus\Delta_{c,1} \) and \( \Delta_2\setminus\Delta_{c,2} \), respectively, see \cite{GRakh81}. In particular, we can write
\[
V^{\frac1c \omega_{c,1}}(x) + \frac1{2c}V^{\omega_{c,2}}(x) - \frac{\ell_{c,1}}{2c} \left\{ \begin{array}{lcl} \equiv0 & \text{on} & \supp(\omega_{c,1}), \medskip \\ \geq 0 & \text{on} & \Delta_1\setminus\supp(\omega_{c,1}), \end{array} \right.
\] 
which, in view of \cite[Theorem~I.3.3]{SaffTotik}, can be interpreted in the following way:  the measure \( \frac1c\omega_{c,1} \) is the weighted logarithmic equilibrium distribution on \( \Delta_1 \) in the presence of the external field \( \frac1{2c} V^{\omega_{c,2}}(x) \).  Hence, its
support maximizes the Mhaskar-Saff functional \cite[Chapter~IV]{SaffTotik}:
\[
F_c(K) := \log\cp(K) - \frac1{2c}\int V^{\omega_{c,2}}\dd\omega_K,
\]
where \( K\subseteq[\alpha_1,\beta_1] \) is compact, \( \cp(K) \) is the logarithmic capacity of \( K \), and \( \omega_K \) is the logarithmic equilibrium distribution on \( K \) (when \( K \) is an interval, \( \omega_K \) is the arcsine distribution on \( K \)). As mentioned before \eqref{var}, the maximizer of this functional is an interval containing \( \alpha_1 \) (this was proven in \cite{GRakh81}). Therefore, it is enough to consider compact sets \( K \) only of the form \( [\alpha_1,\beta] \). Thus, the functional \( F(K) \) reduces to the function
\[
F_c(\beta) := \log\frac{\beta-\alpha_1}4 -\frac1{2c}\int_{\alpha_1}^\beta V^{\omega_{c,2}}(x)\frac{\dd x}{\pi\sqrt{(\beta-x)(x-\alpha_1)}},
\]
where we used explicit expressions for the logarithmic capacity and
the equilibrium measure of an interval. To find the maximum of \( F_c(\beta) \) on \( \Delta_1 \), let us compute its derivative. To this end, it can be readily checked
that
\begin{multline*}
\frac1h\left(\int_{\alpha_1}^{\beta+h} f(x)\frac{\dd
x}{\pi\sqrt{(\beta+h-x)(x-\alpha_1)}}-\int_{\alpha_1}^\beta
f(x)\frac{\dd x}{\pi\sqrt{(\beta-x)(x-\alpha_1)}}\right) \\ =
\int_{\alpha_1}^\beta
\frac1h\left(f\left(x+h\frac{x-\alpha_1}{\beta-\alpha_1}\right) -
f(x)\right) \frac{\dd x}{\pi\sqrt{(\beta-x)(x-\alpha_1)}}
\end{multline*}
for every differentiable function \( f(x) \) on \( \Delta_1 \). Observe also that \( V^{\omega_{c,2}}(x) \) is harmonic off \( \Delta_2 \) and therefore \( f_c(x):=V^{\omega_{c,2}}(x) = - \int\log|x-y|\dd\omega_{c,2}(y) \) is a smooth function on \( \Delta_1 \). Hence, by taking the limit as \( h\to 0 \) in the above equality, we get
\begin{equation}
\label{Fprime}
F_c^\prime(\beta) = \frac 1{\beta-\alpha_1}- \frac1{4\pi c}\int_{-1}^1f_c^\prime\left(\frac{\beta-\alpha_1}2x+\frac{\beta+\alpha_1}2\right)\sqrt{\frac{1+x}{1-x}}\dd x.
\end{equation}
It is also obvious that \( f_c^\prime(x) = \int
(y-x)^{-1}\dd\omega_{c,2}(y), \) which is an increasing positive
function on \( \Delta_1 \). Thus, \( F_c^\prime(\beta) \) is a
decreasing function of \( \beta \) and therefore has at most one
zero. Moreover, it holds that
\begin{equation}
\label{fprime-bounds}
\frac {1-c}{\beta_2-\alpha_1} < f_c^\prime(x) < \frac{1-c}{\alpha_2-\beta_1}, \quad x\in\Delta_1.
\end{equation}
Hence, \( F_c^\prime(\beta_1)<0 \) for all \( c \) small. As \( \lim_{\beta\to\alpha_1^+}F_c^\prime(\beta) = + \infty \), we get that \( \beta_{c,1} \in (\alpha_1,\beta_1) \) for all \( c \) small. Using \( F_c^\prime(\beta_{c,1}) = 0 \) and the above estimates, we get from \eqref{Fprime} that
\begin{equation}
\label{bc1}
\frac{4c}{1-c}(\alpha_2-\beta_1) < \beta_{c,1} - \alpha_1 < \frac{4c}{1-c}(\beta_2-\alpha_1)
\end{equation}
for all small \( c \). This, in particular, implies that  \( \beta_{c,1}\to \alpha_1 \) as \( c\to0 \). An analogous argument shows that \( \alpha_{c,2} \) approaches \( \beta_2 \) when \( c\to1 \). It further follows from \eqref{fprime-bounds} that \( f_c^\prime(x) \) uniformly converges to zero on \( \Delta_1 \) as \( c\to 1 \). Thus, \( F_c^\prime(\beta)>0 \) for all \( \beta\in\Delta_1 \) and all \( c \) close to \( 1 \). That is, \( \Delta_{c,1}=\Delta_1 \) in this case. Similarly, we also get that \( \Delta_{c,2} = \Delta_2 \) for all \( c \) small. 

Let us now describe what happens to the components of the vector equilibrium measure and their potentials as \( c\to0 \). Clearly,  \(
V^{\omega_{c,1}}(z) \to 0 \) uniformly on compact subsets of \( \C\setminus \Delta_{0,1} \) in this case. To show that \( \omega_{c,2}\cws \omega_2 \) as \( c\to 0 \), notice that
\[
\|\sigma\|\ell_2 = \int V^{\omega_2}\dd\sigma = \int V^\sigma\dd\omega_2 \left\{\begin{array}{l} \geq \inf_{\Delta_2}V^\sigma, \medskip \\ \leq \sup_{\Delta_2}V^\sigma, \end{array} \right.
\]
for any Borel measure \( \sigma \) supported on \( \Delta_2 \) since \( \omega_2 \) is a probability measure. It follows from \eqref{var} that \( V^{\omega_{c,2}}(x) \) is continuous on \( \Delta_2=\Delta_{c,2} \). Therefore,
\[
\left\{
\begin{array}{l}
\displaystyle 2\ell_2(1-c) \geq \min_{\Delta_2}V^{2\omega_{c,2}} = V^{2\omega_{c,2}}(x_{\min}) = \ell_{c,2} - V^{\omega_{c,1}}(x_{\min}) =  \ell_{c,2} +o(1), \medskip \\
\displaystyle 2\ell_2(1-c) \leq \max_{\Delta_2}V^{2\omega_{c,2}} = V^{2\omega_{c,2}}(x_{\max}) = \ell_{c,2} - V^{\omega_{c,1}}(x_{\max}) =  \ell_{c,2} +o(1),
\end{array}
\right.
\]
which implies that \( \ell_{c,2} = 2\ell_2 + o(1) \) as \( c\to 0 \). Let \( \omega \) be a weak\(^*\) limit point of \(
\omega_{c,2} \) as \( c\to 0 \). Then \( \omega \) is a probability
measure and
\[
V^\omega(x) \leq  \liminf_{c\to0} V^{\omega_{c,2}}(x) = \liminf_{c\to0} \big( \ell_{c,2} - V^{\omega_{c,1}}(x)\big)/2 = \ell_2, \quad x\in \Delta_2,
\]
where the first inequality follows from the Principle of Descent \cite[Theorem~I.6.8]{SaffTotik}.
Therefore, \( E(\omega,\omega)\leq \ell_2 = E(\omega_2,\omega_2)
\), which implies that \( \omega=\omega_2 \) by the uniqueness of
the equilibrium measure. To deduce the behavior of the constants \( \ell_{c,1} \) as \( c\to0 \), observe that
\[
\left\{
\begin{array}{ll}
V^{2\omega_{c,1}+\omega_{c,2}}(x) \leq \ell_{c,1}, & x\in(-\infty,\alpha_1], \medskip \\
V^{2\omega_{c,1}+\omega_{c,2}}(x) \geq \ell_{c,1}, & x\in[\beta_{c,1},\beta_1],
\end{array}
\right.
\]
where the first claim can be easily obtained from \eqref{var} and the second one was already mentioned at the beginning of the proof. Then
\[
V^{2\omega_{c,1}+\omega_{c,2}}(\alpha_1-\epsilon) \leq  \ell_{c,1} \leq V^{2\omega_{c,1}+\omega_{c,2}}(\alpha_1+\epsilon)
\]
for any  \( \epsilon>0 \) since \( \beta_{c,1}<\alpha_1+\epsilon \)
for all \( c \) small enough. Hence, we get that
\[
V^{\omega_2}(\alpha_1-\epsilon) \leq \liminf_{c\to0}\ell_{c,1} \leq \limsup_{c\to0}\ell_{c,1} \leq V^{\omega_2}(\alpha_1+\epsilon).
\]
Since \( V^{\omega_2}(x) \) is continuous on the real line and \(
\epsilon \) is arbitrary, we get that \( \ell_{c,1} \to
V^{\omega_2}(\alpha_1) \) as \( c\to 0\). The respective claims for
the limits as \( c\to 1\) can be shown in a similar fashion.

Let us point out one consequence of the fact that \(
\omega_{c,2}\cws\omega_2 \) as \( c\to0 \) that will be useful to
us later. It holds that
\[
f_c^\prime(z) := \int \frac{\dd\omega_{c,2}(y)}{y-z} \to \int \frac{\dd\omega_2(y)}{y-z} = \frac1\pi\int_{\alpha_2}^{\beta_2}\frac{1}{y-z}\frac{\dd y}{\sqrt{(y-\alpha_2)(\beta_2-y)}} = - \frac1{w_2(z)},
\]
locally uniformly in \( \overline\C\setminus\Delta_2 \), where, as
before, \( w_2(z):=\sqrt{(z-\alpha_2)(z-\beta_2)} \). Therefore, we
can improve \eqref{bc1} to
\begin{equation}
\label{c-rate}
\frac{4c}{\beta_{c,1}-\alpha_1} = \frac1{|w_2(\alpha_1)|} + o(1)
\end{equation}
as \( c\to0 \), where we again used \eqref{Fprime}.

The facts that \( \omega_{c,i}\cws\omega_{c_\star,i} \) and \( \ell_{c,i}\to\ell_{c_\star,i} \) as \( c\to c_\star\in(0,1) \), \( i\in\{1,2\} \), were shown in the proof of \cite[Proposition~2.1]{Y16}. Let us now show that \( \beta_{c,1}\to \beta_{c_\star,1} \) in this case (that is, that \( \beta_{c,1} \) is a continuous function of \( c \)). Weak\(^*\) convergence of measures necessitates that \( \liminf_{c\to c_\star}\beta_{c,1} \geq \beta_{c_\star,1} \). Assume to the contrary that there exists a subsequence \( c_n\to c_\star \) such that \( \beta_{c_\star,1}< \beta_*:=\liminf_{n\to\infty}\beta_{c_n,1} \). Then
\[
\liminf_{n\to\infty} \ell_{c_n,1} = \liminf_{n\to\infty} V^{2\omega_{c_n,1}+\omega_{c_n,2}}(x) \geq V^{2\omega_{c_\star,1}+\omega_{c_\star,2}}(x) > \ell_{c_\star,1}
\]
for \( x\in(\beta_{c_\star,1},\beta_*) \) due to the Principle of Descent \cite[Theorem~I.6.8]{SaffTotik}. However, the above conclusion clearly contradicts the claim \( \ell_{c,1}\to\ell_{c_\star,1} \) as \( c\to c_\star \). The convergence \( \alpha_{c,2}\to \alpha_{c_\star,2} \) as \( c\to c_\star \) can be shown analogously (unfortunately, this convergence of the endpoints was asserted without justification in the proof \cite[Proposition~2.1]{Y16}). Given the convergence of the endpoint, the uniform convergence of the potentials as \( c\to c_\star\in(0,1) \)  was established in the proof of \cite[Proposition~2.1]{Y16} using harmonicity of \( H_c(\z) \). The same arguments can be applied to show that \(
V^{\omega_{c,i}}\to V^{\omega_i} \) uniformly on compact subsets of
\( \C \) as \( c\to 2-i \), \( i\in\{1,2\} \).

Let us now establish the existence of the constants \( 0<c^*<c^{**}<1 \) and the monotonicity properties of \( \beta_{c,1} \) and \( \alpha_{c,2} \). Claim \eqref{divisor-hc} was obtained in \cite[Proposition~2.3]{Y16}. There it was further shown that
\begin{equation}
\label{special-zero}
\beta_{c,1}<\beta_1 \quad \Rightarrow \quad z_c=\beta_{c,1} \qandq \alpha_{c,2}>\alpha_2 \quad \Rightarrow \quad z_c=\alpha_{c,2}.
\end{equation}
Assume now
that \( \beta_{c_1,1}=\beta_{c_2,1}< \beta_1 \). Then the functions \(
h_{c_1}(\z) \) and \( h_{c_2}(\z) \) are defined on the same Riemann
surface. Their difference has at
least four zeros (double zero at \( \infty^{(0)} \) and simple
zeros at \( \infty^{(1)} \) and \( \infty^{(2)} \)) and at most
three poles \( \boldsymbol \alpha_1,\boldsymbol
\alpha_2,\boldsymbol \beta_2 \). This is possible only if the
function is identically zero and therefore \( c_1=c_2 \) as \( h_c^{(1)}(z) = cz^{-1} + \mathcal O(z^{-2}) \) by \eqref{int-rep}. Since
\( \beta_{c,1} \to \alpha_1 \) as \( c\to0 \), this shows the
existence of \( c^* \) and proves monotonicity of \( \beta_{c,1} \)
as a function of \( c \) (it is a continuous and injective function
of \( c \)). The existence of \( c^{**} \) and monotonicity of \(
\alpha_{c,2} \) are proven analogously. It also follows from \eqref{special-zero} that \( c^*\leq c^{**} \). As it was shown in \cite[Proposition~2.3]{Y16} that \(
z_{c^*}=\beta_{c^*,1}(=\beta_1) \) and \(
z_{c^{**}}=\alpha_{c^{**},2}(=\alpha_2) \), we in fact get that \( c^*<c^{**} \). 

It only remains to prove that \( z_c \) is a continuous increasing function of \( c \) on \( [c^*,c^{**}] \).  To show monotonicity, take \( c^*\leq c_1<c_2 \leq c^{**} \). It follows easily from \eqref{int-rep} that each \( h_c(x^{(0)}) \) is a decreasing function of \( x\in(\beta_1,\alpha_2) \).  Thus, to prove that \( z_{c_1}<z_{c_2} \), it is enough to show that \( h(x^{(0)}) > 0 \) in \( (\beta_1,\alpha_2)
\), where \( h(\z):=(h_{c_2}-h_{c_1})(\z) \). Notice that \( h(x^{(0)}) = -
h(x^{(1)})-h(x^{(2)}) \) by \eqref{int-rep} and therefore it is sufficient to argue
that \( h(x^{(1)})< 0 \) on \( (\beta_1,\infty) \) and \(
h(x^{(2)})<0 \) on \( (-\infty,\alpha_2) \). These claims are
obvious for all \( |x| \) large enough since
\[
h(z^{(1)}) = -\frac{c_2-c_1}z + \mathcal O\big(z^{-2}\big) \quad \text{and} \quad h(z^{(2)}) = \frac{c_2-c_1}z + \mathcal O\big(z^{-2}\big)
\]
as \( z\to\infty \) according to \eqref{int-rep}. As explained after \eqref{special-zero},  \( h(\z) \) vanishes only at \( \infty^{(0)} \), \( \infty^{(1)} \), and \( \infty^{(2)} \). Therefore, \( h(z^{(1)}) \) and \( h(z^{(2)}) \) cannot change sign on  \( (\beta_1,\infty) \) and \( (-\infty,\alpha_2) \), respectively. Hence, these functions are negative everywhere on the considered rays by continuity.

To show continuity of \( z_c \) as a function of \( c\in[c^*,c^{**}] \), we shall once again use the fact that \( h_c(x^{(0)}) \) is a decreasing function on \( (\beta_1,\alpha_2) \). When \( c\in(c^*,c^{**}) \), \( h_c(x^{(0)}) \) is unbounded on both ends of \( (\beta_1,\alpha_2) \) and therefore changes sign from \( + \) to \( - \) when passing through \( z_c \) (recall that \( h_c(\z) \) has poles at \( \boldsymbol\beta_1 \) and \( \boldsymbol\alpha_2 \) in this case). When \( c=c^* \), \( h_c(x^{(0)}) \) is unbounded only at \( \alpha_2 \) and, since it is non-vanishing, is negative on \( [\beta_1,\alpha_2) \). Similarly, when \( c=c^{**} \), it is unbounded at \( \beta_1 \) only and therefore is positive on \( (\beta_1,\alpha_2] \).  In any case, \( z_c \) is the point where the potential \( V^{\omega_{c,1}+\omega_{c,2}}(x) \) achieves its minimum on \( [\beta_1,\alpha_2] \). Thus, if \( z_{c_n}\to z_\star \) as \( c_n\to c_\star \) when \( n\to\infty \), \( c_n,c_\star\in(c^*,c^{**}) \), then
\[
V^{\omega_{c_0,1}+\omega_{c_0,2}}(z_\star) \leq \liminf_{n\to\infty} V^{\omega_{c_n,1}+\omega_{c_n,2}}(z_{c_n}) \leq \liminf_{n\to\infty} V^{\omega_{c_n,1}+\omega_{c_n,2}}(z_{c_\star}) = V^{\omega_{c_\star,1}+\omega_{c_\star,2}}(z_{c_\star}),
\]
where the first inequality follows from the weak\(^*\) convergence of measures and the Principle of Descent \cite[Theorem~I.6.8]{SaffTotik}, the second one from the just discussed extremal property of \( z_{c_n} \), and the last equality holds due to the weak\(^*\) convergence of measures and the fact that  \( z_{c_\star} \) does not belong to the supports of the measures in question. Since \( V^{\omega_{c_\star,1}+\omega_{c_\star,2}}(x) \) is smallest at \( z_{c_\star} \), we get that \( z_\star=z_{c_\star} \). When \( c_\star=c^* \), essentially the same argument works. One just needs to replace \( z_{c_\star}=\beta_1 \) with \( \beta_1 + \epsilon \) for any \( \epsilon>0 \). Since \( V^{\omega_{c_\star,1}+\omega_{c_\star,2}}(x) \) is increasing on \( [\beta_1,\alpha_2] \), this shows that \( z_\star\leq z_{c_\star}+\epsilon \) for any \( \epsilon>0 \) and therefore \( z_\star = z_{c_\star} \). Clearly, an analogous modification works when \( c_\star=c^{**} \).
\end{proof}

\section{Proof of Propositions~\ref{prop:angelesco} and~\ref{prop:szego}}
\label{sec:5}

On several occasions, we shall refer to the following consequences of Koebe's \(1/4\)-theorem, \cite[Theorem~1.3]{Pommerenke}. Given \( r> 0 \), let
\[
a(z) = \sum_{k=0}^\infty a_k(z-z_0)^k, \quad  b(z)=\sum_{k=0}^\infty b_kz^{-k}, \qandq d(z) = \sum_{k=-\infty}^1 d_kz^k 
\]
be univalent in \( D_a=\{|z-z_0|<r \} \),  \( D_b=\{|z|>1/r\} \), and \( D_d=\{|z|>r \} \), respectively. Then,
\begin{equation}
\label{koebe1/4}
\big\{ |z-a_0| < ra_1/4 \big\} \subseteq a(D_a), \quad \big\{|z-b_0|< rb_1/4 \big\} \subseteq b(D_b), \qandq \big\{|z|>4rd_1\} \subseteq d(D_d),
\end{equation}
where \( f(D) \) stands for image of a domain \( D \) under the function \( f(z) \).

\subsection{Proof of Proposition~\ref{prop:angelesco}}

Recall that \( \chi_c(\z) \) is univalent on \( \RS_c \) and \( \chi_c^{(0)}(z) = z + \mathcal O(z^{-1}) \) as \( z\to\infty \), see \eqref{chi}. Hence, it follows from \eqref{koebe1/4} that there exists a finite constant \( R \) independent of \( c \) such that  \( \{|z|>R\}\subset \chi_c\big(\RS_c^{(0)}\big) \) for all \( c\in(0,1) \). In particular, it holds that \( |\chi_c(\x)|\leq R \), \( \x\in\boldsymbol\Delta_{c,1} \), as well as \( |B_{c,i}|\leq R \), \( i\in\{1,2\} \), see \eqref{AngPar1}, for all \( c\in(0,1) \). For all \( c\leq c^{**} \) (in which case \( \Delta_{c,2}= \Delta_2\)), define
\[
\varphi(\z) := \frac12 \left\{
\begin{array}{ll}
z - (\beta_2+\alpha_2)/2+w_2(z), & z\in\RS_c^{(0)}\setminus \boldsymbol\Delta_{c,1}, \medskip \\
z - (\beta_2+\alpha_2)/2-w_2(z), & z\in\RS_c^{(2)}.
\end{array}
\right.
\]
This is a meromorphic function in \( \big(\RS_c^{(0)}\cup\RS_c^{(2)}\big)\setminus\boldsymbol\Delta_{c,1} \) with a simple pole at \( \infty^{(0)} \), a simple zero at \(
\infty^{(2)} \), and otherwise non-vanishing and finite. It is
normalized so that  \( \varphi(z^{(0)}) =z+\mathcal O(1) \) as \(
z\to\infty \). Observe that \( \varphi(\z) \) continuously extends
to the closed set \( \RS_c^{(0)}\cup\RS_c^{(2)} \).  It can be
readily checked that the image of \( \RS_c^{(0)}\cup\RS_c^{(2)} \)
under \( \varphi(\z) \) is equal to \( \overline \C \) and \(
\varphi(\z)\) is one-to-one everywhere except on \(
\boldsymbol\Delta_{c,1} \) that is mapped into an interval
\[
\varphi(\boldsymbol \Delta_{c,1}) =: I_{c,1} = \big[\varphi(\boldsymbol \alpha_1),\varphi(\boldsymbol \beta_{c,1})\big] \to \big\{\varphi(\boldsymbol \alpha_1)\big\} \quad \text{as} \quad c\to 0.
\]
Notice also that \( \varphi^{(0)}(z) = \varphi_2(z) \) for \( z\in\overline\C\setminus\Delta_2 \), see \eqref{varphii}.

Define \( f_c(z) :=
\big(\chi_c\big(\varphi^{-1}(z)\big)-B_{c,2}\big)/z \). Then \( f_c(z)
\) is a holomorphic function in \( \overline\C\setminus I_{c,1} \)
(there is no pole at the origin as \( \varphi^{-1}(0)=\infty^{(2)}
\) and \( \chi_c(\z) - B_{c,2} \) vanishes there) with bounded
traces on \( I_{c,1} \) that assumes value \(1\) at infinity.
Hence, it follows from Cauchy's integral formula that
\[
f_c(z) = 1 + \int_{I_{c,1}}\frac{(f_{c+}-f_{c-})(x)}{x-z}\frac{\dd x}{2\pi\ic}, \quad z\in \overline\C\setminus I_{c,1}.
\]
Since the traces \( f_{c\pm}(z) \) are bounded above in absolute
value on \( I_{c,1} \) independently of \( c \) and \( |I_{c,1}|\to
0 \) as \( c\to0 \), we see that \( f_c(z)\to1 \) as \( c\to0 \) locally uniformly in \(
\overline\C\setminus\{\varphi(\boldsymbol \alpha_1)\} \). Hence, it holds that
\[
\chi_c(\z) = B_{c,2} + \big(1+o(1) \big) \varphi(\z)
\]
locally uniformly on \( \big(\RS_c^{(0)}\cup\RS_c^{(2)}\big)\setminus\boldsymbol\Delta_{c,1} \). Since the image of \( \big(\RS_c^{(0)}\cup\RS_c^{(2)}\big)\setminus\boldsymbol\Delta_{c,1} \) under \( \varphi(\z) \) is \( \overline\C\setminus I_{c,1} \) and \( |I_{c,1}| \to0 \) as \( c\to0 \), for any \( \epsilon>0 \) there exists \( \delta>0 \) such that the image of \( \big(\RS_c^{(0)}\setminus\pi^{-1}(\{|z-\alpha_1|<\epsilon\})\big)\cup\RS_c^{(2)} \)  under \( \chi_c(\z) \) contains \( \overline\C\setminus\{|z-B_{c,2}-\varphi(\boldsymbol\alpha_1)|<\delta\} \). Due to univalency of \( \chi_c(\z) \) on \( \RS_c \), this means that the image of  \( \big(\RS_c^{(0)}\cap\pi^{-1}(\{|z-\alpha_1|<\epsilon\})\big)\cup\RS_c^{(1)} \) is contained in \( \{|z-B_{c,2}-\varphi(\boldsymbol\alpha_1)|<\delta\} \). Altogether, we get that
\begin{equation}
\label{prop-rec0}
\chi_c(\z) = B_{c,2} + \big(1+o(1) \big) \left\{
\begin{array}{rl}
\varphi (\z), & \z\in\RS_c^{(0)}\cup\RS_c^{(2)}, \medskip \\
\varphi(\boldsymbol \alpha_1), & \z\in\RS_c^{(1)},
\end{array}
\right.
\end{equation}
where \( o(1) \) holds uniformly on the entire surface \( \RS_c \). Since
\[
\varphi^{(0)}(z) = z - \frac{\beta_2+\alpha_2}2 + \mathcal O\left(\frac1z\right) \qandq \varphi^{(2)}(z) = \frac{(\beta_2-\alpha_2)^2}{16}\frac1z +  \mathcal O\left(\frac1{z^2}\right),
\]
the desired limits \eqref{AngPar2} easily follow.

Continuity of  \( A_{c,1},A_{c,2},B_{c,1},B_{c,2} \) as functions
of \( c \) comes from the continuous dependence of \( \alpha_{c,2}
\) and \( \beta_{c,1} \) on \( c \), see Proposition~\ref{prop:2},
and therefore the continuous dependence \( \chi_c(\z) \) on \( c
\).

\subsection{Auxiliary Estimates, I}

In the forthcoming analysis, the following functions will play an
important role:
\begin{equation}
\label{Upsilon}
\Upsilon_{c,i}(\z) := A_{c,i}\big(\chi_c(\z)-B_{c,i}\big)^{-1}, \quad i\in\{1,2\}.
\end{equation}
It follows from the properties of \( \chi_c(\z) \), see \eqref{chi} and \eqref{AngPar1}, that \( \Upsilon_{c,i}(\z) \) is a conformal map of \( \RS_c \) onto \( \overline \C \) that maps \( \infty^{(i)} \) into \( \infty \) and \( \infty^{(0)} \) into \( 0 \). Moreover, it holds that
\begin{equation}
\label{Upsilon2}
\Upsilon_{c,1}^{(1)}(z) = z + \mathcal O(1) \qandq \Upsilon_{c,1}^{(0)}(z) = A_{c,1}z^{-1} + \mathcal O\big(z^{-2}\big) \qasq z\to\infty.
\end{equation}
 It was explained in \cite[Section~7]{Y16}, see
\cite[Equation~(7.2)]{Y16}, that
\begin{equation}
\label{lem51-0}
\Upsilon_{c,i}(\z) \to \Upsilon_{c_\star,i}(\z) \qasq c\to c_\star\in(0,1),
\end{equation}
uniformly on \( \RS_{c_\star}\setminus\mathfrak U \) for each \( i\in\{1,2\} \), where \( \mathfrak U \) is any open set the containing ramification points of \( \RS_{c_\star} \) (if \( \mathfrak U_c\subset\RS_c \) is an open set such that \( \pi\big( \RS_{c_\star}^{(k)}\setminus\mathfrak U\big) = \pi\big( \RS_c^{(k)}\setminus\mathfrak U_c\big) \) for each \( k\in\{0,1,2\} \), then the bordered Riemann surfaces \( \RS_{c_\star}\setminus\mathfrak U \) and \( \RS_c\setminus\mathfrak U_c \) are identical for all \( c \) sufficiently close to \( c_\star \) and we can think of \(\Upsilon_{c,i}(\z)\) as a function on \(\RS_{c_\star}\setminus\mathfrak U\)). On the other hand, when \( c\to 0 \), the following is true.

\begin{lemma}
\label{lem:aux1} It holds that
\begin{equation}
\label{lem51-1}
\Upsilon_{c,2}(\z) =
(1+o(1)) \left\{
\begin{array}{rl}
\psi(\z), & z\in\RS_c^{(0)}\cup\RS_c^{(2)}, \medskip \\
\psi(\boldsymbol\alpha_1), & \z\in\RS^{(1)}_c,
\end{array}
\right.
\end{equation}
as \( c\to0 \), where \( o(1) \) holds uniformly on the entire surface \( \RS_c \) and
\[
 \psi(\z) := \frac{A_{0,2}}{\varphi(\z)} = \frac12 \left\{
\begin{array}{ll}
z - (\beta_2+\alpha_2)/2-w_2(z), & z\in\RS_c^{(0)}\setminus \boldsymbol\Delta_{c,1}, \medskip \\
z - (\beta_2+\alpha_2)/2+w_2(z), & z\in\RS_c^{(2)},
\end{array}
\right.
\]
that is \( \psi^{(2)}(z) \) maps \( \RS_c^{(2)} \) conformally onto \( \{|z|>(\beta_2-\alpha_2)/4\} \) and \(  \psi^{(0)}(z) \psi^{(2)}(z)\equiv A_{0,2}\). Moreover, it holds that\footnote{Given non-negative functions \( A_c(z) \) and \( B_c(z) \), we write \( A_c(z)\lesssim B_c(z) \) (resp. \( A_c(z)\sim B_c(z) \)) as \( c\to 0 \) on \( K_c \) for some family of closed sets \( \{K_c\} \), if there exists \( \epsilon> 0 \) such that \( A_c(z) \leq C B_c(z) \) (resp. \( C^{-1}A_c(z)\leq B_c(z) \leq CA_c(z) \)) for all \( z\in K_c \) and each \( c\in[0,\epsilon] \), where \( C \) depends only on \( \epsilon \).}
\begin{equation}
\label{lem51-2}
\big|\Upsilon_{c,1}^{(0)}(z)\big| \sim c|\phi_c^{-1}(z)|, \quad \big|\Upsilon_{c,1}^{(1)}(z)\big| \sim c|\phi_c(z)|, \qandq \big|\Upsilon_{c,1}^{(2)}(z)\big| \sim c^2
\end{equation}
on \( \overline \C \) (including the traces on \(
\Delta_{c,1}\cup \Delta_2\), \( \Delta_{c,1} \), and \( \Delta_2
\), respectively) as \( c\to 0 \), where
\begin{equation}
\label{lem51-3}
\phi_c(z) := \frac2{\beta_{c,1}-\alpha_1}\left(z - \frac{\beta_{c,1}+\alpha_1}2+w_{c,1}(z)\right)
\end{equation}
is the conformal map of \( \overline\C\setminus \Delta_{c,1} \)
onto \( \{|z|>1\} \) that fixes the point at infinity and has
positive derivative there. In addition, it holds that \(
\Upsilon_{c,1}^{(1)}(z) = z-\alpha_1 + \mathcal O(c) \) uniformly
in \( \overline\C \) as \( c\to 0 \).
\end{lemma}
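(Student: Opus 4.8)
We treat the two displays of the lemma separately; the first is routine, the second is the substance.

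\emph{Proof of \eqref{lem51-1}.} By \eqref{Upsilon}, $\Upsilon_{c,2}=A_{c,2}(\chi_c-B_{c,2})^{-1}$, so one only has to divide \eqref{prop-rec0} — which asserts $\chi_c(\z)-B_{c,2}=(1+o(1))\varphi(\z)$ on $\RS_c^{(0)}\cup\RS_c^{(2)}$ and $\chi_c(\z)-B_{c,2}=(1+o(1))\varphi(\boldsymbol\alpha_1)$ on $\RS_c^{(1)}$, uniformly on $\RS_c$ — by $A_{c,2}=(1+o(1))A_{0,2}$ from \eqref{AngPar2}. Since $\varphi$ vanishes only at $\infty^{(2)}$, the pole of $\Upsilon_{c,2}$, the division is harmless and yields \eqref{lem51-1} with $\psi=A_{0,2}/\varphi$. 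The asserted properties of $\psi$ (that $\psi^{(0)}\psi^{(2)}\equiv A_{0,2}$ and that $\psi^{(2)}$ maps $\RS_c^{(2)}$ onto $\{|z|>(\beta_2-\alpha_2)/4\}$) follow from the explicit Joukowski formula \eqref{varphii} for $\varphi_2=\varphi^{(0)}$ and the elementary identity $\varphi^{(0)}\varphi^{(2)}\equiv A_{0,2}$. The difficulty in \eqref{lem51-2} is that as $c\to0$ the cut $\boldsymbol\Delta_{c,1}$ collapses, sheet $1$ of $\RS_c$ swells to fill the sphere while sheets $0$ and $2$ pinch off to a point, and precisely in the region that matters the uniform $o(1)$ of \eqref{prop-rec0} is useless; one must quantify this degeneration, the core being $A_{c,1}\sim c^2$ and $\Upsilon_{c,1}^{(1)}(z)=z-\alpha_1+\mathcal O(c)$.

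The plan for \eqref{lem51-2} has three strands. First, sharpen \eqref{prop-rec0}: in the proof of Proposition~\ref{prop:angelesco}, $f_c(z)=(\chi_c(\varphi^{-1}(z))-B_{c,2})/z$ is holomorphic in $\overline\C\setminus I_{c,1}$, equal to $1$ at $\infty$, with traces bounded uniformly in $c$, so since $|I_{c,1}|=\mathcal O(c)$ Cauchy's formula gives $\chi_c(\z)-B_{c,2}=\varphi(\z)\big(1+\mathcal O(c/\dist(\varphi(\z),I_{c,1}))\big)$; together with \eqref{c-rate} this controls $\Upsilon_{c,1}=A_{c,1}(\chi_c-B_{c,1})^{-1}$ on the part of $\RS_c^{(0)}\cup\RS_c^{(2)}$ a fixed distance from $\alpha_1$, where $|\chi_c-B_{c,1}|$ is bounded below (because $B_{c,1}-B_{c,2}\to\varphi_2(\alpha_1)$, the endpoint of the collapsing slit $I_{c,1}$, which is separated from $\varphi(\z)$ there), giving $|\Upsilon_{c,1}^{(2)}(z)|\sim A_{c,1}$ on all of $\RS_c^{(2)}$ and $|\Upsilon_{c,1}^{(0)}(z)|\sim A_{c,1}/|\varphi_2(z)-\varphi_2(\alpha_1)|$ on the rest of $\RS_c^{(0)}$. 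Second, use conformal capacity: $\Upsilon_{c,1}^{(1)}\circ\phi_c^{-1}$ maps $\{|w|>1\}$ onto $\Omega_1:=\Upsilon_{c,1}(\RS_c^{(1)})\ni\infty$ with leading coefficient $\tfrac14(\beta_{c,1}-\alpha_1)$, so $\cp(\overline\C\setminus\Omega_1)=\cp(\Delta_{c,1})=\tfrac14(\beta_{c,1}-\alpha_1)\sim c$; dually $\chi_c|_{\RS_c^{(1)}}$ maps $\overline\C\setminus\Delta_{c,1}$ onto $\Xi_c:=\chi_c(\RS_c^{(1)})$ with expansion $B_{c,1}+A_{c,1}z^{-1}+\cdots$ by \eqref{AngPar1}, and since $\Upsilon_{c,1}$ is the Möbius map carrying $\Xi_c$ onto $\Omega_1$, one gets $A_{c,1}=\cp(\Delta_{c,1})\cdot r_c$ with $r_c$ the conformal radius of $\Xi_c$ at $B_{c,1}$; hence everything reduces to $r_c\sim c$, i.e. to $\Xi_c\subseteq B_{Mc}(B_{c,1})$, i.e. to $|\Upsilon_{c,1}(\z)|\lesssim c$ on $\RS_c^{(0)}\cup\RS_c^{(2)}$ — which by the first strand is already known off a fixed neighborhood of $\alpha_1$ (there $|\Upsilon_{c,1}|\lesssim A_{c,1}=o(c)$). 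Third, the collar analysis: near $\boldsymbol\Delta_{c,1}$ introduce the coordinate $u$ equal to $\phi_c$ on the sheet-$0$ piece and $1/\phi_c$ on the sheet-$1$ piece (so that a fixed neighborhood $\hat U$ of $\boldsymbol\Delta_{c,1}$ becomes a fixed annulus about $|u|=1$ with $\boldsymbol\alpha_1,\boldsymbol\beta_{c,1}\mapsto-1,1$), express $\chi_c$ in this coordinate via $\varphi_2\circ\phi_c^{-1}$, and compare it with the model $\tfrac14(\beta_{c,1}-\alpha_1)(u+2+u^{-1})$ forced by the branch point at $u=-1$; this, with the boundary data from the first strand, is meant to give $|\Upsilon_{c,1}|\lesssim\beta_{c,1}-\alpha_1\sim c$ on $\hat U$ (a preliminary uniform bound $\Upsilon_{c,1}^{(1)}(z)=z+\mathcal O(1)$ from Koebe's $1/4$-theorem serves to start the bootstrap). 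Koebe's theorem then supplies the reverse bound $r_c\gtrsim\cp(\Delta_{c,1})\sim c$, whence $A_{c,1}\sim c^2$.

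With $A_{c,1}\sim c^2$ and $\overline\C\setminus\Omega_1\subseteq B_{Mc}(0)$ in hand, the assembly is: the bounded holomorphic function $\Upsilon_{c,1}^{(1)}(z)-(z-\alpha_1)$ on $\overline\C\setminus\Delta_{c,1}$ is $\mathcal O(c)$ on $\Delta_{c,1}$ — there $|z-\alpha_1|\le\beta_{c,1}-\alpha_1=\mathcal O(c)$ while $\Upsilon_{c,1}^{(1)}$ takes values on $\partial\Omega_1\subseteq B_{Mc}(0)$ — so $\Upsilon_{c,1}^{(1)}(z)=z-\alpha_1+\mathcal O(c)$ everywhere by the maximum principle, which is the final assertion of the lemma. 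For \eqref{lem51-2} itself: on $\RS_c^{(2)}$ and on $\RS_c^{(0)}$ away from $\alpha_1$ one combines the first strand, $A_{c,1}\sim c^2$, and the elementary estimates $|\varphi_2(z)-\varphi_2(\alpha_1)|\sim|z-\alpha_1|\sim(\beta_{c,1}-\alpha_1)|\phi_c(z)|$ valid there; near $\alpha_1$ (including the traces on $\Delta_{c,1}$, where $|\phi_c|\equiv1$) one uses the crosswise gluing $\Upsilon_{c,1}^{(0)\pm}=\Upsilon_{c,1}^{(1)\mp}$ on $\Delta_{c,1}$ together with $\Upsilon_{c,1}^{(1)}(z)=z-\alpha_1+\mathcal O(c)$, tracking the remainder — whose modulus is itself $\sim c$ in this regime — to obtain both bounds in $|\Upsilon_{c,1}^{(0)}|\sim c$; the two regimes overlap on an annulus where both give $\sim c$, so they patch. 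The tightest point is the third strand: establishing $A_{c,1}\sim c^2$ and, above all, the \emph{lower} bound $|\Upsilon_{c,1}^{(0)}|\gtrsim c$ on $\Delta_{c,1}$ (in particular at $\boldsymbol\alpha_1$), where one sits squarely inside the degenerating configuration and every $o(1)$ estimate inherited from Proposition~\ref{prop:angelesco} is too crude, so the collar-coordinate comparison near the branch point must be carried out by hand.
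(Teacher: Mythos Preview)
Your treatment of \eqref{lem51-1} is correct and matches the paper's: it follows at once from \eqref{prop-rec0}, \eqref{Upsilon}, and \eqref{AngPar2}.

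For \eqref{lem51-2} your conformal-geometric strategy is genuinely different from the paper's, but as written it has a real gap. The collar analysis (your third strand) is never carried out --- you sketch a coordinate change and a model comparison, say it ``is meant to give'' the bound, and then concede that this is ``the tightest point''. But this strand is load-bearing: without it you cannot show $\Xi_c\subseteq B_{Mc}(B_{c,1})$, and hence cannot get the upper bound $A_{c,1}\lesssim c^2$. Your claim that ``Koebe's theorem then supplies the reverse bound $r_c\gtrsim\cp(\Delta_{c,1})$'' is also not justified: Koebe applied to the Riemann map $h:\D\to\Xi_c$ gives $\dist(B_{c,1},\partial\Xi_c)\ge r_c/4$, which is the wrong direction; to get $r_c\gtrsim c$ you would need $\Xi_c\supseteq B_{\delta c}(B_{c,1})$, i.e.\ a \emph{lower} bound $|\chi_c(\x)-B_{c,1}|\gtrsim c$ on $\boldsymbol\Delta_{c,1}$ --- again the collar analysis. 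Note also that the chain ``$r_c\sim c$, i.e.\ $\Xi_c\subseteq B_{Mc}(B_{c,1})$, i.e.\ $|\Upsilon_{c,1}|\lesssim c$ on $\RS_c^{(0)}\cup\RS_c^{(2)}$'' is not a chain of equivalences; the last condition is a statement about $\overline\C\setminus\Omega_1$, not about $\Xi_c$, and in fact follows for free from Koebe applied to $\Upsilon_{c,1}^{(1)}\circ\phi_c^{-1}$ (giving $\overline\C\setminus\Omega_1\subseteq\{|z|\le\beta_{c,1}-\alpha_1\}$) without telling you anything new about $A_{c,1}$.

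The paper's route avoids the collar entirely via three elementary algebraic identities for the branches of $\Upsilon_{c,1}$. First, the product $\Upsilon_{c,1}^{(0)}\Upsilon_{c,1}^{(1)}\Upsilon_{c,1}^{(2)}$ is an entire function vanishing at $\infty$, hence identically $A_{c,1}^2/(B_{c,2}-B_{c,1})\sim A_{c,1}^2$. Second, conjugate symmetry $\overline{\Upsilon_{c,1}(\overline\z)}=\Upsilon_{c,1}(\z)$ forces $|\Upsilon_{c,1+}^{(0)}(x)|=|\Upsilon_{c,1-}^{(0)}(x)|=|\Upsilon_{c,1\pm}^{(1)}(x)|$ on $\Delta_{c,1}$; combined with the product identity and $|\Upsilon_{c,1}^{(2)}|\sim A_{c,1}$ (your first strand) this gives $|\Upsilon_{c,1\pm}^{(0)}(x)|=|\Upsilon_{c,1\pm}^{(1)}(x)|\sim A_{c,1}^{1/2}$ on $\Delta_{c,1}$ --- precisely the two-sided estimate at the branch points that your collar analysis was meant to supply. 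The maximum principle applied to $\Upsilon_{c,1}^{(0)}\phi_c$ and $\Upsilon_{c,1}^{(1)}/\phi_c$ then gives \eqref{lem51-2} with $A_{c,1}^{1/2}$ in place of $c$. Third, the sum $\Upsilon_{c,1}^{(0)}+\Upsilon_{c,1}^{(1)}+\Upsilon_{c,1}^{(2)}$ is a monic degree-one polynomial; evaluating at $\alpha_1$ and $\beta_{c,1}$ and using $\beta_{c,1}-\alpha_1\sim c$ gives $c\lesssim A_{c,1}^{1/2}$. The matching upper bound $A_{c,1}^{1/2}\lesssim c$ comes from a Cauchy integral: the $z^{-1}$ coefficient of $\Upsilon_{c,2}^{(1)}$ at infinity equals $A_{c,1}A_{c,2}/(B_{c,2}-B_{c,1})$, and also equals $(2\pi\ic)^{-1}\int_{\Delta_{c,1}}(\Upsilon_{c,2+}^{(1)}-\Upsilon_{c,2-}^{(1)})\dd x$, whose integrand is $\mathcal O(A_{c,1}^{1/2})$ on an interval of length $\sim c$. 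The final claim $\Upsilon_{c,1}^{(1)}(z)=z-\alpha_1+\mathcal O(c)$ then follows from the maximum principle exactly as you describe.
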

\begin{proof}
Formula \eqref{lem51-1} follows immediately from \eqref{prop-rec0}, the very
definition \eqref{Upsilon},  and the first limit
in \eqref{AngPar2}. It also is immediate from  \eqref{Upsilon} and
\eqref{prop-rec0} that
\begin{equation}
\label{lem51-4}
\big|\Upsilon_{c,1}^{(2)}(z)\big| = \left|\frac{A_{c,1}}{(1+o(1))\varphi(\boldsymbol\alpha_1) + (1+o(1))\varphi^{(2)}(z)}\right| \sim A_{c,1}
\end{equation}
in \( \overline\C \) (including the traces on \( \Delta_2 \)) as \( c\to 0 \) since \( |\varphi^{(2)}(z)| \leq (\beta_2-\alpha_2)/4<|\varphi(\boldsymbol\alpha_1)| \), see \eqref{varphii}. It can be readily verified that the symmetric functions of the branches of a rational function on \( \RS_c \) must be rational functions on \( \overline\C \). Since \( \Upsilon_{c,1}^{(1)}(z) \) has a simple pole at infinity, \( \Upsilon_{c,1}^{(0)}(z) \) has a simple zero there, and \( \Upsilon_{c,1}^{(k)}(z) \), \( k\in\{0,1,2\} \), are otherwise non-vanishing and  finite, the product of three branches of \( \Upsilon_{c,1}(\z) \) must be a constant. Thus, similarly to \eqref{lem51-4}, it holds that
\begin{equation}
\label{lem51-5}
\big|\Upsilon_{c,1}^{(0)}(z)\Upsilon_{c,1}^{(1)}(z)\Upsilon_{c,1}^{(2)}(z)\big| = \frac{A_{c,1}^2}{B_{c,2}-B_{c,1}} = -\frac{A_{c,1}^2}{(1+o(1))\varphi(\boldsymbol\alpha_1)} \sim A_{c,1}^2
\end{equation}
in \( \overline\C \) as \( c\to0 \) (recall that \( \varphi(\boldsymbol\alpha_1)<0 \)). For each \( \z\notin\boldsymbol\Delta_{c,1}\cup\boldsymbol\Delta_{c,2} \), let \( \overline\z \) be the point on the same sheet of \( \RS_c \) as \( \z \) with \( \pi(\overline\z) = \overline z \) and then extend this definition by continuity to \( \boldsymbol\Delta_{c,1}\cup\boldsymbol\Delta_{c,2} \). The function \( \overline{\Upsilon_{c,1}(\overline\z)} \) is meromorphic on \( \RS_c \) and has the same zero/pole divisor and normalization as \( \Upsilon_{c,1}(\z) \). Therefore, \( \overline{\Upsilon_{c,1}(\overline\z)} = \Upsilon_{c,1}(\z) \). In particular, \( \Upsilon_{c,1}^{(2)}(x) \) is real on \( \Delta_{c,1} \) and the traces of \( \Upsilon_{c,1}^{(k)}(z) \) on \( \Delta_{c,1} \), \( k\in\{0,1\} \), are conjugate-symmetric. Hence, we get from \eqref{lem51-4} and \eqref{lem51-5} that
\begin{equation}
\label{lem51-6}
A_{c,1} \sim A_{c,1}^{-1}\big|\Upsilon_{c,1}^{(2)}(x)\Upsilon_{c,1\pm}^{(1)}(x)\Upsilon_{c,1\pm}^{(0)}(x)\big| \sim \big|\Upsilon_{c,1\pm}^{(1)}(x)\big|^2 =  \big|\Upsilon_{c,1\pm}^{(0)}(x)\big|^2, \quad x\in\Delta_{c,1},
\end{equation}
as \( c\to 0 \). Thus, \eqref{lem51-4}, \eqref{lem51-6}, and the maximum modulus principle applied to \( \Upsilon_{c,1}^{(0)}(z)\phi_c(z) \) and \( \Upsilon_{c,1}^{(1)}(z)/\phi_c(z) \) yield \eqref{lem51-2} with \( c^2 \) replaced by \( A_{c,1} \). That is, we need to show that \( A_{c,1} \sim c^2 \) as  \( c\to 0 \).

As is mentioned above, the sum \( \Upsilon_{c,1}^{(0)}(z) + \Upsilon_{c,1}^{(1)}(z) + \Upsilon_{c,1}^{(2)}(z) \) is a rational function on \( \overline\C \). Since it has only one pole, which is simple and located at infinity, it is a monic (see \eqref{Upsilon2}) polynomial of degree \( 1 \). In particular, it holds that
\begin{equation}
\label{lem51-7}
\beta_{c,1} - \alpha_1 = 2\Upsilon_{c,1}^{(0)}(\beta_{c,1}) +  \Upsilon_{c,1}^{(2)}(\beta_{c,1}) -  2\Upsilon_{c,1}^{(0)}(\alpha_1) -  \Upsilon_{c,1}^{(2)}(\alpha_1),
\end{equation}
where we used the fact that \( \Upsilon_{c,1}^{(0)}(\gamma)  = \Upsilon_{c,1}^{(1)}(\gamma)=\Upsilon_{c,1}(\boldsymbol\gamma) \) for \( \boldsymbol\gamma\in\{\boldsymbol\alpha_1,\boldsymbol\beta_{c,1}\} \). Thus, it follows from \eqref{bc1} and \eqref{lem51-7} (lower bound) together with \eqref{lem51-4} and \eqref{lem51-6} (upper bound) that
\[
c \lesssim  2\big|\Upsilon_{c,1}^{(0)}(\beta_{c,1})\big| + \big|\Upsilon_{c,1}^{(2)}(\beta_{c,1})\big| + 2\big|\Upsilon_{c,1}^{(0)}(\alpha_1)\big| +  \big|\Upsilon_{c,1}^{(2)}(\alpha_1)\big| \lesssim A_{c,1}^{1/2} + A_{c,1} \lesssim A_{c,1}^{1/2}
\]
as \( c\to 0 \), where we also used the fact that \( A_{c,1}\to0 \) as \( c\to 0 \) for the last inequality. On the other hand, it holds that
\[
\Upsilon_{c,2}^{(1)}(z) = -\frac{A_{c,2}}{B_{c,2}-B_{c,1}} + \frac{A_{c,1}A_{c,2}}{B_{c,2}-B_{c,1}}\frac1z + \mathcal O\left(\frac1{z^2}\right)
\]
as \( z\to\infty \) by the very definitions \eqref{Upsilon} and \eqref{chi}. Therefore, we can deduce from Cauchy's integral formula that
\begin{equation}
\label{lem51-8}
\frac{A_{c,1}A_{c,2}}{B_{c,2}-B_{c,1}} = \left|\frac1{2\pi\ic}\int_{\Delta_{c,1}}\left(\Upsilon_{c,2+}^{(1)}(x)-\Upsilon_{c,2-}^{(1)}(x)\right)\dd x\right| \leq \frac{\beta_{c,1}-\alpha_1}\pi \max_{\x\in\boldsymbol\Delta_{c,1}}\left|\Upsilon_{c,2}^{(1)}(\x) + Z\right|
\end{equation}
for any complex number \( Z \). Now, if we show that
\begin{equation}
\label{lem51-9}
\max_{\x\in\boldsymbol\Delta_{c,1}}\left|\Upsilon_{c,2}^{(1)}(\x) + \frac{A_{c,2}}{B_{c,2}-B_{c,1}} \right| \lesssim A_{c,1}^{1/2}
\end{equation}
as \( c\to 0 \), inequalities \eqref{bc1} and \eqref{lem51-8} together with limits \eqref{AngPar2} will allow us to conclude that \(A_{c,1}^{1/2} \lesssim c \) as \( c\to 0 \), which will finish the proof of \eqref{lem51-2}. To prove \eqref{lem51-9}, observe that
\[
\Upsilon_{c,2}(\z) = \frac{A_{c,2}}{\chi_c(\z)-B_{c,2}} = \frac{A_{c,2}}{B_{c,1}-B_{c,2} + A_{c,1}\Upsilon_{c,1}^{-1}(\z)} = \frac{A_{c,2}}{B_{c,2}-B_{c,1}}\frac{\Upsilon_{c,1}(\z)}{\frac{A_{c,1}}{B_{c,2}-B_{c,1}} - \Upsilon_{c,1}(\z)}
\]
according to their very definition \eqref{Upsilon}. Thus,
\[
\Upsilon_{c,2}(\z) + \frac{A_{c,2}}{B_{c,2}-B_{c,1}} = \frac{A_{c,2}}{B_{c,2}-B_{c,1}}\frac{A_{c,1}}{A_{c,1}-(B_{c,2}-B_{c,1})\Upsilon_{c,1}(\z)}
\]
The desired estimate \eqref{lem51-9} now follows from \eqref{lem51-6} and \eqref{AngPar2}.

To prove the last claim of the lemma, observe that \( \Upsilon_{c,1}^{(1)}(z) - (z-\alpha_1) \) is holomorphic in \( \overline\C\setminus\Delta_{c,1} \) and
\[
\big|\Upsilon_{c,1\pm}^{(1)}(x) - (x-\alpha_1)\big| \leq \max_{x\in\Delta_{c,1}}\big|\Upsilon_{c,1\pm}^{(1)}(x)\big| + \beta_{c,1}-\alpha_1 \lesssim c, \quad
x\in\Delta_{c,1},
\]
as \( c\to 0 \) by \eqref{bc1} and \eqref{lem51-6}. The desired claim now follows from the maximum modulus principle.
\end{proof}

In our analysis, it will be convenient to apply
Lemma~\ref{lem:aux1} in the following form.

\begin{lemma}
\label{lem:aux1a} 
For each \( 0<\delta\leq (\alpha_2-\beta_1)/2 \) fixed, it holds that
\begin{equation}
\label{lem52-1}
\left\{
\begin{array}{l}
c^{-1}\big|\Upsilon_{c,1}^{(0)}(z)\big|, \, c^{-1}\big|\Upsilon_{c,1}^{(1)}(z)\big|, \, c^{-2}\big|\Upsilon_{c,1}^{(2)}(z)\big| \sim 1, \bigskip \\
(1-c)^{-2}\big|\Upsilon_{c,2}^{(0)}(z)\big|, \, (1-c)^{-2}\big|\Upsilon_{c,2}^{(1)}(z)\big|, \, \big|\Upsilon_{c,2}^{(2)}(z)\big| \sim 1,
\end{array}
\right.
\end{equation}
on \( K_{c,\delta,1} := \{z:\dist(z,\Delta_{c,1})\leq c\delta\} \) for all \( c\in(0,1) \) and that
\begin{equation}
\label{lem52-2}
\left\{
\begin{array}{l}
c^{-2}\big|\Upsilon_{c,1}^{(0)}(z)\big|, \, \big|\Upsilon_{c,1}^{(1)}(z)\big|, \, c^{-2}\big|\Upsilon_{c,1}^{(2)}(z)\big| \sim 1, \bigskip \\
(1-c)^{-1}\big|\Upsilon_{c,2}^{(0)}(z)\big|, \, (1-c)^{-2}\big|\Upsilon_{c,2}^{(1)}(z)\big|, \, (1-c)^{-1}\big|\Upsilon_{c,2}^{(2)}(z)\big| \sim 1,
\end{array}
\right.
\end{equation}
on \( K_{c,\delta,2} := \{z:\dist(z,\Delta_{c,2})\leq (1-c)\delta\} \) for all \( c\in(0,1) \), where the constants of proportionality depend only on \( \delta \).
\end{lemma}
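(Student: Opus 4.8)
\emph{Overview.}
The plan is to prove \eqref{lem52-1}--\eqref{lem52-2} separately in three regimes of the parameter: a fixed compact range \( c\in[\epsilon,1-\epsilon] \) and the two collapsing ranges \( c\in(0,\epsilon] \) and \( c\in[1-\epsilon,1) \), where \( \epsilon>0 \) is fixed small enough that Lemma~\ref{lem:aux1} and its \( c\to1 \) analogue apply. The only analytic input needed beyond what is already established is the elementary estimate that, if \( \phi_J \) denotes the conformal map of \( \overline\C\setminus J \) onto \( \{|z|>1\} \) for a real interval \( J \) (fixing \( \infty \), with positive derivative there), then for each fixed \( \eta>0 \) one has \( |\phi_J(z)|\sim1 \) on \( \{z:\dist(z,J)\le\eta|J|\} \), whereas \( |\phi_J(z)|\sim\dist(z,J)/|J| \) on \( \{z:\dist(z,J)\ge\eta|J|\}\cap L \) for every bounded \( L \); this follows from \eqref{koebe1/4} (or the explicit Joukowski-type formula for \( \phi_J \)) after an affine reduction to \( J=[-1,1] \). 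The same kind of estimate shows that \( \varphi_2 \) and the branch \( \varphi^{(2)} \) both have modulus \( \sim1 \) on any fixed-width neighbourhood of \( \Delta_2 \).

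\emph{Compact range.}
For \( c\in[\epsilon,1-\epsilon] \) the scalar factors \( c^{\pm1},c^{\pm2},(1-c)^{\pm1},(1-c)^{\pm2} \) are bounded above and below, the intervals \( \Delta_{c,1},\Delta_{c,2} \) stay at positive distance from one another and from \( \{\alpha_1,\beta_1,\alpha_2,\beta_2\} \), and each branch \( \Upsilon_{c,i}^{(k)} \) is holomorphic on \( \overline\C\setminus\Delta_{c,i} \) with continuous boundary values on \( \Delta_{c,i} \), non-vanishing there except that \( \Upsilon_{c,i}^{(0)} \) has a simple zero at \( \infty \), and without poles except that \( \Upsilon_{c,i}^{(i)} \) has a simple pole at \( \infty \). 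Since every \( K_{c,\delta,i} \) is bounded and omits \( \infty \), each \( \Upsilon_{c,i}^{(k)} \) is bounded and bounded away from \( 0 \) on it; the convergence \eqref{lem51-0} of \( \Upsilon_{c,i}(\z) \) in \( c \) away from the ramification points, together with the conformal local structure of \( \Upsilon_{c,i} \) at those points (which is also continuous in \( c \)), makes these two-sided bounds uniform over the compact range. Hence all eight scaled quantities in \eqref{lem52-1}--\eqref{lem52-2} are \( \sim1 \) there.

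\emph{Collapsing ranges.}
Consider \( c\to0 \); the case \( c\to1 \) is symmetric under interchanging the indices \( 1,2 \) and \( c,1-c \), using the \( c\to1 \) analogue of Lemma~\ref{lem:aux1}. By \eqref{bc1}, \( |\Delta_{c,1}|=\beta_{c,1}-\alpha_1\sim c \), while \( \Delta_{c,2}=\Delta_2 \). On \( K_{c,\delta,1} \) one has \( \dist(z,\Delta_{c,1})\le c\delta\lesssim\delta|\Delta_{c,1}| \) (using \( \delta\le(\alpha_2-\beta_1)/2 \)), hence \( |\phi_c(z)|\sim1 \), so \eqref{lem51-2} gives \( |\Upsilon_{c,1}^{(0)}(z)|\sim c \), \( |\Upsilon_{c,1}^{(1)}(z)|\sim c \), \( |\Upsilon_{c,1}^{(2)}(z)|\sim c^2 \); moreover on \( K_{c,\delta,1} \) the point \( z \) converges to \( \alpha_1 \), so \eqref{lem51-1} gives \( |\Upsilon_{c,2}^{(k)}(z)|\sim|\psi(\boldsymbol\alpha_1)|\sim1 \) for \( k\in\{0,1,2\} \); since \( 1-c\sim1 \), this is exactly \eqref{lem52-1}. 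On \( K_{c,\delta,2}=\{z:\dist(z,\Delta_2)\le(1-c)\delta\} \) the point \( z \) stays bounded with \( \dist(z,\Delta_{c,1})\gtrsim\alpha_2-\beta_1\gg|\Delta_{c,1}| \), hence \( |\phi_c(z)|\sim1/c \), so \eqref{lem51-2} gives \( |\Upsilon_{c,1}^{(0)}(z)|\sim c^2 \), \( |\Upsilon_{c,1}^{(1)}(z)|\sim1 \), \( |\Upsilon_{c,1}^{(2)}(z)|\sim c^2 \); while \eqref{lem51-1}, together with the boundedness above and below of the branches of \( \psi \) near \( \Delta_2 \), gives \( |\Upsilon_{c,2}^{(k)}(z)|\sim1 \), \( k\in\{0,1,2\} \); since \( 1-c\sim1 \), this is \eqref{lem52-2}. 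Running the symmetric argument as \( c\to1 \) yields the corresponding bounds with the roles of \( c \) and \( 1-c \), and of the two sheets bordered by the collapsing interval, exchanged; they coincide with \eqref{lem52-1}--\eqref{lem52-2} as well. Combining the three ranges completes the proof.

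\emph{Main obstacle.}
There is no conceptual difficulty; the work is essentially bookkeeping. The two points requiring care are: applying the conformal-map estimate at the correct scale, i.e., measuring distances in units of \( |\Delta_{c,1}|\sim c \) on \( K_{c,\delta,1} \) and of \( |\Delta_{c,2}|\sim1-c \) on \( K_{c,\delta,2} \), so as to correctly distinguish whether a given \( z \) in one of these sets lies within \( O(\delta) \) diameters of the collapsing interval (where the relevant conformal map has modulus \( \sim1 \)) or at a fixed distance from it (modulus \( \sim1/c \), resp. \( \sim1/(1-c) \)); and verifying the uniformity over the compact range, which rests on the convergence \eqref{lem51-0}.
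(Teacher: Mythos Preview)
Your proof is correct and follows essentially the same strategy as the paper: split the range of \( c \) into a ``safe'' part where \( A_{c,i} \) stays bounded below and a collapsing part, and use Lemma~\ref{lem:aux1} (specifically \eqref{lem51-1}--\eqref{lem51-3}) together with the elementary \( \phi_c \)-estimate \eqref{lem52-3} for the latter.

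The one notable difference is in how the non-collapsing range is treated. You argue via the convergence \eqref{lem51-0} and compactness, which forces you to patch in a separate remark about the ``conformal local structure'' near the ramification points (since \eqref{lem51-0} is stated only away from those points). The paper instead dispatches this range in one stroke using Koebe's \( 1/4 \)-theorem: since \( \Upsilon_{c,1} \) is a global conformal map of \( \RS_c \) onto \( \overline\C \) with \( \Upsilon_{c,1}^{(0)}(z)=A_{c,1}z^{-1}+\cdots \) and \( \Upsilon_{c,1}^{(1)}(z)=z+\cdots \) near \( \infty \), the images of \( \{|z|>r+\delta\} \) under these two branches must contain \( \{|w|<A_{c,1}/4(r+\delta)\} \) and \( \{|w|>4(r+\delta)\} \) respectively, which immediately gives \( A_{c,1}/4(r+\delta)\le|\Upsilon_{c,1}(\z)|\le 4(r+\delta) \) for all \( \z \) over \( K_{c,\delta,1}\cup K_{c,\delta,2} \) simultaneously, uniformly in \( c \) as long as \( A_{c,1} \) stays bounded below. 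This avoids any appeal to continuity in \( c \) or to behaviour at ramification points, and in fact covers \( c\in[\epsilon,1) \) for \( \Upsilon_{c,1} \) (not just \( [\epsilon,1-\epsilon] \)). Your continuity argument is valid, but the Koebe route is both shorter and more robust.
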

\begin{proof}
We provide the proofs only for \(  \Upsilon_{c,1}(\z) \), understanding that the arguments for \(  \Upsilon_{c,2}(\z) \) are essentially identical. Recall that \( \Upsilon_{c,1}(\z) \) is a conformal map of \( \RS_c \) onto \( \overline\C \) that maps \( \infty^{(0)} \) into \( 0 \) and \( \infty^{(1)} \) into \( \infty \). Let \( r:=\max\{|\alpha_1|,|\beta_2|\} \). Then it follows from \eqref{koebe1/4} and \eqref{Upsilon2} that
\[
\big\{|z|<A_{c,1}/4(r+\delta) \big\} \subset \Upsilon_{c,1}^{(0)}\big(\{|z|>r+\delta\}\big) \qandq \big\{|z|>4(r+\delta)\big\} \subset \Upsilon_{c,1}^{(1)}\big(\{|z|>r+\delta\}\big).
\]
Thus, it holds that
\[
\frac{A_{c,1}}{4(r+\delta)} \leq |\Upsilon_{c,1}(\z)| \leq 4(r+\delta) \quad \text{for all} \quad z\in K_{c,\delta,1}\cup K_{c,\delta,2}.
\]
Since \( A_{c,1}\to ((\beta_1-\alpha_1)/4)^2 \) by the limit analogous to the one for \( A_{c,2} \) in \eqref{AngPar2}, this establishes the desired bounds in \eqref{lem52-1} and \eqref{lem52-2} for all \( c\in[\epsilon,1) \) and any \( \epsilon>0 \) fixed with the constants of proportionality dependent on \( \epsilon \) and \( \delta \). On the other hand, the bounds for \( c\in(0,\epsilon] \) readily follow from \eqref{lem51-2} and \eqref{lem51-3} as
\begin{equation}
\label{lem52-3}
1\leq |\phi_c(z)| \leq 4\frac{c\delta+\beta_{c,1}-\alpha_1}{\beta_{c,1}-\alpha_1} < 4 + \frac\delta{\beta_2-\alpha_1}  \qandq c|\phi_c(z)| \sim |z-\alpha_1|
\end{equation}
on \( K_{c,\delta,1} \) and \( K_{c,\delta,2} \), respectively, as \( c\to 0 \) by elementary estimates and \eqref{bc1}. The estimates of \( \Upsilon_{c,2}^{(k)}(z) \) can be verified similarly.
\end{proof}

Let a function \( \Pi_c(\z) \) be defined on \( \RS_c \)
analogously to the way \( \Pi_\n(\z) \) was defined on \( \RS_\n \)
just before Theorem~\ref{thm:asymp3}. Further, let \( \Pi_{c,i}(\z)
\), \( i\in\{1,2\} \), be rational functions on \( \RS_c \) with
the divisors and normalization given by
\begin{equation}
\label{Pies}
(\Pi_{c,i}) = \infty^{(0)} + \infty^{(i)} + 2\infty^{(3-i)} - \mathcal D_c \qandq \Pi_{c,i}^{(i)}(z) = \frac 1z + \mathcal O\left(\frac1{z^2}\right),
\end{equation}
where \( \mathcal D_c\) is the divisor of the ramification points
of \( \RS_c \), see Proposition~\ref{prop:2}.

\begin{lemma}
\label{lem:aux2} It holds that
\begin{equation}
\label{Up-Pi1}
(-1)^{3-i}(w_{c,1}w_{c,2})(z)\Pi_{c,3-i}(\z) = \left\{
\begin{array}{ll}
\left(\Upsilon_{c,i}^{(2)}-\Upsilon_{c,i}^{(1)}\right)(z), & \z\in\RS_c^{(0)}, \medskip \\
\left(\Upsilon_{c,i}^{(0)}-\Upsilon_{c,i}^{(2)}\right)(z), & \z\in\RS_c^{(1)}, \medskip \\
\left(\Upsilon_{c,i}^{(1)}-\Upsilon_{c,i}^{(0)}\right)(z), & \z\in\RS_c^{(2)},
\end{array}
\right.
\end{equation}
for \( i\in\{1,2\} \) and
\begin{equation}
\label{Up-Pi2}
(w_{c,1}w_{c,2})(z)\Pi_c(\z) = \left\{
\begin{array}{ll}
\left(\Upsilon_{c,2}^{(2)}\Upsilon_{c,1}^{(1)}-\Upsilon_{c,2}^{(1)}\Upsilon_{c,1}^{(2)}\right)(z), & \z\in\RS_c^{(0)}, \medskip \\
\left(\Upsilon_{c,2}^{(0)}\Upsilon_{c,1}^{(2)}-\Upsilon_{c,2}^{(2)}\Upsilon_{c,1}^{(0)}\right)(z), & \z\in\RS_c^{(1)}, \medskip \\
\left(\Upsilon_{c,2}^{(1)}\Upsilon_{c,1}^{(0)}-\Upsilon_{c,2}^{(0)}\Upsilon_{c,1}^{(1)}\right)(z), & \z\in\RS_c^{(2)}.
\end{array}
\right.
\end{equation}
Moreover, it holds that
\begin{equation}
\label{lem52-0}
\Pi_c^{(0)}(z) = (1+o(1)) \frac{\psi^{(2)}(z)}{w_2(z)}\frac{z - \alpha_1 + \mathcal O(c)}{w_{c,1}(z)} = (1+o(1)) \frac{\psi^{(2)}(z)}{w_2(z)}
\end{equation}
as \( c\to0 \), where the first relation holds uniformly in \(
\overline \C \) (that is, including the traces on \( \Delta_{c,1}\cup\Delta_2 \))
and the second one locally uniformly in \(
\overline\C\setminus\Delta_{0,1} \).
\end{lemma}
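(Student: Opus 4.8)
The plan is to establish the two identities \eqref{Up-Pi1}--\eqref{Up-Pi2} by the standard device of recognizing their right-hand sides, after division by the branch of $w_{c,1}w_{c,2}$ holomorphic off $\Delta_{c,1}\cup\Delta_{c,2}$, as rational functions on $\RS_c$ whose divisors can be read off; and then to deduce \eqref{lem52-0} by substituting the asymptotics of Lemma~\ref{lem:aux1} into \eqref{Up-Pi2}. Throughout I would use that $\Upsilon_{c,i}(\z)=A_{c,i}\big(\chi_c(\z)-B_{c,i}\big)^{-1}$ is a degree-one (hence injective) rational function on $\RS_c$ with $\Upsilon_{c,i}^{(i)}(z)=z+\mathcal O(1)$, $\Upsilon_{c,i}^{(0)}(z)=A_{c,i}z^{-1}+\mathcal O(z^{-2})$, and $\Upsilon_{c,i}^{(3-i)}(z)\to A_{c,i}/(B_{c,3-i}-B_{c,i})\neq0$ as $z\to\infty$; in particular the three branches $\Upsilon_{c,i}^{(0)},\Upsilon_{c,i}^{(1)},\Upsilon_{c,i}^{(2)}$ are finite and pairwise distinct at every point of $\overline\C$ (with the obvious convention at ramification points, where only one coincidence occurs).

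For \eqref{Up-Pi1}, fix $i$ and denote its right-hand side by $G(\z)$. Crossing $\boldsymbol\Delta_{c,1}$ or $\boldsymbol\Delta_{c,2}$ interchanges two of the branches of $\Upsilon_{c,i}$, so $G$ changes sign there; the chosen branch of $w_{c,1}w_{c,2}$ changes sign there too, hence $R:=G/(w_{c,1}w_{c,2})$ is single-valued and meromorphic on $\RS_c$. Using the expansions above and $(w_{c,1}w_{c,2})(z)=z^2(1+\mathcal O(z^{-1}))$ one reads off that $R$ has a simple zero at $\infty^{(0)}$, a double zero at $\infty^{(i)}$, and a simple zero at $\infty^{(3-i)}$; at each of the four ramification points $G$ is finite and non-zero (the two colliding branches stay distinct from the third) while $w_{c,1}w_{c,2}$ has a simple zero, so $R$ has a simple pole there; and $R$ is finite and non-zero elsewhere. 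Thus $(R)=(\Pi_{c,3-i})$, so $R=\lambda\Pi_{c,3-i}$, and comparing leading terms at $\infty^{(3-i)}$, where $\Pi_{c,3-i}^{(3-i)}(z)=z^{-1}+\mathcal O(z^{-2})$, forces $\lambda=(-1)^{3-i}$. The proof of \eqref{Up-Pi2} follows the same scheme: its right-hand side $\Theta(\z)$ again changes sign across both cuts, so $\Theta/(w_{c,1}w_{c,2})$ is meromorphic on $\RS_c$; a short computation with $\Upsilon_{c,i}=A_{c,i}(\chi_c-B_{c,i})^{-1}$ gives, on $\RS_c^{(0)}$, $\Theta^{(0)}=A_{c,1}A_{c,2}(B_{c,2}-B_{c,1})\big(\chi_c^{(1)}-\chi_c^{(2)}\big)\big/\prod_{k,l\in\{1,2\}}\big(\chi_c^{(k)}-B_{c,l}\big)$, whence injectivity of $\chi_c$ shows $\Theta$ is finite and non-zero at all ramification points; reading off the behavior at the points over infinity then yields divisor $(\Pi_c)$, and the normalization $\Pi_c^{(0)}(\infty)=1$ pins down the constant to be $1$.

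For \eqref{lem52-0} I would start from $\Pi_c^{(0)}=\big(\Upsilon_{c,2}^{(2)}\Upsilon_{c,1}^{(1)}-\Upsilon_{c,2}^{(1)}\Upsilon_{c,1}^{(2)}\big)/(w_{c,1}w_{c,2})$, use that $\Delta_{c,2}=\Delta_2$ for small $c$ (Proposition~\ref{prop:1}), so $w_{c,2}=w_2$, and factor out $\Upsilon_{c,1}^{(1)}$ to get $\Pi_c^{(0)}=\frac{\Upsilon_{c,1}^{(1)}}{w_{c,1}w_2}\big(\Upsilon_{c,2}^{(2)}-\Upsilon_{c,2}^{(1)}\Upsilon_{c,1}^{(2)}/\Upsilon_{c,1}^{(1)}\big)$. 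Lemma~\ref{lem:aux1} then supplies, uniformly on $\overline\C$, $\Upsilon_{c,2}^{(2)}=(1+o(1))\psi^{(2)}$ with $\psi^{(2)}=\varphi_2$ bounded below in modulus by $(\beta_2-\alpha_2)/4$, $\Upsilon_{c,2}^{(1)}=\mathcal O(1)$, $|\Upsilon_{c,1}^{(2)}|\sim c^2$, and $\Upsilon_{c,1}^{(1)}=z-\alpha_1+\mathcal O(c)$; the one extra ingredient I need is the uniform lower bound $|\Upsilon_{c,1}^{(1)}(z)|\gtrsim c$ on $\overline\C$, which is clear away from $\alpha_1$ and, near $\Delta_{c,1}$, follows from $|\Upsilon_{c,1}^{(1)}|\sim c|\phi_c|\geq c$ via \eqref{lem51-2}. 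Consequently the correction term is $\mathcal O(c)$, hence $o(1)$ relative to $\Upsilon_{c,2}^{(2)}$, and the first identity of \eqref{lem52-0} follows uniformly on $\overline\C$; the second comes from $\beta_{c,1}\to\alpha_1$, which gives $w_{c,1}(z)\to z-\alpha_1$ and so $\big(z-\alpha_1+\mathcal O(c)\big)/w_{c,1}(z)\to1$ locally uniformly on $\overline\C\setminus\Delta_{0,1}$.

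\textbf{Main obstacle.} The crux, as always in such arguments, is the divisor bookkeeping on $\RS_c$: verifying that the two quotients are genuinely single-valued and correctly accounting for multiplicities at $\infty^{(0)},\infty^{(1)},\infty^{(2)}$ and at the four ramification points — all the ``finite and non-zero at the ramification points'' assertions ultimately rest on the injectivity of $\chi_c$. The secondary delicate point is uniformity in \eqref{lem52-0} on the collapsing interval $\Delta_{c,1}$ and on $\Delta_2$, where it is precisely the lower bound $|\Upsilon_{c,1}^{(1)}|\gtrsim c$ that makes the omitted term uniformly negligible against the main term.
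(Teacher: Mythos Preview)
Your proposal is correct and follows essentially the same approach as the paper: for \eqref{Up-Pi1}--\eqref{Up-Pi2} you verify single-valuedness across the cuts, read off the zero/pole divisors, and match normalizations, while for \eqref{lem52-0} you substitute the asymptotics of Lemma~\ref{lem:aux1} into the first line of \eqref{Up-Pi2}. The paper's proof is extremely terse (one sentence per part), so your version simply fills in the divisor bookkeeping and the uniform lower bound $|\Upsilon_{c,1}^{(1)}|\gtrsim c$ that the paper leaves implicit.
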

\begin{proof}
Representations \eqref{Up-Pi1} and \eqref{Up-Pi2} can be easily
verified by observing that the right-hand sides are continuous
across \( \boldsymbol\Delta_{c,1} \) and \( \boldsymbol\Delta_{c,2}
\) and by comparing the zero/pole divisors and the normalizations
of the left-hand and right-hand sides, see \eqref{AngPar1},
\eqref{Upsilon}, and \eqref{Pies}. Asymptotic formula
\eqref{lem52-0} follows immediately from the first relation in
\eqref{Up-Pi2}, asymptotic formulae \eqref{lem51-1} and \eqref{lem51-2}, and the last claim of
Lemma~\ref{lem:aux1}.
\end{proof}

\subsection{Proof of Proposition~\ref{prop:szego}}

It was shown  in \cite[Section~6]{Y16} that the Szeg\H{o} functions
\( S_c(\z) \) satisfying \eqref{szego-pts2} is given by
\[
S_c(\z) := \exp\left\{\frac1{6\pi\ic}\sum_{i=1}^2\int_{\boldsymbol\Delta_{c,i}}\log(\rho_i w_{c,i+})(s)\mathcal C_\z(\boldsymbol s)\right\},
\]
where \( \mathcal C_\z(\boldsymbol s) \) is the third kind
differential on \( \RS_c \) with three simple poles at \(
\z,\z_1,\z_2 \) that have the same natural projection \( z \) and
respective residues \( -2,1,1 \). Limit \eqref{szego-limit1} was in
fact proven in \cite[Section~7]{Y16}. Thus, it only remains to show
the validity of \eqref{szego-limit2} and \eqref{szego-limit3}. In
order to do that we shall use  an alternative construction of \(
S_c(\z) \) that is more amenable to asymptotic analysis.

Since we are interested in what happens when \( c\to0 \), we shall assume that \( c\leq\min\{1/2,c^{**} \} \) (the choice of \( 1/2 \) is rather arbitrary, but convenient to use in \eqref{bc1}). Set
\[
D_{c,1}(z) := \left(\frac{z-(\beta_{c,1}+\alpha_{c,1})/2+w_{c,1}(z)}{2w_{c,1}(z)}\right)^{1/2}, \quad z\in \overline\C\setminus \Delta_{c,1},
\]
where we take the branch of the square root such that \( D_{c,1}(z) \) is holomorphic and non-vanishing in the domain of the definition and has value 1 at infinity. The traces of  \( D_{c,1}(z) \) on \( \Delta_{c,1} \) satisfy
\begin{equation}
\label{prop-szego13}
|D_{c,1\pm}(x)|^2 =  \big(D_{c,1+}D_{c,1-}\big)(x) = \frac{\beta_{c,1}-\alpha_1}{4|w_{c,1}(x)|}=\frac{\ic}4\frac{\beta_{c,1}-\alpha_1}{w_{c,1+}(x)}, \quad x\in \Delta_{c,1}.
\end{equation}
Let \( \delta>0 \) be as in Lemma~\ref{lem:aux1a}, that is, \( \delta\leq(\alpha_2-\beta_1)/2 \). Then it follows from \eqref{bc1} that \( \delta c\leq |\Delta_{c,1}|/8 \). Using \eqref{bc1} once more together with our assumption that \( c\leq1/2 \), we get that
\begin{equation}
\label{prop-szego12}
\left\{\begin{array}{ll}
\sqrt{3(\alpha_2-\beta_1)} < |w_{c,1}(s)| / (c\sqrt\delta ) < 3\sqrt{\beta_2-\alpha_1}, & |s-\alpha_1|=\delta c, \; |s-\beta_{c,1}|=\delta c, \medskip \\
\sqrt{\delta(\alpha_2-\beta_1)} < |w_{c,1\pm}(x)|/c < 8(\beta_2-\alpha_1), & \alpha_1+\delta c\leq x\leq \beta_{c,1} - \delta c,
\end{array} \right.
\end{equation}
(the constants in the above inequalities are in no way sharp, but sufficient for our purposes). Therefore, \eqref{prop-szego12} and similar straightforward estimates of \( |2z-\alpha_1-\beta_{c,1}|\) using \eqref{bc1} as well as \eqref{prop-szego13} and the maximum modulus principle for holomorphic functions applied to both \( D_{c,1}(z) \) and \( D_{c,1}^{-1}(z) \) yield that
\begin{equation}
\label{prop-szego1b}
\left\{
\begin{array}{ll}
|D_{c,1}(s)| \sim \delta^{-1/4}, & |s-\alpha_1|=\delta c, \, |s-\beta_{c,1}|=\delta c, \medskip \\
1 \lesssim |D_{c,1}(z)| \lesssim \delta^{-1/4},  & 0<\delta c \leq \dist(z,\{\alpha_1,\beta_{c,1}\}),
\end{array}
\right.
\end{equation}
uniformly on the respective sets, where the constants of proportionality do not depend on \( c,\delta \). Additionally, since \( \beta_{c,1}\to\alpha_1 \) as \( c\to 0 \) and therefore \( w_{c,1}(z)=z-\alpha_1 + o(1) \) locally uniformly in \( \C\setminus\Delta_{0,1} \) as \( c\to 0 \), it holds locally uniformly in \( \overline\C\setminus\Delta_{0,1} \) that
\begin{equation}
\label{prop-szego1a}
D_{c,1}(z) = 1+o(1) \qasq c\to0.
\end{equation}

Now, let \( D_{c,\rho_1}(z) \) be the Szeg\H{o} function of the restriction of \( \rho_1(x) \) to \( \Delta_{c,1} \) normalized to have value 1 at infinity.  That is,
\begin{equation}
\label{prop-szego8}
D_{c,\rho_1}(z) = \exp\left\{\frac{w_{c,1}(z)}{2\pi\ic}\int_{\Delta_{c,1}}\frac{\log\rho_1(x)}{z-x}\frac{\dd x}{w_{c,1+}(x)} - \int_{\Delta_{c,1}}\frac{\log\rho_1(x)}{w_{c,1+}(x)}\frac{\dd x}{2\pi\ic}\right\},
\end{equation}
\( z\in\overline\C\setminus\Delta_{c,1} \), where we set \( \log\rho_1(x) := \log\mu_1^\prime(x)+\log(2\pi)-\pi\ic/2 \), see \eqref{rhoi} and recall that \( \mu_1^\prime(x) \) is positive on \( \Delta_1 \).  Observe that
\begin{equation}
\label{prop-szego4}
-\int_{\Delta_{c,1}}\frac1{w_{c,1+}(x)}\frac{\dd x}{\pi\ic} = 1 \qandq \frac1{\pi\ic}\int_{\Delta_{c,1}}\frac 1{z-x}\frac{\dd x}{w_{c,1+}(x)} = - \frac 1{w_{c,1}(z)},
\end{equation}
by Cauchy's theorem and integral formula. Hence, \( D_{c,\rho_1}(z) = D_{c,\mu_1^\prime}(z)\) is a holomorphic and non-vanishing function in \( \overline\C \setminus \Delta_{c,1} \) with continuous and conjugate-symmetric traces on \( \Delta_{c,1} \) that satisfy
\begin{equation}
\label{prop-szego2}
\rho_1(x)|D_{c,\rho_1\pm}(x)|^2 = \big(\rho_1D_{c,\rho_1+}D_{c,\rho_1-}\big)(x) = G_{c,\rho_1} := \exp\left\{- \int_{\Delta_{c,i}}\frac{\log\rho_1(x)}{w_{c,1+}(x)}\frac{\dd x}{\pi\ic}\right\},
\end{equation}
according to Plemelj-Sokhotski formulae. Now, analyticity of \( \rho_1(x) \) in a neighborhood of \( \Delta_1 \) implies that \( \max_{x\in\Delta_{c,1}}|\rho_1(x)/\rho_1(\alpha_1) - 1| \to 0 \) as \( c\to 0 \).  Combining this estimate with \eqref{prop-szego4} yields that
\[
- \int_{\Delta_{c,1}}\frac{\log\rho_1(x)}{w_{c,1+}(x)}\frac{\dd x}{\pi\ic} = \log\rho_1(\alpha_1) - \int_{\Delta_{c,1}}\frac{\log(\rho_1(x)/\rho_1(\alpha_1))}{w_{c,1+}(x)}\frac{\dd x}{\pi\ic} = \log\rho_1(\alpha_1) + o(1)
\]
when \( c\to0 \) as well as that
\begin{eqnarray*}
\frac{w_{c,1}(z)}{\pi\ic}\int_{\Delta_{c,1}}\frac{\log\rho_1(x)}{z-x}\frac{\dd x}{w_{c,1+}(x)} &=& \frac{w_{c,1}(z)}{\pi\ic}\int_{\Delta_{c,1}}\frac{\log(\rho_1(x)/\rho_1(\alpha_1))}{z-x}\frac{\dd x}{w_{c,1+}(x)} - \log\rho_1(\alpha_1) \\ & = & o(1) - \log\rho_1(\alpha_1)
\end{eqnarray*}
uniformly on compact subsets of \( \C\setminus\Delta_{0,1} \) when \( c\to0 \). Thus, it follows from the maximum modulus principle that
\begin{equation}
\label{prop-szego3}
D_{c,\rho_1}(z) = 1+o(1) \quad \text{and} \quad G_{c,\rho_1} = \big(1+o(1)\big)\rho_1(\alpha_1)
\end{equation}
locally uniformly in \( \overline\C\setminus\Delta_{0,1} \) as \( c\to 0 \).  One can also see from its very definition in \eqref{prop-szego2} combined with the second formula of \eqref{prop-szego3} that \( G_{c,\rho_1} \) extends to a non-vanishing continuous function of \( c\in[0,1] \) (it is constant for all \( c\geq c^* \)). This observation as well as \eqref{prop-szego2} combined with positivity of \( \rho_1(x) \) on \( \Delta_1 \) show that \( |D_{c,\rho_1\pm}(x)|\sim 1 \) uniformly on \( \Delta_{c,1} \) for all \( c\in(0,1) \). Then the maximum modulus principle for holomorphic functions applied to \( D_{c,\rho_1}(z) \) and \( D_{c,\rho_1}^{-1}(z) \) yields that
\begin{equation}
\label{prop-szego10}
G_{c,\rho_1},|D_{c,\rho_1}(z)| \sim 1,
\end{equation}
uniformly in  \(\overline\C \) for all \( c\in(0,1) \) (notice that \( |D_{c,\rho_1}(z)| \) is a continuous function on the entire sphere \( \overline\C \) independent of \( c \) when \( c\geq c^* \)).

Let \( \Gamma_{c,2} := \chi_c(\boldsymbol\Delta_{c,2}) \), which are clockwise oriented analytic Jordan curves (recall that \( \boldsymbol\Delta_{c,2} \) is oriented so that \( \RS_c^{(0)} \) remains on the left when \( \boldsymbol\Delta_{c,2} \) is traversed in the positive direction and that \( \chi_c(\z) \) is conformal on \( \RS_c \) and maps \( \infty^{(0)} \) into \( \infty \)). The function
\begin{equation}
\label{prop-szego11a}
S_{c,2}(\z) := \exp\left\{ \frac1{2\pi\ic}\int_{\Gamma_{c,2}}\frac{\log (D_{c,1}D_{c,\rho_1})\big(\pi\big(\chi_c^{-1}(s)\big)\big)}{s-\chi_c(\z)}\dd s\right\}
\end{equation}
is holomorphic and bounded in \( \RS_c\setminus\boldsymbol \Delta_{c,2} \) and has value 1 at \( \infty^{(0)} \). It follows from Plemelj-Sokhotski formulae that
\begin{equation}
\label{prop-szego14a}
S_{c,2-}(\x) = S_{c,2+}(\x)(D_{c,1}D_{c,\rho_1})(x), \quad \x\in\boldsymbol \Delta_{c,2}.
\end{equation}
Observe also that \( (D_{c,1}D_{c,\rho_1})(\pi(\z)) \) is holomorphic in a neighborhood of \( \boldsymbol\Delta_{c,2} \). Therefore, \( S_{c,2}(\z) \) can be continued analytically across each side of \( \boldsymbol \Delta_{c,2} \). In fact, this continuation has an integral representation similar to \eqref{prop-szego11a}, where one simply needs to homologously deform \( \Gamma_{c,2} \) within the domain of holomorphy of \( (D_{c,1}D_{c,\rho_1})\big(\pi\big(\chi_c^{-1}(s)\big)\big) \). Moreover, it holds that
\begin{equation}
\label{prop-szego5a}
S_{c,2}(\z) = 1 + o(1) \qasq c\to0 \qandq |S_{c,2}(\z)|\sim 1, \quad c\in(0,c^{**}],
\end{equation}
uniformly on \( \RS_c \) (again, this means including the traces on \( \boldsymbol\Delta_{c,2}\)). Indeed, observe that the analytic curves \( \Gamma_{c,2} \) approach the circle \( \big\{|z-B_{0,2}|=(\beta_2-\alpha_2)/4\big\} \) by \eqref{AngPar2} and \eqref{prop-rec0}. Let \( \delta>0 \) be small enough so that the integrand in \eqref{prop-szego11a} is analytic in a neighborhood of the closure of the annular domain bounded by \( \Gamma_{c,2} \) and \( C_\delta := \{ |z-B_{0,2}| = 2\delta+(\beta_2-\alpha_2)/4  \} \). Assuming that \( C_\delta \) is clockwise oriented, it follows from Cauchy's theorem that \( \Gamma_{c,2} \) can be replaced by \( C_\delta \) whenever \( \z\in\RS_c^{(2)} \), i.e., whenever \( \chi_c(\z) \) is interior or on \( \Gamma_{c,2} \). Then it trivially holds that
\[
|S_{c,2}(z)| \leq \exp\left\{\frac{|C_\delta|}{2\pi\delta}\max_{s\in C_\delta}\left|\log (D_{c,1}D_{c,\rho_1})\big(\pi\big(\chi_c^{-1}(s)\big)\big)\right|\right\}, 
\]
for \( \z\in\RS_c^{(2)} \), where \( |C_\delta| \) is the arclength of \( C_\delta \). The desired limit in \( \RS_c^{(2)} \) now follows from \eqref{prop-szego1a} and \eqref{prop-szego3} while the uniform boundedness follows from \eqref{prop-szego1b} and \eqref{prop-szego10}. Clearly, the estimates in the remaining part of \( \RS_c \) can be obtained analogously by deforming \( \Gamma_{c,2} \) into the circles \( \{ |z-B_{0,2}| = -2\delta+(\beta_2-\alpha_2)/4  \} \).

As a part of the final piece of our construction, let \( \Gamma_{c,1} := \chi_c(\boldsymbol\Delta_{c,1}) \). Similarly to \( \Gamma_{c,2} \), these are clockwise oriented analytic Jordan curves that collapse into a point \( B_{0,1} \) by \eqref{AngPar2} and \eqref{prop-rec0}. Let
\begin{equation}
\label{prop-szego11b}
S_{c,1}(\z) := \exp\left\{ \frac1{2\pi\ic}\int_{\Gamma_{c,1}}\frac{\log\left[S_{\rho_2}\big(\pi\big(\chi_c^{-1}(s)\big)\big)/S_{\rho_2}(\infty)\right]}{s-\chi_c(\z)}\dd s\right\},
\end{equation}
which is a holomorphic and bounded function on \( \RS_c \) that has value \( 1 \) at \( \infty^{(1)} \) and whose traces on \( \boldsymbol\Delta_{c,1} \) are continuous and satisfy
\begin{equation}
\label{prop-szego14b}
S_{c,1-}(\x) = S_{c,1+}(\x)S_{\rho_2}(x)/S_{\rho_2}(\infty), \quad \x\in\boldsymbol \Delta_{c,1},
\end{equation}
by Plemelj-Sokhotski formulae. Notice that all the observation about analytic continuations (contour deformation) made for \( S_{c,2}(\z) \) apply to \( S_{c,1}(\z) \) as well. Since the Cauchy kernel is integrated against the pullback of a fixed function \( S_{\rho_2}(z)/S_{\rho_2}(\infty) \) from \( \Delta_{c,1} \) while the curves \( \Gamma_{c,1} \) collapse into a point, straightforward estimates of Cauchy integrals as well as analytic continuation (deformation of a contour) technique yield that
\begin{equation}
\label{prop-szego5b}
S_{c,1}(\z) = 1 + o(1) \qasq c\to0 \qandq |S_{c,1}(\z)|\sim 1, \quad c\in(0,c^{**}],
\end{equation}
locally uniformly on \( (\RS_c^{(0)}\cup \RS_c^{(2)}\big)\setminus\Delta_{c,1} \) and uniformly on \( \RS_c \), respectively. To examine what happens to \( S_{c,1}(\z) \) on \( \RS_c^{(1)} \), given \( \epsilon>0 \), let \( C_\epsilon:=\{|z-B_{c,1}|=\epsilon\} \) be clockwise oriented circle. It follows from \eqref{prop-rec0} that the Jordan curve \( \chi_c^{-1}(C_\epsilon) \) belongs to \( \RS_c^{(0)} \) and is homologous to \( \boldsymbol\Delta_{c,1} \) for all \( c \) sufficiently small. A straightforward computation shows that
\begin{equation}
\label{prop-szego15}
\int_{C_\epsilon}\frac{\log\left[S_{\rho_2}\big(\pi\big(\chi_c^{-1}(s)\big)\big)/S_{\rho_2}(\infty)\right]}{s-B_{c,1}}\frac{\dd s}{2\pi\ic} = \log\frac{S_{\rho_2}(\alpha_1)}{S_{\rho_2}(\infty)} + \mathcal O\left(\max_{z\in \pi\big(\chi_c^{-1}(C_\epsilon)\big)}\left|\log\frac{S_{\rho_2}(z)}{S_{\rho_2}(\alpha_1)}\right|\right).
\end{equation}
It further follows from \eqref{AngPar2} and \eqref{prop-rec0} that Jordan curves \( \pi\big(\chi_c^{-1}(C_\epsilon)\big) \) converge to the analytic Jordan curve \( (\varphi_2+B_{0,2})^{-1}(C_\epsilon) \) (recall that \( \varphi_2(z)=\varphi^{(0)}(z)\)) and the latter curves collapse into a point \( \alpha_1 \) as \( \epsilon\to 0 \). Hence, by taking the limit as \( c\to 0 \) and then the limit as \( \epsilon\to0 \) of the \( \mathcal O(\cdot) \) in \eqref{prop-szego15} gives \( 0 \). Therefore, analytic continuation (deformation of a contour) technique and \eqref{prop-szego11b} imply that
\begin{equation}
\label{prop-szego16}
\lim_{c\to0}S_{c,1}\big(\infty^{(1)}\big) = \lim_{c\to0}\exp\left\{ \frac1{2\pi\ic}\int_{C_\epsilon}\frac{\log\left[S_{\rho_2}\big(\pi\big(\chi_c^{-1}(s)\big)\big)/S_{\rho_2}(\infty)\right]}{s-B_{c,1}}\dd s\right\} = \frac{S_{\rho_2}(\alpha_1)}{S_{\rho_2}(\infty)}.
\end{equation}

Finally, we are ready to state an alternative formula for the functions \( S_c(\z) \) when \( c\leq c^{**} \). Since relations \eqref{szego-pts2} characterize \( S_c(\z) \) up to multiplication by a cubic root of unity, it follows from the normalization of \( D_{c,1}(z) \) and \( D_{c,\rho_1}(z) \) at infinity, the normalization of \( S_{c,1}(\z) \) and \( S_{c,2}(\z) \) at \( \infty^{(0)} \), and relations \eqref{szego-pts}, \eqref{prop-szego13},  \eqref{prop-szego2},  \eqref{prop-szego14a}, and \eqref{prop-szego14b} that 
\begin{equation}
\label{prop-szego7}
\frac{S_c(\z)}{S_c\big(\infty^{(0)}\big)} = \big(S_{c,1}S_{c,2}\big)(\z)\left\{
\begin{array}{rl}
S_{\rho_2}^{-1}(\infty)(D_{c,1}D_{c,\rho_1}S_{\rho_2})(z), & \z\in\RS_c^{(0)}\setminus\big(\boldsymbol\Delta_{c,1}\cup\boldsymbol\Delta_{c,2}\big), \medskip \\
\ic\frac{\beta_{c,1}-\alpha_1}4G_{c,\rho_1}(D_{c,1}D_{c,\rho_1})^{-1}(z), & \z\in\RS_c^{(1)}\setminus\boldsymbol\Delta_{c,1}, \medskip \\
(S_{\rho_2}(\infty)S_{\rho_2}(z))^{-1}, & \z\in\RS_c^{(2)}\setminus\boldsymbol\Delta_{c,2}.
\end{array}
\right.
\end{equation}
Now, it follows from \eqref{prop-szego5a} and \eqref{prop-szego5b} that
\begin{equation}
\label{prop-szego6a}
S_c^{(2)}(z)/S_c^{(0)}(\infty) = (1+o(1))(S_{\rho_2}(\infty)S_{\rho_2}(z))^{-1}
\end{equation}
uniformly in \( \overline\C \) (that is, including the traces on \( \Delta_2 \)) as \( c\to 0 \). Similarly, it follows from \eqref{prop-szego1a}, \eqref{prop-szego3}, \eqref{prop-szego5a}, and \eqref{prop-szego5b} that
\begin{equation}
\label{prop-szego6b}
S_c^{(0)}(z)/S_c^{(0)}(\infty) = (1+o(1))S_{\rho_2}(z)/S_{\rho_2}(\infty)
\end{equation}
locally uniformly in \( \overline\C\setminus\Delta_{0,1} \) as \( c\to 0 \). Further, it follows from the middle relation in \eqref{szego-pts2} and the last two asymptotic formulae that 
\begin{equation}
\label{prop-szego6c}
\frac{S_c^{(1)}(z)}{S_c^{(0)}(\infty)} = \frac1{S_c^{(0)}(\infty)^3}\frac{S_c^{(0)}(\infty)}{S_c^{(0)}(z)}\frac{S_c^{(0)}(\infty)}{S_c^{(2)}(z)} = (1+o(1))\frac{S_{\rho_2}(\infty)^2}{S_c^{(0)}(\infty)^3}
\end{equation}
locally uniformly in \( \overline\C\setminus\Delta_{0,1} \) as \( c\to 0 \). Since relations \eqref{prop-szego6a}--\eqref{prop-szego6c} also provide asymptotics for the ratios of \( S_c^{(k)}(\infty)/S_c^{(0)}(\infty) \), the limits in \eqref{szego-limit2} easily follow. In fact, we deduce from \eqref{prop-szego6a} and \eqref{prop-szego6c} that
\begin{equation}
\label{prop-szego17a}
S_c^{(2)}(\infty) = (1+o(1))\frac{S_c^{(0)}(\infty)}{S_{\rho_2}^2(\infty)} \qandq S_c^{(1)}(\infty) = (1+o(1))\left(\frac{S_{\rho_2}(\infty)}{S_c^{(0)}(\infty)}\right)^2.
\end{equation}
On the other hand, it follows from the normalization \( D_{c,1}(z) \) and \( D_{c,\rho_1}(z) \) at infinity, \eqref{rhoi}, \eqref{c-rate}, \eqref{prop-szego3}, \eqref{prop-szego5a}, and \eqref{prop-szego16} that
\begin{equation}
\label{prop-szego17b}
\lim_{c\to0} \frac1c\frac{S_c^{(1)}(\infty)}{S_c^{(0)}(\infty)} = \frac{2\pi\mu^\prime_1(\alpha_1)|w_2(\alpha_1)|S_{\rho_2}(\alpha_1)}{S_{\rho_2}(\infty)}.
\end{equation}
Plugging in the second asymptotic formula of \eqref{prop-szego17a} into \eqref{prop-szego17b}  yields the first limit in \eqref{szego-limit3}. The other two  now follow from \eqref{prop-szego17a}.

\subsection{Auxiliary Estimates, II}

The sole purpose of this subsection is to state the following lemma that follows from \eqref{prop-szego1b}, \eqref{prop-szego10}, \eqref{prop-szego5a}, \eqref{prop-szego5b}, \eqref{prop-szego7}, as well as the analogous results for \( c\in[c^*,1) \) and \(c\to1\).

\begin{lemma}
\label{lem:aux3} 
It holds uniformly on \( \RS_c \) for all \( c\in(0,1) \) that
\[
c\left|\frac{S_c^{(0)}(\infty)}{S_c^{(1)}(\infty)}\right|, \, (1-c)\left|\frac{S_c^{(0)}(\infty)}{S_c^{(2)}(\infty)}\right| \sim 1.
\]
Moreover, let \( \delta>0 \) be such that \( 0<\delta c\leq |\Delta_{c,1}|/8 \) and  \( 0<\delta(1-c)\leq |\Delta_{c,2}|/8 \) for all \( c\in(0,1) \). Then it holds for all \( c\in(0,1) \) that
\[
\left|\frac{S_c^{(0)}(z)}{S_c^{(0)}(\infty)}\right| \sim \delta^{-1/4}
\]
uniformly on each circle \( \{|z-\alpha_1|=\delta c \} \), \( \{ |z-\beta_{c,1}|=\delta c\} \), \( \{|z-\alpha_{c,2}|=\delta(1-c)\} \), and \( \{ |z-\beta_2| = \delta(1-c) \} \); and
\[
1 \lesssim \left|\frac{S_c^{(0)}(z)}{S_c^{(0)}(\infty)}\right| \lesssim \delta^{-1/4}
\]
uniformly on \( \{ \delta c \leq \dist(z,\{\alpha_1,\beta_{c,1}\}) \} \) and \( \{ \delta (1-c) \leq \dist(z,\{\alpha_{c,2},\beta_2\}) \} \). In addition, it holds for all \( c\in(0,1) \) and each \( i\in\{1,2\} \) that
\[
\left|\frac{S_c^{(i)}(z)}{S_c^{(i)}(\infty)}\right| \sim \delta^{1/4}
\]
uniformly on circles \( \{ |z-\alpha_{c,i}|=\delta(i-1-(-1)^i c) \} \) and \( \{ |z-\beta_{c,i}| = \delta(i-1-(-1)^i c) \} \); and
\[
\delta^{1/4} \lesssim \left|\frac{S_c^{(i)}(z)}{S_c^{(i)}(\infty)}\right| \lesssim1
\]
uniformly on \( \{ \delta(i-1-(-1)^i c)\leq\dist(z,\{\alpha_{c,i},\beta_{c,i}\}) \} \).
\end{lemma}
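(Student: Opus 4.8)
The plan is to deduce every estimate in Lemma~\ref{lem:aux3} directly from the factored representation \eqref{prop-szego7} of \( S_c(\z)/S_c(\infty^{(0)}) \), which is valid for \( c\leq c^{**} \), together with its mirror image obtained by interchanging the roles of \( \Delta_1 \) and \( \Delta_2 \) (valid for \( c\geq c^* \)), and from continuity in \( c \) on compact subsets of \( (0,1) \). Note that the pointwise asymptotics \eqref{prop-szego6a}--\eqref{prop-szego6c} are \emph{not} sufficient here: they hold only away from the ramification points, whereas the lemma asks for two-sided bounds up to and on circles shrinking onto them. So I would first fix \( \epsilon\in(0,1/2) \) and dispose of the range \( c\in[\epsilon,1-\epsilon] \): there the surfaces \( \RS_c \) and all the objects entering the statement depend continuously on \( c \) (by \eqref{szego-limit1} and Propositions~\ref{prop:1}--\ref{prop:2}), so each ratio is pinched between two positive constants depending only on \( \epsilon \) and \( \delta \). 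By the symmetry \( c\leftrightarrow 1-c \), \( \Delta_1\leftrightarrow\Delta_2 \) it then remains to treat \( c\in(0,\epsilon] \) using \eqref{prop-szego7}.

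Next I would pin down the constants \( S_c^{(k)}(\infty) \). Evaluating \eqref{prop-szego7} at \( \infty^{(1)} \) and \( \infty^{(2)} \), using the normalizations \( D_{c,1}(\infty)=D_{c,\rho_1}(\infty)=1 \) and the uniform bounds \( |S_{c,1}(\z)|,|S_{c,2}(\z)|\sim 1 \) from \eqref{prop-szego5a}--\eqref{prop-szego5b}, \( G_{c,\rho_1}\sim 1 \) from \eqref{prop-szego10}, and the fact that \( S_{\rho_2}(\infty) \) is a fixed nonzero constant, yields \( |S_c^{(1)}(\infty)/S_c^{(0)}(\infty)|\sim \beta_{c,1}-\alpha_1 \) and \( |S_c^{(2)}(\infty)/S_c^{(0)}(\infty)|\sim 1 \) as \( c\to 0 \). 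Since \( \beta_{c,1}-\alpha_1\sim c \) by \eqref{bc1} and \( 1-c\sim 1 \) in this regime, this gives the first display of the lemma on \( (0,\epsilon] \), and the \( c\in[1-\epsilon,1) \) case follows by symmetry. Dividing the branchwise formula \eqref{prop-szego7} by these constants reduces everything else to estimating \( |S_c^{(k)}(z)/S_c^{(k)}(\infty)| \) branch by branch.

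On \( \RS_c^{(0)} \) formula \eqref{prop-szego7} expresses \( |S_c^{(0)}(z)/S_c^{(0)}(\infty)| \) as \( |S_{\rho_2}(z)/S_{\rho_2}(\infty)| \) times the \( \sim 1 \) factors \( S_{c,1},S_{c,2},D_{c,\rho_1} \) times \( |D_{c,1}(z)| \). Here \( S_{\rho_2} \) is a \emph{fixed} function, non-vanishing and holomorphic on \( \overline\C\setminus\Delta_2 \), bounded together with its reciprocal away from \( \{\alpha_2,\beta_2\} \) and satisfying \( |S_{\rho_2}(z)|\sim|z-x_\star|^{-1/4} \) near \( x_\star\in\{\alpha_2,\beta_2\} \) by \eqref{szego-pts}; this produces the \( \delta^{\mp1/4} \) behaviour on the circles about \( \alpha_{c,2}=\alpha_2 \) and \( \beta_2 \) (which have radius \( \delta(1-c)\sim\delta \) here) and the two-sided bound away from them. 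The other singular factor, \( D_{c,1} \), is handled by \eqref{prop-szego1b}, which is tailored to the \emph{moving, shrinking} pair of endpoints \( \alpha_1,\beta_{c,1} \) at scale \( \delta c \): it gives \( |D_{c,1}|\sim\delta^{-1/4} \) on the circles of radius \( \delta c \) and \( 1\lesssim|D_{c,1}|\lesssim\delta^{-1/4} \) away from them. Multiplying the two contributions yields the claimed bounds for \( S_c^{(0)} \). The branches \( \RS_c^{(1)} \) and \( \RS_c^{(2)} \) are similar: after dividing by the constants found above, \eqref{prop-szego7} gives \( |S_c^{(1)}(z)/S_c^{(1)}(\infty)|\sim|D_{c,1}(z)|^{-1} \) and \( |S_c^{(2)}(z)/S_c^{(2)}(\infty)|\sim|S_{\rho_2}(\infty)/S_{\rho_2}(z)| \), so the asserted estimates reduce once more to \eqref{prop-szego1b} and to the endpoint behaviour of \( S_{\rho_2} \), respectively.

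I expect the only real care to be needed in two places, neither of which is a genuine obstacle. First, bookkeeping: one must keep straight which of the two ``singular'' factors — \( D_{c,1} \) or \( S_{\rho_2} \) — supplies the \( \delta^{\pm1/4} \) near which ramification point (\( D_{c,1} \) near \( \alpha_1,\beta_{c,1} \); \( S_{\rho_2} \) near \( \alpha_2,\beta_2 \)), and check that in the mirrored regime \( c\to 1 \) the roles swap correctly. Second, uniformity: one must verify that the proportionality constants produced by \eqref{prop-szego1b}, \eqref{prop-szego5a}, \eqref{prop-szego5b}, \eqref{prop-szego10} and by the continuity argument on \( [\epsilon,1-\epsilon] \) depend only on \( \delta \) (and on \( \epsilon \), fixed once and for all), so that patching the three ranges \( (0,\epsilon] \), \( [\epsilon,1-\epsilon] \), \( [1-\epsilon,1) \) yields bounds uniform in all \( c\in(0,1) \). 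The argument is easy to get wrong only if the three scales \( \delta c \), \( \delta(1-c) \), and \( \delta \) are not tracked carefully.
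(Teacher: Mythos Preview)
Your proposal is correct and matches the paper's approach: the paper's own proof is a single sentence stating that the lemma follows from \eqref{prop-szego1b}, \eqref{prop-szego10}, \eqref{prop-szego5a}, \eqref{prop-szego5b}, \eqref{prop-szego7}, and the analogous mirror results for \( c\in[c^*,1) \) and \( c\to1 \), which are exactly the ingredients you invoke. Your three-range split \( (0,\epsilon]\cup[\epsilon,1-\epsilon]\cup[1-\epsilon,1) \) with a compactness argument in the middle is a slightly different organization than the paper's two overlapping ranges \( (0,c^{**}] \) and \( [c^*,1) \), but the substance is identical.
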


\section{Proof of Theorem~\ref{thm:asymp1}}
\label{sec:6}

Let \( \alpha_1\leq x_{\n,1}< x_{\n,2} < \ldots <
x_{\n,n_1}\leq\beta_1 \) be the zeros of \( P_\n(x) \) on \(
\Delta_1 \). Then we can write
\[
P_\n(x) =: P_{\n,1}(x)P_{\n,2}(x), \quad P_{\n,1}(x):=\prod_{i=1}^{n_1}(x-x_{\n,i}).
\]
Observe that the polynomials \( \{P_{\n,1}(x)\}_{\n\in\mathcal N_0}
\) form a normal family in a neighborhood of \( \Delta_2 \). As \(
\deg(P_{\n,2})=n_2 \) and it holds that
\[
\int x^l P_{\n,2}(x)P_{\n,1}(x)\dd\mu_2(x)=0, \quad l\in\{0,\ldots,n_2-1\},
\]
by \eqref{typeII}, the asymptotics of \( P_{\n,2}(z) \) follows
from \cite[Theorem~2.7]{BY10}. Namely, it holds that
\begin{equation}
\label{6.1}
P_{\n,2}(z) = (1+o(1))\big(S_{\rho_2}(z)/S_{\rho_2}(\infty)\big) \left(\prod_{i=1}^{n_1}S(z;x_{\n,i})\right)\varphi_2^{n_2}(z)
\end{equation}
uniformly on compact subsets of \( \overline\C\setminus\Delta_2 \).
Thus, to obtain the asymptotic formula for \( P_\n(z) \), we only
need to show that all the zeros \( \{x_{\n,i}\}_{i=1}^{n_1} \)
approach \( \alpha_1 \). We shall do it in a slightly more general
setting.

\begin{lemma}
\label{lem:6-1} Suppose that \( \mu_2 \) is an
absolutely continuous Szeg\H{o} measure, i.e., \(
\int_{\Delta_2}\log\mu_2^\prime(x)\dd x > - \infty \), and that \(
\mathcal N_0 \) is any marginal sequence, that is, \( n_1/n_2 \to 0
\) as \( |\n| \to \infty \) for \( \n\in\mathcal N_0 \). Assuming
formula \eqref{6.1} remains valid, it holds that \(
x_{\n,n_1}\to\alpha_1 \) as \( |\n| \to \infty \) for \(
\n\in\mathcal N_0 \). Moreover,
\begin{equation}
\label{6.0}
\lim_{|\n|\to\infty,~\n\in\mathcal N_0}\lim_{z\to\infty} \left(\frac{P_{\n+\vec e_i}(z)}{P_\n(z)} - z \right) = -B_{0,i}, \quad i\in\{1,2\}.
\end{equation}
\end{lemma}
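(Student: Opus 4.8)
The plan is to first show that all zeros of $P_\n$ on $\Delta_1$ collapse to $\alpha_1$, and then to feed the resulting strong asymptotics of $P_\n$ into the elementary identity relating $P_{\n+\vec e_i}/P_\n$ to the recurrence coefficient $b_{\n,i}$.

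\textbf{Step 1: localization of the zeros on $\Delta_1$.} Since $P_\n$ has exactly $n_1$ zeros on $\Delta_1$ and $n_2$ zeros on $\Delta_2$, the factor $P_{\n,2}(x)$ keeps a constant sign on $\Delta_1$; normalizing it to be positive there, \eqref{typeII} shows that $P_{\n,1}$ is the monic orthogonal polynomial of degree $n_1$ with respect to the positive measure $\dd\sigma_\n(x):=P_{\n,2}(x)\dd\mu_1(x)$ on $\Delta_1$. The standard variational description of its largest zero gives
\[
x_{\n,n_1} = \max_{\deg p\le n_1-1,\;p\not\equiv0}\ \frac{\int_{\Delta_1}x|p(x)|^2\dd\sigma_\n(x)}{\int_{\Delta_1}|p(x)|^2\dd\sigma_\n(x)}.
\]
From \eqref{6.1}, together with the fact that $S(z;x_0)$ is holomorphic, non-vanishing and bounded away from $0$ and $\infty$ for $z$ in a fixed neighborhood of $\Delta_1$ and $x_0\in\Delta_1$, one gets $n_2^{-1}\log|P_{\n,2}(z)|\to\log|\varphi_2(z)|$ locally uniformly on $\overline\C\setminus\Delta_2$ (the product of the $n_1=o(n_2)$ bounded Szeg\H{o}-type factors contributes only $o(n_2)$ to the logarithm). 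Because $|\varphi_2|$ is strictly decreasing on $(-\infty,\alpha_2)\supset\Delta_1$, the measure $\dd\sigma_\n$ is exponentially in $n_2$ concentrated near $\alpha_1$: for every $\delta>0$ there is $\eta_\delta>0$ with $\sup_{[\alpha_1+\delta,\beta_1]}|P_{\n,2}|\le e^{-\eta_\delta n_2}\inf_{[\alpha_1,\alpha_1+\delta/2]}|P_{\n,2}|$ for all large $|\n|$. Combining this with a Remez-type bound $\sup_{\Delta_1}|p|\le C_\delta^{\,n_1}\sup_{[\alpha_1,\alpha_1+\delta/2]}|p|$ and a Nikolskii-type bound on $[\alpha_1,\alpha_1+\delta/2]$ — both of which cost only a factor $e^{o(n_2)}$ since $n_1=o(n_2)$ — shows that $\int_{[\alpha_1+\delta,\beta_1]}|p|^2\dd\sigma_\n\big/\int_{[\alpha_1,\alpha_1+\delta/2]}|p|^2\dd\sigma_\n\to0$ uniformly over the admissible $p$. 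Plugging this into the variational formula gives $x_{\n,n_1}\le\alpha_1+\delta+o(1)$, and letting $\delta\to0$ yields $x_{\n,n_1}\to\alpha_1$, hence $x_{\n,i}\to\alpha_1$ for every $i$.

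\textbf{Step 2: identification of the limit.} Taking $z\to\infty$ in the first line of \eqref{recurrence}, rewritten as $P_{\n+\vec e_i}(z)=(z-b_{\n,i})P_\n(z)-a_{\n,1}P_{\n-\vec e_1}(z)-a_{\n,2}P_{\n-\vec e_2}(z)$, gives $\lim_{z\to\infty}\big(P_{\n+\vec e_i}(z)/P_\n(z)-z\big)=-b_{\n,i}$, so \eqref{6.0} is the assertion $b_{\n,i}\to B_{0,i}$. Now, combining \eqref{6.1} (for $\n$ and for $\n+\vec e_i$) with Step~1 exactly as in the factorization argument proving Theorem~\ref{thm:asymp1}, and cancelling the common $\varphi_2^{n_2}$ and $S^{n_1}(\cdot;\alpha_1)(z-\alpha_1)^{n_1}$ factors in the ratios, one obtains, uniformly on a neighborhood of $\infty$ in $\overline\C$,
\[
\frac{P_{\n+\vec e_1}(z)}{(z-\alpha_1)P_\n(z)}=(1+o(1))S(z;\alpha_1),\qquad \frac{P_{\n+\vec e_2}(z)}{\varphi_2(z)P_\n(z)}=1+o(1).
\]
Both sides are holomorphic at $\infty$ with value $1$, so their $z^{-1}$-Taylor coefficients converge. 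Writing $P_\n(z)=z^{|\n|}+c_\n z^{|\n|-1}+\cdots$, using $c_{\n+\vec e_i}-c_\n=-b_{\n,i}$ and $\varphi_2(z)=z-B_{0,2}+\mathcal O(z^{-1})$, the $z^{-1}$-coefficients of the left-hand sides are $\alpha_1-b_{\n,1}$ and $B_{0,2}-b_{\n,2}$. On the right, $S(\infty;\alpha_1)=1$, and expanding \eqref{Sx0} at infinity with the help of the Joukowski-type identity $\varphi_2(z)+A_{0,2}/\varphi_2(z)\equiv z-B_{0,2}$ one finds $S(z;\alpha_1)=1+(\alpha_1-B_{0,2}-\varphi_2(\alpha_1))z^{-1}+\mathcal O(z^{-2})$. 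Equating coefficients gives $b_{\n,2}\to B_{0,2}$ and $b_{\n,1}\to B_{0,2}+\varphi_2(\alpha_1)=B_{0,1}$, i.e.\ \eqref{6.0}.

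\textbf{Main obstacle.} The substantive step is Step~1: one must beat the potentially exponential-in-$n_1$ distortion of an arbitrary competitor $p$ against the exponential-in-$n_2$ decay of $|P_{\n,2}|$ away from $\alpha_1$, which is exactly where $n_1=o(n_2)$ enters. A secondary point arises in Step~2 when $n_1$ is unbounded: passing from \eqref{6.1} to the displayed clean ratio asymptotics requires the collective displacement of the $\Delta_1$-zeros — equivalently, that the product of the $n_1$ Szeg\H{o} factors $S(z;x_{\n+\vec e_i,j})/S(z;x_{\n,j})$ — be $1+o(1)$, which one extracts from a quantitative refinement of Step~1 (bounding $x_{\n,n_1}-\alpha_1$ and the difference of successive zero sets); in the fully marginal case \eqref{fullymarginal}, the case needed for Theorem~\ref{thm:asymp1}, this does not arise since there are only boundedly many factors.
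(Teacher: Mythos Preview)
Your Step~1 is a valid alternative to the paper's argument, though heavier. The paper argues by contradiction from a single orthogonality identity: setting $\rho_{\n,1}:=P_{\n,1}/(x-x_{\n,n_1})$ and using $\int P_\n\rho_{\n,1}\dd\mu_1=0$ gives
\[
\int_{\alpha_1}^{x_{\n,n_1}}\rho_{\n,1}^2|P_{\n,2}|(x_{\n,n_1}-x)\dd\mu_1=\int_{x_{\n,n_1}}^{\beta_1}\rho_{\n,1}^2|P_{\n,2}|(x-x_{\n,n_1})\dd\mu_1,
\]
and then the right side is bounded above crudely while the left side is bounded below via the $L^2$-minimality of Legendre polynomials on $[\alpha_1,\alpha_1+\epsilon/2]$. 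This produces the same exponential-in-$n_2$ versus exponential-in-$n_1$ contradiction without Remez or Nikolskii inequalities.

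Step~2, however, has a genuine gap when $n_1$ is unbounded, and it is not the ``secondary point'' you label it. Knowing only that each $x_{\n,j}\to\alpha_1$ does \emph{not} yield
\[
\frac{P_{\n+\vec e_1,1}(z)}{P_{\n,1}(z)}=(1+o(1))(z-\alpha_1)
\quad\text{or}\quad
\frac{\prod_{j=1}^{n_1+1}S(z;x_{\n+\vec e_1,j})}{\prod_{j=1}^{n_1}S(z;x_{\n,j})}=(1+o(1))S(z;\alpha_1);
\]
at the level of the $z^{-1}$ coefficient this needs $\sum_j x_{\n+\vec e_1,j}-\sum_j x_{\n,j}\to\alpha_1$, which with $n_1\to\infty$ summands requires control you never establish. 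Your proposed ``quantitative refinement of Step~1'' would have to deliver a rate such as $x_{\n,n_1}-\alpha_1=o(1/n_1)$, and that does not fall out of your variational bound without substantial extra work on how $\eta_\delta$ and the Remez constant scale with $\delta$. The paper's device is entirely different: it invokes the \emph{interlacing} of the zeros of $P_\n$ and $P_{\n+\vec e_i}$ on $\Delta_1$. Interlacing gives
\[
0\le\sum_{j=1}^{n_1}\big(x_{\n+\vec e_1,j+1}-x_{\n,j}\big)\le x_{\n+\vec e_1,n_1+1}-x_{\n,1}=o(1),
\]
hence $\lim_{z\to\infty}\big(P_{\n+\vec e_1,1}/P_{\n,1}-z\big)\to-\alpha_1$ directly; the same interlacing combined with the monotonicity of $\varphi_2$ controls the telescoped $S$-factor product. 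You never mention interlacing, and without it your Step~2 is incomplete for general marginal sequences.
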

\begin{proof}
Assume to the contrary that there exists \( \epsilon>0 \) such that
\( \alpha_1+\epsilon \leq x_{\n,n_1} \) along some subsequence \(
\mathcal N^\prime\subset\mathcal N_0 \). Let \( \rho_{\n,1}(x) :=
P_{\n,1}(x)/(x-x_{\n,n_1}) \). Then it follows from \eqref{typeII}
that
\begin{equation}
\label{6.2}
\int_{\alpha_1}^{x_{\n,n_1}} \rho_{\n,1}^2(x)|P_{\n,2}(x)|(x_{\n,n_1}-x)\dd\mu_1(x) = \int_{x_{\n,n_1}}^{\beta_1} \rho_{\n,1}^2(x)|P_{\n,2}(x)|(x-x_{\n,n_1})\dd\mu_1(x),
\end{equation}
(since all the zeros of \( P_{\n,2}(x) \) belong to \( \Delta_2 \),
it has a constant sign on \( \Delta_1 \)). As the zeros of the
monic polynomial \( P_{\n,1}(z) \) belong to \( \Delta_1 \), we
have that \( |P_{\n,1}(x)|\leq |\beta_1-\alpha_1|^{n_1} \), \(
x\in\Delta_1 \). Moreover, since each \( S(z;x_0) \) is a
non-vanishing function in \( \overline\C\setminus\Delta_2 \),
compactness of \( \Delta_1 \) implies that there exists a constant
\( C_1>1 \) such that \( C_1^{-1}\leq |S(x;x_0)|\leq C_1 \) for any
\( x,x_0\in\Delta_1 \). Therefore, we can deduce from \eqref{6.1}
that
\begin{equation}
\label{6.3}
\int_{x_{\n,n_1}}^{\beta_1} \rho_{\n,1}^2(x)|P_{\n,2}(x)|(x-x_{\n,n_1})\dd\mu_1(x) \leq C_2^{n_1}|\varphi_2(\alpha_1+\epsilon)|^{n_2}
\end{equation}
for some absolute constant \( C_2>0 \). On the other hand, by restricting the interval of integration from \( [\alpha_1,x_{\n,n_1}] \) to \( [\alpha_1,\alpha_1+\epsilon/2] \) and then using \eqref{6.1}, the lower estimate of the Szeg\H{o} functions \( S(z;x_0) \), the facts that \( \mu_1^\prime(x) \) is non-vanishing  and \( |\varphi_2(x)|\) is decreasing for \( x<\alpha_2 \) we get that
\begin{multline}
\label{6.4}
\int_{\alpha_1}^{x_{\n,n_1}} \rho_{\n,1}^2(x)|P_{\n,2}(x)|(x_{\n,n_1}-x)\dd\mu_1(x) \geq C_3^{n_1}|\varphi_2(\alpha_1+\epsilon/2)|^{n_2}\int_{\alpha_1}^{\alpha_1+\epsilon/2}\rho_{\n,1}^2(x)\dd x \\
\geq C_3^{n_1} \min_{\n\in\mathcal N^\prime}
\left(\int_{\alpha_1}^{\alpha_1+\epsilon/2}L_{n_1-1}^2(x)\dd x\right)
|\varphi_2(\alpha_1+\epsilon/2)|^{n_2} \geq
C_4^{n_1}|\varphi_2(\alpha_1+\epsilon/2)|^{n_2}
\end{multline}
for some constants \( C_3,C_4 > 0 \) that might depend on \(
\epsilon \), but are independent of \( \n \), where \( L_n(x) \) is
the \(n\)-th monic orthogonal polynomial with respect to \( \dd x
\) on \( [\alpha_1,\alpha_1+\epsilon/2]\) (rescaled Legendre
polynomial) and the last estimate follows from
\cite[Table~18.3.1]{DLMF}. Since \( n_1/n_2 \to 0 \) and \(
|\varphi_2(x)| \) is decreasing on \( (-\infty,\alpha_2)
\), we have that
\[
C_4^{n_1/n_2}|\varphi_2(\alpha_1+\epsilon/2)|> C_2^{n_1/n_2}|\varphi_2(\alpha_1+\epsilon)|
\]
for all \( |\n| \) large, \( \n\in\mathcal N_0 \). Hence, the above
estimate shows that \eqref{6.3}--\eqref{6.4} are incompatible with
\eqref{6.2}. Thus, it indeed holds that \( x_{\n,n_1}\to\alpha_1 \)
as \( |\n|\to\infty \), \( \n\in\mathcal N_0 \). Further, it holds
that
\[
\lim_{z\to\infty} \left(\frac{P_{\n+\vec e_1,1}(z)}{P_{\n,1}(z)}-z\right)  = -\sum_{i=1}^{n_1+1} x_{\n+\vec e_1,i} + \sum_{i=1}^{n_1} x_{\n,i} = -\alpha_1 + o(1) - \sum_{i=1}^{n_1} \big( x_{\n+\vec e_1,i+1} - x_{\n,i} \big).
\]
It is known that the zeros of \( P_\n(z) \) and \( P_{\n+\vec
e_1}(z) \) interlace, see for example \cite[Lemma~A.2]{uApDenY}.
Therefore,
\[
0 \leq  \sum_{i=1}^{n_1} \big( x_{\n+\vec e_1,i+1} - x_{\n,i} \big) \leq x_{\n+\vec e_1,n_1} - x_{\n,1} = o(1),
\]
where the last conclusion follows from the fact that \(
x_{\n,1},x_{\n+\vec e_1,n_1}\to\alpha_1 \) (observe that \(
\{\n+\vec e_1:\n\in\mathcal N_0\} \) is also a marginal sequence).
Thus,
\begin{equation}
\label{6.5}
\lim_{|\n|\to\infty,~\n\in\mathcal N_0}\lim_{z\to\infty} \left(\frac{P_{\n+\vec e_1,1}(z)}{P_{\n,1}(z)} - z \right) = -\alpha_1.
\end{equation}
Furthermore, it follows from the explicit definition \eqref{Sx0}
that
\[
S^2(z;x_0) = \frac{1-\frac{B_{0,2}+\varphi_2(x_0)}z+\mathcal O(z^{-2})}{1-\frac{B_{0,2}+A_{0,2}\varphi_2^{-1}(x_0)}z+\mathcal O(z^{-2})} \frac{1-\frac{B_{0,2}}z+\mathcal O(z^{-2})}{1-\frac{x_0}z},
\]
where we used \eqref{varphii} to get that \( \varphi_2(z) = z -
B_{0,2} + \mathcal O(z^{-1}) \) as \( z\to\infty \). Since
\begin{equation}
\label{6.7}
B_{0,2} + \varphi_2(x_0) - x_0 - A_{0,2}\varphi_2^{-1}(x_0) = 2(B_{0,2} + \varphi_2(x_0) - x_0),
\end{equation}
we have that \( S(z;x_0) = 1 - (B_{0,2} + \varphi_2(x_0) -
x_0)z^{-1} + \mathcal O\big(z^{-2}\big) \) as \( z\to\infty \).
Now, interlacing of the zeros \( \{x_{\n+\vec
e_1,i}\}_{i=1}^{n_1+1}\) and  \( \{x_{\n,i}\}_{i=1}^{n_1}\), their
convergence to \( \alpha_1 \), and monotonicity of \( \varphi_2(z) \) yield similarly to \eqref{6.5} that
\begin{equation}
\label{6.6}
\lim_{|\n|\to\infty,~\n\in\mathcal N_0}\lim_{z\to\infty} z\left(\frac{\prod_{i=1}^{n_1+1}S(z;x_{\n+\vec e_1,i})}{\prod_{i=1}^{n_1}S(z;x_{\n,i})}-1\right) = -(B_{0,2} + \varphi_2(\alpha_1) - \alpha_1).
\end{equation}
Hence, it follows from \eqref{6.1}, \eqref{6.5}, \eqref{6.6}, and \eqref{AngPar2} that the limit in \eqref{6.0} when \( i=1 \) is equal to
\[
\lim_{|\n|\to\infty,~\n\in\mathcal N_0}\lim_{z\to\infty} \left(\frac{P_{\n+\vec e_1,1}(z)}{P_{\n,1}(z)}\frac{\prod_{i=1}^{n_1+1}S(z;x_{\n+\vec e_1,i})}{\prod_{i=1}^{n_1}S(z;x_{\n,i})} - z \right) = -B_{0,1}.
\]
Since \( \{\n+\vec e_2:\n\in\mathcal N_0\} \) is a marginal
sequence as well and the zeros of \( P_\n(z) \) and \( P_{\n+\vec
e_2}(z) \) also interlace, the limit in \eqref{6.0} for \( i=2 \)
follows similarly to the case \( i=1 \).
\end{proof}

\section{Proof of Theorems~\ref{thm:asymp2}--\ref{thm:asymp4}}
\label{sec:8}

To prove Theorems~\ref{thm:asymp2}--\ref{thm:asymp4} we use the
extension to multiple orthogonal polynomials \cite{GerKVA01} of by
now classical approach of Fokas, Its, and Kitaev \cite{FIK91,FIK92}
connecting orthogonal polynomials to matrix Riemann-Hilbert
problems. The RH problem is then analyzed via the non-linear
steepest descent method of Deift and Zhou \cite{DZ93}.

As was agreed in Section~\ref{sec:3.3}, we label
quantities dependent on \( c_\n \) only by the subindex \( \n \) as
in \( \beta_{\n,1} := \beta_{c_\n,1} \),  \( \Delta_{\n,i} :=
\Delta_{c_\n,i} \), etc. If \( \Delta \) is a closed interval, we
denote by \( \Delta^\circ \) the open interval with the same
endpoints. Moreover, when convenient, we write \( \alpha_{\n,1}
(=\alpha_1)\) and \( \beta_{\n,2}(=\beta_2) \) even though they do
not depend on the index \( \n \).

Throughout this section, the reader must keep in mind the
definition of constants \( c^* \) and \( c^{**} \) in
Proposition~\ref{prop:1}. Moreover, we would like to use the symbol
\( c \) as a free parameter from the interval \( [0,1] \), as was
done in the previous sections. Thus, we slightly modify the
notation from the statement of
Theorems~\ref{thm:asymp2}--\ref{thm:asymp4} and assume that we deal
with a sequence of multi-indices \( \mathcal N_{c_\star} \) such
that
\[
c_\n=n_1/|\n|\to c_\star\in[0,1] \qandq n_1,n_2\to\infty \qasq |\n|\to\infty, \quad \n\in\mathcal N_{c_\star}.
\]

 We let \( [\boldsymbol A]_{i,j} \) to stand for \( (i,j) \)-th entry of a matrix \( \boldsymbol A \) and \( \boldsymbol E_{i,j} \) to be the matrix whose entries are all zero except for \( [\boldsymbol E_{i,j}]_{i,j} =1 \). We set \( \boldsymbol I \) to be the identity matrix, \( \sigma_3 := \diag(1,-1) \) to be the third Pauli matrix, and $\sigma(\n) := \diag\left(|\n|, -n_1,-n_2\right)$. Finally, for compactness of notation, we  introduce transformations $\mathsf{T}_i$, $i\in\{1,2\}$,  that act on $2\times2$ matrices in the following way:
\[
\mathsf{T}_1 \left(\begin{matrix} e_{11} & e_{12} \\ e_{21} & e_{22} \end{matrix}\right) = \left( \begin{matrix}  e_{11} & e_{12} & 0 \\ e_{21} & e_{22} & 0 \\ 0 & 0 & 1\end{matrix} \right) \qandq \mathsf{T}_2 \left(\begin{matrix} e_{11} & e_{12} \\ e_{21} & e_{22} \end{matrix}\right) = \left( \begin{matrix}  e_{11} & 0 & e_{12} \\ 0 & 1 & 0 \\ e_{21} & 0 & e_{22}\end{matrix} \right).
\]

\subsection{Initial RH Problem}

Let the measures \( \mu_1,\mu_2 \) be as in
Theorem~\ref{thm:recurrence} and the functions \(
\rho_1(x),\rho_2(x) \) be given by \eqref{rhoi}. Consider the
following Riemann-Hilbert problem (\rhy): find a \( 2\times2\)
matrix function \( \boldsymbol Y(z) \) such that
\begin{itemize}
\label{rhy}
\item[(a)] $\boldsymbol Y(z)$ is analytic in
    $\C\setminus(\Delta_1\cup \Delta_2)$ and $\displaystyle
    \lim_{z\to\infty} {\boldsymbol Y}(z)z^{-\sigma(\n)} =
    {\boldsymbol I}$;
\item[(b)] $\boldsymbol Y(z)$ has continuous traces on
    $\Delta_i^\circ$ that satisfy \( {\boldsymbol Y}_+(x) =
    {\boldsymbol Y}_-(x)( \boldsymbol I + \rho_i(x) \boldsymbol
    E_{1,i+1}) \), \( i\in\{1,2\} \);
\item[(c)] the entries of the $(i+1)$-st column of $\boldsymbol
    Y(z)$ behave like $\mathcal{O}\left(\log|z-\xi|\right)$ as
    $z\to \xi\in\{\alpha_i,\beta_i\}$, while the remaining
    entries stay bounded, \( i\in\{1,2\} \).
\end{itemize}

\begin{lemma}[Proposition~3.1 of \cite{Y16}]
\label{lem:rhy} Solution of \hyperref[rhy]{\rhy} is unique and
given by
\begin{equation}
\label{eq:y}
\boldsymbol Y(z):= \left(\begin{matrix}
P_\n(z) & R_\n^{(1)}(z) &  R_\n^{(2)}(z) \smallskip\\
m_{\n,1}P_{\n-\vec e_1}(z) & m_{\n,1}R_{\n-\vec e_1}^{(1)}(z) & m_{\n,1}R_{\n-\vec e_1}^{(2)}(z) \\
m_{\n,2}P_{\n-\vec e_2}(z) & m_{\n,2}R_{\n-\vec e_2}^{(1)}(z) & m_{\n,2}R_{\n-\vec e_2}^{(2)}(z)
\end{matrix}\right),
\end{equation}
where \( P_\n(z) \) is the polynomial satisfying \eqref{typeII}, \(
R_\n^{(i)}(z) \), \( i\in\{1,2\} \), are its functions of the
second kind, see \eqref{Rni}, $m_{\n,i}$ are constants such that \(
\displaystyle\lim_{z\to\infty}m_{\n,i}R_{\n-\vec
e_i}^{(i)}(z)z^{n_i}=1 \) and \( \vec e_1:=(1,0) \), \( \vec
e_2:=(0,1) \).
\end{lemma}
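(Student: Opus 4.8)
The plan is to follow the classical two-step scheme of Fokas, Its, and Kitaev \cite{FIK91,FIK92}, in the form adapted to multiple orthogonality in \cite{GerKVA01}: first prove that \rhy{} has at most one solution, and then check directly that the matrix \eqref{eq:y} is a solution.

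For uniqueness, I would first show that any solution \( \boldsymbol Y(z) \) satisfies \( \det\boldsymbol Y\equiv 1 \). The jump matrices \( \boldsymbol I+\rho_i(x)\boldsymbol E_{1,i+1} \) in (b) are unipotent, so \( \det\boldsymbol Y \) has no jump across \( \Delta_1^\circ\cup\Delta_2^\circ \); near each endpoint \( \xi\in\{\alpha_i,\beta_i\} \) condition (c) gives \( \det\boldsymbol Y(z)=\mathcal O(\log|z-\xi|) \), so \( (z-\xi)\det\boldsymbol Y(z)\to 0 \) and the singularity is removable. Hence \( \det\boldsymbol Y \) is entire, and since \( \det z^{-\sigma(\n)}=z^{-|\n|+n_1+n_2}\equiv 1 \), condition (a) forces \( \det\boldsymbol Y(z)\to 1 \) as \( z\to\infty \), so \( \det\boldsymbol Y\equiv 1 \) by Liouville's theorem. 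In particular \( \boldsymbol Y(z) \) is invertible on \( \C\setminus(\Delta_1\cup\Delta_2) \). Given two solutions \( \boldsymbol Y_1,\boldsymbol Y_2 \), the matrix \( \boldsymbol X:=\boldsymbol Y_1\boldsymbol Y_2^{-1} \) has no jump on \( \Delta_1^\circ\cup\Delta_2^\circ \) because \( \boldsymbol Y_1 \) and \( \boldsymbol Y_2 \) share the same jump matrices; near the four endpoints its entries grow at most like a power of \( \log|z-\xi| \), hence extend holomorphically; and \( \boldsymbol X\to\boldsymbol I \) at infinity. By Liouville's theorem \( \boldsymbol X\equiv\boldsymbol I \), i.e., \( \boldsymbol Y_1\equiv\boldsymbol Y_2 \).

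For existence, I would verify (a)--(c) for the matrix in \eqref{eq:y}. Analyticity off \( \Delta_1\cup\Delta_2 \) is immediate: the first column consists of polynomials, and by \eqref{Rni} each \( R_{\vec m}^{(i)} \) is a Cauchy transform over \( \Delta_i \). For (a), normality of every multi-index (the Angelesco system is perfect) gives that \( P_{\vec m}(z) \) is monic of degree \( |\vec m| \), so the first column of \( \boldsymbol Y(z)z^{-\sigma(\n)} \) tends to the first standard basis vector. The orthogonality relations \eqref{typeII} force \( R_\n^{(i)}(z)=\mathcal O\big(z^{-(n_i+1)}\big) \) and \( R_{\n-\vec e_i}^{(i)}(z)=\mathcal O\big(z^{-n_i}\big) \) with leading coefficient \( \int P_{\n-\vec e_i}(x)x^{n_i-1}\dd\mu_i(x) \), which is nonzero precisely because \( \n \) is normal — otherwise \( P_{\n-\vec e_i} \) would satisfy \eqref{typeII} for \( \n \) yet have degree \( |\n|-1 \); cf.\ \eqref{por1} — and this both pins down \( m_{\n,i} \) and makes the remaining two columns of \( \boldsymbol Y(z)z^{-\sigma(\n)} \) converge to the second and third standard basis vectors. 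Condition (b) is the Sokhotski--Plemelj formula applied to \eqref{Rni}: \( R_{\vec m+}^{(i)}(x)-R_{\vec m-}^{(i)}(x)=(P_{\vec m}\rho_i)(x) \) on \( \Delta_i^\circ \), while \( R_{\vec m}^{(j)} \) is continuous across \( \Delta_i \) for \( j\neq i \), and \( P_{\vec m} \) itself is continuous; this says exactly that the \( (i{+}1) \)-st column of \( \boldsymbol Y \) jumps by \( \rho_i \) times the first column. Finally, \( \mu_i' \) extends holomorphically, hence boundedly, to a neighborhood of \( \Delta_i \) by hypothesis, so the Cauchy integral defining each \( R_{\vec m}^{(i)} \) is \( \mathcal O(\log|z-\xi|) \) at \( \xi\in\{\alpha_i,\beta_i\} \) and the polynomial entries are bounded, which is (c). Together with uniqueness, this identifies \eqref{eq:y} as the solution.

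The only steps that require genuine care are the endpoint bookkeeping — confirming that the logarithmic singularities permitted by (c) are removable for \( \det\boldsymbol Y \) and for \( \boldsymbol Y_1\boldsymbol Y_2^{-1} \) — and extracting the decay \( R_\n^{(i)}(z)=\mathcal O\big(z^{-(n_i+1)}\big) \) directly from the orthogonality \eqref{typeII}. Everything else is routine; the complete argument is \cite[Proposition~3.1]{Y16}.
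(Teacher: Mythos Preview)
Your proposal is correct and is exactly the standard Fokas--Its--Kitaev/Geronimo--Kuijlaars--Van Assche argument; the paper itself does not reprove the lemma but simply invokes \cite[Proposition~3.1]{Y16}, whose proof is the one you have outlined. Nothing in your sketch is missing or suspect.
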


\subsection{Opening of the Lenses}
\label{ss:OL}

Given \( c\in(0,1) \) and \( \delta>0 \), denote by \( U_{c,\delta,e} \) an open square with vertices \( e \pm c\delta, e
\pm \ic c\delta \) when \(e\in\{\alpha_1,\beta_{c,1}\} \) and \( e
\pm (1-c)\delta, e \pm \ic (1-c)\delta \) when \( e\in\{
\alpha_{c,2},\beta_2 \} \). Define \( \delta_i(c) \), \(
i\in\{1,2\} \), via
\[
\left\{
\begin{array}{l}
\displaystyle \delta_1(c) := \frac1{8c}\left\{
\begin{array}{ll}
\min\{\beta_{c,1}-\alpha_1,\beta_1-\beta_{c,1}\}, & c<c^*, \smallskip \\
\min\{\beta_1-\alpha_1,\alpha_2-\beta_1\}, & c^*\leq c,
\end{array}
\right. \bigskip \\
\displaystyle  \delta_2(c) := \frac1{8(1-c)}\left\{
\begin{array}{ll}
\min\{\beta_2-\alpha_2,\alpha_2-\beta_1\}, & c\leq c^{**}, \smallskip \\
\min\{\beta_2-\alpha_{c,2},\alpha_{c,2}-\alpha_2\}, &c^{**}<c.
\end{array}
\right.
\end{array}
\right.
\]
Of course, it holds that \( c\delta_1(c) \) (resp. \( (1-c)\delta_2(c) \)) is constant for \( c\geq c^* \) (resp. \( c\leq c^{**}\)). Moreover, \( \delta_1(c) \) (resp. \( \delta_2(c) \)) approaches a non-zero constant  as \( c\to0^+ \) (resp. \( c\to1^- \)) by \eqref{c-rate} and it approaches \( 0 \) as \( c\to c^{*-} \) (resp. \( c\to c^{**+} \)). Set \( \delta(c) := \min\{\delta_1(c),\delta_2(c)\} \). For brevity, we write
\[
U_e := U_{c_\n,\delta,e}, \quad \n\in\mathcal N_{c_\star}, \quad e\in E_\n := E_{c_\n}, \; E_c := \{\alpha_1,\beta_{c,1},\alpha_{c,2},\beta_2\},
\] 
assuming that \(\delta\in(0, \delta(c_\star)) \). In particular, all the domains \( U_e \) are disjoint and \( \beta_1 \not\in \overline
U_{\beta_{c,1}} \) when \( c_\star<c^* \) while \( \alpha_2\not\in
\overline U_{\alpha_{c,2}} \) when \( c_\star>c^{**} \), again, for
all \( |\n| \) large enough, \( \n\in \mathcal N_{c_\star} \).

Section~\ref{sec:4.2}  contains a construction of maps \( \zeta_e(z) \), conformal in \( U_e \), \( e\in E_c \), such that \( \zeta_e(z) \) is real on the real line, vanishes at \( e \), and maps \( (\Delta_{c,1}\cup\Delta_{c,2})\cap U_e \) into the negative reals (these subsets of \( \Delta_{c,1}\cup\Delta_{c,2} \) are covered by the darker shading on Figure~\ref{fig:lens}). Using these conformal maps corresponding to \( c_\n \) for \( \n\in\mathcal N_{c_\star} \), we can select piecewise smooth open Jordan arcs \( \Gamma_{\n,i}^\pm \), connecting \( \alpha_{\n,i} \) to \( \beta_{\n,i} \), defined by the following properties:
\begin{equation}
\label{Ipm1}
\zeta_{\beta_{\n,i}}\big(\Gamma_{\n,i}^\pm\cap U_{\beta_{\n,i}}\big) \subset I_\pm:= \big\{z:\arg(z)=\pm2\pi/3\big\}, \quad \zeta_{\alpha_{\n,i}}\big(\Gamma_{\n,i}^\pm\cap U_{\alpha_{\n,i}}\big) \subset I_\mp,
\end{equation}
and \( \Gamma_{\n,i}^\pm \) consist of straight line segments outside of \( U_{\alpha_{\n,i}} \) and \( U_{\beta_{\n,i}} \), see Figure~\ref{fig:lens}.
\begin{figure}[!ht]
\centering
\includegraphics[scale=1]{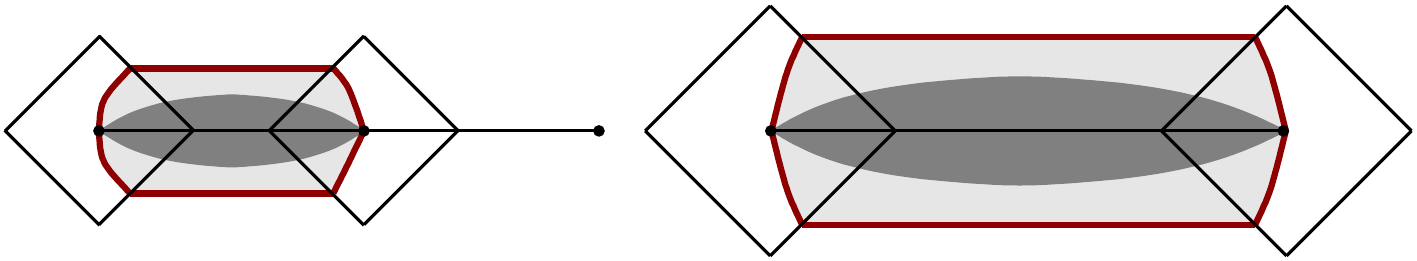}
\begin{picture}(500,0)
\put(390,47){\(\beta_2\)}
\put(226,47){\(\alpha_2\)}
\put(183,40){\(\beta_1\)}
\put(31,47){\(\alpha_1\)}
\put(120,41){\(\beta_{\n,1}\)}
\put(74,75){\(\Gamma_{\n,1}^+\)}
\put(74,21){\(\Gamma_{\n,1}^-\)}
\put(293,84){\(\Gamma_{\n,2}^+=\Gamma_2^+\)}
\put(293,12){\(\Gamma_{\n,2}^-=\Gamma_2^-\)}
\put(72,57){\(\Omega_{\n,1}^+\)}
\put(72,39){\(\Omega_{\n,1}^-\)}
\put(290,64){\(\Omega_{\n,2}^+=\Omega_2^+\)}
\put(290,32){\(\Omega_{\n,2}^-=\Omega_2^-\)}
\end{picture}
\caption{\small The squares \( U_{\alpha_{\n,i}},U_{\beta_{\n,i}} \), and \( U_{\beta_1} \), arcs $\Gamma_{\n,i}^\pm$, domains \( \Omega_{\n,i}^\pm \) (shaded), and the extension domains \( O_{\n,i} \) (darker shaded regions).}
\label{fig:lens}
\end{figure}
When \( c_\star = c^* \), we slightly modify \eqref{Ipm1} and
require that
\begin{equation}
\label{Ipm2}
\widetilde\zeta_{\beta_{\n,1}}\big(\Gamma_{\n,1}^\pm\cap U_{\beta_{\n,1}}\big) \subset I_\pm, \quad \widetilde\zeta_{\beta_{\n,1}}(z) := \zeta_{\beta_{\n,1}}(z) - \zeta_{\beta_{\n,1}}(\beta_1),
\end{equation}
with an analogous modification holding for \( c_\star = c^{**} \)
at \( \alpha_{\n,2} \). We denote by \( \Omega_{\n,i}^\pm \) the
domains delimited by \( \Gamma_{\n,i}^\pm \) and \( \Delta_{\n,i}
\), see Figure~\ref{fig:lens}.

Given $\boldsymbol Y(z)$, the solution of \hyperref[rhy]{\rhy}, set
\begin{equation}
\label{eq:x}
\boldsymbol X(z):= \boldsymbol Y(z) \left\{
\begin{array}{ll}
\mathsf{T}_i\left(\begin{matrix} 1 & 0 \\ \mp1/\rho_i(z) & 1 \end{matrix}\right), & z\in \Omega_{\n,i}^{\pm}, \; i\in\{1,2\}, \medskip \\
\boldsymbol I, & \text{otherwise}.
\end{array}
\right.
\end{equation}
It can be readily verified that $\boldsymbol X(z)$ solves the
following Riemann-Hilbert problem (\rhx):
\begin{itemize}
\label{rhx}
\item[(a)] $\boldsymbol X(z)$ is analytic in
    $\C\setminus\bigcup_{i=1}^2\big(\Delta_i\cup\Gamma_{\n,i}^+\cup\Gamma_{\n,i}^-\big)$
    and $\displaystyle \lim_{z\to\infty} {\boldsymbol
    X}(z)z^{-\sigma(\n)} = {\boldsymbol I}$;
\item[(b)] $\boldsymbol X(z)$ has continuous traces on
    $\bigcup_{i=1}^2\big(\Delta_i^\circ\cup\Gamma_{\n,i}^+\cup\Gamma_{\n,i}^-\big)$
    that satisfy
\[
{\boldsymbol X}_+(s)={\boldsymbol X}_-(s) \left\{
\begin{array}{rl}
\displaystyle \mathsf{T}_i\left(\begin{matrix} 0 & \rho_i(s) \\ -1/\rho_i(s) & 0 \end{matrix}\right), & s\in \Delta_{\n,i},  \medskip \\
\displaystyle \mathsf{T}_i\left(\begin{matrix} 1 & 0 \\ 1/\rho_i(s) & 1 \end{matrix}\right), & s\in \Gamma_{\n,i}^+\cup\Gamma_{\n,i}^-, \medskip \\
\displaystyle \mathsf{T}_i\left(\begin{matrix} 1 & \rho_i(s) \\ 0 & 1 \end{matrix}\right), & s\in \Delta_i^\circ\setminus\Delta_{\n,i},
\end{array}
\right.
\]
for each \( i\in\{1,2\} \);
\item[(c)] the entries of the first and $(i+1)$-st columns of
    $\boldsymbol X(z)$ behave like
    $\mathcal{O}\left(\log|z-\xi|\right)$ as $z\to
    \xi\in\{\alpha_i,\beta_i\}$, while the remaining entries
    stay bounded, \( i\in\{1,2\} \).
\end{itemize}

The following lemma is contained in \cite[Lemma~8.1]{Y16}.

\begin{lemma}[]
\label{lem:rhx} \hyperref[rhx]{\rhx} is solvable if and only if
\hyperref[rhy]{\rhy} is solvable. When solutions of
\hyperref[rhx]{\rhx} and \hyperref[rhy]{\rhy} exist, they are
unique and connected by \eqref{eq:x}.
\end{lemma}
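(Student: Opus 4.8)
The plan is to treat \eqref{eq:x} as an explicit, piecewise‑holomorphic and invertible change of the unknown, and then to check that it carries the three defining conditions of \rhy\ onto those of \rhx. Write \( \boldsymbol X(z)=\boldsymbol Y(z)\boldsymbol D_\n(z) \), where \( \boldsymbol D_\n(z) \) equals \( \mathsf{T}_i\bigl(\begin{smallmatrix}1 & 0 \\ \mp 1/\rho_i & 1\end{smallmatrix}\bigr) \) on the lens \( \Omega_{\n,i}^\pm \), \( i\in\{1,2\} \), and \( \boldsymbol I \) off all lenses. Since the density of \( \mu_i \) is analytic and non‑vanishing on a neighborhood of \( \Delta_i \), the function \( \rho_i(z) \) of \eqref{rhoi} is holomorphic and zero‑free there, and by the construction of Subsection~\ref{ss:OL} the lenses \( \Omega_{\n,i}^\pm \) and their boundary arcs \( \Gamma_{\n,i}^\pm \) lie inside that neighborhood. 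Hence each block of \( \boldsymbol D_\n \) is holomorphic and invertible in its lens, \( \det\boldsymbol D_\n\equiv1 \), and \( \boldsymbol D_\n^{-1} \) is obtained by flipping the sign of the off‑diagonal entry; so \( \boldsymbol Y\mapsto\boldsymbol Y\boldsymbol D_\n \), with inverse \( \boldsymbol X\mapsto\boldsymbol X\boldsymbol D_\n^{-1} \), is a bijection between matrix functions holomorphic off the contour of \rhy\ and those holomorphic off the contour of \rhx. Throughout one uses that \( \mathsf{T}_i \) is multiplicative, so every manipulation reduces to a \( 2\times2 \) identity.

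Next I would verify, starting from a solution \( \boldsymbol Y \) of \rhy\ and setting \( \boldsymbol X:=\boldsymbol Y\boldsymbol D_\n \), that all three conditions of \rhx\ hold. Condition (a) is immediate: \( \boldsymbol D_\n\equiv\boldsymbol I \) near infinity, so the normalization \( \boldsymbol X(z)z^{-\sigma(\n)}\to\boldsymbol I \) is inherited, and holomorphy of \( \boldsymbol X \) off \( \bigcup_i(\Delta_i\cup\Gamma_{\n,i}^+\cup\Gamma_{\n,i}^-) \) follows from holomorphy of \( \boldsymbol Y \) off \( \Delta_1\cup\Delta_2 \) together with holomorphy of \( \boldsymbol D_\n \) inside each lens. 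For condition (b): on \( \Gamma_{\n,i}^\pm \) the factor \( \boldsymbol Y \) is analytic, so the jump of \( \boldsymbol X \) is that of \( \boldsymbol D_\n \), namely \( \mathsf{T}_i\bigl(\begin{smallmatrix}1 & 0 \\ 1/\rho_i & 1\end{smallmatrix}\bigr) \); on \( \Delta_{\n,i}^\circ \) one combines \( \boldsymbol Y_+=\boldsymbol Y_-(\boldsymbol I+\rho_i\boldsymbol E_{1,i+1}) \) with \( \boldsymbol X_\pm=\boldsymbol Y_\pm\mathsf{T}_i\bigl(\begin{smallmatrix}1 & 0 \\ \mp 1/\rho_i & 1\end{smallmatrix}\bigr) \) and the elementary factorization
\[
\begin{pmatrix}1 & \rho_i \\ 0 & 1\end{pmatrix}=\begin{pmatrix}1 & 0 \\ 1/\rho_i & 1\end{pmatrix}\begin{pmatrix}0 & \rho_i \\ -1/\rho_i & 0\end{pmatrix}\begin{pmatrix}1 & 0 \\ 1/\rho_i & 1\end{pmatrix}
\]
to get the jump \( \mathsf{T}_i\bigl(\begin{smallmatrix}0 & \rho_i \\ -1/\rho_i & 0\end{smallmatrix}\bigr) \); and on \( \Delta_i^\circ\setminus\Delta_{\n,i} \) we have \( \boldsymbol X=\boldsymbol Y \) on both sides, so the jump \( \mathsf{T}_i\bigl(\begin{smallmatrix}1 & \rho_i \\ 0 & 1\end{smallmatrix}\bigr) \) is unchanged. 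For condition (c): since \( 1/\rho_i \) is bounded near \( \alpha_i,\beta_i \), right multiplication by \( \mathsf{T}_i\bigl(\begin{smallmatrix}1 & 0 \\ \mp 1/\rho_i & 1\end{smallmatrix}\bigr) \) adds to the first column of \( \boldsymbol Y \) a bounded multiple of its \( (i+1) \)-st column and leaves that column unchanged, so the first and \( (i+1) \)-st columns of \( \boldsymbol X \) are \( \mathcal O(\log|z-\xi|) \) while the remaining entries stay bounded — exactly condition (c) of \rhx. Thus solvability of \rhy\ forces solvability of \rhx.

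Finally, for uniqueness and the converse implication I would argue as follows. In either problem all jump matrices have determinant \( 1 \) and \( \det z^{\sigma(\n)}=z^{|\n|-n_1-n_2}=1 \), so the determinant of any solution extends to an entire function (its at most logarithmic growth at the four endpoints is a removable singularity) tending to \( 1 \) at infinity, hence it is \( \equiv1 \); in particular every solution is invertible. Two solutions of \rhx\ then differ by a factor analytic across all contours and endpoints (the endpoint growth is again only a power of a logarithm, hence removable) that tends to \( \boldsymbol I \) at infinity, so by Liouville it is \( \boldsymbol I \); likewise for \rhy. Since \rhy\ is in fact always solvable by Lemma~\ref{lem:rhy}, the unique solution of \rhx\ must coincide with the \( \boldsymbol X=\boldsymbol Y\boldsymbol D_\n \) constructed above, which establishes the equivalence of solvability and the connecting identity \eqref{eq:x} at one stroke. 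I expect the only care to be needed in the geometric bookkeeping of Subsection~\ref{ss:OL} (keeping \( \Gamma_{\n,i}^\pm \) inside the analyticity and non‑vanishing region of \( \rho_i \)) and in the handling of the endpoint conditions; there is no substantial obstacle, and the argument runs as in \cite[Lemma~8.1]{Y16}.
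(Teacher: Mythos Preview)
Your argument is correct and is exactly the standard lens-opening verification; the paper itself does not prove this lemma but simply cites \cite[Lemma~8.1]{Y16}, and what you have written is essentially the proof one finds there. Your check of the three jump regimes via the factorization $\bigl(\begin{smallmatrix}1&\rho_i\\0&1\end{smallmatrix}\bigr)=\bigl(\begin{smallmatrix}1&0\\1/\rho_i&1\end{smallmatrix}\bigr)\bigl(\begin{smallmatrix}0&\rho_i\\-1/\rho_i&0\end{smallmatrix}\bigr)\bigl(\begin{smallmatrix}1&0\\1/\rho_i&1\end{smallmatrix}\bigr)$, the endpoint bookkeeping, and the Liouville uniqueness are all in order.
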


\subsection{Auxiliary Parametrices}

The following Riemann-Hilbert problem (\rhn) is essentially obtained by discarding the jumps of \( \boldsymbol X(z) \) outside of \(
\Delta_{\n,1}\cup\Delta_{\n,2} \):
\begin{itemize}
\label{rhn}
\item[(a)] $\boldsymbol N(z)$ is analytic in
    $\C\setminus(\Delta_{\n,1}\cup \Delta_{\n,2})$ and
    $\displaystyle \lim_{z\to\infty} {\boldsymbol
    N}(z)z^{-\sigma(\n)} = {\boldsymbol I}$;
\item[(b)] $\boldsymbol N(z)$ has continuous traces on
    $\Delta_{\n,i}^\circ$ that satisfy $\boldsymbol N_+(s) =
    \boldsymbol N_-(s)\displaystyle
    \mathsf{T}_i\left(\begin{matrix} 0 & \rho_i(s) \\
    -1/\rho_i(s) & 0 \end{matrix}\right)$;
\item[(c)] it holds that \( \boldsymbol N(z) =
    \boldsymbol{\mathcal O}\big(|z-e|^{-1/4}\big) \) as \(
    z\to e \in E_\n \).
\end{itemize}

Let $S_{\n}(\z):=S_{c_\n}(\z)$ be the one granted by Proposition~\ref{prop:szego}. Put
\[
\boldsymbol S(z):=\diag\big(S_\n^{(0)}(z),S_\n^{(1)}(z),S_\n^{(2)}(z)\big)
\]
for \( z\in\overline \C \setminus (\Delta_{\n,1}\cup\Delta_{\n,2}) \). Further, let  $\Phi_\n(\z)$,  \( w_{\n,i}(z):=w_{c_\n,i}(z) \), and \( \Upsilon_{\n,i}(\z) := \Upsilon_{c_\n,i}(\z) \) be the functions given by \eqref{normalization}, \eqref{wi}, and \eqref{Upsilon}, respectively.  Define
\begin{equation}
\label{matrix-M}
\boldsymbol M(z):=\boldsymbol S^{-1}(\infty)\left(\begin{array}{cccc}
1 & 1/w_{\n,1}(z) & 1/w_{\n,2}(z) \medskip\\
\Upsilon_{\n,1}^{(0)}(z) & \Upsilon_{\n,1}^{(1)}(z)/w_{\n,1}(z) & \Upsilon_{\n,1}^{(2)}(z)/w_{\n,2}(z) \smallskip \\
\Upsilon_{\n,2}^{(0)}(z) & \Upsilon_{\n,2}^{(1)}(z)/w_{\n,1}(z) & \Upsilon_{\n,2}^{(2)}(z)/w_{\n,2}(z)
\end{array}\right)\boldsymbol S(z).
\end{equation}
Then \hyperref[rhn]{\rhn} is solved by \( \boldsymbol N(z) :=
\boldsymbol C (\boldsymbol{MD})(z)\), see \cite[Section~8.2]{Y16},
where $\boldsymbol C$ is a diagonal matrix of constants such that
\begin{equation}
\label{matrix-CD}
\lim_{z\to\infty}\boldsymbol C\boldsymbol D(z)z^{-\sigma(\n)} = \boldsymbol I \qandq \boldsymbol D(z):=\diag\left(\Phi_\n^{(0)}(z),\Phi_\n^{(1)}(z),\Phi_\n^{(2)}(z)\right).
\end{equation}
Since the jump matrix in \hyperref[rhn]{\rhn}(b) has determinant \(
1 \), it follows from \hyperref[rhn]{\rhn}(a,b)  that \(
\det(\boldsymbol N)(z) \) is holomorphic in \( \overline\C \setminus E_\n \) with at most square root singularities at the points of \( E_\n \). Thus,  \( \det(\boldsymbol N)(z) \) is a constant and \( \det(\boldsymbol N)(z)\equiv1 \) by \hyperref[rhn]{\rhn}(a). Therefore, it holds that \( \det(\boldsymbol M)(z)\equiv\det(\boldsymbol D)(z)\equiv\det(\boldsymbol C)=1 \) due to the second relation in \eqref{szego-pts2} and \eqref{normalization}. Moreover, it follows from
\eqref{Up-Pi1} and \eqref{Up-Pi2} that
\begin{equation}
\label{M-inverse}
\boldsymbol M^{-1}(z) = \boldsymbol S^{-1}(z)\left( \begin{matrix} \Pi_\n^{(0)}(z) & \Pi_{\n,1}^{(0)}(z) & \Pi_{\n,2}^{(0)}(z) \medskip \\ w_{\n,1}(z)\Pi_\n^{(1)}(z) & w_{\n,1}(z)\Pi_{\n,1}^{(1)}(z) & w_{\n,1}(z)\Pi_{\n,2}^{(1)}(z) \medskip \\ w_{\n,2}(z)\Pi_\n^{(2)}(z) & w_{\n,2}(z)\Pi_{\n,1}^{(2)}(z) & w_{\n,2}(z)\Pi_{\n,2}^{(2)}(z) \end{matrix} \right)\boldsymbol S(\infty).
\end{equation}
We use the following convention: \( |\boldsymbol A(z)| \lesssim |\boldsymbol B(z)| \) (resp. \( |\boldsymbol A(z)| \sim |\boldsymbol B(z)| \) if all the individual entries satisfy \( |[\boldsymbol A]_{i,j}(z)| \lesssim |[\boldsymbol B]_{i,j}(z)| \) (resp. \( |[\boldsymbol A]_{i,j}(z)| \sim |[\boldsymbol B]_{i,j}(z)| \)). Moreover, if the constants appearing in inequalities  \( \lesssim \) and \( \sim \) do depend on a certain parameter, say \( \delta \), we write \( \lesssim_\delta \) and \( \sim_\delta \). Furthermore, we shall write \( \boldsymbol A(z) = \boldsymbol{\mathcal O}_\delta(1) \) if all the individual entries satisfy \( |[\boldsymbol A]_{i,j}(z)| \lesssim_\delta 1  \).

\begin{lemma}
\label{lem:Mbounds} 
It holds that \( \boldsymbol M^{\pm1}(z) = \boldsymbol{\mathcal O}_\delta(1) \) uniformly for \( z \) such that \( 0<\delta c_\n \leq \dist(z,\{\alpha_1,\beta_{\n,1}\}) \) and \( 0<\delta(1-c_\n) \leq \dist(z,\{\alpha_{\n,2},\beta_2\}) \), where the estimate is independent of the parameter \( c_\n \). Moreover, it holds that \( |\boldsymbol M(z)| \) is
\[
\sim\left( \begin{matrix} \delta^{-1/4} & \delta^{-1/4} & 1-c_\n \smallskip \\ \delta^{-1/4} & \delta^{-1/4} & c_\n(1-c_\n) \smallskip \\  (1-c_\n)\delta^{-1/4} & (1-c_\n)\delta^{-1/4} & 1 \end{matrix} \right) \qandq \sim\left( \begin{matrix} \delta^{-1/4} & c_\n & \delta^{-1/4} \smallskip \\ c_\n\delta^{-1/4} & 1 & c_\n \delta^{-1/4} \smallskip \\  \delta^{-1/4} & c_\n(1-c_\n) & \delta^{-1/4} \end{matrix} \right) 
\]
uniformly on \( |z-\alpha_1| = \delta c_\n \), \( |z-\beta_{\n,1}| = \delta c_\n \) and on \( |z-\alpha_{\n,2}| = \delta (1-c_\n) \), \( |z-\beta_2| = \delta (1-c_\n) \), respectively, where the constants of proportionality are independent of \( c_\n \) and \( \delta \). Finally, it holds that \( \boldsymbol M^{-1}(z)  \) is equal to
\[
\boldsymbol{\mathcal O}\left(\frac1{\delta^{1/4}}\left( \begin{matrix} 1 & 1 & 1-c_\n \smallskip \\ 1 & 1 & 1-c_\n \smallskip \\  (1-c_\n)\delta^{-1/4} & (1-c_\n)\delta^{-1/4} & \delta^{-1/4} \end{matrix} \right)\right) \qandq \boldsymbol{\mathcal O}\left(\frac1{\delta^{1/4}} \left( \begin{matrix} 1 & c_\n & 1 \smallskip \\ c_\n\delta^{-1/4} & \delta^{-1/4} & c_\n \delta^{-1/4} \smallskip \\  1 & c_\n & 1 \end{matrix} \right) \right)
\]
uniformly on \( |z-\alpha_1| = \delta c_\n \), \( |z-\beta_{\n,1}| = \delta c_\n \) and on \( |z-\alpha_{\n,2}| = \delta (1-c_\n) \), \( |z-\beta_2| = \delta (1-c_\n) \), respectively, with \( \boldsymbol{\mathcal O}(\cdot) \) holding independently of \( c_\n \) and \( \delta \).
\end{lemma}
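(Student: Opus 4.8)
The plan is to deduce every bound directly from the factored expressions \eqref{matrix-M} and \eqref{M-inverse}: write $[\boldsymbol M]_{j,k}(z)=\bigl(S_\n^{(k-1)}(z)/S_\n^{(j-1)}(\infty)\bigr)\,m_{j,k}(z)$, where $m_{j,k}(z)$ is the entry of the central matrix in \eqref{matrix-M} (so $m_{j,k}(z)$ is $1$, $w_{\n,i}^{-1}(z)$, or $\Upsilon_{\n,i}^{(l)}(z)\,w_{\n,j}^{-1}(z)$), and similarly $[\boldsymbol M^{-1}]_{j,k}(z)=\bigl(S_\n^{(k-1)}(\infty)/S_\n^{(j-1)}(z)\bigr)\,\widetilde m_{j,k}(z)$, where, using \eqref{Up-Pi1}--\eqref{Up-Pi2}, each entry $\widetilde m_{j,k}(z)$ of the central matrix in \eqref{M-inverse} is a fixed combination of the branches of $\Upsilon_{\n,1},\Upsilon_{\n,2}$ divided by one of $w_{\n,1},w_{\n,2}$. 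Then each entry of $\boldsymbol M^{\pm1}(z)$ is a product of: a ratio of Szeg\H{o} functions, whose size is given by Lemma~\ref{lem:aux3} on the four circles and a one- or two-sided bound off the four disks, together with $|S_\n^{(1)}(\infty)/S_\n^{(0)}(\infty)|\sim c_\n$ and $|S_\n^{(2)}(\infty)/S_\n^{(0)}(\infty)|\sim 1-c_\n$; one (or a difference, resp. product) of the branches of $\Upsilon_{\n,i}$, controlled by \eqref{lem52-1}--\eqref{lem52-2} on the $c_\n\delta$- and $(1-c_\n)\delta$-neighborhoods of the cuts; and a power of $w_{\n,1}(z)$ or $w_{\n,2}(z)$, estimated from $|w_{\n,i}(z)|^2=|z-\alpha_{\n,i}||z-\beta_{\n,i}|$ using $|\Delta_{\n,1}|\sim c_\n$, $|\Delta_{\n,2}|\sim 1-c_\n$ (cf. \eqref{bc1}).

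First I would handle the portion of the region at a fixed positive distance from all of $\alpha_1,\beta_{\n,1},\alpha_{\n,2},\beta_2$. When $c_\star\in(0,1)$ this is immediate from \eqref{lem51-0} and \eqref{szego-limit1} together with a normal-family argument, which make all of $\Upsilon_{\n,i}^{(k)}(z)$, $S_\n^{(k)}(z)$, $w_{\n,i}(z)$ bounded away from $0$ and $\infty$ there. When $c_\star\in\{0,1\}$ one uses instead Lemmas~\ref{lem:aux1},\ref{lem:aux3} and \eqref{szego-limit2}--\eqref{szego-limit3} (and their $c\to1$ analogues): the constants $S_\n^{(k)}(\infty)$ themselves degenerate like powers of $c_\n$ (resp. $1-c_\n$), but the matching branch of $\Upsilon_{\n,i}$, and the factor $w_{\n,i}^{-1}$, degenerate at exactly the compensating rate, so the products entering $\boldsymbol M^{\pm1}$ stay $\boldsymbol{\mathcal O}_\delta(1)$. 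Next, on the part of the remaining region lying within a fixed multiple of the length of one cut, say $\Delta_{\n,1}$, but at distance $\ge\delta c_\n$ from $\alpha_1$ and $\beta_{\n,1}$: there \eqref{lem52-1} gives $|\Upsilon_{\n,1}^{(0)}(z)|,|\Upsilon_{\n,1}^{(1)}(z)|\sim c_\n$, $|\Upsilon_{\n,1}^{(2)}(z)|\sim c_\n^2$ and $|\Upsilon_{\n,2}^{(k)}(z)|$ of orders $(1-c_\n)^2,(1-c_\n)^2,1$, while $|w_{\n,1}(z)|\gtrsim\delta c_\n$ (and $\lesssim c_\n$) and $|w_{\n,2}(z)|\sim 1$, and $S_\n^{(k)}(z)/S_\n^{(k)}(\infty)$ is controlled by Lemma~\ref{lem:aux3}; carrying out the multiplication and using $S_\n^{(1)}(\infty)/S_\n^{(0)}(\infty)\sim c_\n$, $S_\n^{(2)}(\infty)/S_\n^{(0)}(\infty)\sim 1-c_\n$ makes every entry $\boldsymbol{\mathcal O}_\delta(1)$ (this is where the powers of $c_\n$ have to cancel, e.g. in $[\boldsymbol M]_{1,2}$ one gets $c_\n^{2/3}\cdot c_\n^{1/3}\cdot c_\n^{-1}=1$ up to powers of $\delta$). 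The symmetric piece near $\Delta_{\n,2}$ is identical with $c_\n\leftrightarrow 1-c_\n$ and \eqref{lem52-1}$\leftrightarrow$\eqref{lem52-2}. Together these cases give $\boldsymbol M^{\pm1}(z)=\boldsymbol{\mathcal O}_\delta(1)$ on the whole region of the first assertion.

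The four circle estimates follow by the same substitution, now keeping \emph{both} sides of the comparisons in Lemmas~\ref{lem:aux1a} and~\ref{lem:aux3}. On $|z-\alpha_1|=\delta c_\n$ and $|z-\beta_{\n,1}|=\delta c_\n$ one has $z\in K_{c_\n,\delta,1}$, so \eqref{lem52-1} pins down the orders of the $\Upsilon_{\n,i}^{(k)}(z)$ exactly; moreover $|w_{\n,1}(z)|\sim\sqrt\delta\,c_\n$, $|w_{\n,2}(z)|\sim 1$, $|S_\n^{(0)}(z)/S_\n^{(0)}(\infty)|\sim\delta^{-1/4}$, $|S_\n^{(1)}(z)/S_\n^{(1)}(\infty)|\sim\delta^{1/4}$ by Lemma~\ref{lem:aux3}, and the Szeg\H{o} ratio attached to $\Delta_{\n,2}$ is $\sim 1$ with $\delta$-independent constants since $z$ stays a fixed distance from $\alpha_{\n,2},\beta_2$. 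Multiplying out \eqref{matrix-M} reproduces the first displayed matrix for $|\boldsymbol M(z)|$, and multiplying out \eqref{M-inverse}, after rewriting the $\Pi$'s through \eqref{Up-Pi1}--\eqref{Up-Pi2} and bounding the resulting differences of branches from above, reproduces the first $\boldsymbol{\mathcal O}(\cdot)$ matrix for $\boldsymbol M^{-1}(z)$; the circles around $\alpha_{\n,2},\beta_2$ are handled identically with $c_\n\leftrightarrow 1-c_\n$, $\Delta_{\n,1}\leftrightarrow\Delta_{\n,2}$, \eqref{lem52-1}$\leftrightarrow$\eqref{lem52-2}, giving the second pair of matrices. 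The $\boldsymbol M$-bounds are two-sided because the entries of $\boldsymbol M$ are single monomials in the building blocks, whereas those of $\boldsymbol M^{-1}$ involve the differences $\Upsilon_{\n,i}^{(p)}-\Upsilon_{\n,i}^{(q)}$ (the $\Pi$'s), which may lose sharpness through cancellation, so only the $\boldsymbol{\mathcal O}(\cdot)$ statement survives for $\boldsymbol M^{-1}$.

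The obstacle here is not conceptual but the bookkeeping: in each of these regimes one must verify that the powers of $c_\n$ and $1-c_\n$ coming from the branches of $\Upsilon_{\n,i}$, from the factors $w_{\n,i}^{-1}$, and from the constants $S_\n^{(k)}(\infty)$ cancel exactly, so that no entry of $\boldsymbol M^{\pm1}$ degenerates or blows up; this is precisely what forces the use of the sharp two-sided estimates of Lemmas~\ref{lem:aux1a} and~\ref{lem:aux3} rather than crude bounds, and what makes the explicit exponents of $\delta$, $c_\n$ and $1-c_\n$ in the circle statements nontrivial to nail down. A secondary point is uniformity simultaneously in $c_\n$ and in $\delta$, which is the reason for isolating the ``far from all endpoints'' region and treating it by the compactness and $c\to\{0,1\}$ limit arguments indicated above.
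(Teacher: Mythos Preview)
Your approach is essentially correct and uses the same building blocks as the paper (Lemmas~\ref{lem:aux1a} and~\ref{lem:aux3}, the estimates \eqref{prop-szego12} on $w_{\n,i}$, and the identifications \eqref{Up-Pi1}--\eqref{Up-Pi2}), but it differs from the paper's argument in two places where the paper is considerably more economical.

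First, for the uniform bound $\boldsymbol M^{\pm1}=\boldsymbol{\mathcal O}_\delta(1)$, the paper does \emph{not} decompose the region into ``far from all endpoints'' versus ``near one cut''. Instead it proves the circle estimates first, then observes that the same entrywise bounds (with $\delta^{-1/4}$ replaced by an upper bound) persist on the two central subintervals $(\alpha_1+\delta c_\n,\beta_{\n,1}-\delta c_\n)$ and $(\alpha_{\n,2}+\delta(1-c_\n),\beta_2-\delta(1-c_\n))$; since the four circles together with these subintervals bound the region on which $\boldsymbol M$ is holomorphic, the maximum modulus principle finishes the job in one line. Your decomposition, as written, actually has a small gap when $c_\n\to0$ (or $1$): a point at distance $d$ from $\alpha_1$ with $c_\n\ll d\ll1$ is neither at fixed positive distance from the endpoints nor in $K_{c_\n,\delta',1}$, so it falls between your two cases. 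This is easily repaired using \eqref{lem51-2}--\eqref{lem51-3} directly (since $|\phi_{c_\n}(z)|\sim d/c_\n$ there), but the maximum-modulus route avoids the issue entirely.

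Second, for $\boldsymbol M^{-1}$ the paper does not go through \eqref{M-inverse} and \eqref{Up-Pi1}--\eqref{Up-Pi2} at all. It simply uses $\det(\boldsymbol M)\equiv1$: each entry of $\boldsymbol M^{-1}$ is a $2\times2$ minor of $\boldsymbol M$, hence a sum of two products of entries whose sizes have just been pinned down. This yields the $\boldsymbol{\mathcal O}(\cdot)$ matrices for $\boldsymbol M^{-1}$ by pure arithmetic on the displayed matrices for $|\boldsymbol M|$, with no further input. Your route via the $\Pi$'s works too, and it explains structurally \emph{why} only an upper bound survives (the differences $\Upsilon^{(p)}-\Upsilon^{(q)}$ may cancel), but it requires redoing nine separate products of $\Upsilon$-, $w$-, and $S$-estimates that the determinant trick makes unnecessary.
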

\begin{proof}
Consider first \( z \) on one of the circles from the statement of the lemma. It follows  from \eqref{szego-limit3} and Lemma~\ref{lem:aux3} that
\[
\boldsymbol S(\infty) \sim \diag\left(c_\n^{-1/3}(1-c_\n)^{-1/3},c_\n^{2/3}(1-c_\n)^{-1/3},c_\n^{-1/3}(1-c_\n)^{2/3}\right),
\]
where the constants of proportionality are independent of \( c_\n \). It further follows from Lemma~\ref{lem:aux3} that
\[
|\boldsymbol S(z)|\sim \boldsymbol S(\infty)\diag\left(\delta^{-1/4},\delta^{1/4},1\right) \qandq |\boldsymbol S(z)|\sim \boldsymbol S(\infty)\diag\left(\delta^{-1/4},1,\delta^{1/4}\right)
\]
uniformly on \( |z-\alpha_1| = \delta c_\n \), \( |z-\beta_{\n,1}| = \delta c_\n \) and on \( |z-\alpha_{\n,2}| = \delta (1-c_\n) \), \( |z-\beta_2| = \delta (1-c_\n) \), respectively, where the constants of proportionality are independent of \( c_\n \) and \( \delta \). Moreover, we deduce from Lemma~\ref{lem:aux1a} and \eqref{prop-szego12} that \( \boldsymbol S(\infty)\big|\big(\boldsymbol{MS}^{-1}\big)(z)\big| \) is
\[
\sim \left( \begin{matrix} 1 & c_\n^{-1}\delta^{-1/2} & 1 \smallskip \\ c_\n & \delta^{-1/2} & c_\n^2 \smallskip \\  (1-c_\n)^2 & c_\n^{-1}(1-c_\n)^2\delta^{-1/2} & 1 \end{matrix} \right) \qandq \sim \left( \begin{matrix} 1 & 1 & (1-c_\n)^{-1}\delta^{-1/2} \smallskip \\ c_\n^2 & 1 & c_\n^2(1-c_\n)^{-1}\delta^{-1/2} \smallskip \\  1-c_\n & (1-c_\n)^2 & \delta^{-1/2} \end{matrix} \right)
\]
uniformly on \( |z-\alpha_1| = \delta c_\n \), \( |z-\beta_{\n,1}| = \delta c_\n \) and on \( |z-\alpha_{\n,2}| = \delta (1-c_\n) \), \( |z-\beta_2| = \delta (1-c_\n) \), respectively, where the constants of proportionality are independent of \( c_\n \) and \( \delta \). The combination of the above three estimates yields  the desired asymptotics of \( \boldsymbol M(z) \) on the circles around \( \alpha_1,\beta_{\n,1},\alpha_{\n,2},\beta_2 \).

It further follows from Lemma~\ref{lem:aux3} that
\[
|\boldsymbol S_\pm(x)| \lesssim \boldsymbol S(\infty)\diag\left(\delta^{-1/4},1,1\right)
\]
uniformly for \( x\in(\alpha_1+\delta c_\n,\beta_{\n,1}-\delta c_\n)\cup (\alpha_{\n,2}+\delta(1-c_\n),\beta_2-\delta (1-c_\n)) \)  where the constants of proportionality are independent of \( c_\n \) and \( \delta \). Analogously, it follows from Lemma~\ref{lem:aux1a} and \eqref{prop-szego12} that the above estimate of \( \boldsymbol S(\infty)\big(\boldsymbol{MS}^{-1}\big)(z) \) on the circles remains valid as an upper estimate on \( (\alpha_1+\delta c_\n,\beta_{\n,1}-\delta c_\n)\cup (\alpha_{\n,2}+\delta(1-c_\n),\beta_2-\delta (1-c_\n)) \). The last two observations and the maximum modulus principle for holomorphic functions show that \( \boldsymbol M(z) = \boldsymbol{\mathcal O}_\delta(1) \) uniformly for \( z \) such that \( 0<\delta c_\n \leq \dist(z,\{\alpha_1,\beta_{\n,1}\}) \) and \( 0<\delta(1-c_\n) \leq \dist(z,\{\alpha_{\n,2},\beta_2\}) \), where the estimate is independent of the parameter \( c_\n \).

Finally, as \( \det(\boldsymbol M)(z)\equiv 1 \), the estimates of \( \boldsymbol M^{-1}(z) \) follow in a straightforward fashion from the ones for \( \boldsymbol M(z) \).
\end{proof}

Besides \( \boldsymbol N(z) \), we shall also need matrix functions
that solve \hyperref[rhx]{\rhx} within the domains \( U_e \),
introduced at the beginning of Section~\ref{ss:OL}, with an
additional matching condition on the boundary. More precisely, let
\( \varepsilon_\n \) be given by \eqref{vareps}. For each \(
e\in\{\alpha_1,\beta_{\n,1},\alpha_{\n,2},\beta_2\}
\) we are seeking a solution of the following \rhp$_e$:
\begin{itemize}
\label{rhp}
\item[(a,b,c)] $\boldsymbol P_e(z)$ satisfies
    \hyperref[rhx]{\rhx}(a,b,c) within $U_e$;
\item[(d)] $\boldsymbol P_e(s)=\boldsymbol M(s)(\boldsymbol
    I+\boldsymbol o(1))\boldsymbol D(s)$ uniformly on $\partial
    U_e\setminus\bigcup_{i=1}^2\big(\Delta_i\cup\Gamma_{\n,i}^+\cup\Gamma_{\n,i}^-\big)$,
    where
\[
|[\boldsymbol o(1)]_{j,k}| \leq C\varepsilon_\n \left\{
\begin{array}{ll}
\delta^{-1/2}, & e=\alpha_1, \smallskip \\
\delta^{-3/2}, & e=\beta_{\n,1} \, \text{when} \; c_\star<c^*, \smallskip \\
\big(\delta(z_{c_\star}-\beta_1)\big)^{-1/2}, & e=\beta_1  \; \text{when} \; c_\star>c^*,
\end{array}
\right.
\]
for some constant \( C>0 \) independent of \( \n \) and \( \delta \), and analogous estimates hold around \( \alpha_{\n,2},\beta_2 \) (in the cases
\( c_\star=c^* \) and \( c_\star=c^{**} \) we cannot specify the exact rate of the error term), where the point \( z_c \), or more precisely \( \z_c \) was defined in Proposition~\ref{prop:2}.
\end{itemize}

We will solve \hyperref[rhp]{\rhp$_e$} only for \( e\in\{\alpha_1,\beta_{\n,1}\} \) understanding that the solutions for \( e\in\{ \alpha_{\n,2},\beta_2 \} \) can be constructed similarly. Solution of each \hyperref[rhp]{\rhp$_e$} will require a construction, carried out in the next subsection, of a local conformal map around \( \alpha_1 \) and \( \beta_{\n,1} \). Recall that these maps were already used in  \eqref{Ipm1}.

\subsection{Conformal Maps}
\label{sec:4.2}

In this subsection we construct local conformal maps needed to solve problems \hyperref[rhp]{\rhp\(_e\)}. To this end, recall the definition, given right after \eqref{Hc}, and properties, described in Proposition~\ref{prop:2}, of a function \( h_c(\z) \) that is rational on the surface \( \RS_c \). 

\subsubsection{Local maps around \( \alpha_1\)}
Given \( c\in(0,1) \), define
\begin{equation}
\label{4.1.1}
\zeta_{c,\alpha_1}(z) := \left(\frac14\int_{\alpha_1}^z\left(h_c^{(0)}-h_c^{(1)}\right)(s)\dd s\right)^2, \quad \Re z<\beta_{c,1}.
\end{equation}
Since $h_{c\pm}^{(0)}(x)=h_{c\mp}^{(1)}(x)$ on
$\Delta_{c,1}^\circ$, the function $\zeta_{c,\alpha_1}(z)$ is holomorphic in the
region of definition. When \( \omega \) is a real measure on the real line, it trivially holds that
\[
\int\frac{\dd\omega(x)}{x-(x_0\pm\ic y)} = \int\frac{(x-x_0)\dd\omega(x)}{(x-x_0)^2+y^2} \mp \ic y\int\frac{\dd\omega(x)}{(x-x_0)^2+y^2}.
\]
Therefore, if the traces of \( \int(x-z)^{-1}\dd\omega(x) \) exist at \( x_0 \), they are necessarily conjugate-symmetric. In particular, it follows from \eqref{int-rep} that the integrand in \eqref{4.1.1} is purely imaginary on \( \Delta_{c,1}^\circ \) and therefore \( \zeta_{c,\alpha_1}(x)<0 \) for \( x\in\Delta_{c,1}^\circ \). It also clearly follows from \eqref{int-rep} that $\zeta_{c,\alpha_1}(x)>0$ for $x<\alpha_1$. Moreover, since $h_c(\z)$ has a pole at $\boldsymbol \alpha_1$, a ramification point of \( \RS_c \) of order \(2\), $\zeta_{c,\alpha_1}(z)$ has a simple zero at $\alpha_1$.

\begin{lemma}
\label{lem:4.1} There exist \( \delta_{\alpha_1}>0 \), \(
A_{\alpha_1}>0 \), and \( D_{\alpha_1}>0 \), independent of \( c
\), such that each \( \zeta_{c,\alpha_1}(z) \) is conformal in \(
\{|z-\alpha_1|<\delta_{\alpha_1}c \} \), \( 4A_{\alpha_1}c \leq
|\zeta_{c,\alpha_1}^\prime(\alpha_1)| \), and  \(
|\zeta_{c,\alpha_1}^\prime(z)| \leq D_{\alpha_1}c \) when \(
\{|z-\alpha_1|<\delta_{\alpha_1}c \} \) for all \( c\in(0,1) \).
\end{lemma}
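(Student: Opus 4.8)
The plan is to reduce the lemma, via a local expansion of \( \zeta_{c,\alpha_1} \) at \( \alpha_1 \), to two quantitative statements about the vector equilibrium problem, and then to prove those separately in the two regimes \( c\to0 \) and \( c \) bounded away from \( 0 \).

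\textbf{Step 1: local structure and reduction.} By Proposition~\ref{prop:2}, \( h_c(\z) \) has a simple pole at the ramification point \( \boldsymbol\alpha_1 \) of \( \RS_c \), which is a branch point of order two; in the local holomorphic coordinate \( \tau=(z-\alpha_1)^{1/2} \) at \( \boldsymbol\alpha_1 \) this means \( \big(h_c^{(0)}-h_c^{(1)}\big)(z) = 2r_c(z-\alpha_1)^{-1/2}\big(1+\eta_c(z)\big) \) with \( \eta_c \) holomorphic near \( \alpha_1 \), \( \eta_c(\alpha_1)=0 \), and a nonzero constant \( r_c \) (purely imaginary, since \( h_c^{(0)}-h_c^{(1)} \) is purely imaginary on \( \Delta_{c,1}^\circ \)). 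Integrating from \( \alpha_1 \) and squaring yields \( \zeta_{c,\alpha_1}(z)=\kappa_c\,(z-\alpha_1)\,\Theta_c(z) \) with \( \kappa_c:=\zeta_{c,\alpha_1}'(\alpha_1)=r_c^2 \) real and nonzero, and \( \Theta_c \) holomorphic and non-vanishing near \( \alpha_1 \), \( \Theta_c(\alpha_1)=1 \); moreover, by Proposition~\ref{prop:2}, for \( c\le c^* \) the zero of \( h_c \) at \( \z_c \) lies on top of \( \boldsymbol\beta_{c,1} \), so the only zeros and poles of \( h_c^{(0)}-h_c^{(1)} \) within distance \( \asymp\beta_{c,1}-\alpha_1 \) of \( \alpha_1 \) are at \( \boldsymbol\alpha_1 \). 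Hence the lemma follows once we establish: (i) \( |\kappa_c|\asymp c \) uniformly in \( c\in(0,1) \); and (ii) there is \( \delta>0 \), independent of \( c \), such that \( \Theta_c \) stays within \( \tfrac12 \) of \( 1 \) and \( c\,\Theta_c' \) stays bounded on \( \{|z-\alpha_1|\le\delta c\} \). Indeed (ii) makes \( \zeta_{c,\alpha_1}=\kappa_c(z-\alpha_1)\Theta_c \) univalent there (a small perturbation of \( z\mapsto z-\alpha_1 \)) and forces \( |\zeta_{c,\alpha_1}'(z)|\asymp|\kappa_c| \), so together with (i) this gives the conformality on \( \{|z-\alpha_1|<\delta_{\alpha_1}c\} \) and the two-sided derivative bounds after the constants of proportionality are absorbed into \( A_{\alpha_1},D_{\alpha_1} \).

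\textbf{Step 2: the regime \( c\to0 \).} The natural length scale at \( \alpha_1 \) is \( \ell_c:=\beta_{c,1}-\alpha_1 \), which is \( \asymp c \) by \eqref{bc1} and satisfies \( 4c/\ell_c\to1/|w_2(\alpha_1)| \) by \eqref{c-rate}. Put \( z=\alpha_1+\ell_c u \), write \( \widehat\zeta_c(u):=\ell_c^{-2}\zeta_{c,\alpha_1}(\alpha_1+\ell_c u) \), and let \( \widehat\omega_c \) be the pushforward of \( c^{-1}\omega_{c,1} \) under \( x\mapsto(x-\alpha_1)/\ell_c \), a probability measure on \( [0,1] \). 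As recalled just before \eqref{Fprime}, \( c^{-1}\omega_{c,1} \) is the weighted equilibrium measure on \( \Delta_1 \) in the external field \( \tfrac1{2c}V^{\omega_{c,2}} \); hence \( \widehat\omega_c \) is the weighted equilibrium measure on \( [0,1] \) in the field \( \widehat Q_c(t):=\tfrac1{2c}V^{\omega_{c,2}}(\alpha_1+\ell_c t) \), whose derivative \( \widehat Q_c'(t)=\tfrac{\ell_c}{2c}(V^{\omega_{c,2}})'(\alpha_1+\ell_c t) \) converges uniformly on \( [0,1] \) to the constant \( 2 \) (using \( \omega_{c,2}\cws\omega_2 \), \( (V^{\omega_{c,2}})'=f_c'\to-1/w_2 \), and \eqref{c-rate}). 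Therefore \( \widehat\omega_c\cws\widehat\omega \) for a fixed probability measure \( \widehat\omega \) with a hard edge at \( 0 \), \( \widehat\omega'(t)\asymp t^{-1/2} \) there, both the convergence and the edge bound being uniform in small \( c \). Feeding this into the integral representation \eqref{int-rep} of \( h_c \) gives \( \big(h_c^{(0)}-h_c^{(1)}\big)(\alpha_1+\ell_c u)\to H(u):=\tfrac1{2|w_2(\alpha_1)|}\big(\int_0^1\tfrac{\dd\widehat\omega(\tau)}{u-\tau}-2\big) \) locally uniformly off \( [0,1] \), whence \( \widehat\zeta_c(u)\to\widehat\zeta(u):=\big(\tfrac14\int_0^u H\big)^2 \); since \( H \) has a \( u^{-1/2} \)-type singularity at \( 0 \), \( \widehat\zeta \) extends holomorphically through \( u=0 \) with a simple zero and \( \widehat\zeta'(0)\ne0 \), and the convergence is upgraded to local uniformity on a fixed neighborhood of \( 0 \) by a normal-families argument (each \( \widehat\zeta_c \) is holomorphic at \( u=0 \) and a priori bounded, the bound coming from estimating \( \int_0^1 \dd\widehat\omega_c(\tau)/(t-\tau) \) against the uniform \( t^{-1/2} \) bound). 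As \( \widehat\zeta \) is conformal on a fixed disk \( \{|u|<\delta_0\} \) with \( |\widehat\zeta'| \) bounded above and below there, so is \( \widehat\zeta_c \) on \( \{|u|<\delta_0/2\} \) for all small \( c \); rescaling back (with \( \delta_{\alpha_1} \) slightly below \( 2|w_2(\alpha_1)|\delta_0 \)) we obtain conformality of \( \zeta_{c,\alpha_1} \) on \( \{|z-\alpha_1|<\delta_{\alpha_1}c\} \), while \( \kappa_c/c=(\ell_c/c)\widehat\zeta_c'(0)\to 4|w_2(\alpha_1)|\widehat\zeta'(0)\ne0 \) gives (i) and \( |\zeta_{c,\alpha_1}'(z)|=\ell_c|\widehat\zeta_c'(u)|\asymp\ell_c\asymp c \) gives (ii) (equivalently \( \Theta_c(\alpha_1+\ell_c u)\to\widehat\zeta(u)/(\widehat\zeta'(0)u) \)).

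\textbf{Step 3: the regime \( c\in[\epsilon_0,1) \) and conclusion.} For \( c \) bounded below, \( c\asymp1 \), so it suffices to produce a fixed radius and fixed two-sided derivative bounds there. The measures \( \omega_{c,1},\omega_{c,2} \), hence \( h_c \) and \( \zeta_{c,\alpha_1} \), depend continuously on \( c\in[\epsilon_0,1) \) (Proposition~\ref{prop:1}) and extend continuously to \( c=1 \), where \( \omega_{c,1}\cws\omega_1 \) (mass \( 1 \)) and \( \omega_{c,2}\to0 \), so that \( \zeta_{c,\alpha_1} \) tends to the standard single-interval parametrix map at \( \alpha_1 \), which is conformal near \( \alpha_1 \) with non-vanishing derivative; a compactness argument on \( [\epsilon_0,1] \) then yields \( \delta_1>0 \) and \( 0<m\le|\zeta_{c,\alpha_1}'(z)|\le M \) on \( \{|z-\alpha_1|\le\delta_1\} \) for all such \( c \). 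Taking \( \delta_{\alpha_1} \) to be the minimum of the two radii from Steps~2 and~3, \( A_{\alpha_1}>0 \) small enough that \( 4A_{\alpha_1}c\le|\kappa_c| \) in both regimes, and \( D_{\alpha_1}>0 \) large enough that \( |\zeta_{c,\alpha_1}'(z)|\le D_{\alpha_1}c \) in both regimes, completes the proof. The main obstacle is Step~2: establishing the uniform-in-\(c\) weak-\(^*\) convergence and the uniform hard-edge bound \( \widehat\omega_c'(t)\asymp t^{-1/2} \) for the rescaled equilibrium component as \( \Delta_{c,1} \) collapses onto \( \{\alpha_1\} \) — this is exactly what fixes the scale \( c \) and renders all constants \( c \)-independent — with the accompanying technical nuisance of pushing \( \widehat\zeta_c\to\widehat\zeta \) through the branch point at the origin, handled by the holomorphy of \( \zeta_{c,\alpha_1} \) at \( \alpha_1 \) together with the uniform \( t^{-1/2} \) control.
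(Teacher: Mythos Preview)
Your overall architecture --- local reduction at \( \alpha_1 \), a rescaled limit as \( c\to0 \), and compactness for \( c \) bounded away from \( 0 \) --- matches the paper's.  The genuine divergence, and the gap, is in Step~2.  You try to obtain the rescaled limit \( \widehat\zeta_c\to\widehat\zeta \) purely from potential theory: weak-\(^*\) convergence of the rescaled equilibrium component \( \widehat\omega_c \) in a converging external field, plus a claimed uniform hard-edge bound \( \widehat\omega_c'(t)\asymp t^{-1/2} \).  But weak-\(^*\) convergence of \( \widehat\omega_c \) gives convergence of Cauchy transforms only \emph{away} from the support, not in any neighborhood of \( u=0 \); to push the convergence through the branch point you invoke a normal-families argument whose a~priori bound you base on the very hard-edge estimate you have not proved.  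You are candid that this is ``the main obstacle,'' but you do not resolve it --- and it is exactly the content of the lemma in the delicate regime.  Establishing a \emph{uniform} inverse-square-root edge for a one-parameter family of weighted equilibrium measures on intervals blowing up to \( [0,\infty) \) is not soft potential theory; it needs either explicit structure or a quantitative Riemann--Hilbert/variational argument you have not supplied.

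The paper sidesteps this entirely by exploiting that \( h_c \) is an algebraic function: for \( c\le c^* \) it satisfies the explicit cubic \eqref{4.1.2}.  From this one reads off \( \zeta_{c,\alpha_1}'(\alpha_1)=-u_c^2 \) via \eqref{4.1.3}--\eqref{4.1.5}, giving the lower bound \( 4A_{\alpha_1}c\le|\zeta_{c,\alpha_1}'(\alpha_1)| \) directly; and, after rescaling, the cubic \eqref{4.1.2} converges to the factored equation \eqref{4.1.10}, which identifies the limit \( \widehat\zeta_{\alpha_1}(s)=\tfrac14\big(\sqrt{s^2-1}+\log(s+\sqrt{s^2-1})\big)^2 \) in \eqref{4.1.8} in closed form, from which conformality on a fixed \( s \)-disk and the derivative bounds are immediate.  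The regime \( c\to1 \) is handled by the analogous explicit limit \eqref{4.1.9}.  In short, the algebraic equation is doing the work that your unproven uniform edge bound was supposed to do; either import that equation or supply an independent proof of the uniform \( t^{-1/2} \) density control for \( \widehat\omega_c \).
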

\begin{proof}
We start by proving the estimate on the size of \(
|\zeta_{c,\alpha_1}^\prime(\alpha_1)| \). Assume first that \( c\leq c^*
\). Since \( \boldsymbol \alpha_1 \) is a simple pole of \( h_c(\z) \) and \( h_c^{(0)}(x)<0 \) for \( x<\alpha_1 \) by \eqref{int-rep}, it
holds that \( h_c^{(0)}(x)=u_c(\alpha_1-x)^{-1/2}+ \mathcal O(1) \)
for \( x<\alpha_1 \) and sufficiently close to \( \alpha_1 \), where the branch of the square root is principal and \( u_c<0 \). Since \( h_c^{(1)}(x)=-u_c(\alpha_1-x)^{-1/2}+ \mathcal O(1) \) around \( \alpha_1 \), it can be readily checked that \(
\zeta_{c,\alpha_1}^\prime(\alpha_1)=-u_c^2 \). It was shown in
\cite[Equation~2.7]{ApBogY17} that \( h_c(\z) \) solves
\begin{equation}
\label{4.1.2}
h^3 -\big(1-c+c^2\big)\frac{z-d_c}{\Pi(z)}h - \frac{c-c^2}{\Pi(z)} =0,
\end{equation}
where \( \Pi(z) := (z-\alpha_1)(z-\alpha_2)(z-\beta_2) \) and \(
d_c \) is the point such that the discriminant of \eqref{4.1.2},
whose numerator is a cubic polynomial, vanishes at \( \beta_{c,1}
\) and has an additional double zero. By plugging the identity \( h_c^{(0)}(x)=u_c(\alpha_1-x)^{-1/2}+ \mathcal O(1) \) into
\eqref{4.1.2}, it is easy to verify that
\begin{equation}
\label{4.1.3}
u_c^2 = \big(1-c+c^2\big)\frac{d_c-\alpha_1}{(\alpha_2-\alpha_1)(\beta_2-\alpha_1)}.
\end{equation}
The numerator of the discriminant of \eqref{4.1.2} is equal to
\begin{equation}
\label{4.1.4}
4\big(1-c+c^2\big)^3(z-d_c)^3-27\big(c-c^2\big)^2(z-\alpha_1)(z-\alpha_2)(z-\beta_2)
\end{equation}
and must have a single sign change, which happens at \( \beta_{c,1}
\). If \( d_c\leq \alpha_1 \) were true, then the discriminant would have been positive at \( \alpha_2,\beta_2 \) and non-negative at \( \alpha_1 \), that is, it would have been positive on \( (\alpha_1,\beta_2) \), which contradicts vanishing at \( \beta_{c,1} \). On the other hand if, \( d_c\geq\beta_{c,1} \) were to be true, then the discriminant would have been strictly negative at \( \beta_{c,1} \), which, again, leads to a contradiction. Thus,  \( \alpha_1<d_c<\beta_{c,1} \). Now, \eqref{4.1.2} yields that
\begin{equation}
\label{4.1.5}
\frac{d_c-\alpha_1}c = \frac{1-c}{(\alpha_2-d_c)(\beta_2-d_c)}\frac1{h_c^3(d_c)} \geq \frac{(1-c^*)(\alpha_2-\beta_1)^3}{(\alpha_2-\alpha_1)(\beta_2-\alpha_1)},
\end{equation}
where we used \eqref{int-rep} to observe that \( h_c^{(2)}(d_c)\leq 1/(\alpha_2-\beta_1) \). The above inequality and \eqref{4.1.3} clearly yield the desired
estimate for \( |\zeta_{c,\alpha_1}^\prime(\alpha_1)|=u_c^2 \) when \( c\leq c^* \). In fact, when \( c\to 0 \), it actually follows from the first equality in \eqref{4.1.5} that
\begin{equation}
\label{4.1.6}
\frac{c}{d_c-\alpha_1} = \frac{(\alpha_2-d_c)(\beta_2-d_c)}{1-c}\left(h^{(2)}(d_c)\right)^3 \to (\alpha_2-\alpha_1)(\beta_2-\alpha_1)\left(\int\frac{\dd\omega_2(x)}{x-\alpha_1}\right)^3 = \frac1{|w_2(\alpha_1)|}
\end{equation}
due to \eqref{int-rep}, the last conclusion of Proposition~\ref{prop:1}, and the formula before \eqref{c-rate}. In this case, \eqref{4.1.3} and \eqref{4.1.6} yield that
\begin{equation}
\label{4.1.7}
\zeta_{c,\alpha_1}^\prime(\alpha_1) = -u_c^2 = -c\frac{1+o(1)}{|w_2(\alpha_1)|} \qasq c\to0.
\end{equation}
When \( c\in\big[c^*,c^{**}\big] \) the surface \( \RS_c \) is
always the same. Hence, one can argue using local coordinates that
the pull-backs of \( h_c(\z) \) from a fixed circular neighborhood
of \( \boldsymbol \alpha_1 \) to a fixed neighborhood in \( \C \)
continuously depend on \( c \). Since each \(
|\zeta_{c,\alpha}^\prime(\alpha_1)|>0 \) for \( c\in(0,1) \), the
desired estimate follows from compactness of \( [c^*,c^{**}] \).
When \( c^{**}\leq c \), \( h_c(\z) \) satisfies an equation
similar to \eqref{4.1.2}. Using this equation, we again can argue
that the estimate holds as \( c\to1 \), thus, proving that it holds
uniformly for all \( c\in(0,1) \).

It remains to study conformality of \( \zeta_{c,\alpha_1}(z) \). Denote by \( \delta_{\alpha_1}(c) \) the supremum of \( \delta \) such that \( \zeta_{c,\alpha_1}(z) \) is conformal in \( \{|z-\alpha_1|<2\delta c\} \). We take \( \delta_{\alpha_1} :=\inf_{c\in(0,1)}\delta_{\alpha_1}(c) \). Since \( \delta_{\alpha_1}(c)>0 \) for \( c\in(0,1) \) and continuously depends on \( c \), we only need to study what happens as \( c\to0 \) and \( c\to 1 \) to prove that \( \delta_{\alpha_1}>0 \). Assume first that \( c\to0 \).  Set \( \hat\zeta_{c,\alpha_1}(s) := c^{-2}\zeta_{c,\alpha_1}(z(s)) \), where \( z(s) := \alpha_1 + |\Delta_{c,1}|(1-s)/2 \). Then it follows from \eqref{4.1.1} that
\[
\hat\zeta_{c,\alpha_1}(s) = \left( \frac{|\Delta_{c,1}|}{8c}\int_1^s\big( \hat h_c^{(0)} - \hat h_c^{(1)}\big)(t)\dd t \right)^2,
\]
where \( \hat h_c^{(k)}(s) := h_c^{(k)}(z(s)) \). By using \eqref{4.1.2}, \eqref{4.1.6}, and \eqref{c-rate}, we see that \( \hat h_c \) solves an algebraic equation of the form
\[
\hat h^3  - (1+o(1))\frac{2(1-s)-1-o(1)}{2(1-s)(|w_2(\alpha_1)|^2+o(1))} \hat h - \frac{1+o(1)}{2(1-s)(|w_2(\alpha_1)|^3+o(1))} = 0,
\]
where \( o(1) \) holds uniformly on compact subsets of the plane as \( c\to0 \). The above equation converges to
\begin{equation}
\label{4.1.10}
\left( \hat h^2 + \frac{\hat h}{|w_2(\alpha_1)|} + \frac1{2(1-s)|w_2(\alpha_1)|^2}\right)\left(\hat h - \frac1{|w_2(\alpha_1)|} \right) = 0.
\end{equation}
Since the branches \( \hat h_c^{(0)}(s) \) and \( \hat h_c^{(1)}(s) \)  have \( 1 \) as a branch point, their limits come from the quadratic factor in \eqref{4.1.10}. This observation together with with \eqref{c-rate} readily yield that
that
\begin{equation}
\label{4.1.8}
\hat\zeta_{c,\alpha_1}(s) \to \hat\zeta_{\alpha_1}(s) := \left(\frac12\int_1^s \sqrt\frac{t+1}{t-1}\dd t\right)^2 = \frac14\left(\sqrt{s^2-1} + \log\big( s + \sqrt{s^2-1}\big) \right)^2
\end{equation}
locally uniformly in \( \{|1-s|<2\} \), where the branches of the square roots and the logarithm are principal and therefore \( \hat\zeta_{\alpha_1}(s) \) is holomorphic in \( \C\setminus(-\infty,-1] \) and is positive for \( s\in(1,\infty) \). Using the explicit expression for \( \hat\zeta_{\alpha_1}(s) \), we can conclude that it is conformal in  \( \{|1-s|<2\} \) and therefore \( \liminf_{c\to0}\delta_{\alpha_1}(c)\geq 4|w_2(\alpha_1)|  \) by \eqref{c-rate}. When \( c\to 1 \), we can similarly get from the algebraic equation for \( h_c(\z) \) that \( \zeta_{c,\alpha_1}(z) \) converges to
\begin{equation}
\label{4.1.9}
\left(\frac12\int_{\alpha_1}^z\frac{\dd x}{\sqrt{(x-\alpha_1)(x-\beta_1)}}\right)^2 = \frac14\left( \log \left( \frac{\beta_1+\alpha_1}2 - z - \sqrt{(z-\alpha_1)(z-\beta_1)} \right) - \log\left(\frac{\beta_1-\alpha_1}2 \right) \right)^2,
\end{equation}
which allows us to conclude that \(
\liminf_{c\to1}\delta_{\alpha_1}(c)>0 \) as desired.

Finally, let \( D_{\alpha_1}(c) := c^{-1}\max_{|z-\alpha_1|\leq
\delta_{\alpha_1}c}|\zeta_{c,\alpha_1}^\prime(z)| \). These constants are finite for each \( c\in(0,1) \) since each \( \zeta_{c,\alpha_1}(z) \) is, in fact, analytic in \( \{|z-\alpha_1|<2\delta_{\alpha_1}c\} \). Moreover, since \( \zeta_{c,\alpha_1}(z) \) continuously depends on \( c \), so do the constants  \( D_{\alpha_1}(c) \). Thus, we only need to check their limits as \( c\to 0 \) and \( c\to 1\). The finiteness of \( D_{\alpha_1}:=\sup_{c\in(0,1)}D_{\alpha_1}(c) \) now easily follows from \eqref{c-rate}, \eqref{4.1.8}, and \eqref{4.1.9}.
\end{proof}

\subsubsection{Local maps around \( \beta_{c,1} \) when \( c\in (0,c^*] \)}

Given \( c\in (0,c^*] \), define
\begin{equation}
\label{4.2.1}
\zeta_{\beta_{c,1}}(z) := \left(-\frac34\int_{\beta_{c,1}}^z\left(h_c^{(0)}-h_c^{(1)}\right)(s)\dd s\right)^{2/3}, \quad \alpha_1<\Re z<\alpha_2,
\end{equation}
where the choice of the root function can be made such that \( \zeta_{\beta_{c,1}}(z) \) is holomorphic with a simple zero at \( \beta_{c,1} \) and is positive for \( x>\beta_{c,1} \). Indeed, since \( h_c(\z) \) is bounded at \( \boldsymbol \beta_{c,1} \), which is a ramification point of order \( 2 \), we can write
\begin{equation}
\label{4.2.2}
h_c^{(0)}(x) = h_c(\boldsymbol \beta_{c,1}) - v_c\sqrt{x-\beta_{c,1}} - \mathcal O(x-\beta_{c,1})
\end{equation}
for some number \( v_c \) and \( x>\beta_{c,1} \) sufficiently small. It follows from Proposition~\ref{prop:2} that \( h(\boldsymbol\beta_{c,1}) \) is a non-zero real number. It is also clear from \eqref{int-rep} that \( h_c^{(0)}(x) \) and \( h_c^{(2)}(x) \) assume any non-zero real number somewhere on \( (-\infty,\alpha_1) \cup (\beta_2,\infty) \) and \( (-\infty,\alpha_2)\cup(\beta_2,\infty) \), respectively. Thus, if \( v_c = 0 \), then the function \( h_c(\z) - h(\boldsymbol\beta_{c,1}) \) would have at least four zeros (the zero at \( \boldsymbol\beta_{c,1} \) would be at least a double one), but only three poles, which is impossible. Hence, \( v_c\neq0 \), or more precisely, \( v_c>0 \) since \( h_c^{(0)}(x) \) is a decreasing function on \( (\beta_{c,1},\alpha_2) \) as can be seen \eqref{int-rep}. Therefore, the integrand in \eqref{4.2.1} vanishes as a square root at \( \beta_{c,1} \). Thus, \( \zeta_{\beta_{c,1}}(z) \) has a simple zero there. Again, as in \eqref{4.1.1}, we select such a branch of the root function so that \( \zeta_{\beta_{c,1}}(z) \) is negative on \( \Delta_{c,1}^\circ \). Since the difference \( h_c^{(0)}(x)-h_c^{(1)}(x) \) is real in the gap \( (\beta_{c,1},\alpha_2) \), the map \( \zeta_{\beta_{c,1}}(z) \) is positive there.

\begin{lemma}
\label{lem:4.2} There exist \( \delta_{\beta_1}>0 \) and \(
A_{\beta_1}>0 \), independent of \( c\in(0,c^*] \), such that each
\( \zeta_{\beta_{c,1}}(z) \) is conformal in \(
\{|z-\beta_{c,1}|<\delta_{\beta_1}c \} \) and \(
4A_{\beta_1}c^{-1/3} \leq \zeta_{\beta_{c,1}}^\prime(\beta_{c,1}) \)
for all \( c\in(0,c^*] \).
\end{lemma}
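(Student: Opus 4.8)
The plan is to follow the template of the proof of Lemma~\ref{lem:4.1}.

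The first step is to express $\zeta_{\beta_{c,1}}^\prime(\beta_{c,1})$ through the local behavior of $h_c$ at $\boldsymbol\beta_{c,1}$. Since $h_c^{(1)}$ is the analytic continuation of $h_c^{(0)}$ across $\Delta_{c,1}^\circ$, formula \eqref{4.2.2} gives $h_c^{(1)}(x)=h_c(\boldsymbol\beta_{c,1})+v_c\sqrt{x-\beta_{c,1}}+\mathcal O(x-\beta_{c,1})$ near $\beta_{c,1}$, whence $(h_c^{(0)}-h_c^{(1)})(z)=-2v_c\sqrt{z-\beta_{c,1}}\,\big(1+\mathcal O(\sqrt{z-\beta_{c,1}})\big)$ with $v_c>0$. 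Integrating and substituting into \eqref{4.2.1} then yields $\zeta_{\beta_{c,1}}(z)=v_c^{2/3}(z-\beta_{c,1})(1+o(1))$ as $z\to\beta_{c,1}$; in particular $\zeta_{\beta_{c,1}}$ extends analytically across $\Delta_{c,1}$ near $\beta_{c,1}$ with a simple zero there, hence is conformal in some disk about $\beta_{c,1}$, and $\zeta_{\beta_{c,1}}^\prime(\beta_{c,1})=v_c^{2/3}>0$. Because $\RS_c$, the ramification point $\boldsymbol\beta_{c,1}$, and therefore the rational function $h_c(\z)$ depend continuously on $c\in(0,c^*]$ (Proposition~\ref{prop:2}), both $\zeta_{\beta_{c,1}}^\prime(\beta_{c,1})$ and the radius of conformality vary continuously with $c$ and are positive for every fixed $c$; it therefore suffices to control them as $c\to0$.

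For the regime $c\to0$ the plan is to reuse the rescaling from the proof of Lemma~\ref{lem:4.1}: set $z(s):=\alpha_1+\tfrac{\beta_{c,1}-\alpha_1}{2}(1-s)$, so that $z(-1)=\beta_{c,1}$, and $\hat h_c^{(k)}(s):=h_c^{(k)}(z(s))$. By the analysis there, $\hat h_c^{(0)},\hat h_c^{(1)}$ converge, locally uniformly near $s=-1$, to the two roots $\tfrac{1}{2|w_2(\alpha_1)|}\big(-1\pm(1-\tfrac{2}{1-s})^{1/2}\big)$ of the quadratic factor of \eqref{4.1.10}, which degenerate to the double root $-\tfrac{1}{2|w_2(\alpha_1)|}$ at $s=-1$ — precisely the coincidence of $h_c^{(0)}$ and $h_c^{(1)}$ at $\boldsymbol\beta_{c,1}$. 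Changing the variable of integration in \eqref{4.2.1} and using \eqref{c-rate}, one obtains
\[
c^{-2/3}\zeta_{\beta_{c,1}}(z(s))=\left(\frac{3(\beta_{c,1}-\alpha_1)}{8c}\int_{-1}^{s}\big(\hat h_c^{(0)}-\hat h_c^{(1)}\big)(t)\,\dd t\right)^{2/3}\longrightarrow\left(\frac32\int_{s}^{-1}\left(1-\frac{2}{1-t}\right)^{1/2}\dd t\right)^{2/3}
\]
locally uniformly near $s=-1$, where the determination of the root is fixed so that the right-hand side is positive for $s<-1$. The limiting function is a fixed map, conformal in a neighborhood of $s=-1$, with a simple zero at $s=-1$ (its integrand vanishes there to order $\tfrac12$); computing its leading term and invoking \eqref{c-rate} gives $\zeta_{\beta_{c,1}}^\prime(\beta_{c,1})=\tfrac{c^{-1/3}}{2^{4/3}|w_2(\alpha_1)|}\,(1+o(1))$ as $c\to0$. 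Since $z\mapsto z(s)$ sends $\{|z-\beta_{c,1}|<\delta c\}$ onto an $s$-disk about $-1$ of radius $\tfrac{\delta}{2|w_2(\alpha_1)|}(1+o(1))$, Hurwitz's theorem applied to $s\mapsto c^{-2/3}\zeta_{\beta_{c,1}}(z(s))$ shows that, for a suitably small fixed $\delta$, $\zeta_{\beta_{c,1}}$ is conformal in $\{|z-\beta_{c,1}|<\delta c\}$ for all $c$ small.

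Finally, for any $\epsilon\in(0,c^*)$ the $c\to0$ asymptotics cover $c\in(0,\epsilon]$, while on the compact set $[\epsilon,c^*]$ continuity and the strict positivity of $\zeta_{\beta_{c,1}}^\prime(\beta_{c,1})c^{1/3}$ and of the radius of conformality (measured in units of $c$) furnish uniform bounds; taking the two infima produces $A_{\beta_1}$ and $\delta_{\beta_1}$. The main obstacle is the correct set-up near the inner endpoint $\beta_{c,1}$: unlike $\alpha_1$, which is the pole of $h_c$ sitting at the finite value $s=1$ of the rescaled variable, the point $\beta_{c,1}$ collapses onto $\alpha_1$ at rate $c$, and one must check that in the rescaled coordinate it lands exactly at the value $s=-1$ at which the quadratic factor of \eqref{4.1.10} acquires a double root, and then carefully track the $2/3$-powers and the signs of the roots to identify the explicit conformal limit; the remaining estimates are routine given Lemma~\ref{lem:4.1} and \eqref{c-rate}.
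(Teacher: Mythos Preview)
Your proof is correct and follows essentially the same approach as the paper: reduce to the regime $c\to0$ by continuity, rescale so that $\Delta_{c,1}$ becomes a fixed interval, pass to the limiting algebraic equation for $\hat h_c$, and read off both the conformal limit and the leading asymptotics of $\zeta_{\beta_{c,1}}'(\beta_{c,1})$ (your constant $2^{-4/3}|w_2(\alpha_1)|^{-1}$ agrees with the paper's $(16)^{-1/3}|w_2(\alpha_1)|^{-1}$). The only cosmetic difference is that you reuse the substitution $z(s)=\alpha_1+\tfrac{|\Delta_{c,1}|}{2}(1-s)$ from Lemma~\ref{lem:4.1} (so $\beta_{c,1}$ sits at $s=-1$ and equation~\eqref{4.1.10} applies verbatim), whereas the paper recenters at $\beta_{c,1}$ via $z(s)=\beta_{c,1}+\tfrac{|\Delta_{c,1}|}{2}(s-1)$ and rederives the analogous equation with $(s+1)$ in place of $(1-s)$; the two are related by $s\mapsto-s$.
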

\begin{proof}
Since \( \zeta_{\beta_{c,1}}^\prime(\beta_{c,1}) \neq 0 \) for \( c\in(0,c^*] \), to prove the second claim, we only need to consider what happens as \( c\to 0 \). Similarly to considerations preceding \eqref{4.1.10}, let \( \hat h_c(s) := h_c(\beta_{c,1}+|\Delta_{c,1}|(s-1)/2) \) and \( \hat h(s) \) be the limit of \( \hat h_c(s) \) as \( c\to0 \). Then \eqref{4.1.10} gets replaced by 
\[
\left( \hat h^2 + \frac{\hat h}{|w_2(\alpha_1)|} + \frac1{2(s+1)|w_2(\alpha_1)|^2}\right)\left(\hat h - \frac1{|w_2(\alpha_1)|} \right) = 0.
\]
Since each \( \hat h_c^{(0)}(s) \) has branchpoints at \( \pm1 \) and is negative for \( s<-1 \), see \eqref{int-rep}, the same must be true for their limit \( \hat h^{(0)}(s) \). Thus, solving the above quadratic equation gives us
\[
\hat h^{(0)}(s) = -\frac1{2|w_2(\alpha_1)|}\left(1 + \sqrt{\frac{s-1}{s+1}}\right) = -\frac1{2|w_2(\alpha_1)|} - \frac1{2|w_2(\alpha_1)|}\sqrt{\frac{s-1}2} + \mathcal O(s-1).
\]
Plugging the above limit and the substitution \( x=\beta_{c,1}+|\Delta_{c,1}|(s-1)/2 \) into \eqref{4.2.2} yields
\begin{equation}
\label{4.2.3}
\zeta_{\beta_{c,1}}^\prime(\beta_{c,1}) = v_c^{2/3} = c^{-1/3}\frac{1+o(1)}{(16)^{1/3}|w_2(\alpha_1)|}
\end{equation}
as \( c\to 0 \), where \( v_c \) was introduced in \eqref{4.2.2}. This finishes the proof of the second claim of the lemma. To prove the first one, it is enough to observe that
\begin{equation}
\label{4.2.5}
(3c)^{-2/3}\zeta_{\beta_{c,1}}\big(\beta_{c,1}+|\Delta_{c,1}|(s-1)/2\big) \to \left(\int_1^s \sqrt\frac{t-1}{t+1}\dd t\right)^{2/3} = \left(\sqrt{s^2-1} - \log\big( s + \sqrt{s^2-1}\big) \right)^{2/3}
\end{equation}
as \( c\to0 \), where the limit is conformal around \( 1 \).
\end{proof}

\subsubsection{Local maps around \( \beta_1 \) for \( c \) close to \( c^{*} \) from the right}

This construction will be used only for the ray sequences \( \mathcal N_{c^*} \) with infinitely many indices \( \n \) such that \( c_\n>c^* \). By Proposition~\ref{prop:2}, $h_{c^*}(\z)$ is bounded at $\boldsymbol \beta_1$ while $h_c(\z)$  has a simple pole at $\boldsymbol \beta_1$ for all \( c>c^* \) and a simple zero $\z_c$ that approaches $\boldsymbol \beta_1$ as \( c\to c^{*+} \). Since the functions \( h_c(\z) \) converge around \( \boldsymbol\beta_1 \) to \( h_{c^*}(\z) \) as \( c\to c^{*+} \)  by \eqref{int-rep} and Proposition~\ref{prop:1}, we can write
\begin{equation}
\label{4.3.1}
-\frac34\int_{\beta_1}^z\left(h_c^{(0)}-h_c^{(1)}\right)(s)\dd s = \sqrt{z-\beta_1}\left(z-\beta_1-\epsilon_c\right)f_c(z), \quad \alpha_1<\Re z<\alpha_2,
\end{equation}
for some \( \epsilon_c>0 \) such that $\epsilon_c\to 0^+$ as $c\to c^{*+}$, where $f_c(z)$ is a holomorphic function that is real on \( (\alpha_1,\alpha_2) \) (observe that the Puisuex expansion of \( (h_c^{(0)}-h_c^{(1)})(x) \) around \( \beta_{c,1} \) does not have the integral powers of \( x-\beta_1 \)). Similarly, it holds that \( \zeta_{\beta_{c^*,1}}^{3/2}(z)=(z-\beta_1)^{3/2}f_{c^*}(z) \) for some holomorphic function \( f_{c^*}(z) \) that is real on \( (\alpha_1,\alpha_2) \) and is positive at \( \beta_1 \). Since the right-hand side of \eqref{4.3.1} converges to \( \zeta_{\beta_{c^*,1}}^{3/2}(z) \) as \( c\to c^{*+} \), the functions \( f_c(z) \) converge to \( f_{c^*}(z) \) (in particular, \( f_c(\beta_1)>0 \) for all \( c \) sufficiently close to \( c^* \)).

\begin{lemma}
\label{lem:4.3} There exist \( c^\prime>c^* \) and a fixed
neighborhood of \( \beta_1 \) such that for every \(
c\in(c^*,c^\prime] \) there exists a function \(
\hat\zeta_{c,\beta_1}(z) \), conformal in this neighborhood, such
that
\begin{equation}
\label{4.3.4}
-\frac34\int_{\beta_1}^z\left(h_c^{(0)}-h_c^{(1)}\right)(s)\dd s = \hat\zeta_{c,\beta_1}^{3/2}(z) - \hat\zeta_{c,\beta_1}(\beta_1+\epsilon_c)\hat\zeta_{c,\beta_1}^{1/2}(z).
\end{equation}
(we can adjust the constant \( \delta_{\beta_1}>0 \) from Lemma~\ref{lem:4.2} so that the neighborhood of conformality is given by \( \{|z-\beta_1|< \delta_{\beta_1}c^\prime\} \)). Moreover,  \( \hat\zeta_{c,\beta_1}(z) \) is positive for \( x>\beta_1 \) and converges to \( \zeta_{\beta_{c^*,1}}(z) \) as \( c\to c^{*+} \).
\end{lemma}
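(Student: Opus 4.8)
The plan is to reduce \eqref{4.3.4} to a single scalar fixed‑point equation for the ``conformal factor'' of $\hat\zeta_{c,\beta_1}$ and to solve it by a perturbation argument which degenerates, at $c=c^*$, to the construction already carried out for $\zeta_{\beta_{c^*,1}}$. Set $\Psi_c(z):=-\tfrac34\int_{\beta_1}^z (h_c^{(0)}-h_c^{(1)})(s)\,\dd s$. By \eqref{4.3.1} and the facts recorded just before the lemma, $\Psi_c(z)=\sqrt{z-\beta_1}\,(z-\beta_1-\epsilon_c)f_c(z)$ with $f_c$ holomorphic near $\beta_1$, real on $(\alpha_1,\alpha_2)$, $f_c\to f_{c^*}$ locally uniformly, $f_{c^*}(\beta_1)>0$, and $\epsilon_c\to0^+$; hence $\Psi_c^2(z)=(z-\beta_1)(z-\beta_1-\epsilon_c)^2 f_c^2(z)$ is holomorphic near $\beta_1$. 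Squaring \eqref{4.3.4} turns it into $\hat\zeta_{c,\beta_1}(\hat\zeta_{c,\beta_1}-a_c)^2=\Psi_c^2$ with $a_c:=\hat\zeta_{c,\beta_1}(\beta_1+\epsilon_c)$. I would look for $\hat\zeta_{c,\beta_1}(z)=(z-\beta_1)\phi_c(z)$ with $\phi_c$ holomorphic and non‑vanishing on a fixed disk $D:=\{|z-\beta_1|<r_0\}$, $r_0$ small enough that $f_{c^*}$ is bounded below on $\overline D$ and varies little there. Then $a_c=\epsilon_c\phi_c(\beta_1+\epsilon_c)$, the factor $(z-\beta_1)\phi_c(z)-a_c$ vanishes at $z=\beta_1+\epsilon_c$ and so equals $(z-\beta_1-\epsilon_c)\psi_c(z)$ for the holomorphic quotient $\psi_c(z):=[(z-\beta_1)\phi_c(z)-\epsilon_c\phi_c(\beta_1+\epsilon_c)]/(z-\beta_1-\epsilon_c)$ (note $\psi_c\to\phi_c$ as $\epsilon_c\to0$), and cancelling $(z-\beta_1)(z-\beta_1-\epsilon_c)^2$ reduces the squared identity to
\[
\phi_c(z)\,\psi_c(z)^2=f_c^2(z),\qquad z\in D.
\]
At $c=c^*$ this reads $\phi_{c^*}^3=f_{c^*}^2$, i.e. $\phi_{c^*}=f_{c^*}^{2/3}$, so $\hat\zeta_{c^*,\beta_1}=(z-\beta_1)f_{c^*}^{2/3}=\zeta_{\beta_{c^*,1}}$ by the relation $\zeta_{\beta_{c^*,1}}^{3/2}=(z-\beta_1)^{3/2}f_{c^*}$ recorded before the lemma.

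To solve $\phi_c\psi_c^2=f_c^2$ for $c$ slightly above $c^*$, I would introduce $q_c(\phi):=\psi_c(\phi)/\phi$, a holomorphic function that is identically $1$ when $\epsilon_c=0$, rewrite the equation as $\phi_c^3=f_c^2\,q_c(\phi_c)^{-2}$, and set it up as the fixed‑point equation $\phi_c=\mathcal S_c(\phi_c)$ for
\[
\mathcal S_c(\phi):=(f_c^2\,q_c(\phi)^{-2})^{1/3}
\]
(cube root with the branch close to $f_{c^*}^{2/3}>0$) on the closed ball $\mathcal B:=\{\phi\in C(\overline D):\|\phi-f_{c^*}^{2/3}\|_{C(\overline D)}\le\varrho\}$, with $\varrho$ small enough that members of $\mathcal B$ are non‑vanishing on $D$. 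The key point is that $\mathcal S_{c^*}$ is the constant map $\phi\mapsto f_{c^*}^{2/3}$, hence contracts with constant $0$; by continuity in $c$ there is $c'>c^*$ so that for $c\in(c^*,c']$ the map $\mathcal S_c$ sends $\mathcal B$ into itself and is a contraction there. The one estimate to watch is that $q_c(\phi)$ stay uniformly close to $1$ on all of $\overline D$: since $q_c(\phi)-1$ is holomorphic on $D$, by the maximum principle this reduces to a bound of the form $|q_c(\phi)(z)-1|\le\epsilon_c(1+K)/(r_0-\epsilon_c)$ on $\{|z-\beta_1|=r_0\}$ (with $K$ a bound for $|\phi(\beta_1+\epsilon_c)/\phi(z)|$), together with $\epsilon_c\to0^+$. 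Banach's fixed‑point theorem then produces a unique $\phi_c\in\mathcal B$; it is holomorphic on $D$ as a uniform limit of holomorphic functions, it is real on $D\cap\R$ (the functions in $\mathcal B$ real on $D\cap\R$ form a closed $\mathcal S_c$‑invariant set), and $\phi_c\to f_{c^*}^{2/3}$ in $C(\overline D)$ as $c\to c^{*+}$.

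Setting $\hat\zeta_{c,\beta_1}(z):=(z-\beta_1)\phi_c(z)$, I would verify the three assertions. Conformality: $\hat\zeta_{c,\beta_1}'(z)=\phi_c(z)+(z-\beta_1)\phi_c'(z)$ is uniformly close on $\overline D$ to the non‑zero constant $f_{c^*}(\beta_1)^{2/3}$ — the term $(z-\beta_1)\phi_c'(z)$ being small by the smallness of $r_0$ and a Cauchy estimate for $\phi_c'$ — so $\hat\zeta_{c,\beta_1}$ is univalent on $D$. Identity \eqref{4.3.4}: reversing the algebra gives $\hat\zeta_{c,\beta_1}(\hat\zeta_{c,\beta_1}-a_c)^2=\Psi_c^2$; fixing the branches of $\hat\zeta_{c,\beta_1}^{1/2}$ and $\hat\zeta_{c,\beta_1}^{3/2}$ on $D$ slit along $(\beta_1-r_0,\beta_1)$ so that $\hat\zeta_{c,\beta_1}^{1/2}>0$ for $x>\beta_1$, and taking square roots, the sign ambiguity is settled by evaluation at some $x_0\in(\beta_1+\epsilon_c,\beta_1+r_0)$, where both $\hat\zeta_{c,\beta_1}^{1/2}(\hat\zeta_{c,\beta_1}-a_c)$ and $\Psi_c$ are positive (using that $\hat\zeta_{c,\beta_1}$ is increasing on the real axis, so $\hat\zeta_{c,\beta_1}(x_0)>a_c>0$); this yields \eqref{4.3.4}. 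Positivity of $\hat\zeta_{c,\beta_1}$ for $x>\beta_1$ is immediate from $\phi_c>0$ there (and $\hat\zeta_{c,\beta_1}<0$ on $(\beta_1-r_0,\beta_1)$), while $\hat\zeta_{c,\beta_1}\to\zeta_{\beta_{c^*,1}}$ as $c\to c^{*+}$ follows from $\phi_c\to f_{c^*}^{2/3}$ and $\zeta_{\beta_{c^*,1}}=(z-\beta_1)f_{c^*}^{2/3}$. Relabelling $\delta_{\beta_1}:=r_0/c'$ puts $D$ in the form $\{|z-\beta_1|<\delta_{\beta_1}c'\}$, matching the statement.

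The step I expect to be the main obstacle is securing the contraction. The naive rearrangement $\phi=f_c^2/\psi_c(\phi)^2$ has Fréchet derivative $-2\,\mathrm{Id}$ at the limiting solution when $\epsilon_c=0$, so it does \emph{not} contract; it is precisely the passage to the cube root — after which the $\epsilon_c=0$ map is constant — that makes the perturbation off $c=c^*$ go through. In carrying this out one must also treat $q_c(\phi)$ carefully at its removable singularity $z=\beta_1+\epsilon_c$ and keep it uniformly close to $1$ on the entire fixed disk $D$, not merely near $\beta_1$, since the neighbourhood of conformality is required to be independent of $c$.
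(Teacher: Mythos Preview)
Your proof is correct and takes a genuinely different route from the paper's. Where the paper attacks the cubic $u(u-3p_\epsilon)^2=2g(z;\epsilon)$ (with $2g$ equal to $\Psi_c^2$ after translation) head-on via Cardano's formula --- fixing the free parameter by the normalization $2p_\epsilon^3=\max_{[0,\epsilon]}g$, which gives the discriminant $108\,g(2p_\epsilon^3-g)$ a double zero at the interior critical point of $g$ and thereby permits a single-valued holomorphic branch to be extracted, then tracking the argument of the cube-root of $v$ along a real cut --- you instead write $\hat\zeta_{c,\beta_1}=(z-\beta_1)\phi_c$ and solve a Banach fixed-point problem for the conformal factor. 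Your key observation, that passing to the cube root renders $\mathcal S_{c^*}$ the \emph{constant} map $\phi\mapsto f_{c^*}^{2/3}$ (whereas the naive rearrangement $\phi=f_c^2/\psi_c(\phi)^2$ has linearization $-2\,\mathrm{Id}$), is precisely what makes the perturbation off $c=c^*$ go through with no branch-tracking whatsoever. The paper's construction yields the coefficient $a_c=3p_\epsilon$ in closed form via the maximum of $g$, which could in principle feed into sharper control of $s_\n$ in the local parametrix of Section~\ref{ss:LP}; your argument is softer and more robust but leaves $a_c$ implicit as $\epsilon_c\phi_c(\beta_1+\epsilon_c)$.

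One cosmetic point on your univalence step: bounding $(z-\beta_1)\phi_c'(z)$ uniformly on $\overline D$ by a Cauchy estimate requires $\phi_c$ to be holomorphic on a strictly larger disk than $D$, so you should run the fixed-point argument on radius $2r_0$ first and then restrict to radius $r_0$ for the derivative bound. Alternatively, simply invoke a Hurwitz-type argument: $\hat\zeta_{c,\beta_1}\to\zeta_{\beta_{c^*,1}}$ locally uniformly with univalent limit, hence $\hat\zeta_{c,\beta_1}$ is eventually univalent on any fixed compact subdisk. Either way this is routine, not a gap.
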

\begin{proof}
Let \( F(z;\epsilon) \) be a family of holomorphic and non-vanishing functions in \( \{|z|<r_0\} \) that are positive at the origin and continuously depend on the parameter \( \epsilon\in[0,\epsilon_0] \). Consider the equation
\begin{equation}
\label{4.3.2}
u(z;\epsilon)(u(z;\epsilon)-3p_\epsilon)^2 = 2g(z;\epsilon), \quad g(z;\epsilon) := z(z-\epsilon)^2F(z;\epsilon),
\end{equation}
where \( p_\epsilon>0 \) is a parameter that we shall fix in a moment. The solution of this cubic equation is formally given by
\[
\left\{ \begin{array}{l}
u(z;\epsilon) = 2p_\epsilon + v^{1/3}(z;\epsilon) + p_\epsilon^2v^{-1/3}(z;\epsilon), \medskip \\
v(z;\epsilon) = g(z;\epsilon) - p_\epsilon^3 + \sqrt{g(z;\epsilon)(g(z;\epsilon)-2p_\epsilon^3)}.
\end{array}\right.
\]
Observe that \( g^\prime(x;\epsilon)=(x-\epsilon)\big[(3x-\epsilon)F(x;\epsilon)+x(x-\epsilon)F^\prime(x;\epsilon)\big] \). The expression in the square brackets is negative at \( 0 \) and positive at \( \epsilon \). Since \( F(0;\epsilon)>\delta>0 \), independently of \( \epsilon\in[0,\epsilon_0] \) for some \( \delta,\epsilon_0>0\) sufficiently small, the derivative of the expression in the square brackets, that is, \( 3F(x;\epsilon)+(5x-\epsilon)F^\prime(x;\epsilon)+x(x-\epsilon)F^{\prime\prime}(x;\epsilon)\), is positive on \( [0,\epsilon] \) for all \( \epsilon\in[0,\epsilon_0] \), where we might need to decrease \( \epsilon_0 \) if necessary. Hence, there exists a unique point \( x_\epsilon\in(0,\epsilon) \) such that \( g^\prime(x_\epsilon) =0 \). Then we let
\begin{equation}
\label{4.3.3}
2p_\epsilon^3 := g(x_\epsilon;\epsilon) = \max_{x\in[0,\epsilon]}g(x;\epsilon).
\end{equation}
Since \( g^\prime(x;\epsilon)=(x-\epsilon)\big[2xF(x;\epsilon)+(x-\epsilon)(F(x;\epsilon)+xF^\prime(x;\epsilon))\big] \) and \( F(0;\epsilon)>\delta>0 \), independently of \( \epsilon\in[0,\epsilon_0] \), we can decrease \( r_0 \) if necessary so that \( g^\prime(x;\epsilon)>0 \) for \( x\in(\epsilon;r_0) \) and \( \epsilon\in[0,\epsilon_0] \). Thus, there exists a unique \( y_\epsilon\in(\epsilon,r_0) \) such that \( 2p_\epsilon^3=g(y_\epsilon;\epsilon) \) for all \( \epsilon\in[0,\epsilon_0] \), where, again, we might need to decrease \( \epsilon_0 \). Hence, we can choose \( v(z;\epsilon) \) to be holomorphic in \( \{|z|<r_0\}\setminus[0,y_\epsilon] \) and \( v^{1/3}(z;\epsilon) \) such that \( v^{1/3}(x;\epsilon) \to -p_\epsilon \) as \( x\to0^- \).

Now, since \( g(x;\epsilon) - p_\epsilon^3 \) is real on \( [0,y_\epsilon]
\) and changes sign exactly once on each interval \( [0,x_\epsilon] \), \(
[x_\epsilon,\epsilon] \), and \( [\epsilon,y_\epsilon] \) while the
square root vanishes at the endpoint of these intervals, the change
of the argument of \( v_\pm(x;\epsilon) \) is equal to \( 3\pi \).
Thus, we can define \( v^{1/3}(z;\epsilon) \) holomorphically in
\( \{|z|<r_0\}\setminus[0,y_\epsilon] \) as well, where it also holds that \(
v^{1/3}_+(x;\epsilon)v^{1/3}_-(x;\epsilon) = p_\epsilon^2 \) and \( v_\pm(\epsilon;\epsilon) = -e^{\mp2\pi\ic/3}p_\epsilon\). In this case
\( u(z;\epsilon) \) is in fact holomorphic in \( \{|z|<r_0\} \), has a simple zero at the origin, is positive for \( x>0 \), and satisfies \( u(\epsilon;\epsilon)=2p_\epsilon \). Since \(
u(z;0) = z(2F(z;0))^{1/3} \) and \( u(z;\epsilon) \) continuously
depends on \( \epsilon \), we can decrease \( r_0 \) if necessary so that all the function \( u(z;\epsilon) \) are conformal  in \( \{|z|<r_0\} \).

Let \( u(z;\epsilon_c) \) be the discussed solution of \eqref{4.3.2} and \eqref{4.3.3} with \( F(z;\epsilon_c)=f^2_c(z+\beta_1)/2 \).  Then the desired function \( \hat\zeta_{c,\beta_1}(z) \) is given by \( u(z-\beta_1;\epsilon_c) \).
\end{proof}

\subsubsection{Local maps around \( \beta_1 \) when \( c>c^* \)}

This construction will be used only for the ray sequences \( \mathcal N_{c_\star} \) with \( c_\star>c^* \). Similarly to \eqref{4.1.1}, given \( c\in( c^*,1) \), define
\begin{equation}
\label{4.4.1}
\zeta_{c,\beta_1}(z) := \left(\frac14\int_{\beta_1}^z\left(h_c^{(0)}-h_c^{(1)}\right)(s)\dd s\right)^2, \quad \alpha_1<\Re z<\alpha_2.
\end{equation}
Then \( \zeta_{c,\beta_1}(z) \) is holomorphic in the domain of the definition, has a simple zero
at \( \beta_1 \), is real positive for \( x>\beta_1 \), and is real
negative for \( x<\beta_1 \).

\begin{lemma}
\label{lem:4.4} There exists a continuous and non-vanishing function \( \delta_{\beta_1}(c) \) on \( (c^*,1) \) with non-zero one-sided limit at \( 1 \) such that \( \zeta_{c,\beta_1}(z) \) is conformal in \( \{|z-\beta_1|<\delta_{\beta_1}(c) \} \). Moreover, the constant \( A_{\beta_1} \) in Lemma~\ref{lem:4.2} can be adjusted so that \( 4A_{\beta_1}(z_c-\beta_1) \leq |\zeta_{c,\beta_1}^\prime(\beta_1)| \), where \( \z_c \) is the zero of \( h_c(\z) \) described in Proposition~\ref{prop:2}.
\end{lemma}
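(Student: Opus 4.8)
The plan is to mirror the proof of Lemma~\ref{lem:4.1}: first to relate $\zeta_{c,\beta_1}^\prime(\beta_1)$ to the strength of the pole of $h_c(\z)$ at $\boldsymbol\beta_1$, then to obtain the radius of conformality from continuity of $\zeta_{c,\beta_1}$ in $c$ together with its explicit limit at $c=1$.

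For the local analysis, note that for $c\in(c^*,1)$ one has $\boldsymbol\beta_{c,1}=\boldsymbol\beta_1$, and by Proposition~\ref{prop:2} this is a ramification point of $\RS_c$ of order $2$ at which $h_c(\z)$ has a simple pole, while its only other zero $\z_c$ lies over $z_c>\beta_1$ (recall $z_c=\beta_1$ precisely when $c\le c^*$, and $z_c$ is strictly increasing). In the local coordinate $t$ with $z-\beta_1=t^2$, chosen so that $\RS_c^{(0)}$ corresponds to $t>0$ over the gap, I would write $h_c=Q_c(t)\,(t-t_c)/t$ with $t_c:=\sqrt{z_c-\beta_1}$ the location of $\z_c$ on sheet~$0$; here $Q_c$ is holomorphic and non-vanishing on a disc $\{|t|<r_0\}$ whose radius $r_0$ is bounded below uniformly for $c$ in compact subsets of $(c^*,1)$, because the remaining zeros of $h_c$ are $\infty^{(0)},\infty^{(1)},\infty^{(2)}$ and its remaining poles project onto $\alpha_1,\alpha_{c,2},\beta_2$, all at a fixed positive distance from $\beta_1$. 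The substitution $s=\beta_1+\sigma^2$ in \eqref{4.4.1} (with $h_c^{(0)}-h_c^{(1)}$ becoming $h_c(\sigma)-h_c(-\sigma)$ in this coordinate) then gives $\zeta_{c,\beta_1}^\prime(\beta_1)=u_c^2$, where $u_c=-t_cQ_c(0)$ is the strength of the pole; equivalently $\zeta_{c,\beta_1}^\prime(\beta_1)=(z_c-\beta_1)\,Q_c(0)^2$.

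Granting this, the derivative estimate reduces to bounding $Q_c(0)^2$ away from $0$ on $(c^*,1)$. It is a continuous, strictly positive function of $c$ there (continuity of $h_c$ in $c$ follows from Proposition~\ref{prop:1}, continuity of $z_c$ is part of Proposition~\ref{prop:2}), with non-zero one-sided limits at both endpoints. As $c\to c^{*+}$ one has $t_c\to0$ and, since $\RS_c=\RS_{c^*}$ for $c\in[c^*,c^{**}]$, the pull-backs of $h_c(\z)$ near $\boldsymbol\beta_1$ depend continuously on $c$ up to $c=c^*$ (as argued for $\boldsymbol\alpha_1$ in the proof of Lemma~\ref{lem:4.1}); hence $Q_c(0)\to h_{c^*}(\boldsymbol\beta_1)$, which is finite and non-zero because the divisor of $h_{c^*}$ has neither a zero nor a pole at $\boldsymbol\beta_1$ (the zero $\z_{c^*}=\boldsymbol\beta_{c^*,1}$ cancels the corresponding pole in $\mathcal D_{c^*}$). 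As $c\to1^-$, from $\omega_{c,1}\cws\omega_1$ and $\|\omega_{c,2}\|=1-c\to0$ one gets $h_c^{(0)}\to 1/w_1$, $h_c^{(1)}\to-1/w_1$ uniformly near $\beta_1$, so $u_c^2\to(\beta_1-\alpha_1)^{-1}$ while $z_c-\beta_1\to\beta_2-\beta_1$. Thus $Q_c(0)^2$ is bounded above and below on $(c^*,1)$, and after possibly shrinking the constant $A_{\beta_1}$ of Lemma~\ref{lem:4.2} we get $4A_{\beta_1}(z_c-\beta_1)\le|\zeta_{c,\beta_1}^\prime(\beta_1)|$ throughout $(c^*,1)$. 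For conformality: $\zeta_{c,\beta_1}$ is holomorphic near $\beta_1$ (as noted before the statement) with $\zeta_{c,\beta_1}^\prime(\beta_1)=u_c^2\ne0$, hence conformal on some disc, so I would let $\delta_{\beta_1}(c)$ be one third of the supremum of the $\delta$ for which $\zeta_{c,\beta_1}$ is injective on $\{|z-\beta_1|<\delta\}$; continuous dependence of $\zeta_{c,\beta_1}$ on $c$ makes $\delta_{\beta_1}$ continuous and non-vanishing on $(c^*,1)$, and the locally uniform limit $\zeta_{c,\beta_1}(z)\to\left(\frac12\int_{\beta_1}^z\frac{\dd s}{w_1(s)}\right)^2$ as $c\to1^-$ --- a map conformal about $\beta_1$, cf.\ \eqref{4.1.9} --- gives $\liminf_{c\to1^-}\delta_{\beta_1}(c)>0$.

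The main obstacle I foresee is the uniform lower bound on $|Q_c(0)|$ near $c=c^*$: one must verify that the collapsing pole of $h_c$ at $\boldsymbol\beta_1$ loses its strength at exactly the rate $|u_c|\sim t_c=\sqrt{z_c-\beta_1}$ and no faster --- equivalently, that $\z_c$ reaches $\boldsymbol\beta_1$ transversally in the local coordinate. The clean route is the divisor computation giving $Q_c(0)\to h_{c^*}(\boldsymbol\beta_1)\ne0$; should this passage to the limit prove delicate, one can instead invoke the cubic equation for $h_c(\z)$ analogous to \eqref{4.1.2} --- now with $\Pi$ carrying the factor $z-\beta_1$ --- and read off the rate of $u_c$ directly, exactly as $u_c^2\sim c$ was obtained for $\alpha_1$.
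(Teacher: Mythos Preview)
Your proposal is correct and follows essentially the same route as the paper: both arguments identify $\zeta_{c,\beta_1}'(\beta_1)=u_c^2$ and then show $u_c/\sqrt{z_c-\beta_1}\to -h_{c^*}(\boldsymbol\beta_1)\neq0$ as $c\to c^{*+}$, the paper via the Puiseux expansion and the relation $h_c^{(0)}(z_c)=0$, you via the equivalent factorization $h_c=Q_c(t)(t-t_c)/t$ in the local coordinate. The conformality part and the $c\to1^-$ endpoint are handled identically in both, using the explicit limit analogous to \eqref{4.1.9}.
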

\begin{proof}
Since \( \zeta_{c,\beta_1}(z) \) has a simple zero at \( \beta_1 \), \( \delta_{\beta_1}(c) \) is simply the largest radius of
conformality, which is clearly positive. Moreover, when \( c\to 1 \), the limiting behavior of \( \zeta_{c,\beta_1}(z) \) is similar to the one described in \eqref{4.1.9} and therefore \( \lim_{c\to1^-}\delta_{\beta_1}(c)>0 \). To prove the second claim of the lemma observe that \( \zeta_{c,\beta_1}^\prime(\beta_1) = u_c^2 \), where
\[
h_c^{(0)}(x)=u_c(x-\beta_1)^{-1/2} + \tilde h_c^{(0)}(x), \quad \tilde h_c^{(0)}(x)=\mathcal O(1) \qasq x\to\beta_1,
\]
exactly as in Lemma~\ref{lem:4.1}. Thus, we only need to investigate what
happens when \( c\to c^{*+} \) (existence of a limit of \( \zeta_{c,\beta_1}(z) \) as \( c\to1 \), which is conformal around \( \beta_1 \), shows that \( |\zeta_{c,\beta_1}^\prime(\beta_1)| \) is bounded from below as \( c\to 1 \)).   It follows from the second part of Proposition~\ref{prop:1} and \eqref{int-rep} that the Puiseux expansion of \( h_c^{(0)}(x) \) must converge to the Puiseux expansion of \( h_{c^*}^{(0)}(x) \) in some punctured neighborhood of \( \beta_1 \). In particular, we have that \( u_c\to0 \) and \( \tilde h_c^{(0)}(x_c) \to h_{c^*}^{(0)}(\beta_1)=h_{c^*}(\boldsymbol\beta_1) \) as \( c\to c^{*+} \) for any sequence of points \( x_c \to \beta_1^+ \) as \( c\to c^{*+} \). Since \( h_c^{(0)}(z_c)=0 \), it holds that \( u_c(z_c-\beta_1)^{-1/2} = - \tilde h_c^{(0)}(z_c)\to -h_{c^*}(\boldsymbol\beta_1) \) as \( c\to c^{*+} \), from which the estimate follows.
\end{proof}

\subsubsection{Estimates of \(  H_c^{(0)}(z)-H_c^{(1)}(z) \) around \( \Delta_{c,1} \)}

The following lemma will be used in the proof of Lemma~\ref{lem:rhzV}, but is presented here due to its connection to the conformal maps constructed above.

\begin{lemma}
\label{lem:4.5} Let \( H_c(\z) \) be as in \eqref{Hc} and \( \delta_{\beta_1} \) as in Lemma~\ref{lem:4.2}. There exists \( \tilde\delta_{\beta_1}\in(0,\delta_{\beta_1}) \) such that given \( c\in(0,c^*) \) and \( \delta\in(0,\tilde\delta_{\beta_1}) \), it holds that
\begin{equation}
\label{4.5.1}
\left(H_c^{(0)}-H_c^{(1)}\right)(x+\ic y) \leq -B_{\beta_1}\delta^{3/2}c, \quad x\in[\beta_{c,1}+\delta c,\alpha_2-\delta c], \; y\in[-\delta c/2,\delta c/2],
\end{equation}
where \( B_{\beta_1}>0 \) is a constant independent of \( c \) and \( \delta \). Moreover, for any fixed \( \delta> 0 \) small enough there exists \( c_\delta>0 \) and \( \epsilon>0 \) such that
\begin{equation}
\label{4.5.1a}
\left(H_c^{(0)}-H_c^{(1)}\right)(x+\ic y) \leq -\epsilon
\end{equation}
for all \( c\in(0,c_\delta) \), \( x\in[\alpha_1+\delta,\alpha_2-\delta] \), and \( y\in[-\delta/2,\delta/2] \). Finally, for any \( c\in(0,1) \), it holds that
\begin{equation}
\label{4.5.2}
\left(H_c^{(0)}-H_c^{(1)}\right)(x\pm \ic\delta c) \geq B_{\beta_1}\delta^{5/2} c, \quad x\in[\alpha_1,\beta_{c,1}].
\end{equation}
\end{lemma}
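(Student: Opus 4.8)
\emph{Reduction.} Everything is governed by the single function $u_c(z):=\big(H_c^{(0)}-H_c^{(1)}\big)(z)$, which by \eqref{Hc} equals $\ell_{c,1}-V^{2\omega_{c,1}+\omega_{c,2}}(z)$, is continuous on $\C$, is even in $\Im z$, and vanishes identically on $\Delta_{c,1}=\supp(\omega_{c,1})$ by \eqref{var}. Moreover $2\partial_z u_c=h_c^{(0)}-h_c^{(1)}$ by \eqref{int-rep}, and since the period of $h_c^{(0)}-h_c^{(1)}$ around $\Delta_{c,1}$ equals $4\pi\ic c$, purely imaginary, $u_c(z)=\Re\int_{\beta_{c,1}}^{z}(h_c^{(0)}-h_c^{(1)})(s)\,\dd s$ on $\{\alpha_1<\Re z<\alpha_2\}$. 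Combining this with the definitions \eqref{4.2.1} and \eqref{4.1.1} gives, for $c\in(0,c^*]$, the identities
\[
u_c(z)=-\tfrac43\,\Re\,\zeta_{\beta_{c,1}}^{3/2}(z)\quad\text{on }\{\alpha_1<\Re z<\alpha_2\},\qquad u_c(z)=4\,\Re\,\zeta_{c,\alpha_1}^{1/2}(z)\quad\text{near }\alpha_1,
\]
with the branches fixed by positivity of $u_c$ on $(-\infty,\alpha_1)$ and by $\zeta_{\beta_{c,1}}>0$ on $(\beta_{c,1},\alpha_2)$; analogous identities hold near $\beta_1$ for $c>c^*$ via \eqref{4.4.1} and Lemma~\ref{lem:4.3}. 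By Koebe's theorem \eqref{koebe1/4} together with Lemmas~\ref{lem:4.1}, \ref{lem:4.2}, \ref{lem:4.4} and the rescaling limits \eqref{4.1.8}, \eqref{4.2.5}, these maps satisfy, uniformly in $c$ on their discs of conformality (of radii $\sim c$), $|\zeta_{\beta_{c,1}}(z)|\sim c^{-1/3}|z-\beta_{c,1}|$ and $|\zeta_{c,\alpha_1}(z)|\sim c\,|z-\alpha_1|$, with $\arg\zeta_{\beta_{c,1}}(z)$ and $\arg\zeta_{c,\alpha_1}(z)$ within $o(1)$ of $\arg(z-\beta_{c,1})$ and $\pi+\arg(z-\alpha_1)$, respectively. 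The whole lemma is obtained by feeding these into $\Re(\cdot)^{3/2}$ and $\Re(\cdot)^{1/2}$.

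\emph{Proof of \eqref{4.5.1}.} On the gap $(\beta_{c,1},\alpha_2)$ one has $h_c^{(0)}-h_c^{(1)}<0$: indeed $h_c^{(0)}$ is decreasing and $h_c^{(1)}$ increasing there, while the two coincide at the non-polar ramification point $\boldsymbol\beta_{c,1}=\z_c$; hence $u_c$ is strictly decreasing on $(\beta_{c,1},\alpha_2)$ and its least negative value on $[\beta_{c,1}+\delta c,\alpha_2-\delta c]$ is at the left endpoint. Choose $\tilde\delta_{\beta_1}<\tfrac12\delta_{\beta_1}$ so that for $\delta<\tilde\delta_{\beta_1}$ the set $\{|\Re z-\beta_{c,1}|\le\delta c,\ |\Im z|\le\delta c/2\}$ lies in $\{|z-\beta_{c,1}|<\delta_{\beta_1}c\}$. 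On the portion $\delta c\le\Re z-\beta_{c,1}\le\tfrac12\delta_{\beta_1}c$, $|\Im z|\le\delta c/2$, we have $|\arg(z-\beta_{c,1})|\le\arctan\tfrac12$, hence (by the reduction) $\tfrac32\arg\zeta_{\beta_{c,1}}(z)$ stays in a fixed subarc of $(-\pi/2,\pi/2)$ and $\Re\,\zeta_{\beta_{c,1}}^{3/2}(z)\ge\tfrac12|\zeta_{\beta_{c,1}}(z)|^{3/2}\gtrsim(c^{-1/3}\delta c)^{3/2}=\delta^{3/2}c$, which is \eqref{4.5.1} there. On the remaining portion $\Re z\ge\beta_{c,1}+\tfrac12\delta_{\beta_1}c$, $|\Im z|\le\delta c/2$, I estimate $u_c(z)=\Re\int_{\beta_{c,1}}^{z}(h_c^{(0)}-h_c^{(1)})$ directly: along the real axis the integrand is real and negative, with $\int_{\beta_{c,1}}^{\Re z}(h_c^{(0)}-h_c^{(1)})\le-c_0c$ always and $\le-c_0$ once $\Re z$ passes the middle of the gap, while the contribution of the vertical segment from $\Re z$ to $z$ to $\Re u_c$ is bounded by $|\Im z|\max|h_c^{(0)}-h_c^{(1)}|\lesssim\delta c\cdot(\Re z-\beta_{c,1})^{-1/2}\lesssim\sqrt{\delta c}$, using the hard-edge bound $|(h_c^{(0)}-h_c^{(1)})(w)|\lesssim|w-\alpha_2|^{-1/2}$ near $\alpha_2$ and boundedness of the integrand away from $\{\alpha_1,\alpha_2\}$. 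Thus $u_c(z)\le-c_0c+C\sqrt{\delta c}$, resp.\ $\le-c_0+C\sqrt{\delta c}$, which is $\le-B_{\beta_1}\delta^{3/2}c\le-B_{\beta_1}\delta^{5/2}c$ once $\tilde\delta_{\beta_1}$ is fixed small; the uniformity in $c\in(0,c^*)$ comes from \eqref{bc1}, \eqref{c-rate}, \eqref{4.2.3}, \eqref{4.2.5} as $c\to0^+$ and from continuity plus compactness of $[\varepsilon,c^*]$ for every $\varepsilon>0$.

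\emph{Proof of \eqref{4.5.1a}.} By Proposition~\ref{prop:1}, as $c\to0$ we have $V^{\omega_{c,1}}\to0$ locally uniformly off $\alpha_1$, $V^{\omega_{c,2}}\to V^{\omega_2}$ locally uniformly, and $\ell_{c,1}\to V^{\omega_2}(\alpha_1)$; hence $u_c\to V^{\omega_2}(\alpha_1)-V^{\omega_2}$ uniformly on the compact rectangle $K_\delta:=[\alpha_1+\delta,\alpha_2-\delta]\times[-\delta/2,\delta/2]$, which is bounded away from $\alpha_1$ and from $\Delta_2$. A short computation — $V^{\omega_2}(x_0+\ic y)$ is decreasing in $|y|$ and increasing in $x_0<\alpha_2$ — gives $\min_{K_\delta}V^{\omega_2}-V^{\omega_2}(\alpha_1)\ge 2\epsilon>0$ for all small $\delta$, and then $u_c\le-\epsilon$ on $K_\delta$ for all $c<c_\delta$.

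\emph{Proof of \eqref{4.5.2}, and the main obstacle.} On $\Delta_{c,1}^\circ$ the measure $2\omega_{c,1}+\omega_{c,2}$ reduces to $2\omega_{c,1}$, which has a positive density, so evenness in $\Im z$ and $u_c|_{\Delta_{c,1}}\equiv0$ give $u_c(x\pm\ic t)=2\pi\omega_{c,1}'(x)\,t+O(t^2)$ as $t\to0^+$ for $x\in\Delta_{c,1}^\circ$. For $\beta_{c,1}-x\ge\delta c$ the equilibrium density is $\omega_{c,1}'(x)\sim\sqrt{(\beta_{c,1}-x)/(x-\alpha_1)}\gtrsim\sqrt\delta$ (using $\beta_{c,1}-\alpha_1\sim c$ from \eqref{bc1}), so $u_c(x\pm\ic\delta c)\gtrsim\delta^{3/2}c$; for $x$ in the remaining $\delta c$-neighbourhood of $\beta_{c,1}$ the identity $u_c=-\tfrac43\Re\,\zeta_{\beta_{c,1}}^{3/2}$ together with $|\zeta_{\beta_{c,1}}(x\pm\ic\delta c)|\gtrsim c^{-1/3}\delta c$ and $\arg\zeta_{\beta_{c,1}}(x\pm\ic\delta c)$ in a fixed closed subarc of $(0,\pi)$ whose $\tfrac32$-multiple avoids $\pm\pi/2$ again yields $\gtrsim\delta^{3/2}c$, and near $\alpha_1$ the same with $\zeta_{c,\alpha_1}$ gives $\gtrsim\sqrt\delta\,c$. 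For $c\ge c^*$ one argues identically with $\Delta_{c,1}=\Delta_1$: the bulk estimate handles the range where the density is bounded below (both endpoints then being hard edges for $c$ away from $c^*$), and the maps $\zeta_{c,\alpha_1},\zeta_{c,\beta_1}$ (Lemma~\ref{lem:4.4}), resp.\ $\zeta_{\beta_{c^*,1}}$, the ranges near the endpoints, with a crossover analysis when $z_c-\beta_1$ is comparable to $\delta c$. In every regime the bound is at least a fixed multiple of $\delta^{3/2}c\ge B_{\beta_1}\delta^{5/2}c$. The hard part throughout is exactly this uniformity in $c\in(0,1)$: one must patch the soft-edge picture at $\beta_{c,1}$ (which fixes the $\delta^{3/2}c$ rate for $c\le c^*$), the hard-edge pictures at $\alpha_1$ and at $\beta_1$ (for $c>c^*$), and the bulk/density estimate, while controlling the collapse $\Delta_{c,1}\to\{\alpha_1\}$ as $c\to0$ via the rescalings \eqref{4.1.8}, \eqref{4.2.5}; the single most delicate point is the transition at $c=c^*$, where $\boldsymbol\beta_1$ switches between carrying a pole of $h_c$ and carrying the distinguished zero $\z_c$, and the crossover between the scales $z_c-\beta_1$ and $\delta c$ is what forces the conservative exponent $\tfrac52$ in place of the sharp $\tfrac32$.
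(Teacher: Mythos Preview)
Your outline has the right ingredients --- the identities $u_c=-\tfrac43\Re\zeta_{\beta_{c,1}}^{3/2}$ and $u_c=4\Re\zeta_{c,\alpha_1}^{1/2}$, the Koebe lower bounds, and the endpoint/bulk splitting --- and \eqref{4.5.1a} is handled exactly as in the paper. But there is a real gap in your treatment of \eqref{4.5.1} on the ``remaining portion'' $\Re z\ge\beta_{c,1}+\tfrac12\delta_{\beta_1}c$.

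\textbf{The gap in \eqref{4.5.1}.} Your bound on the vertical segment is both misstated and too weak. There is no $(\Re z-\beta_{c,1})^{-1/2}$ singularity: for $c<c^*$ the point $\boldsymbol\beta_{c,1}$ is a \emph{zero} of $h_c$, so $h_c^{(0)}-h_c^{(1)}$ vanishes like $\sqrt{z-\beta_{c,1}}$ there, and on the strip with $\Re z\ge\beta_{c,1}+\tfrac12\delta_{\beta_1}c$ the function $\int(z-t)^{-1}\dd(2\omega_{c,1}+\omega_{c,2})$ is uniformly bounded away from $\alpha_2$ (the $\omega_{c,1}$--part contributes $\le 2c/(\tfrac12\delta_{\beta_1}c)=4/\delta_{\beta_1}$). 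The quantity you actually wrote down, $-c_0c+C\sqrt{\delta c}$, fails to be $\le -B_{\beta_1}\delta^{3/2}c$ uniformly in $c$: when $c\to0$ one has $\sqrt{\delta c}=\sqrt\delta\sqrt c\gg c$, and the inequality breaks. With the correct bound ($|h_c^{(0)}-h_c^{(1)}|\lesssim 1$ away from $\alpha_2$) the vertical contribution is $\lesssim\delta c$, giving $u_c\le-(c_0-C\delta)c$, which does suffice once $\tilde\delta_{\beta_1}$ is small; so the argument is repairable, but not as written. The paper bypasses this whole computation: it shows $\partial_x\Re(h_c^{(0)}-h_c^{(1)})(x+\ic y)<0$ for $|y|<\delta c$, which (together with negativity of $\Re(h_c^{(0)}-h_c^{(1)})$ at $\beta_{c,1}+\delta c+\ic y$, obtained from the conformal map) gives $\partial_xu_c<0$ throughout the strip, so the maximum is at $x=\beta_{c,1}+\delta c$ and only one application of the $\zeta_{\beta_{c,1}}$--estimate is needed.

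\textbf{On \eqref{4.5.2}.} Your bulk estimate ``$u_c(x\pm\ic t)=2\pi\omega_{c,1}'(x)\,t+O(t^2)$, $\omega_{c,1}'(x)\gtrsim\sqrt\delta$'' is morally the same object the paper uses --- indeed $2\pi\omega_{c,1}'(x)=|\tilde h_c(x)|$ in the paper's notation --- but you assert the error $O(t^2)$ and the lower bound on $\omega_{c,1}'$ without checking uniformity in $c\in(0,1)$ and in $x$ along the shrinking bulk $[\alpha_1+\delta'c,\beta_{c,1}-\delta'c]$. The paper does this explicitly: it proves $\inf_{c}\min_{x}|\tilde h_c(x)|>0$ via the rescaled limits \eqref{4.5.10}--\eqref{4.5.11} and then applies Koebe to the local antiderivative $\widetilde H_c(z;x')$ to convert the pointwise lower bound into a uniform estimate of $u_c(x+\ic\delta c)$, see \eqref{4.5.12}--\eqref{4.5.14}. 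Your ``crossover'' remarks near $c=c^*$ are correct in spirit but also need this level of care; the paper handles them through Lemmas~\ref{lem:4.3}--\ref{lem:4.4}.
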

\begin{proof}
Since \( h_c(\z) = 2\partial_z H_c(\z) \) and \( \boldsymbol \beta_{c,1} \) is a ramification point of \( \RS_c \) belonging to both \( \RS_c^{(0)} \) and \( \RS_c^{(1)} \), it holds that
\begin{equation}
\label{4.5.3a}
\left(H_c^{(0)}-H_c^{(1)}\right)(z) = \Re\left(\int_{\beta_{c,1}}^z \left( h_c^{(0)}-h_c^{(1)}\right)(s)\dd s\right),  \quad \alpha_1<\Re z<\alpha_2.
\end{equation}
It further follows from \eqref{int-rep} that
\[
\partial_x\Re\left( h_c^{(0)}-h_c^{(1)}\right)(x+\ic y) = \int \frac{y^2-(t-x)^2}{((t-x)^2+y^2)^2}\dd(2\omega_{c,1}+\omega_{c,2})(t)<0
\]
when \( |y|< \delta c\leq \dist(x,\Delta_{c,1}\cup\Delta_{c,2}) \). Therefore, it holds that
\[
\left(H_c^{(0)}-H_c^{(1)}\right)(x+\ic y) \leq \left(H_c^{(0)}-H_c^{(1)}\right)(\beta_{c,1}+\delta c+\ic y)
\]
for all \( x\in[\beta_{c,1}+\delta c,\alpha_2-\delta c] \) and \( y\in[-\delta c/2,\delta c/2] \). Now, by combining \eqref{4.2.1} and \eqref{4.5.3a} we get that
\begin{equation}
\label{4.5.3}
\left(H_c^{(0)}-H_c^{(1)}\right)(z) = -\frac43\Re\left(\zeta_{\beta_{c,1}}^{3/2}(z)\right),  \quad \alpha_1<\Re z<\alpha_2,
\end{equation}
for all \( c\in(0,c^*] \). Take \( \tilde\delta_{\beta_1}\leq \sin(\pi/6)\delta_{\beta_1} \). Since each map \( \zeta_{\beta_{c,1}}(z) \) is conformal in \( |z-\beta_{c,1}|<\delta_{\beta_1}c \) and \( \delta<\sin(\pi/6)\delta_{\beta_1} \), every point \( \beta_{c,1}+\delta c+\ic y \) lies within a disk of conformality of \( \zeta_{\beta_{c,1}}(z) \) when \( |y|<\delta c/2 \). Since \( \Arg(\delta c+ \ic y)\in[-\pi/6,\pi/6] \) when \( |y|<\delta c/2 \) and \( \zeta_{\beta_{c,1}}(x) \) is positive for \( x>\beta_{c,1} \), is negative for \( x<\beta_{c,1} \) and has a positive derivative at \( \beta_{c,1} \), there exists \( \delta_c>0 \) such that
\[
\Re\left(\zeta_{\beta_{c,1}}^{3/2}(\beta_{c,1}+\delta c + \ic y) \right) \geq \frac12\left|\zeta_{\beta_{c,1}}^{3/2}(\beta_{c,1}+\delta c + \ic y)\right|
\]
for all \( |y|<\delta c/2 \) and \( \delta<\delta_c \). Since the maps \( \zeta_{\beta_{c,1}}(z) \) continuously depend on \( c \) and have a rescaled conformal limit as \( c\to0 \), see \eqref{4.2.5}, the constants \( \delta_c \) can be chosen so that \( \delta_c\geq\tilde\delta_{\beta_1}>0 \) for all \( c\in(0,c^*) \) and some \( \tilde\delta_{\beta_1}>0 \). Thus,
\[
\left(H_c^{(0)}-H_c^{(1)}\right)(x+\ic y) \leq -\frac23\left|\zeta_{\beta_{c,1}}^{3/2}(\beta_{c,1}+\delta c +\ic y) \right| \leq - B_{\beta_1}\delta^{3/2} c
\]
for \( x\in[\beta_{c,1}+\delta c,\alpha_2-\delta c] \), \( y\in[-\delta c/2,\delta c/2]\), and a constant \( B_{\beta_1}>0 \) independent of \( c \) by Lemma~\ref{lem:4.2} and \eqref{koebe1/4}, which  finishes the proof of \eqref{4.5.1}.

Estimate \eqref{4.5.1a} follows in straightforward fashion from the observation that the left-hand side of \eqref{4.5.1a} converges to \( V^{\omega_2}(\alpha_1)-V^{\omega_2}(x+\ic y) \) as \( c\to 0 \) uniformly on the considered set by Proposition~\ref{prop:1} and \eqref{Hc}, where \( \omega_2 \) is the arcsine distribution on \( \Delta_2 \).

To prove \eqref{4.5.2}, observe that for each \( x\in \Delta_{c,1} \) fixed, the functions \( \big(H_c^{(0)}-H_c^{(1)}\big)(x\pm \ic y) \) are increasing for \( y\in[0,\infty) \) and vanish at \( y=0 \) by  \eqref{Hc} and \eqref{var}. Moreover, since these functions have the same value at conjugate-symmetric points, it is enough to consider only the upper half-plane. As the right-hand side of \eqref{4.5.2} is positive whenever \( c,\delta>0 \), we can assume without loss of generality that \( \delta<\min\{\delta_{\alpha_1},\delta_{\beta_1},\min_{c\in[c^\prime,1)}\delta_{\beta_1}(c) \} \), where \( \delta_{\alpha_1} \), \( \delta_{\beta_1} \), \( c^\prime \), and \( \delta_{\beta_1}(c) \) were introduced in Lemmas~\ref{lem:4.1}, \ref{lem:4.2}, \ref{lem:4.3}, and \ref{lem:4.4}, respectively. 

Suppose that \( |x+\ic\delta c-\alpha_1|<\delta_{\alpha_1}c \). Then it follows from Lemma~\ref{lem:4.1} together with \eqref{koebe1/4} that
\begin{equation}
\label{4.5.5}
\left|\zeta_{c,\alpha_1}^{1/2}(x+\ic \delta c)\right| \geq (A_{\alpha_1}/4)^{1/2}\delta^{1/2} c.
\end{equation}
It clearly holds that \( \Arg(x+\ic \delta c) \in \big[\arctan(\delta/\delta_{\alpha_1}),\pi/2\big] \). Since \(\zeta_{c,\alpha_1}(z) \) is conformal, negative for \( z>\alpha_1 \), and positive for \( z<\alpha_1 \), there exists \( \delta_c>0 \) such that
\begin{equation}
\label{4.5.6}
\Arg\left(\zeta_{c,\alpha_1}^{1/2}(x+\ic\delta c)\right) \in \big(0,(\pi-\arctan(\delta/\delta_{\alpha_1}))/2\big]
\end{equation}
for all \( \delta\in(0,\delta_c) \). Since the maps \( \zeta_{c,\alpha_1}(z) \) continuously depend on \( c \) and have a rescaled conformal limit as \( c\to0 \), see \eqref{4.1.8}, and a conformal limit as \( c\to1 \), see \eqref{4.1.9}, the constants \( \delta_c \) can be chosen so that \( \delta_c\geq\delta_*>0 \) for all \( c\in(0,1) \). However, as mentioned before, without loss of generality we can consider only \( \delta\in(0,\delta_*) \). Furthermore, similarly to \eqref{4.5.3}, it holds that
\[
\left(H_c^{(0)}-H_c^{(1)}\right)(z) = 4\Re\left(\zeta_{c,\alpha_1}^{1/2}(z)\right),  \quad \Re z<\beta_{c,1},
\]
by \eqref{4.1.1}. Thus, combining the above expression with \eqref{4.5.5} and \eqref{4.5.6} gives us
\begin{equation}
\label{4.5.7}
\left(H_c^{(0)}-H_c^{(1)}\right)(x + \ic\delta c) \geq \sin(\arctan(\delta/\delta_{\alpha_1})/2)\left|\zeta_{c,\alpha_1}^{1/2}(x+\ic \delta c)\right| \geq B^\prime\delta^{3/2}c
\end{equation}
for some \( B^\prime>0 \), independent of \( c \) and \( \delta \).

Now, we shall examine what happens when \( x \) lies in the vicinity of \( \beta_{c,1} \). Unfortunately, there are three different constructions of the conformal maps in this case. Thus, we first assume that \( c\in(0,c^*] \) and \( |x+\ic\delta-\beta_{c,1}|<\delta_{\beta_1}c \), see Lemma~\ref{lem:4.2}. Then it follows from Lemma~\ref{lem:4.2} and \eqref{koebe1/4} that
\[
\left|\zeta_{\beta_{c,1}}^{3/2}(x+\ic \delta c)\right| \geq (A_{\beta_1}/4)^{3/2}\delta^{3/2} c.
\]
In the considered case \( \Arg(x+\ic \delta c) \in \big[\pi/2,\pi-\arctan(\delta/\delta_{\beta_1})\big] \). Since the conformal maps \( \zeta_{\beta_{c,1}}^{3/2}(z) \) continuously depend on \( c \), have a rescaled limit when \( c\to0 \), see \eqref{4.2.5}, are positive for \( z>\beta_{c,1} \) and negative for \( x<\beta_{c,1} \), \eqref{4.5.6} gets now replaced by
\begin{equation}
\label{4.5.8}
\Arg\left(\zeta_{\beta_{c,1}}^{3/2}(x+\ic\delta c)\right) \in \big(5\pi/8,(3\pi-\arctan(\delta/\delta_{\alpha_1}))/2\big]
\end{equation}
for all \( \delta\in(0,\delta_*) \) and a possibly adjusted constant \( \delta_*>0 \). Thus, combining the above observations with \eqref{4.5.3} gives us that
\begin{equation}
\label{4.5.9}
\left(H_c^{(0)}-H_c^{(1)}\right)(x + \ic\delta c) \geq (4/3)\sin(\arctan(\delta/\delta_{\beta_1})/2) \left|\zeta_{\beta_{c,1}}^{3/2}(x+\ic \delta c)\right| \geq B^{\prime\prime}\delta^{5/2}c
\end{equation}
for some \( B^{\prime\prime}>0 \), independent of \( \delta \) and \( c \). Let now \( c^\prime \) be the same as in Lemma~\ref{lem:4.3} and \( |x+\ic\delta-\beta_1|<\delta_{\beta_1}c \) for any \( c\in(c^*,c^\prime] \), again, see Lemma~\ref{lem:4.3}. Then it follows from \eqref{4.3.4} that
\[
\left(H_c^{(0)}-H_c^{(1)}\right)(z) = -\frac43\Re\left(\hat\zeta_{c,\beta_1}^{3/2}(z) - \hat\zeta_{c,\beta_1}(\beta_1+\epsilon_c)\hat\zeta_{c,\beta_1}^{1/2}(z)\right).
\]
Since \( \hat\zeta_{c,\beta_1}(x) \) is positive for \( x>\beta_1 \) and negative for \( x<\beta_1 \), it holds that
\[
-\frac43\Re\left(\hat\zeta_{c,\beta_1}^{3/2}(z) - \hat\zeta_{c,\beta_1}(\beta_1+\epsilon_c)\hat\zeta_{c,\beta_1}^{1/2}(z)\right) > -\frac43\Re\left(\hat\zeta_{c,\beta_1}^{3/2}(z)\right)
\]
for \( z \) with \( \Arg(z)\in(0,\pi) \). Since the maps \( \hat\zeta_{c,\beta_1}(z) \) continuously depend on \( c\in[c^*,c^\prime] \), where we set \( \hat\zeta_{c^*,\beta_1}(z):=\zeta_{c^*,\beta_1}(z)\), see Lemma~\ref{lem:4.3}, the constant \( \delta_* \) can be adjusted so that \eqref{4.5.8} remains valid with \( \zeta_{c,\beta_1}(z) \) replaced by \( \hat\zeta_{c,\beta_1}(z) \) for \( |\delta|<\delta_* \) and \( c\in[c^*,c^\prime] \). Hence, we can proceed exactly as in the case \( c\in(0,c^*] \), perhaps, at the expense of possibly adjusting the constant \( B^{\prime\prime} \) in \eqref{4.5.9}. Further, when \( c\in[c^\prime,1) \), it follows from \eqref{4.4.1} that
\[
\left(H_c^{(0)}-H_c^{(1)}\right)(z) = 4\Re\left(\zeta_{c,\beta_1}^{1/2}(z)\right),  \quad \alpha_1<\Re z<\alpha_2.
\]
It also follows from Proposition~\ref{prop:2} and Lemma~\ref{lem:4.4} that \( |\zeta_{c,\beta_1}^\prime(\beta_1)| \) is bounded away from \( 0 \) independently of \( c\in[c^\prime,1) \) (the bound does depend on \( c^\prime \)). Notice also that in this case \eqref{4.5.6} remains valid with \( \delta_{\alpha_1} \) replaced by \( \min_{c\in[c^\prime,1)}\delta_{\beta_1}(c) \). Therefore, \eqref{4.5.7} remains valid as well, where we need to replace \( \zeta_{c,\alpha_1}(z) \) by \( \zeta_{c,\beta_1}(z) \) and, perhaps, adjust \( B^\prime \).

It only remains to examine what happens when \( \alpha_1+\delta^\prime c\leq x\leq \beta_{c,1}-\delta^\prime c \) for some \( \delta^\prime>0 \). To this end, let us denote by \( \tilde h_c(x) \) the following function:
\begin{eqnarray*}
\tilde h_c(x) &:=& 2\ic\Im\big(h_{c+}^{(0)}(x)\big) = h_{c+}^{(0)}(x) - h_{c-}^{(0)}(x) = h_{c+}^{(0)}(x) - h_{c+}^{(1)}(x) \\
&=& 2\ic\Im\big(h_{c-}^{(1)}(x)\big) = -2\ic\Im\big(h_{c+}^{(1)}(x)\big) = -2\ic\Im\big(h_{c-}^{(0)}(x)\big), \quad x\in\Delta_{c,1}^\circ.
\end{eqnarray*}
Let us show that \( \tilde h_c(x)\neq0 \) for \( x\in\Delta_{c,1}^\circ \). Indeed, if \( \tilde h_c(x^\prime)=0 \) for some \( x^\prime\in\Delta_{c,1}^\circ \), then \( h_{c+}^{(0)}(x^\prime) = h_{c-}^{(0)}(x^\prime) = h_{c+}^{(1)}(x^\prime)=h_{c-}^{(1)}(x^\prime) \) and this value is real. That is, there exist \( \x^\prime,\x^{\prime\prime}\in\boldsymbol\Delta_{c,1} \) (\(\pi(\x^\prime)=\pi(\x^{\prime\prime})=x^\prime\)) at which \( h_c(\z) \) assumes the same non-zero real value. On the other hand, when \( c\in(c^*,c^{**}) \), \( h_c(\z) \) has simple poles at \( \boldsymbol\alpha_1, \boldsymbol\beta_1, \boldsymbol\alpha_2, \boldsymbol\beta_2 \). Therefore, it can be clearly seen from \eqref{int-rep} that \( h_c^{(0)}(x) \) assumes every non-zero real value twice, once on \( (-\infty,\alpha_1)\cup(\beta_2,\infty) \) and once on \( (\beta_1,\alpha_2) \). Furthermore, \eqref{int-rep} also shows that \( h_c^{(1)}(x) \) and \( h_c^{(2)}(x) \) assume every non-zero real value once on \( (-\infty,\alpha_1)\cup(\beta_1,\infty) \) and \( (-\infty,\alpha_2)\cup(\beta_2,\infty) \), respectively. As \( h_c(\z) \) has four zeros/poles, it assumes every value exactly four times. Thus, if \( \tilde h_c(x^\prime) \) were zero, \( h_c(\z) \) would assume a given real value six times, which is impossible. Since the proof for the case \( c\in(0,c^*]\cup[c^{**},1) \) is quite similar, the claim follows.

For the next step, we would like to argue that
\[
\tilde h_{\min}:=\inf_{c\in(0,1)}\min_{\alpha_1+\delta^\prime c\leq x\leq \beta_{c,1}-\delta^\prime c}|\tilde h_c(x)|>0.
\]
For that, it will be convenient to consider the rescaled function \( \hat{\tilde h}_c(s) :=\tilde h_c(\beta_{c,1}+|\Delta_{c,1}|(s-1)/2) \). These functions are purely-imaginary and non-vanishing on \( (-1,1) \). It follows from \eqref{c-rate} that there exists \( \delta^{\prime\prime}>0 \) such that
\[
\tilde h_{\min} \geq \inf_{c\in(0,1)}\min_{-1+\delta^{\prime\prime}\leq s\leq 1-\delta^{\prime\prime}}|\hat{\tilde h}_c(s)|.
\]
For each \( c \) fixed, the minimum over \( s \) is clearly non-zero and continuously depends on \( c \). On the other hand, exactly as in Lemma~\ref{lem:4.2}, it holds that
\begin{equation}
\label{4.5.10}
\hat{\tilde h}_c(s) \to -\frac{\ic}{|w_2(\alpha_1)|}\sqrt{\frac{1-s}{1+s}}
\end{equation}
as \( c\to 0 \) uniformly on \( [-1+\delta^{\prime\prime},1-\delta^{\prime\prime}] \), which again, has a non-zero minimum of the absolute  value. Moreover, a computation similar to the one leading to \eqref{4.1.9} gives us that
\begin{equation}
\label{4.5.11}
\hat{\tilde h}_c(s) \to -\frac{4\ic}{\sqrt{\beta_1-\alpha_1}}\frac1{\sqrt{1-s^2}}
\end{equation}
as \( c\to 1 \) uniformly on \( [-1+\delta^{\prime\prime},1-\delta^{\prime\prime}] \), which also has a non-zero minimum of the absolute  value. Hence, it indeed holds that \( \tilde h_{\min}>0 \).

Now, observe that \( \tilde h_c(x) \) is a trace of a function analytic across \( \Delta_{c,1}^\circ \), namely, of
\[
\tilde h_c(z) := \left\{
\begin{array}{ll}
h_c^{(0)}(z) - h_c^{(1)}(z), & \Im z>0, \smallskip \\
h_c^{(1)}(z) - h_c^{(0)}(z), & \Im z<0.
\end{array}
\right.
\]
Therefore, for each \( x^\prime\in[\alpha_1+\delta^\prime c,\beta_{c,1}-\delta^\prime c] \) fixed, there exists \( \delta(c;x^\prime)>0 \) such that
\begin{equation}
\label{4.5.12}
\big| \widetilde H_c(z;x^\prime) \big| \geq (\tilde h_{\min}/4)|z-x^\prime|, \quad \widetilde H_c(z;x^\prime) := \int_{x^\prime}^z \tilde h_c(s)\dd s,
\end{equation}
for all \( |z-x^\prime|<\delta(c;x^\prime)c \) by \eqref{koebe1/4}. Notice that \( \delta(x^\prime)c \) can be taken to be the radius of the largest disk of conformality of \( \widetilde H_c(z;x^\prime) \). Observe also that \( \delta(c;x^\prime) \) continuously depends on \( x^\prime \) and therefore there exists \( \delta(c)>0 \) such that \( \delta(c;x^\prime)\geq \delta(c) \) for all \( x^\prime\in[\alpha_1+\delta^\prime c,\beta_{c,1}-\delta^\prime c] \). Since \( \delta(c) \) can be made to continuously depend on \( c \) and the limits \eqref{4.5.10} and \eqref{4.5.11} hold not only on \( (-1,1) \), but in some neighborhood of \( (-1,1) \) as well, the constant \( \delta_* \) can be adjusted so that \( \delta(c)>\delta_* \) for all \( c\in(0,1) \).

Since the functions \( \widetilde H_c(z;x^\prime) \) are conformal in \( |z-x^\prime|<\delta_*c \) for each \( x^\prime\in[\alpha_1+\delta^\prime c,\beta_{c,1}-\delta^\prime c] \) and are purely imaginary on the real axis, the same continuity and compactness arguments we have been employing throughout the lemma imply that
\begin{equation}
\label{4.5.13}
\Re\left(\widetilde H_c(x^\prime+\ic y;x^\prime)\right) \geq C \big| \widetilde H_c(x^\prime+\ic y;x^\prime) \big|
\end{equation}
for all \( y\in(0,\delta_* c) \) and \( x^\prime\in[\alpha_1+\delta^\prime c,\beta_{c,1}-\delta^\prime c] \), where \( C>0 \) is constant independent of \( c \). Since \( h_c(\z) = 2\partial_z H_c(\z) \), it follows from \eqref{4.5.12} and \eqref{4.5.13} that
\begin{equation}
\label{4.5.14}
\left(H_c^{(0)}-H_c^{(1)}\right)(x + \ic\delta c) = \Re\left(\widetilde H_c(x+\ic\delta c;x)\right) \geq (C\tilde h_{\min}/4)\delta c.
\end{equation}
The estimate in \eqref{4.5.2} now follows from \eqref{4.5.7}, \eqref{4.5.9}, and \eqref{4.5.14}.
\end{proof}

\subsection{Local Parametrices} 
\label{ss:LP}

Below, we construct solutions of \hyperref[rhp]{\rhp$_e$} for \( e\in\{\alpha_1,\beta_{\n,1}\} \), \( \n\in\mathcal N_{c_\star} \).  Recall that the squares \( U_e \) have diagonals of length \( 2\delta c\), where \( \delta\leq \delta(c_\star) \) see Section~\ref{ss:OL}. Additionally, we assume that \( \delta\leq\min\{\delta_{\alpha_1},\delta_{\beta_1}\} \) or \( \delta\leq\min\{\delta_{\alpha_1},\delta_{\beta_1}(c_\star)\} \), depending on \( c_\star \), see Lemmas~\ref{lem:4.1}--\ref{lem:4.4}. Then the maps constructed in Section~\ref{sec:4.2} are conformal in the corresponding squares \( U_e \).

\subsubsection{Matrix \( \boldsymbol P_{\alpha_1}(z) \)}
\label{sss:7.5.1}

Let  $\boldsymbol\Psi(\zeta)$ be a matrix-valued function such that
\begin{itemize}
\label{rhpsi}
\item[(a)] $\boldsymbol\Psi(\zeta)$ is holomorphic in
    $\C\setminus\big(I_+\cup I_-\cup(-\infty,0]\big)$, see
    \eqref{Ipm1};
\item[(b)] $\boldsymbol\Psi(\zeta)$ has continuous traces on
    $I_+\cup I_-\cup(-\infty,0)$ that satisfy
\[
\boldsymbol\Psi_+(\zeta) = \boldsymbol\Psi_-(\zeta)
\left\{
\begin{array}{rl}
\left(\begin{matrix} 0 & 1 \\ -1 & 0 \end{matrix}\right), & \zeta\in(-\infty,0), \medskip \\
\left(\begin{matrix} 1 & 0 \\ 1 & 1 \end{matrix}\right), & \zeta\in I_\pm,
\end{array}
\right.
\]
where \( I_\pm  \) are oriented towards the origin;
\item[(c)] \( \boldsymbol\Psi(\zeta)=\boldsymbol{\mathcal
    O}(\log|\zeta|) \) as $\zeta\to0$;
\item[(d)] $\boldsymbol\Psi(\zeta)$ has the following behavior
    near $\infty$:
\[
\boldsymbol\Psi(\zeta) = \frac{\zeta^{-\sigma_3/4}}{\sqrt2}\left(\begin{matrix} 1 & \ic \\ \ic & 1 \end{matrix}\right)\left(\boldsymbol I+\mathcal{O}\left(\zeta^{-1/2}\right)\right)\exp\left\{2\zeta^{1/2}\sigma_3\right\}
\]
uniformly in $\C\setminus\big(I_+\cup I_-\cup(-\infty,0]\big)$.
\end{itemize}
Solution of \hyperref[rhpsi]{\rhpsi} was constructed explicitly in \cite{KMcLVAV04} with the help of modified Bessel and Hankel functions. Observe that the jump matrices in  \hyperref[rhpsi]{\rhpsi}(b) have determinant one. Therefore, it follows from \hyperref[rhpsi]{\rhpsi}(d) that \( \det(\boldsymbol\Psi(\zeta)) \equiv \sqrt 2 \).

Let \( \zeta_{\n,\alpha_1}(z) := \zeta_{c_\n,\alpha_1}(z) \), see \eqref{4.1.1}, which is conformal in \( U_{\alpha_1} \). It holds due to Lemma~\ref{lem:4.1} and \eqref{koebe1/4} that
\begin{equation}
\label{rhpsi-2}
\left\{|z|<A_{\alpha_1}\delta n_1^2\right\}\subset|\n|^2\zeta_{\n,\alpha_1}(U_{\alpha_1}),
\end{equation}
where \( A_{\alpha_1} \) is independent of \( \delta \) and \( c_\n=n_1/|\n| \). It also follows from \eqref{PhiInt} and \eqref{4.1.1} that
\begin{equation}
\label{rhpsi-1}
\zeta_{\n,\alpha_1}(z) = \left(\frac1{4|\n|}\log\left(\Phi_\n^{(0)}(z)/\Phi_\n^{(1)}(z)\right)\right)^2, \quad z\in U_{\alpha_1}.
\end{equation}

Let \( \boldsymbol D(z) \) be given by \eqref{matrix-CD}. Note also that the matrix \( \sigma_3\boldsymbol\Psi(\zeta)\sigma_3 \) also satisfies \hyperref[rhpsi]{\rhpsi}, but with the orientation of all the rays in \hyperref[rhpsi]{\rhpsi}(b) reversed and \( \ic \) replaced by \( -\ic \) in the asymptotic formula of \hyperref[rhpsi]{\rhpsi}(d). Relation \eqref{rhpsi-1} and  \hyperref[rhpsi]{\rhpsi}(a,b,c) imply
that the matrix
\begin{equation}
\label{rhpsi-3}
\boldsymbol P_{\alpha_1}(z) := \boldsymbol E_{\alpha_1}(z) \mathsf{T}_1\left(\big(\sigma_3\boldsymbol\Psi\sigma_3\big)\left(|\n|^2\zeta_{\n,\alpha_1}(z)\right) \rho_1^{-\sigma_3/2}(z)\left(\Phi^{(0)}_{\n}/\Phi^{(1)}_{\n}\right)^{-\sigma_3/2}(z)\right)\boldsymbol D(z),
\end{equation}
satisfies \hyperref[rhp]{\rhp$_{\alpha_1}$}(a,b,c) for any
holomorphic prefactor $\boldsymbol E_{\alpha_1}(z)$. As
$\zeta_+^{1/4}=\mathrm{i}\zeta^{1/4}_-$ on \( (-\infty,0) \), where
we take the principal branch, it can be easily checked that
\[
\frac{\zeta_+^{-\sigma_3/4}}{\sqrt2}\left(\begin{matrix} 1 & -\ic \\ -\ic & 1 \end{matrix}\right) = \frac{\zeta_-^{-\sigma_3/4}}{\sqrt2}\left(\begin{matrix} 1 & -\ic \\ -\ic & 1 \end{matrix}\right) \left(\begin{matrix} 0 & -1 \\ 1 & 0 \end{matrix}\right)
\]
there. Then \hyperref[rhn]{\rhn}(b) implies that
\begin{equation}
\label{rhpsi-4}
\boldsymbol E_{\alpha_1}(z) := \boldsymbol M(z)\mathsf{T}_1\left(\frac{(|\n|^2\zeta_{\n,\alpha_1}(z)\big)^{-\sigma_3/4}}{\sqrt2}\left(\begin{matrix} 1 & -\ic \\ -\ic & 1 \end{matrix}\right) \rho_1^{-\sigma_3/2}(z)\right)^{-1}
\end{equation}
is holomorphic in $U_{\alpha_1}\setminus\{\alpha_1\}$. Since the first and second columns of $\boldsymbol M(z)$ has at most quarter root singularities at \( \alpha_1 \) and the third one is bounded, see Lemma~\ref{lem:Mbounds}, $\boldsymbol E_{\alpha_1}(z)$ is in fact holomorphic in $U_{\alpha_1}$ as desired.  Finally, \hyperref[rhp]{\rhp$_{\alpha_1}$}(d) follows from \hyperref[rhpsi]{\rhpsi}(d) and \eqref{rhpsi-2}.

Recall that \( \det(\boldsymbol M(z)) \equiv \det(\boldsymbol D(z)) \equiv 1 \) as explained between \eqref{matrix-M} and \eqref{matrix-CD}. Hence, it holds that \( \det(\boldsymbol E_{\alpha_1}(z)) \equiv 1/\sqrt2 \) and respectively \( \det(\boldsymbol P_{\alpha_1}(z))\equiv 1 \).

\subsubsection{Matrix \( \boldsymbol P_{\beta_{\n,1}}(z) \) when \( c_\star \leq c^* \) and \( c_\n\leq c^* \)}

Below, given \( \mathcal N_{c_\star} \), with \( c_\star \leq c^*
\), we solve \hyperref[rhp]{\rhp$_{\beta_{\n,1}}$} along the
subsequence \( \mathcal N_{c_\star}^\leq:=\big\{ \n\in\mathcal
N_{c_\star}: c_\n\leq c^*\big\} \), when such a subsequence is
infinite.  Clearly, \( \mathcal N_{c_\star}^\leq \) only omits
finitely many terms from  \( \mathcal N_{c_\star} \) when \(
c_\star<c^* \).

Given \( \sigma\in\C\setminus(-\infty,0) \) and \(
s\in(-\infty,\infty) \), let  $\boldsymbol\Phi_\sigma(\zeta;s)$ be
a matrix-valued function such that
\begin{itemize}
\label{rhphi}
\item[(a)] $\boldsymbol\Phi_\sigma(\zeta;s)$ is holomorphic in
    $\C\setminus\big(I_+\cup I_-\cup(-\infty,\infty)\big)$;
\item[(b)] $\boldsymbol\Phi_\sigma(\zeta;s)$ has continuous
    traces on $I_+\cup I_-\cup(-\infty,0)\cup(0,\infty)$ that
    satisfy
\[
\boldsymbol\Phi_{\sigma+}(\zeta;s) = \boldsymbol\Phi_{\sigma-}(\zeta;s)
\left\{
\begin{array}{rl}
\left(\begin{matrix} 0 & 1 \\ -1 & 0 \end{matrix}\right), & \zeta\in(-\infty,0), \medskip \\
\left(\begin{matrix} 1 & 0 \\ 1 & 1 \end{matrix}\right), & \zeta\in I_\pm, \medskip \\
\left(\begin{matrix} 1 & \sigma \\ 0 & 1 \end{matrix}\right), & \zeta\in(0,\infty);
\end{array}
\right.
\]
\item[(c)] \( \boldsymbol\Phi_1(\zeta;s)=\boldsymbol{\mathcal
    O}(1) \) and \(
    \boldsymbol\Phi_\sigma(\zeta;s)=\boldsymbol{\mathcal
    O}(\log|\zeta|) \) when \( \sigma\neq1 \) as $\zeta\to0$;
\item[(d)] $\boldsymbol\Phi(\zeta;s)$ has the following
    behavior near $\infty$:
\[
\boldsymbol\Phi_\sigma(\zeta;s) = \frac{\zeta^{-\sigma_3/4}}{\sqrt2}\left(\begin{matrix} 1 & \ic \\ \ic & 1 \end{matrix}\right) \left(\boldsymbol I+\mathcal{O}\left(\zeta^{-1/2}\right)\right) \exp\left\{-\frac23(\zeta+s)^{3/2}\sigma_3\right\}
\]
uniformly in $\C\setminus\big(I_+\cup
I_-\cup(-\infty,\infty)\big)$.
\end{itemize}

As in the previous subsection, notice that \( \det(\boldsymbol\Phi_\sigma(\zeta;s)) \equiv \sqrt 2 \).

Besides \hyperref[rhphi]{\rhphi$_\sigma$}, we shall also need
\rhwphi~ obtained from \hyperref[rhphi]{\rhphi$_0$} by replacing
\hyperref[rhphi]{\rhphi$_0$}(d) with
\begin{itemize}
\label{rhwphi}
\item[(d)] $\widetilde{\boldsymbol\Phi}(\zeta;s)$ has the
    following behavior near $\infty$:
\[
\widetilde{\boldsymbol\Phi}(\zeta;s) = \frac{\zeta^{-\sigma_3/4}}{\sqrt2}\left(\begin{matrix} 1 & \ic \\ \ic & 1 \end{matrix}\right) \left(\boldsymbol I+\mathcal{O}\left(\zeta^{-1/2}\right)\right) \exp\left\{-\frac23\left(\zeta^{3/2}+s\zeta^{1/2}\right)\sigma_3\right\}.
\]
\end{itemize}

When $\sigma=1$ and $s=0$, the Riemann-Hilbert problem
\hyperref[rhphi]{\rhphi$_1$} is well known \cite{DKMLVZ99b}
and is solved using Airy functions. In fact, in this case
\hyperref[rhphi]{\rhphi$_1$}(d) can be improved to
\begin{equation}
\label{rhphi-1}
\boldsymbol\Phi_1(\zeta;0) = \frac{\zeta^{-\sigma_3/4}}{\sqrt2}\left(\begin{matrix} 1 & \ic \\ \ic & 1 \end{matrix}\right) \left(\boldsymbol I+\mathcal{O}\left(\zeta^{-3/2}\right)\right) \exp\left\{-\frac23\zeta^{3/2}\sigma_3\right\}
\end{equation}
uniformly in $\C\setminus\big(I_+\cup
I_-\cup(-\infty,\infty)\big)$. More generally, when $\sigma=1$, the solvability of
these two problems for all $s\in(-\infty,\infty)$ was shown in
\cite{IKOs08} with further properties investigated in
\cite{IKOs09}. The solvability of the general case
$\sigma\in\C\setminus(-\infty,0)$ was obtained in \cite{XuZh11}.
In \cite[Theorem~4.1]{Y16} it was shown that
\hyperref[rhphi]{\rhphi$_\sigma$}(d) can be replaced by
\begin{equation}
\label{rhphi-2}
\boldsymbol\Phi_\sigma(\zeta;s) = \frac{\zeta^{-\sigma_3/4}}{\sqrt2} \left(\begin{matrix} 1 & \mathrm{i} \\ \mathrm{i} & 1 \end{matrix}\right) \left( \boldsymbol I + \mathcal{O}\left(\sqrt{\frac{|s|+1}{|\zeta|+1}}\right) \right)  \exp\left\{-\frac23(\zeta+s)^{3/2}\sigma_3\right\}
\end{equation}
which holds uniformly for $\zeta\in\C\setminus\big(I_+\cup
I_-\cup(-\infty,\infty)\big)$ and $s\in(-\infty,\infty)$ when \(
\sigma\neq 0 \), and uniformly for $s\in[0,\infty)$ when
$\sigma=0$; and that \hyperref[rhwphi]{\rhwphi}(d) can be replaced
by
\begin{equation}
\label{rhphi-3}
\widetilde{\boldsymbol\Phi}(\zeta;s) = \frac{\zeta^{-\sigma_3/4}}{\sqrt2} \left(\begin{matrix} 1 & \mathrm{i} \\ \mathrm{i} & 1 \end{matrix}\right) \left( \boldsymbol I + \mathcal{O}\left(\sqrt{\frac{|s|+1}{|\zeta|+1}}\right) \right)  \exp\left\{-\frac23\left(\zeta^{3/2}+s\zeta^{1/2}\right)\sigma_3\right\}
\end{equation}
uniformly for $\zeta\in\C\setminus\big(I_+\cup
I_-\cup(-\infty,0]\big)$ and $s\in(-\infty,0]$.

Let \( \zeta_{\beta_{\n,1}}(z) := \zeta_{\beta_{c_\n,1}}(z) \) be the functions defined in \eqref{4.2.1} that are conformal in \( U_{\beta_{\n,1}} \), see Lemma~\ref{lem:4.2}. It follows from \eqref{PhiInt} and \eqref{4.2.1} that
\begin{equation}
\label{rhphi-4}
\zeta_{\beta_{\n,1}}(z) = \left(-\frac3{4|\n|}\log\left(\Phi_{\n}^{(0)}/\Phi_{\n}^{(1)}\right)\right)^{2/3}, \quad z\in U_{\beta_{\n,1}}.
\end{equation}
According to Lemma~\ref{lem:4.2} and \eqref{koebe1/4}, it holds that
\begin{equation}
\label{rhphi-5}
\left\{|z|<A_{\beta_1}\delta n_1^{2/3}\right\}\subset|\n|^{2/3}\zeta_{\beta_{\n,1}}(U_{\beta_{\n,1}}),
\end{equation}
where \( A_{\beta_1} \) is independent of \( \n \) with \( \n \in \mathcal N_{c_\star}^\leq \).

Assume now that \( c_\star<c^* \). Recall that is this case \( \beta_1\in U_{\beta_{\n,1}} \) for all \( |\n| \) large enough. Relation \eqref{rhphi-4} and
\hyperref[rhphi]{\rhphi$_1$}(a,b,c) imply that the matrix
\begin{equation}
\label{rhphi-6}
\boldsymbol P_{\beta_{\n,1}}(z) := \boldsymbol E_{\beta_{\n,1}}(z) \mathsf{T}_1\left(\boldsymbol\Phi_1\left(|\n|^{2/3}\zeta_{\beta_{\n,1}}(z);0\right) \rho_1^{-\sigma_3/2}(z)\left(\Phi^{(0)}_{\n}/\Phi^{(1)}_{\n}\right)^{-\sigma_3/2}(z)\right)\boldsymbol D(z),
\end{equation}
satisfies \hyperref[rhp]{\rhp$_{\beta_{\n,1}}$}(a,b,c) for any
holomorphic prefactor $\boldsymbol E_{\beta_{\n,1}}(z)$. As in the
previous subsection, \hyperref[rhn]{\rhn}(b) implies that
\begin{equation}
\label{rhphi-7}
\boldsymbol E_{\beta_{\n,1}}(z) := \boldsymbol M(z)\mathsf{T}_1\left(\frac{(|\n|^{2/3}\zeta_{\beta_{\n,1}}(z)\big)^{-\sigma_3/4}}{\sqrt2}\left(\begin{matrix} 1 & \ic \\ \ic & 1 \end{matrix}\right) \rho_1^{-\sigma_3/2}(z)\right)^{-1}
\end{equation}
is holomorphic in $U_{\beta_{\n,1}}$. Requirement
\hyperref[rhp]{\rhp$_{\beta_{\n,1}}$}(d) now follows from
\eqref{rhphi-1} and \eqref{rhphi-5}.

Assume now that \( c_\star=c^* \) and recall \eqref{Ipm2}. Observe
also that \( \beta_{\n,1}\leq\beta_1 \) for \( \n\in\mathcal
N_{c^*}^\leq \) and therefore \( s_\n :=
|\n|^{2/3}\zeta_{\beta_{\n,1}}(\beta_1)\geq0 \). Then, similarly to
\eqref{rhphi-6}, we get from \eqref{rhphi-4} and
\hyperref[rhphi]{\rhphi$_0$}(a,b,c) that
\begin{equation}
\label{rhphi-8}
\boldsymbol P_{\beta_{\n,1}}(z) := \boldsymbol E_{\beta_{\n,1}}(z) \mathsf{T}_1\left(\boldsymbol\Phi_0\left(|\n|^{2/3}\widetilde\zeta_{\beta_{\n,1}}(z);s_\n\right) \rho_1^{-\sigma_3/2}(z)\left(\Phi^{(0)}_{\n}/\Phi^{(1)}_{\n}\right)^{-\sigma_3/2}(z)\right)\boldsymbol D(z),
\end{equation}
satisfies \hyperref[rhp]{\rhp$_{\beta_{\n,1}}$}(a,b,c), where
holomorphic prefactor $\boldsymbol E_{\beta_{\n,1}}(z)$ is again
given by \eqref{rhphi-7}. Then it follows from \eqref{rhphi-2} and
\eqref{rhphi-4} that
\begin{multline*}
\big(\boldsymbol M^{-1}\boldsymbol P_{\beta_{\n,1}}\boldsymbol
D^{-1}\big)(s) = \mathsf{T}_1\left(
\rho_1^{\sigma_3/2}(s)\frac1{\sqrt2}\left(\begin{matrix} 1 & -\ic
\\ -\ic & 1\end{matrix}\right) \left( 1 +
\frac{\zeta_{\beta_{\n,1}}(\beta_1)}{\widetilde\zeta_{\beta_{\n,1}}(s)}
\right)^{\sigma_3/4}\frac1{\sqrt2}\left(\begin{matrix} 1 & \ic \\
\ic & 1\end{matrix}\right) \times \right. \\ \left. \times
\left(\boldsymbol I + \boldsymbol{\mathcal
O}\left(\sqrt{|\n|^{-2/3}+\zeta_{\beta_{\n,1}}(\beta_1)}\right)
\right) \rho_1^{-\sigma_3/2}(s)\right)
\end{multline*}
for \( s\in\partial U_{\beta_{\n,1}} \). Since \(
\zeta_{\beta_{\n,1}}(\beta_1) \to 0 \) as \( |\n|\to\infty \), \(
\n\in\mathcal N_{c^*}^\leq \), and \(
\widetilde\zeta_{\beta_{\n,1}}(z) \) is bounded below in modulus on
\( \partial U_{\beta_{\n,1}} \),
\hyperref[rhp]{\rhp$_{\beta_{\n,1}}$}(d) follows. As in the previous subsection, we point out that \( \det(\boldsymbol P_{\beta_{\n,1}}(z)) \equiv 1 \).

\subsubsection{Matrix \( \boldsymbol P_{\beta_{\n,1}}(z) \) when \( c_\star=c^* \) and \( c_\n>c^* \)}

Below, we solve \hyperref[rhp]{\rhp$_{\beta_{\n,1}}$} along the subsequence \( \mathcal N_{c^*}^>:=\big\{ \n\in\mathcal N_{c^*}: c_\n > c^*\big\} \), when such a subsequence is infinite. Let \( \hat\zeta_{\n,\beta_1}(z) := \hat\zeta_{c_\n,\beta_1}(z) \) be the conformal map in \( U_{\beta_1} \) constructed in Lemma~\ref{lem:4.3}. As before, it follows from \eqref{PhiInt} that
\begin{equation}
\label{rhwphi-1}
\hat\zeta_{\n,\beta_1}^{3/2}(z) - \hat\zeta_{\n,\beta_1}(\beta_1+\epsilon_\n)\hat\zeta_{\n,\beta_1}^{1/2}(z)= -\frac3{4|\n|}\log\left(\Phi_{\n}^{(0)}/\Phi_{\n}^{(1)}\right), \quad z\in U_{\beta_1}.
\end{equation}
Let \( s_\n := - |\n|^{2/3}\hat\zeta_{\n,\beta_1}(\beta_1+\epsilon_\n) \). As above, it follows from \eqref{rhwphi-1} and
\hyperref[rhwphi]{\rhwphi} that
\[
\boldsymbol P_{\beta_1}(z) := \boldsymbol E_{\beta_1}(z) \mathsf{T}_1\left(\widetilde{\boldsymbol\Phi}\left(|\n|^{2/3}\hat\zeta_{\n,\beta_1}(z);s_\n\right) \rho_1^{-\sigma_3/2}(z) \left(\Phi^{(0)}_\n/\Phi^{(1)}_\n\right)^{-\sigma_3/2}\right)\boldsymbol D(z),
\]
satisfies \hyperref[rhp]{\rhp$_{\beta_1}$}, where $\boldsymbol E_{\beta_1}(z)$ is given by \eqref{rhphi-7} with \( \zeta_{\beta_\n,1}(z) \) replaced by \( \hat\zeta_{\n,\beta_1}(z) \), and it follows from \eqref{rhphi-3} that \hyperref[rhp]{\rhp$_{\beta_1}$}(d) is satisfied with
\[
o(1) = \mathcal O\left(\max\left\{\hat\zeta_{\n,\beta_1}^{1/2}(\beta_1+\epsilon_{\vec n}),|\n|^{-1/3}\right\}\right).
\]
Again, we stress that \( \det(\boldsymbol P_{\beta_1}(z)) \equiv 1 \).

\subsubsection{Matrix \( \boldsymbol P_{\beta_1}(z) \) when \( c_\star>c^* \)}

The construction of \( \boldsymbol P_{\beta_1}(z) \) in the considered case is absolutely identical to the one of \( \boldsymbol P_{\alpha_1}(z) \) in Section~\ref{sss:7.5.1}.

Clearly, we can assume that \( \n\in\mathcal N_{c_\star} \) is such that \( c_\n>c^* \). Let \( \zeta_{\n,\beta_1}(z) := \zeta_{c_\n,\beta_1}(z) \) be the conformal map defined in \eqref{4.4.1}, whose properties were described in Lemma~\ref{lem:4.4}. It follows from \eqref{PhiInt}
and \eqref{4.4.1} that
\[
\zeta_{\n,\beta_1}(z) = \left(\frac1{4|\n|}\log\left(\Phi_\n^{(0)}/\Phi_\n^{(1)}\right)\right)^2, \quad z\in U_{\beta_1}.
\]
According to Lemma~\ref{lem:4.4} and \eqref{koebe1/4} theorem and since \( n_1^2\leq |\n|^2 \), it holds that
\[
\left\{|z|<A_{\beta_1}\delta (z_{c_\star}-\beta_1) n_1^2\right\}\subset|\n|^2\zeta_{\n,\beta_1}(U_{\beta_1}),
\]
where \( \delta_{\beta_1}(c) \) is continuous and non-vanishing on \( (c^*,1] \). Similarly to \eqref{rhpsi-3}, a solution of  \hyperref[rhp]{\rhp$_{\beta_1}$} is given by
\[
\boldsymbol P_{\beta_1}(z) := \boldsymbol E_{\beta_1}(z) \mathsf{T}_1\left(\boldsymbol\Psi\left(|\n|^2\zeta_{\n,\beta_1}(z)\right) \rho_1^{-\sigma_3/2}(z)\left(\Phi^{(0)}_{\n}/\Phi^{(1)}_{\n}\right)^{-\sigma_3/2}(z)\right)\boldsymbol D(z),
\]
where
\[
\boldsymbol E_{\beta_1}(z) := \boldsymbol M(z)\mathsf{T}_1\left(\frac{(|\n|^2\zeta_{\n,\beta_1}(z)\big)^{-\sigma_3/4}}{\sqrt2}\left(\begin{matrix} 1 & \ic \\ \ic & 1 \end{matrix}\right) \rho_1^{-\sigma_3/2}(z)\right)^{-1}.
\]
It again holds that \( \det(\boldsymbol P_{\beta_1}(z)) \equiv 1 \).

\subsection{Solution of \hyperref[rhx]{\rhx}}

Set \( U_\n := U_{\alpha_1}\cup U_{\beta_{\n,1}} \cup U_{\alpha_{\n,2}} \cup U_{\beta_2} \) and \(
\Gamma_\n :=
\Gamma_{\n,1}^+\cup\Gamma_{\n,1}^-\cup\Gamma_{\n,2}^+\cup\Gamma_{\n,2}^-
\). Put
\[
\Sigma_{\n,\delta} := \partial U_\n \cup \left(\left(\Gamma_\n\cup[\beta_{\n,1},\beta_1]\cup[\alpha_2,\alpha_{\n,2}]\right) \setminus \overline U_\n\right),
\]
see Figure~\ref{fig:sigma-lens}.
\begin{figure}[!ht]
\centering
\includegraphics[scale=1]{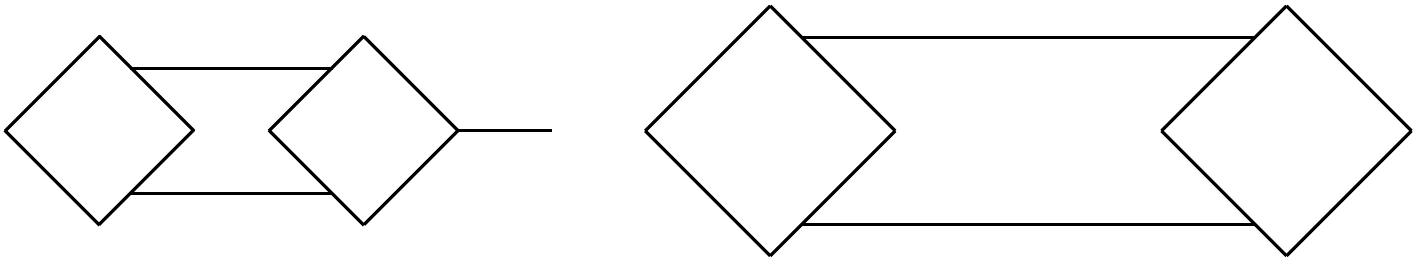}
\begin{picture}(500,0)
\put(399,75){\(\partial U_{\beta_2}\)}
\put(203,75){\(\partial U_{\alpha_2}\)}
\put(13,70){\(\partial U_{\alpha_1}\)}
\put(129,70){\(\partial U_{\beta_{\n,1}}\)}
\put(67,74){\(\Gamma_{\n,1}^+\setminus \overline U_\n\)}
\put(67,23){\(\Gamma_{\n,1}^-\setminus \overline U_\n\)}
\put(300,82){\(\Gamma_2^+\setminus \overline U_\n\)}
\put(300,14){\(\Gamma_2^-\setminus \overline U_\n\)}
\end{picture}
\caption{\small Lens \( \Sigma_{\n,\delta} \) consisting of two connected components \( \Sigma_{\n,\delta,1} \) (the left one) and \( \Sigma_{\n,\delta,2} \) (the right one).}
\label{fig:sigma-lens}
\end{figure}
For definiteness, we agree that all the segments in \( \Sigma_{\n,\delta} \) are oriented from left to right and all the polygons are oriented counter-clockwise. We shall further denote by \( \Sigma_{\n,\delta,1} \) and \( \Sigma_{\n,\delta,2} \) the left and right, respectively, connected components of \( \Sigma_{\n,\delta} \).

For what is to come, we shall need uniform boundedness of the
Cauchy operators on \( \Sigma_{\n,\delta} \). For convenience, we
formulate this claim as a lemma.

\begin{lemma}
\label{lem:cauchy} Given \( r>1 \), there exists a constant \(
C_r>0 \) such that for all \( \delta>0 \) it holds that
\[
\| \mathcal C_\pm f \|_{L^r(\Sigma_{\n,\delta})} \leq C_r \| f \|_{L^r(\Sigma_{\n,\delta})},
\]
where \( \mathcal Cf(z) = \frac1{2\pi\ic}
\int_{\Sigma_{\n,\delta}}\frac{f(t)\dd t}{t-z} \) and \( \mathcal
C_\pm f(s) \) are the traces of \( \mathcal Cf(z) \) on the left
(\(-\)) and right (\(+\)) hand-sides of \( \Sigma_{\n,\delta} \).
\end{lemma}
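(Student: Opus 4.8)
The plan rests on the observation that, by the construction in Section~\ref{ss:OL}, \( \Sigma_{\n,\delta} \) is a \emph{polygonal} contour. Indeed, it is the union of the sides of the four diamond-shaped squares \( U_e \) (each \( U_e \) has a horizontal and a vertical diagonal), of the straight parts of the arcs \( \Gamma_{\n,i}^\pm \) lying outside the squares, and of the parts of \( [\beta_{\n,1},\beta_1] \) and \( [\alpha_2,\alpha_{\n,2}] \) lying outside the squares; the number \( N_0 \) of segments is bounded by an absolute constant. Writing \( \mathcal C = \sum_{\gamma,\gamma'}\chi_\gamma\mathcal C\chi_{\gamma'} \) over these \( \leq N_0 \) segments, it suffices to bound each of the \( \leq N_0^2 \) blocks on \( L^r \) with a constant independent of \( \n \) and \( \delta \); the boundary traces \( \mathcal C_\pm \) are then recovered via the Plemelj--Sokhotski formulae \( \mathcal C_\pm = \pm\tfrac12 I + \mathrm{p.v.}\,\mathcal C \). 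Crucially, every estimate involved is invariant under translations, rotations, and dilations of \( \C \); this is what makes the collapse of the squares \( U_e \) to points (as \( \delta\to0 \), or as \( c_\n \) approaches \( 0,1,c^* \) or \( c^{**} \)) harmless, since only the \emph{shape} of \( \Sigma_{\n,\delta} \) matters.

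Each diagonal block \( \chi_\gamma\mathcal C\chi_\gamma \) reduces, after the affine map carrying \( \gamma \) to \( [-1,1] \), to the finite Hilbert transform on \( [-1,1] \) (plus \( \pm\tfrac12 I \) in the case of \( \mathcal C_\pm \)), which is bounded on \( L^r \), \( 1<r<\infty \), with an absolute constant. For an off-diagonal block \( \chi_\gamma\mathcal C\chi_{\gamma'} \) with \( \gamma\neq\gamma' \), rotate \( \gamma' \) onto a subinterval of \( \R \); the block becomes \( g\mapsto G\circ\varphi \), where \( G(\zeta) := \frac1{2\pi\ic}\int_{\gamma'}\frac{g(t)\,\dd t}{t-\zeta} \) is the Cauchy integral of \( g \), holomorphic off \( \gamma'\subset\R \), and \( \varphi \) is the affine map sending \( \gamma \) into \( \C \). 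When \( \gamma \) and \( \gamma' \) are transversal, the estimate \( \|G\circ\varphi\|_{L^r(\gamma)}\lesssim_r(\sin\theta(\gamma,\gamma'))^{-1}\|g\|_{L^r(\gamma')} \) follows from the uniform boundedness \( \sup_{y}\|G(\cdot+\ic y)\|_{L^r(\R)}\lesssim_r\|g\|_{L^r(\gamma')} \) (Hardy-space theory) together with control of \( G \) along the slanted segment \( \varphi(\gamma) \) by its nontangential maximal function; when \( \gamma \) and \( \gamma' \) are parallel or nearly so, \( |\chi_\gamma\mathcal C\chi_{\gamma'}g| \) is dominated pointwise by the Hardy--Littlewood maximal function and the Hilbert transform of \( g \), giving \( \lesssim_r\|g\|_{L^r(\gamma')} \) with an absolute constant. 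Summing the \( \leq N_0^2 \) blocks then reduces everything to two uniformities: that \( N_0 \) is absolute (clear from the structure of \( \Sigma_{\n,\delta} \)), and that the angles at which the segments of \( \Sigma_{\n,\delta} \) meet are bounded away from \( 0 \) and \( \pi \) uniformly in \( \n \) and \( \delta \).

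For the latter, the junctions of \( \Sigma_{\n,\delta} \) are of three kinds: corners of a diamond \( U_e \), where two sides meet at \( \pi/2 \); the points where \( [\beta_{\n,1},\beta_1] \) and \( [\alpha_2,\alpha_{\n,2}] \) exit \( U_{\beta_{\n,1}} \), \( U_{\alpha_{\n,2}} \) through a real-axis vertex, where a horizontal segment meets sides of slope \( \pm1 \); and the points where a straight part of an arc \( \Gamma_{\n,i}^\pm \) exits a square through one of its sides. For the last kind the key fact is that the straight part of \( \Gamma_{\n,i}^\pm \) has slope \( \lesssim\delta \): it joins a point of \( \partial U_{\alpha_1} \) (resp.\ \( \partial U_{\alpha_{\n,2}} \)) to a point of \( \partial U_{\beta_{\n,1}} \) (resp.\ \( \partial U_{\beta_2} \)), so its vertical excursion is \( \lesssim c_\n\delta \) (resp.\ \( \lesssim (1-c_\n)\delta \)) while its horizontal span is \( \gtrsim|\Delta_{\n,1}|\gtrsim c_\n \) (resp.\ \( \gtrsim|\Delta_{\n,2}|\gtrsim 1-c_\n \)) by \eqref{bc1} and its analogue near \( \beta_2 \); hence the junction angle with a side of slope \( \pm1 \) is \( \geq \pi/4 - O(\delta) \), bounded below uniformly. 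The main obstacle is therefore organizational rather than analytic: one must recognize that \( \Sigma_{\n,\delta} \) is polygonal with an absolutely bounded number of sides, that every constant entering the Cauchy estimates is scale invariant, and that \eqref{bc1} prevents the connecting segments from becoming steep; once these are in place, the Cauchy estimates are classical and entirely uniform. (Alternatively, each connected component of \( \Sigma_{\n,\delta} \) is seen to be an Ahlfors--David regular curve with a regularity constant bounded uniformly in \( \n,\delta \), so one may invoke David's theorem componentwise and estimate the cross-component interaction, a finite sum of blocks of the type above, directly.)
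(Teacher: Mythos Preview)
Your argument is correct and follows the same overall route as the paper's: both exploit that \( \Sigma_{\n,\delta} \) is polygonal with an absolutely bounded number of segments and reduce the claim to classical \( L^r \) Cauchy estimates on each pair of segments, noting that all such estimates are scale invariant. The paper's proof is considerably more compressed: it simply cites the ray-to-ray bound \( \|\mathcal C_{R_1}f\|_{L^r(R_2)}\le C_r\|f\|_{L^r(R_1)} \) for two semi-infinite rays with a common endpoint from \cite[Eq.~(7.11)]{Deift}, together with its single-ray trace version \cite[Eqs.~(7.5)--(7.7)]{Deift} and the trivial disjoint-interval case, and then embeds each pair of segments of \( \Sigma_{\n,\delta} \) into such rays. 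Your block-by-block treatment via the finite Hilbert transform, Hardy-space boundary values, and the maximal function is an alternative realization of the same reduction. Your explicit verification that the junction angles stay uniformly away from \( 0 \) and \( \pi \)---resting on the slope bound for the straight pieces of \( \Gamma_{\n,i}^\pm \), which in turn comes from \( c_\n\delta\lesssim|\Delta_{\n,1}| \) (via the definition of \( \delta(c_\star) \) and \eqref{bc1})---is a point the paper leaves implicit; since the ray-to-ray constant generally depends on the angle for \( r\neq 2 \), this is exactly what makes that constant uniform in \( \n \) and \( \delta \), so your extra care is not wasted.
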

\begin{proof}
Recall the following known fact, see  \cite[Equation (7.11)]{Deift},
if \( R_1,R_2 \) are two semi-infinite rays with a common endpoint,
then
\begin{equation}
\label{harmonic}
\|\mathcal C_{R_1}f\|_{L^r(R_2)} \leq C_r \|f\|_{L^r(R_1)},
\end{equation}
for some constant \( C_r>0 \) (we can take \( C_2=1 \)), where \( \mathcal C_{R_1} \) is the Cauchy operator defined on \( R_1 \). Moreover, the same estimate holds when \( R_2 =R_1 \) and \( \mathcal C_{R_1} \) is replaced by the trace operators \( \mathcal C_{R_1\pm} \), see \cite[Equations~(7.5)--(7.7)]{Deift}. Trivially, the same estimate holds when \( R_2 \) is replaced by an interval disjoint from \( R_1 \) (may be for an adjusted constant \( C_r \)). Since we can embed any two segments with a common endpoint into semi-infinite rays with a common endpoint and embed a function from \( L^r \) space of a segment into \( L^r \) space of the corresponding ray by extending it by zero, the desired estimate then follows from~\eqref{harmonic} (again, with an adjusted constant~\( C_r \)).
 \end{proof}

Given the global parametrix $\boldsymbol N(z)=\boldsymbol C(\boldsymbol{MD})(z)$ solving \hyperref[rhn]{\rhn}, see \eqref{matrix-M} and \eqref{matrix-CD}, and local parametrices $\boldsymbol P_e(z)$ solving \hyperref[rhp]{\rhp$_e$} and constructed in the previous section, consider the following Riemann-Hilbert Problem (\rhz):
\begin{itemize}
\label{rhz}
\item[(a)] $\boldsymbol{Z}(z)$ is a holomorphic matrix function
    in $\overline\C\setminus\Sigma_{\n,\delta}$ and
    $\boldsymbol{Z}(\infty)=\boldsymbol{I}$;
\item[(b)] $\boldsymbol{Z}(z)$ has continuous traces on  $\Sigma_{\n,\delta}^\circ$  that satisfy
\[
\boldsymbol{Z}_+(s)  = \boldsymbol{Z}_-(s) \left\{
\begin{array}{ll}
(\boldsymbol{MD})(s) \mathsf{T}_i\left(\begin{matrix} 1 & 0 \\ 1/\rho_i(s) & 1 \end{matrix}\right) (\boldsymbol{MD})^{-1}(s), & s\in\Gamma_\n\setminus \overline U_\n, \medskip \\
(\boldsymbol{MD})(s) \mathsf{T}_i\left(\begin{matrix} 1 & \rho_i(s) \\ 0 & 1 \end{matrix}\right) (\boldsymbol{MD})^{-1}(s), & s\in\Delta_i\setminus(\Delta_{\n,i}\cup\overline U_\n), \medskip \\
\boldsymbol P_e(s) (\boldsymbol{MD})^{-1}(s), & s\in\partial U_e, \;\; e\in\{\alpha_1,\beta_{\n,1},\alpha_{\n,2},\beta_2\};
\end{array}
\right.
\]
\item[(c)] around the points of \( \Sigma_{\n,\delta} \setminus \big( \Sigma_{\n,\delta}^\circ \cup \{\beta_1,\alpha_2\} \big) \) the function $\boldsymbol{Z}(z)$ is bounded and around \( \beta_1 \) (resp. \( \alpha_2 \)) its entries are bounded except for those in the second (resp. third) column that behave like \( \mathcal O(\log|z-\beta_1|) \) (resp. \( \mathcal O(\log|z-\alpha_2|) \)).
\end{itemize}

To show existence and prove size estimates of the matrix function \( \boldsymbol Z(z) \), let us first estimate the size of its jump:
\begin{equation}
\label{rhz-V}
 \boldsymbol V(s) := \boldsymbol Z_-^{-1}(s)\boldsymbol{Z}_+(s) - \boldsymbol I, \quad s\in\Sigma_{\n,\delta}.
\end{equation}
More precisely, the following lemma holds.
\begin{lemma}
\label{lem:rhzV}
Let \( \boldsymbol V(s) \) be given by \eqref{rhz-V} and \hyperref[rhz]{\rhz}(b). Then it holds that
\begin{equation}
\label{rhz-2}
\|\boldsymbol V\|_{L^\infty(\Sigma_{\n,\delta})} \lesssim \frac{\varepsilon_\n}{\delta^4} \left\{
\begin{array}{rl}
1, & c_\star\in[0,c^*)\cup(c^{**},1], \smallskip \\
\min\{z_{c_\star}-\beta_1,\alpha_2-z_{c_\star}\}^{-1/2}, & c_\star\in(c^*,c^{**}),
\end{array}
\right.
\end{equation}
with the constant in \( \lesssim \) being independent of \( \delta \) and \( \n \).  Moreover, it also holds that \( \|\boldsymbol V\|_{L^\infty(\Sigma_{\n,\delta})}=o(1) \) when \( c_\star\in\{c^*,c^{**}\} \). 
\end{lemma}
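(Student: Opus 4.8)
The plan is to bound $\boldsymbol V(s)$ separately on the three types of arcs forming $\Sigma_{\n,\delta}$: the lens arcs $\Gamma_\n\setminus\overline U_\n$, the gap segments $([\beta_{\n,1},\beta_1]\cup[\alpha_2,\alpha_{\n,2}])\setminus\overline U_\n$, and the circles $\partial U_e$, $e\in E_\n$. On each piece the matrix $\boldsymbol V+\boldsymbol I$ is read off \hyperref[rhz]{\rhz}(b); since the constant prefactor $\boldsymbol C$ and the diagonal $\boldsymbol D=\diag(\Phi_\n^{(0)},\Phi_\n^{(1)},\Phi_\n^{(2)})$ cancel in every conjugation except through ratios $\Phi_\n^{(k)}/\Phi_\n^{(l)}$, the estimates reduce to controlling (i) the size of these ratios via $\tfrac1{|\n|}\log\big|\Phi_\n^{(k)}/\Phi_\n^{(l)}\big|=\big(H_{c_\n}^{(k)}-H_{c_\n}^{(l)}\big)$, Lemma~\ref{lem:4.5}, and the strict variational inequalities of Proposition~\ref{prop:1}, and (ii) the size of $\boldsymbol M^{\pm1}$ via Lemma~\ref{lem:Mbounds}. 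Throughout I will also use the elementary inequality $e^{-x}\lesssim_k x^{-k}$ to convert exponential decay in $1/\varepsilon_\n$ into a polynomial bound of the size $\varepsilon_\n/\delta^4$.

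First I would treat the lens arcs. There the jump in \hyperref[rhz]{\rhz}(b) equals $\boldsymbol I+\rho_i^{-1}(s)\big(\Phi_\n^{(i)}/\Phi_\n^{(0)}\big)(s)\,\big(\boldsymbol M\boldsymbol E_{i+1,1}\boldsymbol M^{-1}\big)(s)$ for $s\in\Gamma_{\n,i}^\pm\setminus\overline U_\n$. By \eqref{4.5.2} and its $\Delta_{\n,2}$-analogue one has $\big|\Phi_\n^{(1)}/\Phi_\n^{(0)}\big|=e^{|\n|(H_{c_\n}^{(1)}-H_{c_\n}^{(0)})}\le e^{-B_{\beta_1}\delta^{5/2}n_1}$ on $\Gamma_{\n,1}^\pm\setminus\overline U_\n$ and $\big|\Phi_\n^{(2)}/\Phi_\n^{(0)}\big|\le e^{-B\delta^{5/2}n_2}$ on $\Gamma_{\n,2}^\pm\setminus\overline U_\n$; since $\boldsymbol M^{\pm1}=\boldsymbol{\mathcal O}_\delta(1)$ at distance $\gtrsim\delta c_\n$ from $E_\n$ by Lemma~\ref{lem:Mbounds} and $n_1,n_2\ge1/\varepsilon_\n\to\infty$, the lens arcs contribute $\boldsymbol{\mathcal O}(e^{-B\delta^{5/2}/\varepsilon_\n})\lesssim\varepsilon_\n/\delta^4$ to $\|\boldsymbol V\|_{L^\infty}$. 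On the gap segments the jump equals $\boldsymbol I+\rho_i(s)\big(\Phi_\n^{(0)}/\Phi_\n^{(i)}\big)(s)\,\big(\boldsymbol M\boldsymbol E_{1,i+1}\boldsymbol M^{-1}\big)(s)$, and \eqref{4.5.1} (for $i=1$) together with its $\Delta_{\n,2}$-analogue (for $i=2$) give $\big|\Phi_\n^{(0)}/\Phi_\n^{(i)}\big|\le e^{-B_{\beta_1}\delta^{3/2}/\varepsilon_\n}$ there, so these arcs are likewise $\lesssim\varepsilon_\n/\delta^4$; moreover they are empty for $|\n|$ large when $c_\star\in(c^*,c^{**})$ since then $\beta_{\n,1}=\beta_1$ and $\alpha_{\n,2}=\alpha_2$, while when $c_\star\in\{c^*,c^{**}\}$ the rate in \eqref{4.5.1} is lost but the bound stays $o(1)$.

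The decisive contribution is that of the circles. On $\partial U_e$ the jump is $\boldsymbol P_e(\boldsymbol{MD})^{-1}$, and the matching condition \hyperref[rhp]{\rhp$_e$}(d), $\boldsymbol P_e(s)=\boldsymbol M(s)(\boldsymbol I+\boldsymbol o(1))\boldsymbol D(s)$, yields $\boldsymbol V(s)=\boldsymbol M(s)\,\boldsymbol o(1)\,\boldsymbol M^{-1}(s)$ on $\partial U_e$. It then suffices to insert the entrywise bounds on $\boldsymbol o(1)$ from \hyperref[rhp]{\rhp$_e$}(d) (which are $C\varepsilon_\n\delta^{-1/2}$ at $\alpha_1$, $C\varepsilon_\n\delta^{-3/2}$ at $\beta_{\n,1}$ when $c_\star<c^*$, $C\varepsilon_\n(\delta(z_{c_\star}-\beta_1))^{-1/2}$ at $\beta_1$ when $c_\star>c^*$, with symmetric statements at $\alpha_{\n,2},\beta_2$) together with the precise asymptotics of $|\boldsymbol M(z)|$ and $|\boldsymbol M^{-1}(z)|$ on the circles $|z-e|=\delta c_\n$ and $|z-e|=\delta(1-c_\n)$ from Lemma~\ref{lem:Mbounds}, and to carry out the products entrywise, bounding $\big|(\boldsymbol M\boldsymbol o(1)\boldsymbol M^{-1})_{jk}\big|$ by $\|\boldsymbol o(1)\|_\infty$ times a row-sum of $|\boldsymbol M|$ times a column-sum of $|\boldsymbol M^{-1}|$. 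The $\delta^{-1/4}$-powers and the $c_\n,\,1-c_\n$ factors in $\boldsymbol M^{\pm1}$ combine with the $\delta$-powers in $\boldsymbol o(1)$ to a quantity $\lesssim\varepsilon_\n/\delta^4$ when $c_\star\in[0,c^*)\cup(c^{**},1]$ and $\lesssim\varepsilon_\n\delta^{-4}\min\{z_{c_\star}-\beta_1,\alpha_2-z_{c_\star}\}^{-1/2}$ when $c_\star\in(c^*,c^{**})$ (the minimum arising from taking the larger of the contributions of the edges $\beta_1$ and $\alpha_2$), while for $c_\star\in\{c^*,c^{**}\}$ only $\boldsymbol o(1)$ is available for the matching error, so one concludes $\|\boldsymbol V\|_{L^\infty(\Sigma_{\n,\delta})}=o(1)$. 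Taking the maximum of the three contributions yields \eqref{rhz-2}.

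The main obstacle will be the bookkeeping in this last step: one must check, case by case in $c_\star$ and for each edge $e$ (keeping track of which of $\Delta_{\n,1},\Delta_{\n,2}$ is collapsing and of the corresponding scaling $c_\n$ versus $1-c_\n$), that the entrywise products $\boldsymbol M\,\boldsymbol o(1)\,\boldsymbol M^{-1}$ never exceed the asserted bound — in particular that the mild blow-up $\delta^{-3/2}$ of the matching error at $\beta_{\n,1}$ for $c_\star<c^*$, and the factor $(z_{c_\star}-\beta_1)^{-1/2}$ for $c_\star\in(c^*,c^{**})$, are not amplified past $\delta^{-4}$ by the quarter-root singularities of $\boldsymbol M^{\pm1}$ at the edges. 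A secondary point of care is to invoke the correct sign estimate from Lemma~\ref{lem:4.5} (or its $\Delta_{\n,2}$-counterpart, or the strict inequalities of \cite{GRakh81}) on every connected component of the lens and of the gap.
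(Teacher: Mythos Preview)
Your proposal is correct and follows essentially the same approach as the paper's proof: decompose $\Sigma_{\n,\delta}$ into lens arcs, gap segments, and boundary circles, write $\boldsymbol V$ on each as a conjugation by $\boldsymbol M$ of a rank-one (or $\boldsymbol o(1)$) matrix weighted by the appropriate ratio $\Phi_\n^{(k)}/\Phi_\n^{(l)}$, and invoke Lemma~\ref{lem:4.5} for the ratios together with Lemma~\ref{lem:Mbounds} and \hyperref[rhp]{\rhp$_e$}(d) for the sizes of $\boldsymbol M^{\pm1}$ and the matching error. The paper is much terser (it simply says ``follows from Lemma~\ref{lem:Mbounds} and \hyperref[rhp]{\rhp$_e$}(d)'' on the circles, and from Lemma~\ref{lem:Mbounds} together with \eqref{rhz-0} or \eqref{4.5.2} on the other arcs), but the logical content is the same.
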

\begin{proof}
We shall prove \eqref{rhz-2} separately for different parts of \( \Sigma_{\n,\delta} \). In fact, we shall do it only on \( \Sigma_{\n,\delta,1} \) understanding that the estimates on \( \Sigma_{\n,\delta,2} \) can be carried out in the same fashion. For \( s\in\partial U_e \), \( e\in\{\alpha_1,\beta_{\n,1}\} \), it holds that \( \boldsymbol V(s) = \boldsymbol P_e(s) (\boldsymbol{MD})^{-1}(s) - \boldsymbol I \). Therefore, the desired estimate \eqref{rhz-2} follows from Lemma~\ref{lem:Mbounds} and \hyperref[rhp]{\rhp$_e$}(d). Let now \( s=x\in \Delta_1\setminus (\Delta_{\n,1}\cup \overline U_\n) \), which is non-empty when \( c_\star<c^* \). In this case, it holds that
\[
\boldsymbol V(x) = (\boldsymbol {MD})(x) \mathsf{T}_1 \left(\begin{matrix} 1 & \rho_1(x) \\ 0 & 1 \end{matrix}\right) (\boldsymbol{MD})^{-1}(x) - \boldsymbol I = \rho_1(x)\frac{\Phi_{\n}^{(0)}(x)}{\Phi_{\n}^{(1)}(x)}\boldsymbol M(x)\boldsymbol E_{1,2}\boldsymbol M^{-1}(x).
\]
Estimate \eqref{rhz-2} now follows from Lemma~\ref{lem:Mbounds} and the estimate
\begin{equation}
\label{rhz-0}
\left| \Phi_{\n}^{(0)}(x)/\Phi_{\n}^{(1)}(x)\right| = \exp\left\{|\n|\left(H_\n^{(0)}(x)-H_\n^{(1)}(x)\right)\right\} \leq \exp\left\{-B_{\beta_1}\delta^{3/2}n_1\right\} \leq \frac{\varepsilon_\n}{B_{\beta_1}\delta^{3/2}},
\end{equation}
see \eqref{PhiInt} and \eqref{4.5.1}. Lastly, let \( s\in \Gamma_{\n,1}^\pm\setminus U_\n \). Then it holds that
\[
\boldsymbol V(s) = (\boldsymbol {MD})(s) \mathsf{T}_1 \left(\begin{matrix} 1 & 0 \\ 1/\rho_1(s) & 1 \end{matrix}\right) (\boldsymbol{MD})^{-1}(s) - \boldsymbol I = \frac1{\rho_1(s)}\frac{\Phi_\n^{(1)}(s)}{\Phi_\n^{(0)}(s)}\boldsymbol M(s)\boldsymbol E_{2,1}\boldsymbol M^{-1}(s).
\]
The desired estimate \eqref{rhz-2} can be deduced exactly as in the second step of the proof with \eqref{4.5.2} used instead of \eqref{4.5.1}.
\end{proof}

It is essentially a standard argument in the theory of orthogonal polynomials to deduce existence of  \( \boldsymbol Z(z) \) from Lemma~\ref{lem:rhzV}, see \cite[Chapter~7]{Deift}.

\begin{lemma}
\label{lem:rhz} Given \( \mathcal N_{c_\star} \), \(
c_\star\in[0,1] \), there exists a constant \( M(\mathcal
N_{c_\star}) \) such that a solution of \hyperref[rhz]{\rhz} exists
for all $|\n|\geq M(\mathcal N_{c_\star})$ and it satisfies
\begin{equation}
\label{rhz-1}
\max_{i,j}\left|\big[\boldsymbol{Z}(z) - \boldsymbol{I}\big]_{i,j} \right| \lesssim \delta^{-1}\|\boldsymbol V\|_{L^\infty(\Sigma_{\n,\delta})}
\end{equation}
for all \( z\in \overline\C \) when \( c_\star\in[c^*,c^{**}] \), \( |z-\beta_1|\geq\delta/5 \) when \( c_\star\in(0,c^*) \), \( \dist(z,\{\alpha_1,\beta_1\})\geq\delta/5 \) when \( c_\star=0 \), \( |z-\alpha_2|\geq\delta/5 \) when \( c_\star\in(c^{**},1) \), and \( \dist(z,\{\alpha_2,\beta_2\})\geq\delta/5 \) when \( c_\star=1 \), where the constant in \( \lesssim \) is independent of \( \delta \) and \( \n \).
\end{lemma}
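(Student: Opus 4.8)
The plan is to carry out the classical small-norm analysis of \hyperref[rhz]{\rhz}, following \cite[Chapter~7]{Deift}; this simultaneously yields existence, uniqueness, and the estimate \eqref{rhz-1}. First I would recast \hyperref[rhz]{\rhz} as a singular integral equation on \( \Sigma_{\n,\delta} \). Since \( \det\boldsymbol M\equiv\det\boldsymbol D\equiv1 \) and \( \det\boldsymbol P_e\equiv1 \) for every \( e \) (as recorded in the construction of the local parametrices), the jump prescribed in \hyperref[rhz]{\rhz}(b) has the form \( \boldsymbol I+\boldsymbol V \) with \( \boldsymbol V \) exactly the matrix bounded in Lemma~\ref{lem:rhzV}. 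Seeking the solution in the form \( \boldsymbol Z(z)=\boldsymbol I+\frac1{2\pi\ic}\int_{\Sigma_{\n,\delta}}\frac{(\mu\boldsymbol V)(s)}{s-z}\dd s \), the fact that \( \mathcal C_+-\mathcal C_- \) is the identity operator together with Plemelj's formulae reduces \hyperref[rhz]{\rhz}(a,b) to solving \( (\boldsymbol I-\mathcal C_{\boldsymbol V})\mu=\boldsymbol I \) in \( L^2(\Sigma_{\n,\delta}) \), where \( \mathcal C_{\boldsymbol V}\phi:=\mathcal C_-(\phi\boldsymbol V) \) and \( \mathcal C_\pm \) are the trace operators of Lemma~\ref{lem:cauchy}.

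The second step is the invertibility of \( \boldsymbol I-\mathcal C_{\boldsymbol V} \), uniformly in \( \n \) and \( \delta \). By Lemma~\ref{lem:cauchy}, \( \|\mathcal C_-\|_{L^2(\Sigma_{\n,\delta})\to L^2(\Sigma_{\n,\delta})}\leq C_2 \) with \( C_2 \) independent of \( \n,\delta \); moreover the total arclength of \( \Sigma_{\n,\delta} \) is bounded independently of \( \n,\delta \) (the circles \( \partial U_e \) contribute \( \lesssim\delta \), the remaining arcs lie in a fixed bounded region). Hence \( \|\mathcal C_{\boldsymbol V}\|_{L^2\to L^2}\leq C_2\|\boldsymbol V\|_{L^\infty(\Sigma_{\n,\delta})} \), and by Lemma~\ref{lem:rhzV} the right-hand side is \( \lesssim\varepsilon_\n\delta^{-4} \) (up to the extra factor displayed in \eqref{rhz-2}), so it tends to \( 0 \) as \( |\n|\to\infty \). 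Thus there is \( M(\mathcal N_{c_\star}) \), depending also on the fixed \( \delta \), such that \( \|\mathcal C_{\boldsymbol V}\|_{L^2\to L^2}\leq\tfrac12 \) for \( |\n|\geq M(\mathcal N_{c_\star}) \); the Neumann series then produces \( \mu:=(\boldsymbol I-\mathcal C_{\boldsymbol V})^{-1}\boldsymbol I \) with \( \|\mu-\boldsymbol I\|_{L^2}\leq2C_2\|\boldsymbol V\|_{L^2}\lesssim\|\boldsymbol V\|_{L^\infty(\Sigma_{\n,\delta})} \), hence \( \boldsymbol Z(z) \). Property \hyperref[rhz]{\rhz}(c) is read off from the local form of \( \boldsymbol V \) near the finitely many corners of \( \Sigma_{\n,\delta} \) and near \( \beta_1,\alpha_2 \); at the latter two points \( \boldsymbol V \) is bounded on each smooth arc, but \( \beta_1 \) (resp. \( \alpha_2 \)) is a free endpoint of \( \Sigma_{\n,\delta,1} \) (resp. \( \Sigma_{\n,\delta,2} \)) when \( c_\star<c^* \) (resp. \( c_\star>c^{**} \)), which forces the logarithmic behavior of one column, exactly as in the references. (Alternatively, since an Angelesco system is perfect, \( \boldsymbol Y \) and hence \( \boldsymbol X \) exist for all \( \n \), so \( \boldsymbol Z:=\boldsymbol X\boldsymbol M^{-1}\boldsymbol D^{-1} \) off \( \bigcup_e U_e \) and \( \boldsymbol Z:=\boldsymbol X\boldsymbol P_e^{-1} \) in \( U_e \) solves \hyperref[rhz]{\rhz}, and uniqueness in the small-norm class identifies it with the function built above.)

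For the estimate \eqref{rhz-1} I would split \( \boldsymbol Z(z)-\boldsymbol I=\frac1{2\pi\ic}\int_{\Sigma_{\n,\delta}}\frac{\boldsymbol V(s)}{s-z}\dd s+\frac1{2\pi\ic}\int_{\Sigma_{\n,\delta}}\frac{((\mu-\boldsymbol I)\boldsymbol V)(s)}{s-z}\dd s \). When \( \dist(z,\Sigma_{\n,\delta})\gtrsim\delta \) one has \( \|(\cdot-z)^{-1}\|_{L^1(\Sigma_{\n,\delta})}\lesssim\log(1/\delta)\leq\delta^{-1} \) and \( \|(\cdot-z)^{-1}\|_{L^2(\Sigma_{\n,\delta})}\lesssim\delta^{-1/2} \), so the first term is \( \lesssim\delta^{-1}\|\boldsymbol V\|_{L^\infty} \) and the second is \( \lesssim\delta^{-1/2}\|\boldsymbol V\|_{L^\infty}^2\lesssim\delta^{-1}\|\boldsymbol V\|_{L^\infty} \) once \( |\n|\geq M(\mathcal N_{c_\star}) \). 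For \( z \) within \( \delta \) of the smooth part \( \Sigma_{\n,\delta}^\circ \) one uses that \( \boldsymbol V \) extends analytically to a neighborhood of \( \Sigma_{\n,\delta}^\circ \) of width of order \( \delta \) (the lens arcs \( \Gamma_{\n,i}^\pm \) lie in the domain of holomorphy of \( \rho_i \) and \( \Phi_\n \), and the jumps on the circles \( \partial U_e \) are analytic); removing the jump across such an arc by multiplying \( \boldsymbol Z \) by \( \boldsymbol I+\boldsymbol V \) analytically continued shows that \( \boldsymbol Z \) continues holomorphically across it, bounded by its value on the far side, so the previous bound propagates up to \( \Sigma_{\n,\delta}^\circ \). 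The restrictions on \( z \) in the statement are precisely those \( z \) not reachable this way: near \( \beta_1 \) when \( c_\star\in(0,c^*) \) and near \( \alpha_2 \) when \( c_\star\in(c^{**},1) \) the solution has a genuine logarithmic singularity, while for \( c_\star=0 \) (resp. \( c_\star=1 \)) the squares \( U_{\alpha_1},U_{\beta_{\n,1}} \) (resp. \( U_{\alpha_{\n,2}},U_{\beta_2} \)) collapse onto one another as \( \Delta_{\n,1} \) (resp. \( \Delta_{\n,2} \)) shrinks, so one must also stay at distance \( \geq\delta/5 \) from \( \{\alpha_1,\beta_1\} \) (resp. \( \{\alpha_2,\beta_2\} \)).

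The step I expect to be the main obstacle is keeping every constant uniform in both \( \n \) and \( \delta \): the contour \( \Sigma_{\n,\delta} \) varies with \( \n \), contains circular pieces of radius of order \( \delta c_\n \) and finitely many corners, and degenerates in the marginal regimes \( c_\star\in\{0,c^*,c^{**},1\} \). What makes the argument go through is precisely the uniform Cauchy-operator bound of Lemma~\ref{lem:cauchy}, the uniform control of the geometry and arclength of \( \Sigma_{\n,\delta} \), and the explicit bound on \( \|\boldsymbol V\|_{L^\infty(\Sigma_{\n,\delta})} \) from Lemma~\ref{lem:rhzV}; the case distinctions in the statement are dictated by the points where the local parametrices fail to match \( \boldsymbol N \) on a circle of size of order \( \delta \) with a small error.
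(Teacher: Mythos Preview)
Your proposal is correct and follows essentially the same small-norm approach as the paper: recast \hyperref[rhz]{\rhz} as a singular integral equation via $\mathcal C_{\boldsymbol V}$, invoke Lemmas~\ref{lem:cauchy} and~\ref{lem:rhzV} to make $\|\mathcal C_{\boldsymbol V}\|<1/2$, invert by Neumann series, and bound $\boldsymbol Z-\boldsymbol I$ by the resulting Cauchy integral for $\dist(z,\Sigma_{\n,\delta})\gtrsim\delta$, then extend by analytic continuation of the jump. The paper's implementation of this last step is slightly more concrete than yours: it notes that the solutions $\boldsymbol Z$ built with different values of $\delta$ are analytic continuations of one another (so replacing $\delta$ by $\delta/2$ already covers a $\delta/5$-neighborhood of the lens arcs and the boundaries $\partial U_e$), and then for the remaining real segments $(\beta_{\n,1},\beta_1]\setminus U_\n$ (resp.\ $[\alpha_2,\alpha_{\n,2})\setminus U_\n$) deforms the contour into a nearby horizontal line using the explicit exponential decay of $|\Phi_\n^{(0)}/\Phi_\n^{(1)}|$ supplied by \eqref{4.5.1} and \eqref{4.5.1a}. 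One small correction to your heuristic: the extra exclusion of $\alpha_1$ when $c_\star=0$ is not because the squares $U_{\alpha_1},U_{\beta_{\n,1}}$ collapse (the $\delta\mapsto\delta/2$ trick handles those uniformly), but because the real segment $(\beta_{\n,1},\beta_1]$ itself creeps toward $\alpha_1$ and the deformation estimate \eqref{4.5.1a} is only available on $[\alpha_1+\delta,\alpha_2-\delta]$.
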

\begin{proof}
Let \( \mathcal C \) and \( \mathcal C_- \) be the operators defined in Lemma~\ref{lem:cauchy} and \( \mathcal C_{\boldsymbol V}:L^r(\Sigma_{\n,\delta})\to L^r(\Sigma_{\n,\delta}) \), \( r>1 \), be an operator defined by \(  \mathcal C_{\boldsymbol V}\boldsymbol F := \mathcal C_-(\boldsymbol{FV}) \) for any \( 2\times 2 \) matrix function \( \boldsymbol F(s) \) in \( L^r(\Sigma_{\n,\delta}) \). Then it follows from Lemmas~\ref{lem:cauchy} and~\ref{lem:rhzV} that
\begin{equation}
\label{rhz-4}
\|\mathcal C_{\boldsymbol V}\|_r \leq C_r\|\boldsymbol V\|_{L^\infty(\Sigma_{\n,\delta})} = o(1).
\end{equation}
Let \( M(\mathcal N_{c_\star}) \) be such that the above norm is less than \( 1/2 \) for all \( \n\in\mathcal N_{c_\star} \), \( |\n|\geq M(\mathcal N_{c_\star}) \). Then the operator \( \mathcal I - \mathcal C_{\boldsymbol V} \) is invertible in \( L^r(\Sigma_{\n,\delta}) \) for all such \( \n \). Hence, one can readily verify that
\[
\boldsymbol Z(z) = \boldsymbol I + \mathcal C(\boldsymbol{UV})(z), \quad \boldsymbol U(s) := (\mathcal I - \mathcal C_{\boldsymbol V})^{-1}(\boldsymbol I)(s).
\]
The above formula and H\"older inequality immediately yield that
\begin{equation}
\label{rhz-5}
\max_{i,j}\left|\big[\boldsymbol{Z}(z) - \boldsymbol{I}\big]_{i,j} \right| \lesssim \frac{\|\boldsymbol{UV}\|_{L^r(\Sigma_{\n,\delta})}}{\dist(z,\Sigma_{\n,\delta})} \lesssim \delta^{-1} \|\boldsymbol V\|_{L^\infty(\Sigma_{\n,\delta})}
\end{equation}
for \( \dist(z,\Sigma_{\n,\delta})\geq \delta/5 \), where the constant in \( \lesssim \) is independent of \( \n \) and \( \delta \) (it involves the arclengths of \( \Sigma_{\n,\delta} \), but the latter are uniformly bounded above and below). 

It can be readily seen from \hyperref[rhz]{\rhz}(b) that \( \boldsymbol V(s) \) can be analytically continued off each connected component of \( \Sigma_{\n,\delta}^\circ \). Hence, solutions of \hyperref[rhz]{\rhz} for the same value of \( \n \) and different values of \( \delta \) are, in fact, analytic continuations of each other. Thus, using \eqref{rhz-5} together with  \eqref{rhz-5} where \( \delta \) is replaced by \( \delta/2 \), we get that \eqref{rhz-5} in fact holds for \( \dist(z,([\beta_{c_\star,1},\beta_1]\cup[\alpha_2,\alpha_{c_\star,2}])\setminus U_\n)\geq \delta/5\). The set \( ([\beta_{c_\star,1},\beta_1]\cup[\alpha_2,\alpha_{c_\star,2}])\setminus U_\n \) is not empty only when \( c_\star\in[0,c^*)\cup(c^{**},1] \). In particular, we have finished the proof of the lemma for \( c_\star\in[c^*,c^{**}] \). When \( c_\star\in(0,c^*) \), set \( I_{\n,\delta}:=[\beta_1+\ic\delta c_\star/3,\beta_1]\cup(\beta_{c_\star,1}+\ic\delta c_\star/3,\beta_1+\ic\delta c_\star/3)\setminus \overline U_\n \) and let \( O_{\n,\delta} \) be the bounded domain delimited by \( \partial U_\n \), \( I_{\n,\delta} \), and \( [\beta_{c_\star,1},\beta_1)\setminus \overline U_\n \). Observer that \( \boldsymbol V(s) \) extends as an analytic matrix function into \( O_{\n,\delta} \) and still satisfies \eqref{rhz-0} there by \eqref{4.5.1}. Thus, we can analytically continue \( \boldsymbol Z(s) \) into \( O_{\n,\delta} \) by multiplying it by \( \boldsymbol I+ \boldsymbol V(z) \)  there. This continuation will still have a jump matrix satisfying \eqref{rhz-2} and therefore itself will satisfy \eqref{rhz-5} away from its jump contour. This finishes the proof of the lemma when \( c_\star\in(0,c^*)\cup(c^{**},1) \) (the proof for the case \( c_\star\in(c^{**},1) \) is identical). The proof in the case \( c_\star=0 \) (and therefore in the case \( c_\star=1 \)) is similar and uses \eqref{4.5.1a} instead of \eqref{4.5.1}.

The fact that the above constructed matrix \( \boldsymbol Z(z) \) has behavior as described in \hyperref[rhz]{\rhz}(c) follows from the fact that it admits an explicit local parametrix around \( \beta_1 \) (resp. \(\alpha_2\)) when \(c_\star<c^* \) (resp. \(c_\star>c^{**}\)), see \cite[Sections~8.3 and~9.1]{Y16}.
\end{proof}

The following lemma immediately follows from Lemma~\ref{lem:rhz}.

\begin{lemma}
\label{lem:final} A solution of \hyperref[rhx]{\rhx} is given by
\begin{equation}
\label{X}
 \boldsymbol X(z) := \boldsymbol C\boldsymbol Z(z) \left\{
 \begin{array}{rl}
(\boldsymbol{MD})(z), & z\in \overline\C\setminus\overline U_\n, \medskip \\
\boldsymbol P_e(z), & z\in U_e, \;\; e\in\{\alpha_1,\beta_{\n,1},\alpha_{\n,2},\beta_2\},
\end{array}
 \right.
 \end{equation}
 where \( \boldsymbol Z(z) \) solves \hyperref[rhz]{\rhz}, \( \boldsymbol N(z) := \boldsymbol C(\boldsymbol{MD})(z) \) solves \hyperref[rhn]{\rhn}, see \eqref{matrix-M}--\eqref{matrix-CD}, and \( \boldsymbol P_e(z) \) solve \hyperref[rhp]{\rhp$_e$}, see Section~\ref{ss:LP}.
\end{lemma}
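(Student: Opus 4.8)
The plan is to verify directly that the right-hand side of \eqref{X}, which I denote \( \boldsymbol X(z) \), satisfies every condition of \hyperref[rhx]{\rhx}; uniqueness will then be automatic from Lemma~\ref{lem:rhx} (together with Lemma~\ref{lem:rhy}). By Lemma~\ref{lem:rhz} a solution \( \boldsymbol Z(z) \) of \hyperref[rhz]{\rhz} exists once \( |\n|\geq M(\mathcal N_{c_\star}) \); moreover \( \boldsymbol Z(z) \) is holomorphic off \( \Sigma_{\n,\delta} \) with \( \boldsymbol Z(\infty)=\boldsymbol I \). The global parametrix \( \boldsymbol N(z)=\boldsymbol C(\boldsymbol{MD})(z) \) solves \hyperref[rhn]{\rhn}, see \eqref{matrix-M}--\eqref{matrix-CD}, and each \( \boldsymbol P_e(z) \) solves \hyperref[rhp]{\rhp$_e$}, see Section~\ref{ss:LP}. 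Since \( \boldsymbol C \) is a constant matrix, the interiors of the squares \( U_e \) do not meet \( \Sigma_{\n,\delta} \), and \( \boldsymbol{MD} \) is single-valued and holomorphic off \( \Delta_{\n,1}\cup\Delta_{\n,2} \), formula \eqref{X} defines a matrix function holomorphic in \( \overline\C\setminus\big(\bigcup_{i=1}^2(\Delta_i\cup\Gamma_{\n,i}^+\cup\Gamma_{\n,i}^-)\big) \). The normalization \hyperref[rhx]{\rhx}(a) at infinity then follows from \( \boldsymbol Z(\infty)=\boldsymbol I \) and \( \lim_{z\to\infty}\boldsymbol C\boldsymbol D(z)z^{-\sigma(\n)}=\boldsymbol I \), which give \( \boldsymbol X(z)z^{-\sigma(\n)}\to\boldsymbol C\,\boldsymbol I\,\boldsymbol C^{-1}=\boldsymbol I \).

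Next I would check the jump relation \hyperref[rhx]{\rhx}(b) contour by contour, keeping track of the orientation conventions and the (nonstandard) \( \pm \) labelling fixed in connection with \( \Sigma_{\n,\delta} \) and Lemma~\ref{lem:cauchy}. On \( \partial U_e \) the relevant branches of \( \boldsymbol X \) are \( \boldsymbol C\boldsymbol Z\boldsymbol{MD} \) on the outside and \( \boldsymbol C\boldsymbol Z\boldsymbol P_e \) on the inside, and the jump \( \boldsymbol Z_+=\boldsymbol Z_-\boldsymbol P_e(\boldsymbol{MD})^{-1} \) from \hyperref[rhz]{\rhz}(b) forces the two sides of \( \boldsymbol X \) to agree, so \( \boldsymbol X \) is continuous across \( \partial U_e \). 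On \( \Delta_{\n,i}^\circ\setminus\overline U_\n \) the factor \( \boldsymbol Z \) is holomorphic and \( \boldsymbol N \) carries precisely the jump \( \mathsf T_i\left(\begin{smallmatrix}0&\rho_i\\-1/\rho_i&0\end{smallmatrix}\right) \) prescribed in \hyperref[rhx]{\rhx}(b), by \hyperref[rhn]{\rhn}(b). On the remaining pieces of \( \Sigma_{\n,\delta} \), i.e.\ on \( \Gamma_{\n,i}^\pm\setminus\overline U_\n \) and on \( (\Delta_i\setminus\Delta_{\n,i})\setminus\overline U_\n \) (which contains \( [\beta_{\n,1},\beta_1] \) and \( [\alpha_2,\alpha_{\n,2}] \)), the factor \( \boldsymbol N=\boldsymbol C\boldsymbol{MD} \) is holomorphic since these sets avoid \( \Delta_{\n,1}\cup\Delta_{\n,2} \); writing the jump of \hyperref[rhz]{\rhz}(b) as \( (\boldsymbol{MD})\,\mathsf T_i(\cdot)\,(\boldsymbol{MD})^{-1} \) and conjugating back by \( \boldsymbol{MD} \) gives \( \boldsymbol X_+=\boldsymbol X_-\,\mathsf T_i(\cdot) \), with \( \mathsf T_i(\cdot) \) being exactly the triangular factor appearing in \hyperref[rhx]{\rhx}(b) on that arc. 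Finally, inside each \( U_e \) we have \( \boldsymbol X=\boldsymbol C\boldsymbol Z\boldsymbol P_e \) with \( \boldsymbol Z \) holomorphic, so \( \boldsymbol X \) inherits the jumps of \( \boldsymbol P_e \) on \( (\Delta_i\cup\Gamma_{\n,i}^+\cup\Gamma_{\n,i}^-)\cap U_e \), which by \hyperref[rhp]{\rhp$_e$}(a,b) are the ones required by \hyperref[rhx]{\rhx}(b). These conjugations telescoping correctly is exactly what the orientation and \( \pm \) conventions were set up to ensure.

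It then remains to verify the endpoint condition \hyperref[rhx]{\rhx}(c). At each \( e\in\{\alpha_1,\beta_{\n,1},\alpha_{\n,2},\beta_2\} \) the point lies in the interior of \( U_e \), where \( \boldsymbol Z \) is holomorphic and \( \boldsymbol X=\boldsymbol C\boldsymbol Z\boldsymbol P_e \) has the local behavior of \( \boldsymbol P_e \), i.e.\ exactly \hyperref[rhp]{\rhp$_e$}(c)\,\( = \)\,\hyperref[rhx]{\rhx}(c). At \( \beta_1 \) in the regime \( c_\star<c^* \) (and symmetrically at \( \alpha_2 \) when \( c_\star>c^{**} \)) the point lies outside \( \overline U_\n \) but away from \( \Delta_{\n,1}\cup\Delta_{\n,2} \), so \( \boldsymbol{MD} \) is holomorphic there and \( \boldsymbol X=\boldsymbol C\boldsymbol Z\boldsymbol{MD} \) inherits the logarithmic growth of the appropriate column of \( \boldsymbol Z \) recorded in \hyperref[rhz]{\rhz}(c); that this reproduces the column pattern of \hyperref[rhx]{\rhx}(c) is seen from the explicit local parametrix around \( \beta_1 \) (resp.\ \( \alpha_2 \)) referenced at the end of the proof of Lemma~\ref{lem:rhz}. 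All other points of \( \Sigma_{\n,\delta} \) are interior points of analytic arcs, where \( \boldsymbol Z \), and hence \( \boldsymbol X \), is bounded. Since \hyperref[rhx]{\rhx} has at most one solution by Lemma~\ref{lem:rhx}, the matrix defined by \eqref{X} is \emph{the} solution. I expect the only genuine work here to be the bookkeeping of the various subarcs of \( \Sigma_{\n,\delta} \) and of the orientation/\( \pm \) conventions in the jump computation, together with confirming that the contour-deformation continuation of \( \boldsymbol Z \) near \( \beta_1 \) and \( \alpha_2 \) carried out in Lemma~\ref{lem:rhz} is compatible with \hyperref[rhx]{\rhx}(c); there is no analytic difficulty beyond what Lemma~\ref{lem:rhz} already supplies.
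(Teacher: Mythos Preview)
Your proposal is correct and carries out in detail the routine verification that the paper compresses into a single sentence (``immediately follows from Lemma~\ref{lem:rhz}''). The only delicate point is the one you already flag: at \( \beta_1 \) when \( c_\star<c^* \) (and at \( \alpha_2 \) when \( c_\star>c^{**} \)), the column pattern of \hyperref[rhx]{\rhx}(c) does \emph{not} follow from \hyperref[rhz]{\rhz}(c) by right-multiplying \( \boldsymbol Z \) by \( \boldsymbol{MD} \), since that mixes columns; one genuinely needs the explicit local parametrix \( \boldsymbol P_{\beta_1}=\boldsymbol M\,\mathsf T_1\big(\begin{smallmatrix}1&\mathcal C_1\Phi_\n^{(0)}/\Phi_\n^{(1)}\\0&1\end{smallmatrix}\big)\boldsymbol D \) from \cite[Section~8.3]{Y16}, which differs from \( \boldsymbol{MD} \) only in its second column and therefore has exactly the required logarithmic structure, and the fact that \( \boldsymbol Z\boldsymbol{MD}\boldsymbol P_{\beta_1}^{-1} \) is bounded near \( \beta_1 \). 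You correctly defer to this, just as the paper does at the end of the proof of Lemma~\ref{lem:rhz}.
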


\subsection{Proof of Theorems~\ref{thm:asymp2}--\ref{thm:asymp4}} We are now ready to prove the main results of Section~\ref{sec:3}. We stop using the notation \( c_\star \) and resume writing \( c \) as in the statements of Theorems~\ref{thm:asymp2}--\ref{thm:asymp4}.

\subsubsection{Proof of Theorem~\ref{thm:asymp2}}

Let \( K \) be a closed subset of \( \overline\C\setminus(
\Delta_{c,1}\cup\Delta_{c,2}) \).  It follows from
Proposition~\ref{prop:1} that the constant \( \delta \) in the
definition of the contour \( \Sigma_{\n,\delta} \) can be adjusted so that
\( K \) lies outside of each \( \overline\Omega_{\n,i}^\pm \) as
well as \( \overline U_\n \) for all \( |\n| \) large enough. Then
it holds that
\begin{equation}
\label{Y1}
\boldsymbol Y(z) = \boldsymbol C (\boldsymbol{ZMD})(z), \quad z\in K,
\end{equation}
by \eqref{eq:x} and  Lemma~\ref{lem:final}, where we need to write
\( \boldsymbol Y_\pm(z) \) and \( \boldsymbol Z_\pm(z) \) for \(
z\in \Delta_i \setminus \Delta_{c,i} \), \( i\in\{1,2\} \). Set
\begin{equation}
\label{defB}
B_k(z) := [\boldsymbol{Z}(z)]_{1,k+1} - \delta_{0k} = o(1) , \quad k\in\{0,1,2\},
\end{equation}
where  \( \delta_{ij} \) is the usual Kronecker symbol. Observe that \( B_k(\infty) =0 \) and
\begin{equation}
\label{Best}
|B_k(z)| = \left\{ \begin{array}{ll} \mathcal O_{\delta,c}(\varepsilon_\n), & c\not\in\{c^*,c^{**}\}, \smallskip \\ o_\delta(1), & c\in\{c^*,c^{**}\}, \end{array} \right.
\end{equation}
uniformly in \( \overline\C\setminus\{\alpha_1,\beta_1\} \) when \( c=0 \), in \( \overline\C\setminus\{\beta_1\} \) when \( c\in(0,c^*) \), in \( \overline\C \) when \( c\in[c^*,c^{**}] \), in \( \overline\C\setminus\{\alpha_2\} \) when \( c\in(c^{**},1) \), and in \( \overline\C\setminus\{\alpha_2,\beta_2\} \) when \( c=1 \) by \eqref{rhz-2} and \eqref{rhz-1}, where the dependence on \( c \) of \( \mathcal O_{\delta,c}(\varepsilon_\n) \) is uniform on compact subsets of \( [0,c^*)\cup(c^{**},1]\). Then it follows from \eqref{eq:y},
\eqref{Y1}, the definition of \( \boldsymbol M(z) \) in
\eqref{matrix-M}, and of \( \boldsymbol C \), \( \boldsymbol D(z)
\) in \eqref{matrix-CD} that
\begin{eqnarray*}
P_\n(z) &=& [\boldsymbol Y(z)]_{1,1} = [\boldsymbol C]_{1,1}[(\boldsymbol{ZM})(z)]_{1,1}[\boldsymbol D(z)]_{1,1} \\
& = & \gamma_\n S_\n^{(0)}(z) \left( 1+ B_0(z) + s_{\n,1}B_1(z)\Upsilon_{\n,1}^{(0)}(z) + s_{\n,2}B_2(z)\Upsilon_{\n,2}^{(0)}(z) \right) \Phi_\n^{(0)}(z),
\end{eqnarray*}
where \( s_{\n,i}:=S_\n^{(0)}(\infty)/S_\n^{(i)}(\infty) \), \( i\in\{1,2\} \). The first asymptotic formula of the theorem now follows from \eqref{Best}, \eqref{lem51-0}--\eqref{lem51-3}, and \eqref{szego-limit3}.

Let now \( K \) be a closed subset of \(
\Delta_{c,1}^\circ\cup\Delta_{c,2}^\circ \). Again, we can adjust
\( \delta \) so that \( K \) does not intersect \( \overline U_\n
\) for all \( |\n| \) large enough. Hence,
\begin{equation}
\label{Y2}
\boldsymbol Y_{\pm}(x) = \boldsymbol C (\boldsymbol{ZM}_{\pm}\boldsymbol D_{\pm})(x)(\boldsymbol I \pm \rho_i^{-1}(x) \boldsymbol E_{i+1,1} ), \quad x\in K\cap\Delta_{c,i},
\end{equation}
for \( i\in\{1,2\} \), again by \eqref{eq:x} and Lemma~\ref{lem:final}. Thus, we get for \( x\in K \cap \Delta_{c,i} \) that
\begin{multline*}
P_\n(x) = \gamma_\n\big(S_\n\Phi_\n\big)_\pm^{(0)}(x)\left(1+B_0(x)
+ B_1(x)\Upsilon_{\n,1\pm}^{(0)}(x) +
B_2(x)\Upsilon_{\n,2\pm}^{(0)}(x)\right) \\ \pm
\gamma_\n(\rho_iw_{\n,i\pm})^{-1}(x)\big(S_\n\Phi_\n\big)_\pm^{(i)}(x)\left(1+B_0(x)
+ B_1(x)\Upsilon_{\n,1\pm}^{(i)}(x) +
B_2(x)\Upsilon_{\n,2\pm}^{(i)}(x)\right).
\end{multline*}
Since \( F^{(0)}_\pm(x) = F^{(i)}_\mp(x) \) on \( \Delta_{\n,i} \)
for any rational function \( F(\z) \) on \( \RS_\n \), the
second asymptotic formula of the theorem now follows from
\eqref{szego-pts2}, \eqref{Best}, and
\eqref{lem51-0}--\eqref{lem51-3}.

\subsubsection{Proof of Theorem~\ref{thm:asymp3}}

Similarly to the matrix \( \boldsymbol Y(z) \) defined in
\eqref{eq:y}, set
\begin{equation}
\label{hatY}
\widehat{\boldsymbol Y}(z) := \left(\begin{matrix} L_\n(z) & -A_\n^{(1)}(z) & -A_\n^{(2)}(z) \medskip \\ -d_{\n,1} L_{\n+\vec e_1}(z) &  d_{\n,1} A_{\n+\vec e_1}^{(1)}(z) & d_{\n,1}A_{\n+\vec e_1}^{(2)}(z) \medskip \\ -d_{\n,2} L_{\n+\vec e_2}(z) & d_{\n,2} A_{\n+\vec e_2}^{(1)}(z) & d_{\n,2}A_{\n+\vec e_2}^{(2)}(z) \end{matrix}\right),
\end{equation}
where the constants \( d_{\n,i} \) are chosen so that the polynomials
\( d_{\n,i}A_{\n+\vec e_i}^{(i)}(z) \) are monic. It was shown in
\cite[Theorem~4.1]{GerKVA01} that
\begin{equation}
\label{hatY1}
\widehat{\boldsymbol Y}(z) = \big(\boldsymbol Y^\mathsf{T}(z)\big)^{-1}.
\end{equation}
Hence, it follows from \eqref{Y1} that on closed subsets of \(
\overline\C\setminus \big(\Delta_{c,1}\cup \Delta_{c,2}\big) \) it
holds that
\[
\widehat{\boldsymbol Y}(z) = \boldsymbol C^{-1}\big(\boldsymbol Z^{-1}\big)^{\mathsf T}(z)\big(\boldsymbol M^{-1}\big)^\mathsf{T}(z)\boldsymbol D^{-1}(z)
\]
(as before, the contour \( \Sigma_{\n,\delta} \) can be adjusted to
accommodate any such closed set, moreover, one needs to write \(
\widehat{\boldsymbol Y}_\pm(z) \) for \( z\in
\Delta_i\setminus\Delta_{c,i} \)). The above equation and
\eqref{hatY} yield that
\begin{equation}
\label{Y3}
A_\n^{(i)}(z) = -\big[\boldsymbol C^{-1}\big(\boldsymbol Z^{-1}\big)^{\mathsf T}(z)\big(\boldsymbol M^{-1}\big)^\mathsf{T}(z)\boldsymbol D^{-1}(z)\big]_{1,i+1}, \quad z\in K.
\end{equation}
Let us rewrite \eqref{M-inverse} as
\[
\boldsymbol M^{-1}(z) =: \diag\left(\frac1{S_\n^{(0)}(z)},\frac{w_{\n,1}(z)}{S_\n^{(1)}(z)},\frac{w_{\n,2}(z)}{S_\n^{(2)}(z)}\right)\boldsymbol \Pi(z) \boldsymbol S(\infty),
\]
which serves as a definition of the matrix \( \boldsymbol\Pi(z) \).
Notice that \( \tau_\n \), defined in the statement of the theorem,
is equal to \( [\boldsymbol C]_{1,1} \). Thus, it follows from
\eqref{Y3} that
\begin{equation}
\label{Y4}
A_\n^{(i)}(z) = -\big[\big(\boldsymbol Z^{-1}\big)^{\mathsf T}(z)\boldsymbol S(\infty)\boldsymbol \Pi^\mathsf{T}(z)\big]_{1,i+1}\frac{w_{\n,i}(z)}{\tau_\n\big(S_\n\Phi_\n\big)^{(i)}(z)}, \quad z\in K.
\end{equation}
Similarly to \eqref{defB}, set
\[
\widehat B_k(z) := \left[\big(\boldsymbol Z^{-1}\big)^\mathsf{T}(z)\right]_{1,k+1} - \delta_{0k} = o(1) , \quad k\in\{0,1,2\}.
\]
Observe that all the jump matrices in \hyperref[rhz]{\rhz}(b) have determinant one. Since \( \boldsymbol Z(\infty)=\boldsymbol I \), we therefore get that \( \det(\boldsymbol Z(z))\equiv 1 \). Hence, the functions \( \widehat B_k(z) \) do obey the estimate of \eqref{Best} as well. Again, it holds that \( \widehat B_k(\infty)=0 \). Thus,
\begin{multline*}
\big[\big(\boldsymbol Z^{-1}\big)^{\mathsf T}(z)\boldsymbol
S(\infty)\boldsymbol \Pi^\mathsf{T}(z)\big]_{1,i+1} =
S_\n^{(0)}(\infty)\left(\Pi_\n^{(i)}(z) + \widehat
B_0(z)\Pi_\n^{(i)}(z) \right. \\ + s_{\n,1}^{-1}\widehat
B_1(z)\Pi_{\n,1}^{(i)}(z) \left. + s_{\n,2}^{-1}\widehat
B_2(z)\Pi_{\n,2}^{(i)}(z) \right), \quad z\in K,
\end{multline*}
where, as before, \( s_{\n,l} =
S_\n^{(0)}(\infty)/S_\n^{(l)}(\infty) \). Now, observe that
\[
\Pi_{\n,l}(\z) / \Pi_\n(\z) = -A_{\n,l}^{-1}\Upsilon_{\n,l}(\z), \quad l\in\{1,2\},
\]
which follows from comparing zero/pole divisors and the
normalizations at \( \infty^{(0)} \) of the left- and right-hand
sides of the above equality (recall that \( \Pi_\n^{(0)}(\infty)=1
\) and \( \Pi_{\n,l}^{(0)}(z) = -z^{-1} + \mathcal O(z^{-2}) \),
which can be seen from \eqref{Up-Pi1}). Therefore, it follows from
\eqref{Y4} that
\begin{equation}
\label{Y5}
A_\n^{(i)}(z) = - \left(1 + \widehat B_0(z) - \frac{\Upsilon_{\n,1}^{(i)}(z)}{s_{\n,1}A_{\n,1}} \widehat B_1(z) - \frac{\Upsilon_{\n,2}^{(i)}(z)}{s_{\n,2}A_{\n,2}} \widehat B_2(z) \right)\frac{\big(\Pi_\n^{(i)}w_{\n,i}\big)(z)}{\gamma_\n\big(S_\n\Phi_\n\big)^{(i)}(z)}.
\end{equation}
Hence, the first asymptotic formula of the theorem follows from
\eqref{Best}, \eqref{lem51-0}--\eqref{lem51-3} (here, one
needs to recall that \( \widehat B_l(\infty)=0 \) and therefore the
estimate for \( (\Upsilon_{\n,l}^{(l)}\widehat B_l)(z) \) around
infinity follows from the maximum principle), \eqref{szego-limit3}, and the fact that \( A_{\n,1}\sim c_\n^2 \) shown in the proof of Lemma~\ref{lem:aux1}. When \( c=0 \) and \(
i=1 \), we also deduce from \eqref{Y5} and the maximum modulus principle
that
\[
A_\n^{(1)}(z) = \frac{o(1)}{c_\n^2}\frac{S_\n^{(1)}(\infty)}{S_\n^{(1)}(z)} \frac{\big(\Pi_\n^{(1)}w_{\n,1}\big)(z)} {\tau_\n\Phi_\n^{(1)}(z)} = \frac{o(1)}{c_\n^2} \frac{\big(\Pi_\n^{(1)}w_{\n,1}\big)(z)}{\tau_\n\Phi_\n^{(1)}(z)},
\]
where we also used \eqref{szego-limit2} and \( o(1) \) behaves like the right-hand side of \eqref{Best}. Recall that \(
\Pi_\n^{(1)}(z) \) has a double zero at infinity. Therefore,
\[
\big| \big(\Pi_\n^{(1)}w_{\n,1}^2\big)(z) \big| = \left|\left(\Upsilon_{\n,2}^{(0)}\Upsilon_{\n,1}^{(2)}-\Upsilon_{\n,2}^{(2)}\Upsilon_{\n,1}^{(0)}\right)(z)  \frac{w_{\n,1}(z)}{w_{\n,2}(z)}\right| = \mathcal O\big( c_\n^2 \big)
\]
uniformly on closed subsets \( \C\setminus\Delta_{0,1} \) by
\eqref{Up-Pi2}, \eqref{lem51-1}--\eqref{lem51-3}, and the maximum
modulus principle. Clearly, the last two estimates prove the second
asymptotic formula of the theorem (the case \( c=1 \) and \( i=2 \)
can be treated similarly).

Finally, \eqref{Y2} and \eqref{hatY1} give us
\[
\widehat{\boldsymbol Y}_{\pm}(x) = \boldsymbol C^{-1}\big(\boldsymbol Z^{-1}\big)^{\mathsf T}(x)\big(\boldsymbol M_{\pm}^{-1}\big)^\mathsf{T}(x)\boldsymbol D_{\pm}^{-1}(x)\big(\boldsymbol I\mp \rho_i^{-1}(x)\boldsymbol E_{1,i+1}\big)
\]
on any compact subset of \( \Delta_{c,i}^\circ \), \( i\in\{1,2\}
\). Analogously to \eqref{Y5}, the above formula yields that
\begin{multline*}
A_\n^{(i)}(x) = - \left(1 + \widehat B_0(x) -
\frac{\Upsilon_{\n,1\pm}^{(i)}(x)}{s_{\n,1}A_{\n,1}} \widehat
B_1(x) - \frac{\Upsilon_{\n,2\pm}^{(i)}(x)}{s_{\n,2}A_{\n,2}}
\widehat B_2(x) \right)
\frac{\big(\Pi_\n^{(i)}w_{\n,i}\big)_\pm(x)}{\gamma_\n\big(S_\n\Phi_\n\big)_\pm^{(i)}(x)}
\\ \pm \rho_i^{-1}(x) \left(1 + \widehat B_0(x) -
\frac{\Upsilon_{\n,1\pm}^{(0)}(x)}{s_{\n,1}A_{\n,1}} \widehat
B_1(x) - \frac{\Upsilon_{\n,2\pm}^{(0)}(x)}{s_{\n,2}A_{\n,2}}
\widehat B_2(x) \right)
\frac{\Pi_{\n\pm}^{(0)}(x)}{\gamma_\n\big(S_\n\Phi_\n\big)_\pm^{(0)}(x)}.
\end{multline*}
Once again, \eqref{Best} and \eqref{lem51-0}--\eqref{lem51-3}
imply that
\[
A_\n^{(i)}(x) = -(1+o(1))\frac{\big(\Pi_\n^{(i)}w_{\n,i}\big)_\pm(x)}{\gamma_\n\big(S_\n\Phi_\n\big)_\pm^{(i)}(x)}  \pm (1+o(1))\rho_i^{-1}(x)\frac{\Pi_{\n\pm}^{(0)}(x)}{\gamma_\n\big(S_\n\Phi_\n\big)_\pm^{(0)}(x)}
\]
uniformly on compact subsets of \( \Delta_{c,i}^\circ \). Since
\[
\mp\rho_i^{-1}(x)\Pi_{\n\pm}^{(0)}(x)/(S_\n\Phi_\n)_\pm^{(0)}(x) = \big(\Pi_{\n\mp}^{(i)}w_{\n,i\mp}\big)(x)/(S_\n\Phi_\n)_\mp^{(i)}(x), \quad x\in\Delta_{\n,i},
\]
by \eqref{szego-pts2}, the last asymptotic formula of the theorem
follows.

\subsubsection{Proof of Theorem~\ref{thm:asymp4}}

As in the previous two subsections, given a closed set \( K \) in \(
\overline\C\setminus (\Delta_1\cup\Delta_2) \), we can adjust the
contour \( \Sigma_{\n,\delta} \) so that \( K \) lies in the unbounded
component of its complement. Hence, using the
notation of the previous two subsections, we get from \eqref{eq:y},
\eqref{matrix-M}, \eqref{matrix-CD}, \eqref{Y1}, and \eqref{Best}
that
\[
R_\n^{(i)}(z) = \gamma_\n S_\n^{(i)}(z)w_{\n,i}^{-1}(z) \left( 1+ B_0(z) + s_{\n,1}B_1(z)\Upsilon_{\n,1}^{(i)}(z) + s_{\n,2}B_2(z)\Upsilon_{\n,2}^{(i)}(z) \right) \Phi_\n^{(i)}(z)
\]
for \( z\in K \), \( i\in\{1,2\} \). The first asymptotic formula
of the theorem now follows from \eqref{Best},
\eqref{lem51-0}--\eqref{lem51-3}, \eqref{szego-limit3}, and the
maximum modulus principle applied to \(
(\Upsilon_{\n,i}^{(i)}B_i)(z) \) to extend the desired estimates to
the neighborhood of infinity. As in the proof of Theorem
\ref{thm:asymp3}, it holds when \( c=0 \) and \( i=1 \) that
\[
R_\n^{(1)}(z) = o(1) \tau_\n\Phi_\n^{(1)}(z)w_{\n,1}^{-1}(z)
\]
uniformly on closed subsets of \( \overline\C\setminus\Delta_{0,1}
\) by \eqref{lem51-1}--\eqref{lem51-3} and
\eqref{szego-limit2}--\eqref{szego-limit3}. Since an analogous
formula holds for \( c=1 \) and \( i=2 \), the second asymptotic
formula of the theorem follows.

Finally, it follows from \eqref{hatY} and \eqref{hatY1} that
\[
L_\n(z) = \left(1 + \widehat B_0(z) - \frac{\Upsilon_{\n,1}^{(0)}(z)}{s_{\n,1}A_{\n,1}} \widehat B_1(z) - \frac{\Upsilon_{\n,2}^{(0)}(z)}{s_{\n,2}A_{\n,2}} \widehat B_2(z) \right)\frac{\Pi_\n^{(0)}(z)}{\gamma_\n\big(S_\n\Phi_\n\big)^{(0)}(z)}
\]
on closed subsets of \(
\overline\C\setminus(\Delta_{c,1}\cup\Delta_{c,2}) \), from which
the last asymptotic formula of the theorem follows, as usual, by
\eqref{Best} (holding for \( \widehat B_k(z) \) as well), \eqref{lem51-0}--\eqref{lem51-3}, \eqref{szego-limit3}, and since \( A_{\n,1}\sim c_\n^2 \) as shown in Lemma~\ref{lem:aux1}.

\section{Proof of Theorem~\ref{thm:recurrence}}
\label{sec:9}

While proving Theorem~\ref{thm:recurrence} we first consider the
case of fully marginal sequences and then consider separately the
asymptotic behavior of \( a_{\n,1},a_{\n,2} \) and  \(
b_{\n,1},b_{\n,2} \).

\subsection{Fully Marginal Ray Sequences}

In this section we only consider sequences \( \mathcal N_0 \) and
\( \mathcal N_1 \) satisfying \eqref{fullymarginal}. Again, we
present the proof only in the case of \( c=0 \). Recurrence formula
\eqref{recurrence} for \( P_\n(x) \) can be rewritten as
\begin{equation}
\label{8.1}
z - b_{\n,i} = \frac{P_{\n+\vec e_i}(z)}{P_\n(z)} + a_{\n,1}\frac{P_{\n-\vec e_1}(z)}{P_\n(z)} + a_{\n,2}\frac{P_{\n-\vec e_2}(z)}{P_\n(x)}, \quad i\in\{1,2\}.
\end{equation}
One can easily see from \eqref{8.1} that
\begin{equation}
\label{8.2}
b_{\n,i} = -\lim_{z\to\infty} \left(  \frac{P_{\n+\vec e_i}(z)}{P_\n(z)} - z \right).
\end{equation}
Thus, the limiting behavior of \( b_{\n,1},b_{\n,2} \) follows from
Theorem~\ref{thm:asymp1} and \eqref{6.0} in Lemma~\ref{lem:6-1}.
Moreover, since the rays \( \big\{\n\pm\vec e_i:\n\in\mathcal
N_0\big\} \) are also fully marginal, we can use
Theorem~\ref{thm:asymp1} to rewrite \eqref{8.1} for \( i=2 \) as
\begin{equation}
\label{8.3}
z - b_{\n,2} = (1+o(1))\varphi_2(z) + (1+o(1))\frac{a_{\n,1}}{S(z;\alpha_1)(z-\alpha_1)} + (1+o(1))\frac{a_{\n,2}}{\varphi_2(z)}.
\end{equation}
Recall that \( S(z;\alpha_1) = 1 - (B_{0,1}  - \alpha_1)/z + \mathcal O\big(z^{-2}\big) \) by \eqref{6.7} and \eqref{AngPar2}. Hence,
if we use \eqref{varphii} to obtain the first four terms of the
power series expansion of  \( \varphi_2(z) \) at infinity, we then
can rewrite \eqref{8.3} as
\begin{multline}
\label{8.4} z - b_{\n,2} = (1+o(1))\left(z - B_{0,2} - \frac{A_{0,2}}z - \frac{A_{0,2}B_{0,2}}{z^2} + \mathcal O\left(\frac1{z^3}\right)\right) + \\ +  \frac{a_{\n,1}}z\left(1 + \frac{B_{0,1}}z + \mathcal O\left(\frac1{z^2}\right)\right) + \frac{a_{\n,2}}z\left(1 + \frac{B_{0,2}}z + \mathcal O\left(\frac1{z^2}\right)\right).
\end{multline}
It follows immediately from \eqref{8.4} that
\[
a_{\n,1} + a_{\n,2} = (1+o(1))A_{0,2} \qandq B_{0,1}a_{\n,1} + B_{0,2}a_{\n,2} =  (1+o(1))B_{0,2}A_{0,2},
\]
from which the limits of \( a_{\n,1},a_{\n,2} \) easily follow
(recall that \( \varphi_2(z) \) is non-vanishing).

\subsection{Asymptotics of \( a_{\n,1},a_{\n,2} \) along Non-fully Marginal Sequences}

From now on we are assuming that ray sequences \( \mathcal N_c \) satisfy
\eqref{vareps}. It can be deduced from orthogonality relations
\eqref{typeII} and definition \eqref{Rni} that
\[
R_\n^{(i)}(z) = -\frac{h_{\n,i}}{2\pi\mathrm i}\frac1{z^{n_i+1}} + \mathcal O\big(z^{-n_i-2}\big), \quad h_{\n,i} := \int P_\n(x)x^{n_i}\dd\mu_i(x),
\]
\( i\in\{1,2\} \). In particular, we have that
$m_{\n,i}=-2\pi\ic/h_{\n-\vec e_i,i}$ in \eqref{eq:y}. Then it follows from the first and second asymptotic formulae of Theorem~\ref{thm:asymp4}, the definition of constants \( \gamma_\n \) and \( \tau_\n \) in Theorems~\ref{thm:asymp2} and~\ref{thm:asymp3}, respectively, and the definition of the matrix \( \boldsymbol C \) in \eqref{matrix-CD} that
\begin{equation}
\label{8.5}
-\frac{h_{\n,i}}{2\pi\ic} =\frac{1+o(1)}{s_{\n,i}}\frac{[\boldsymbol C]_{1,1}}{[\boldsymbol C]_{i+1,i+1}} \qorq -\frac{h_{\n,i}}{2\pi\mathrm i} = o(1)\frac{[\boldsymbol C]_{1,1}}{[\boldsymbol C]_{i+1,i+1}}
\end{equation}
where, as before, \(
s_{\n,i}=S_\n^{(0)}(\infty)/S_\n^{(i)}(\infty) \), \( i\in\{1,2\}
\), the first formula holds for \( i\in\{1,2\} \) when \( c\in(0,1)
\), \( i=2 \) when \( c=0 \), and \( i=1 \) when \( c=1 \), and the
second formula holds for the remaining cases. Furthermore, we get
from \eqref{eq:y} that
\begin{equation}
\label{8.6}
-\frac{2\pi\ic}{h_{\n-\vec e_i,i}} = m_{\n,i} = \lim_{z\to\infty} z^{1-|\n|}[\boldsymbol Y(z)]_{i+1,1}.
\end{equation}
Analogously to the computation after
\eqref{Y1}--\eqref{Best} we get that \( [\boldsymbol Y(z)]_{i+1,1}
\) is equal to
\begin{equation}
\label{8.7}
[\boldsymbol C]_{i+1,i+1} \frac{S_\n^{(0)}(z)}{S_\n^{(0)}(\infty)}  \left( s_{\n,i}\Upsilon_{n,i}^{(0)}(z) + B_{0,i}(z) + s_{\n,1}B_{1,i}(z)\Upsilon_{\n,1}^{(0)}(z) + s_{\n,2}B_{2,i}(z)\Upsilon_{\n,2}^{(0)}(z) \right) \Phi_\n^{(0)}(z)
\end{equation}
in a neighborhood of infinity, where \( B_{k,i}(z) := [\boldsymbol Z(z)]_{i+1,k+1} - \delta_{ik} \), \( k\in\{0,1,2\} \), satisfy \eqref{Best}. Since \( B_{k,i}(\infty)=0 \) and \(
\Upsilon_{n,i}^{(0)}(z) = A_{\n,i}z^{-1} + \mathcal
O\big(z^{-2}\big) \) as \(z\to\infty\), see \eqref{Upsilon}, we get that
\begin{equation}
\label{8.8}
-\frac{2\pi\ic}{h_{\n-\vec e_i,i}} = \big( s_{\n,i} A_{\n,i} + o(1) \big) \frac{[\boldsymbol C]_{i+1,i+1}}{[\boldsymbol C]_{1,1}}.
\end{equation}
Now, it is well known, see for example \cite[Lemma~A.1]{uApDenY},
that \( a_{\n,i} = h_{\n,i}/ h_{\n-\vec e_i,i} \). Therefore, it
follows from \eqref{8.5} and \eqref{8.8} that
\[
a_{\n,i}  = (1+o(1)) \big( A_{\n,i} +s_{\n,i}^{-1} o(1) \big) \qorq a_{\n,i}  = o(1)\big( s_{\n,i}A_{\n,i} + o(1) \big)
\]
\( i\in\{1,2\} \), where the first formula holds for \( i\in\{1,2\}
\) when \( c\in(0,1) \), \( i=2 \) when \( c=0 \), and \( i=1 \)
when \( c=1 \), and the second formula holds for the remaining
cases. The desired limits of \( a_{\n,i} \) therefore follow from
continuity of the constants \( A_{c,i} \) with respect to the
parameter \( c \), see Proposition~\ref{prop:angelesco}, asymptotic
formulae \eqref{szego-limit3}, and the estimates \( A_{c,1}\sim c^2
\) as \( c\to 0 \) (\( A_{c,2}\sim (1-c)^2 \) as \( c\to 1 \)), see
\eqref{lem51-6} and after.

\subsection{Asymptotics of \( b_{\n,1},b_{\n,2}\) along Non-fully Marginal Sequences}

Excluding the cases \( i=1 \) when \( c=0 \) and \( i=2 \) when \(
c=1 \), we get from \eqref{8.6}--\eqref{8.8} and
\eqref{lem51-1}--\eqref{lem51-3} that
\begin{equation}
\label{8.9}
P_{\n-\vec e_i}(z) = (1 + o(1)) A_{\n,i}^{-1}\Upsilon_{\n,i}^{(0)}(z)\gamma_\n\big(S_\n\Phi_\n)^{(0)}(z)
\end{equation}
in some neighborhood of the point at infinity. Replacing the
sequence \( \mathcal N_c \) with \( \{\n+\vec e_i:\n\in \mathcal
N_c\} \), we get from \eqref{8.2}, Theorem~\ref{thm:asymp2}, and
\eqref{8.9} that
\[
b_{\n,i} =  -(1 + o(1)) \lim_{z\to\infty}\left(\frac{A_{\n+\vec e_i}}{\Upsilon^{(0)}_{\n+\vec e_i,i}(z)}-z\right) = (1 + o(1)) B_{\n+\vec e_i},
\]
where we also used \eqref{AngPar1} and \eqref{Upsilon}. The desired claim now follows from
Proposition~\ref{prop:angelesco}.

Out of the two exceptional cases, we shall only consider the case
\( i = 1 \) when \( c=0 \) understanding that the other one can be
treated similarly. Assume for the moment that the measure \( \mu_2
\) is, in fact, the arcsine distribution on \( \Delta_2 \), that
is,
\begin{equation}
\label{8.10}
\dd\mu_2(x) = \frac{\dd x}{2\pi\sqrt{(x-\alpha_2)(\beta_2-x)}} = -\frac{\dd x}{2\pi\ic w_{2+}(x)}.
\end{equation}
Recall the notation of Section~\ref{sec:6} where we wrote \(
P_\n(z) = P_{\n,1}(z)P_{\n,2}(z) \) with polynomial \( P_{\n,i}(z)
\) having all its zeros on \( \Delta_i \). We would like to show
that when \( \mu_2 \) is of the form \eqref{8.10}, formula
\eqref{6.1} still holds along any marginal ray sequence \( \mathcal N_0
\). To this end, we shall use \( 2\times 2 \) Riemann-Hilbert
analysis of orthogonal polynomials. Since this method has been
described in detail in Section~\ref{sec:8}, we shall only outline
the main steps.

It follows from \eqref{typeII} and \eqref{8.10} that the
Riemann-Hilbert problem
\begin{itemize}
\item[(a)] $\boldsymbol Y(z)$ is analytic in
    $\C\setminus\Delta_2$ and $\displaystyle \lim_{z\to\infty}
    {\boldsymbol Y}(z)z^{-n_2} = {\boldsymbol I}$;
\item[(b)] $\boldsymbol Y(z)$ has continuous traces on each
    $\Delta_2^\circ$ that satisfy \( {\boldsymbol Y}_+(x) =
    {\boldsymbol Y}_-(x)\left(\begin{matrix} 1 &
    (P_{\n,1}/w_{2+})(x) \\ 0 & 1 \end{matrix}\right) \);
\item[(c)] the entries of the first column of $\boldsymbol Y(z)$ are bounded and the entries of the second column behave like \( \mathcal O(|z-\xi|^{-1/2}) \) as $z\to\xi\in\{\alpha_2,\beta_2\}$;
\end{itemize}
is solved by
\[
\boldsymbol Y(z):= \left(\begin{matrix}
P_{\n,2}(z) & R_\n^{(2)}(z) \smallskip\\
m_{\n,2}^\star P_{\n,2}^\star(z) & m_{\n,2}^\star R_{\n,2}^\star(z)
\end{matrix}\right),
\]
where \( P_{\n,2}^\star(z) \) is the monic polynomial of degree \(
n_2-1 \) orthogonal to lower degree polynomials with respect to the
weight \( P_{\n,1}(x)\dd\mu_2(x) \) and
\[
R_{\n,2}^\star(z) = \frac1{2\pi\ic}\int \frac{P_{\n,2}^\star(x)P_{\n,1}(x)\dd\mu_2(x)}{x-z} = \frac1{m_{\n,2}^\star z^{n_2}} + \mathcal O\big(z^{-n_2-1}\big).
\]
Let \( \Gamma_2 \) be a Jordan curve encircling \( \Delta_2 \)
counter-clockwise and containing \( \Delta_1 \) in its exterior.
Set
\[
\boldsymbol X(z):= \boldsymbol Y(z) \left\{
\begin{array}{ll}
\left(\begin{matrix} 1 & 0 \\ -(w_2/P_{\n,1})(z) & 1 \end{matrix}\right) &  z\in\Omega_2, \medskip \\
\boldsymbol I & \text{otherwise},
\end{array}
\right.
\]
where \( \Omega_2 \) is the interior domain of \( \Gamma_2 \). Then
$\boldsymbol X(z)$ solves the following Riemann-Hilbert problem:
\begin{itemize}
\item[(a)] $\boldsymbol X(z)$ is analytic in
    $\C\setminus(\Delta_2\cup\Gamma_2)$ and $\displaystyle
    \lim_{z\to\infty} {\boldsymbol X}(z)z^{-n_2} = {\boldsymbol
    I}$;
\item[(b)] $\boldsymbol X(z)$ has continuous traces on
    $\Delta_2^\circ\cup\Gamma_2$ that satisfy
\[
{\boldsymbol X}_+(s)={\boldsymbol X}_-(s) \left\{
\begin{array}{rl}
\left(\begin{matrix} 0 & (P_{\n,1}/w_{2+})(s) \\ -(w_{2+}/P_{\n,1})(s) & 0 \end{matrix}\right), & s\in \Delta_2,  \medskip \\
\left(\begin{matrix} 1 & 0 \\ (w_2/P_{\n,1})(s) & 1 \end{matrix}\right), & s\in \Gamma_2;
\end{array}
\right.
\]
\item[(c)] the entries of the first column of $\boldsymbol Y(z)$ are bounded and the entries of the second column behave like \( \mathcal O(|z-\xi|^{-1/2}) \) as $z\to\xi\in\{\alpha_2,\beta_2\}$.
\end{itemize}
The solution of the above Riemann-Hilbert problem is given by \(
\boldsymbol X(z) = \boldsymbol C(\boldsymbol{ZL})(z) \), where
\[
\boldsymbol L(z) := \left(\begin{matrix} 1  & 1/w_2(z) \smallskip \\ 1/\widetilde\varphi_2(z) & \widetilde\varphi_2(z)/w_2(z) \end{matrix} \right) \big( S_\n\widetilde\varphi_2^{n_2}\big)^{\sigma_3}(z)
\]
with (compare to \eqref{Sx0} and observe that \(
\widetilde\varphi_{2+}(x) \widetilde\varphi_{2-}(x)\equiv 1 \) on
\( \Delta_2 \))
\[
\widetilde\varphi_2(z) := A_{0,2}^{-1/2}\varphi_2(z) \qandq S_\n(z) := \prod_{i=1}^{n_1} \left( \frac{\widetilde\varphi_2(z) - \widetilde\varphi_2(x_{\n,i})}{ \widetilde\varphi_2(z) \widetilde\varphi_2(x_{\n,i}) -1} \frac{\widetilde\varphi_2(z)}{z-x_{\n,i}}\right)^{1/2},
\]
\( \boldsymbol C \) is a diagonal matrix of constants such that \(
\lim_{z\to\infty} \boldsymbol{CL}(z)z^{-n_2\sigma_3} = \boldsymbol
I \), and \( \boldsymbol Z(z) \)  solves the following
Riemann-Hilbert problem:
\begin{itemize}
\item[(a)] $\boldsymbol Z(z)$ is a holomorphic matrix function
    in $\overline\C\setminus\Gamma_2$ and
    $\boldsymbol{Z}(\infty)=\boldsymbol{I}$;
\item[(b)] $\boldsymbol Z(z)$ has continuous traces on
    $\Gamma_2$ that satisfy \( \boldsymbol{Z}_+(s)  =
    \boldsymbol{Z}_-(s) \boldsymbol L(s) \left(\begin{matrix} 1
    & 0 \\ (w_2/P_{\n,1})(s) & 1 \end{matrix}\right)\boldsymbol
    L^{-1}(s) \).
\end{itemize}
Indeed, as in Section~\ref{sec:8}, we only need to verify that the
jump of \( \boldsymbol Z(z) \) on \( \Gamma_2 \) can be estimated
as \( \boldsymbol I + o(1) \) as \( n_2 \to\infty \), \(
\n\in\mathcal N_0 \). The latter is equal to
\[
\boldsymbol I + \frac1{(w_2P_{n,1}S_\n^2\widetilde\varphi_2^{2n_2})(s)}\left( \begin{matrix} \widetilde\varphi_2(s) & -1 \smallskip \\ \widetilde\varphi_2^2(s) & -\widetilde\varphi_2(s) \end{matrix} \right).
\]
Observe that
\[
(P_{n,1}S_\n^2)(s) = \varphi_2^{n_1}(s) \prod_{i=1}^{n_1} b(s;x_{\n,i}), \quad b(z;x_0) := \frac{\widetilde\varphi_2(z) - \widetilde\varphi_2(x_0)}{ \widetilde\varphi_2(z) \widetilde\varphi_2(x_0) -1}.
\]
Notice that \( \inf_{s\in\Gamma_2}|\widetilde\varphi_2(s)|>1 \) and
\(  \inf_{s\in\Gamma_2,x_0\in\Delta_1}|b(s;x_0)|>0 \) by the
compactness of \( \Delta_1 \) and \( \Gamma_2 \). Therefore, there
exist positive constants \( C_1>1 \) and \( C_2<1 \) such that
\[
\sup_{s\in\Gamma}|(w_2P_{n,1}S_\n^2\widetilde\varphi_2^{2n_2})(s)|^{-1} \leq C_1^{n_1}C_2^{2n_2+n_1} = (C_1^{n_1/(2n_2+n_1)}C_2)^{2n_2+n_1} = o(1)
\]
as \( n_1/n_2 \to 0 \). This finishes the proof of the identity \(
\boldsymbol X(z) = \boldsymbol C(\boldsymbol{ZL})(z) \) from which
\eqref{6.1} easily follows. Observe that \( \mu_2 \) as in
\eqref{8.10} is a Szeg\H{o} weight. Hence, Lemma~\ref{lem:6-1} is
applicable. Therefore,
\begin{equation}
\label{8.11}
\lim_{|\n|\to\infty,~\n\in\mathcal N_0}\lim_{z\to\infty} \left(\frac{P_{\n+\vec e_1}(z)}{P_\n(z)} - z \right) = -B_{0,1}
\end{equation}
by \eqref{6.0}. On the other hand, it should be clear from the
above argument that the proof in Section~\ref{sec:8} will work if
\( \mu_2 \) is as in \eqref{8.10}. Therefore,
Theorem~\ref{thm:asymp2} for such a choice of \( \mu_2 \) gives us
that
\begin{equation}
\label{8.12}
\frac{P_{\n+\vec e_1}(z)}{P_\n(z)} = (1 + o(1)) \frac{\gamma_{\n+\vec e_1}(S_{\n+\vec e_1}\Phi_{\n+\vec e_1})^{(0)}(z)}{\gamma_\n(S_\n\Phi_\n)^{(0)}(z)}
\end{equation}
in a neighborhood of the point at infinity. It follows from
\eqref{8.11}, \eqref{8.12}, and \eqref{szego-limit2} that
\begin{equation}
\label{8.13}
\lim_{|\n|\to\infty,~\n\in\mathcal N_0}\lim_{z\to\infty} \left(\frac{\tau_{\n+\vec e_1}\Phi_{\n+\vec e_1}^{(0)}(z)}{\tau_\n\Phi_\n^{(0)}(z)} - z \right) = -B_{0,1},
\end{equation}
where \( \tau_\n \) was defined in Theorem~\ref{thm:asymp3}.
Observe that \eqref{8.13} is a statement about Riemann surfaces \(
\RS_\n \) for \( \n\in\mathcal N_0 \) and is independent of the
original measures \( \mu_1,\mu_2 \). By Theorem~\ref{thm:asymp2}, \eqref{8.12} holds for
measures \( \mu_1,\mu_2 \) as in Theorem~\ref{thm:recurrence},
which we are currently proving. Hence, polynomials \( P_\n(z) \), \( \n\in\mathcal N_0 \), satisfy
\eqref{8.11} by \eqref{8.13} and \eqref{szego-limit2}. The final
claim of the theorem now follows from \eqref{8.2}.

\appendix

\section{}
\label{appendix}

In this Appendix, we will study the operators $\mathcal{L}_c^{(1)}$ and $\mathcal{L}_c^{(2 )}$ defined in  \eqref{sad6}. As we have already mentioned in Section~\ref{ss:2.2}, these operators appear in \cite[Formula~(4.20)]{uApDenY} used with $\vec{\kappa}=\vec{e}_1$ and $\vec{\kappa}=\vec{e}_2$, respectively. The analysis in this section is  fairly standard for the Spectral Theory of Jacobi matrices on trees  (see, e.g., \cite{at2} where the Laplacian and its perturbations were studied for some trees with the finite cone type). However, to make the paper self-contained, we provide  complete proofs. That will also emphasize the connection between the  quantities used in Spectral Theory, such as $m$--functions to be defined a few lines below, and the quantities  standard in the asymptotical analysis of multiple orthogonal polynomials, e.g., function $\chi_c^{(0)}$.\smallskip

We denote by $\delta^{(Y)}$ the delta function (Kronecker symbol) of the vertex~$Y$. Consider two functions
\begin{equation}\label{sad12}
m_I(z):=\langle (\mathcal{L}_c^{(1)}-z)^{-1}\delta^{(O)},\delta^{(O)}\rangle, \quad m_{II}(z):=\langle (\mathcal{L}_{c}^{(2)}-z)^{-1}\delta^{(O)},\delta^{(O)}\rangle\,.
\end{equation}
Given the function \( \chi_c(\z) \) from Proposition~\ref{prop:angelesco} and $c\in (0,1)$, \cite[Equation (4.22)]{uApDenY} yields that
\[
m_I(z) =\frac{-1}{\chi_c^{(0)}(z)-B_{c,1}}, \quad m_{II}(z) = \frac{-1}{\chi_c^{(0)}(z)-B_{c,2}}\,,
\]
where, as usual, $\chi_c^{(0)}(z)$ are the values taken from the zero-th sheet $\RS_c^{(0)}$. By the Spectral Theorem \cite{akh2}, they can also be written in the form
\[
m_I(z)=\int_{\R}\frac{\dd\sigma^{(1)}_O(x)}{x-z}, \quad m_{II}(z)=\int_{\R}\frac{\dd\sigma^{(2)}_O(x)}{x-z}\,,
\]
where $\sigma^{(l)}_O$  is the spectral measure of $\delta^{(O)}$ with respect to $\mathcal{L}_c^{(l)}$, \( l\in\{1,2\}\). The properties of the conformal map $\chi_c(\z)$ imply that the functions $m_I(z)$ and $m_{II}(z)$ satisfy: \smallskip
\\
{\bf (A)}  $m_I(z)$ and $m_{II}(z)$ have no poles since $\chi_c^{(0)}(z)\neq B_{c,j}$ for $z\in \RS_c^{(0)}$ by conformality; \smallskip
\\
{\bf (B)} both $m_I(z)$ and $m_{II}(z)$ are Herglotz-Nevanlinna functions in $\C^+$, i.e., they are analytic, have positive imaginary part, and are continuous up to the boundary. Moreover, $\Im m_{I}(x)=\Im m_{II}(x)=0$ for $x\in \R\backslash (\Delta_{c,1}\cup \Delta_{c,2})$ and $\Im m_{I}^+(x)>0,\Im m_{II}^+(x)>0$ for $x\in\Delta_{c,1}^\circ\cup\Delta_{c,2}^\circ$. \smallskip

We will use the following notation. If $Y,Z\in\mathcal{V}$ and $Y\sim Z$, then deleting the edge $(Y,Z)$ that connects them leaves us with two subtrees. The one containing $Y$ will be called $\mathcal{T}_{[Z,Y]}$, the other one will be called $\mathcal{T}_{[Y,Z]}$. The restriction of any Jacobi matrix $\mathcal{J}$ to a subtree $\mathcal{T}'$ will be denoted by~$\mathcal{J}_{\mathcal{T}'}$.

We learned from {\bf  (A)} and {\bf (B)} above that $\sigma_O^{(1)}$ and $\sigma_{O}^{(2)}$ are absolutely continuous measures with supports equal to $\Delta_{c,1}\cup\Delta_{c,2}$. We need this for the following lemma. 

\begin{lemma}If $c\in (0,1)$, then $\mathcal{L}_c^{(1)}$ and $\mathcal{L}_c^{(2)}$ have no eigenvalues.\label{sad13}
\end{lemma}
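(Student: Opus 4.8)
The plan is to reduce the statement to a propagation-of-solutions argument on the tree, using the fact established from properties \textbf{(A)} and \textbf{(B)} that the spectral measures $\sigma_O^{(l)}$ are purely absolutely continuous on $\Delta_{c,1}\cup\Delta_{c,2}$. First I would recall that, for a bounded self-adjoint Jacobi matrix on a tree, a real number $\lambda$ is an eigenvalue if and only if there is a nonzero $\ell^2(\mathcal V)$ eigenfunction $\psi$ with $\mathcal L_c^{(l)}\psi = \lambda\psi$; I would then argue that no such $\psi$ can vanish at $O$. Indeed, if $\psi_O = 0$, then the eigenvalue equation, restricted to the subtrees $\mathcal T_{[O,O_{(ch),i}]}$ hanging below the two children of the root, decouples $\psi$ into eigenfunctions of $\big(\mathcal L_c^{(l)}\big)_{\mathcal T_{[O,O_{(ch),i}]}}$ (up to the harmless coupling constants $\sqrt{A_{c,j}}$, which are nonzero for $c\in(0,1)$). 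Each of those subtree operators is, by the construction of the types of vertices and edges in Section~\ref{ss:2.2}, unitarily equivalent to a ``cone'' operator that is itself a copy of one of the $\mathcal L_c^{(m)}$-type operators rooted at the corresponding child. Iterating this observation, an eigenfunction vanishing at $O$ would have to vanish on an entire generation, hence propagate to a contradiction with its own support; alternatively, and more cleanly, one checks directly that $\psi_O=0$ together with the eigenvalue equation at $O$ forces $\psi$ to be supported on a proper subtree, and repeating the argument at the new ``root'' shows $\psi\equiv 0$.

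Granting that any eigenfunction $\psi$ has $\psi_O\neq 0$, normalize $\psi_O = 1$. Then $\lambda$ is a point mass of the spectral measure $\sigma_O^{(l)}$: indeed, $\langle E_{\mathcal L_c^{(l)}}(\{\lambda\})\delta^{(O)},\delta^{(O)}\rangle = |\langle \psi,\delta^{(O)}\rangle|^2/\|\psi\|^2 = 1/\|\psi\|^2 > 0$, where $E_{\mathcal L_c^{(l)}}$ is the projection-valued spectral measure. But we have already deduced from \textbf{(A)} and \textbf{(B)} that $\sigma_O^{(l)}$ is absolutely continuous (its Cauchy transform $m_I$, resp. $m_{II}$, extends continuously to $\R$ with no poles and with boundary values having locally bounded, in fact continuous, imaginary part). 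An absolutely continuous measure has no atoms, so $\sigma_O^{(l)}(\{\lambda\})=0$, a contradiction. Hence $\mathcal L_c^{(l)}$ has no eigenvalues, for $l\in\{1,2\}$ and every $c\in(0,1)$.

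The main obstacle is the first step: making rigorous the claim that an $\ell^2$ eigenfunction cannot vanish at the root. This requires carefully exploiting the self-similar, finite-cone-type structure of the trees underlying $\mathcal L_c^{(1)}$ and $\mathcal L_c^{(2)}$, namely that each subtree $\mathcal T_{[Y_{(p)},Y]}$ is isomorphic, as a weighted graph, to one of finitely many model trees, with the isomorphism intertwining the restricted Jacobi matrix with one of the model operators $\mathcal L_c^{(m)}$ (rooted at $Y$). Once this ``renormalization'' dictionary is in place, the non-vanishing at the root follows by the standard contradiction: an eigenfunction vanishing at $O$ restricts to eigenfunctions of the two child subtrees, each of which again may be assumed (after a shift of root) to vanish at its root, so by induction $\psi$ vanishes identically. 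I would phrase this inductively, or equivalently observe that the set $\{Y : \psi_Y = 0\}$, if it contains $O$, is forced by the three-term relation to contain all neighbours' subtrees and thus all of $\mathcal V$. Everything after that — translating ``eigenvalue'' into ``atom of $\sigma_O^{(l)}$'' and invoking absolute continuity — is routine spectral theory.
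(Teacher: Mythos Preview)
Your proposal uses the same two ingredients as the paper --- the absolute continuity of $\sigma_O^{(l)}$ and the self-similarity of the subtrees --- but the order in which you invoke them creates a genuine gap. You try to establish as a standalone claim that any nonzero eigenfunction satisfies $\psi_O\neq 0$, arguing that if $\psi_O=0$ then the restrictions to the two child subtrees are eigenfunctions of copies of $\mathcal L_c^{(m)}$ and ``iterating'' forces $\psi\equiv 0$. But to iterate you need each subtree eigenfunction to vanish at \emph{its} root, and nothing in the tree structure alone gives that: the eigenvalue equation at $O$ with $\psi_O=0$ only yields $\sqrt{A_{c,1}}\,\psi_{O_{(ch),1}}+\sqrt{A_{c,2}}\,\psi_{O_{(ch),2}}=0$, not that both terms vanish. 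In fact, for a generic Jacobi matrix with this self-similar structure one can manufacture nonzero $\ell^2$ eigenvectors vanishing at the root by gluing suitably scaled subtree eigenvectors so that this linear relation holds. So your Step~1 cannot be proved without spectral input, and separating it from Step~2 makes the induction circular (``each of which again may be assumed to vanish at its root'' is precisely the unproved assertion).

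The paper's argument repairs this by interleaving the two steps. It first uses the absolute continuity of $\sigma_O^{(l)}$ to conclude $\Psi_O=0$ (an eigenvector with $\Psi_O\neq 0$ would put an atom in $\sigma_O^{(l)}$), then restricts to the two subtrees, obtaining eigenfunctions of copies of $\mathcal L_c^{(1)}$ or $\mathcal L_c^{(2)}$, and applies the \emph{same} absolute-continuity argument at each new root to force $\Psi_{O_{(ch),1}}=\Psi_{O_{(ch),2}}=0$. Repeating, $\Psi$ vanishes on every generation, hence identically. The point is that the absolute continuity of both $m_I$ and $m_{II}$ must be invoked at every level of the descent, not just once at the end; once you reorganize your argument this way, it coincides with the paper's.
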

\begin{proof}
Suppose that $\mathcal{L}_c^{(l)}$, \( l\in\{1,2\} \), has an eigenvector $\Psi$. Since $\sigma_{O}^{(l)}$ is purely absolutely continuous as just explained, the restriction of $\mathcal{L}_c^{(l)}$ to the cyclic subspace generated by $\delta^{(O)}$ has no eigenvalues by the spectral theorem. Therefore, we must have $\Psi_O=0$. Now, consider the restrictions of $\Psi$ to $\mathcal{T}_{[O,O_{(ch),1}]}$ and to $\mathcal{T}_{[O,O_{(ch),2}]}$. One of these functions is not identically equal to zero and the one that is not must be an eigenvector of the corresponding operator: either $\mathcal{J}_{\mathcal{T}_{[O,O_{(ch),1}]}}$ or $\mathcal{J}_{\mathcal{T}_{[O,O_{(ch),2}]}}$. By construction, these operators are identical to either $\mathcal{L}_c^{(1)}$ or $\mathcal{L}_c^{(2)}$ and, as we established earlier, this implies that $\Psi_{O_{(ch),1}}=\Psi_{O_{(ch),2}}=0$. Repeating the argument, we can now show that $\Psi=0$ identically on the whole tree which gives a contradiction.
\end{proof}

The following observation holds for a general Jacobi matrix \eqref{Isad1} and \eqref{Isad2}. Let $\sigma_Y$ denote the spectral measure of $\delta^{(Y)}$ with respect to $\mathcal{J}$, i.e.,
\begin{equation}
\label{mly2}
m_Y(z):= \left\langle(\mathcal{J}-z)^{-1}\delta^{(Y)},\delta^{(Y)}\right\rangle=\int_{\R}\frac{d\sigma_{Y}(x)}{x-z},\quad  z\in \C^+\,.
\end{equation}
If we delete all edges connecting $Y$ to its neighbors, say \( l \) of them, we will be left with the vertex $Y$ and $l$ subtrees \( \{\mathcal{T}_{[Y,Y_j]}\}_{j=1}^l \). The restrictions of $\mathcal{J}$ to these subtrees are also Jacobi matrices and we previously denoted  them by $\mathcal{J}_{\mathcal{T}_{[Y,Y_j]}}$. Let
\begin{equation}
\label{mly3}
{m}_{[Y,Y_j]}(z):= \left\langle(\mathcal{J}_{\mathcal{T}_{[Y,Y_j]}}-z)^{-1}\delta^{(Y_j)},\delta^{(Y_j)}\right\rangle=\int_{\R}\frac{d\sigma_{[Y,Y_j]}(x)}{x-z},\quad z\in \C^+\,.
\end{equation}
Then the following lemma holds.

\begin{lemma} 
For every $z\in \C^+$, we have
\begin{equation}
\label{Isad13}
m_Y(z)=\frac1{V_Y-\sum_{j=1}^l W_{Y_j,Y}{m}_{[Y,Y_j]}(z)-z}.
\end{equation}
\end{lemma}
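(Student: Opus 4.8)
The identity \eqref{Isad13} is the standard ``coefficient stripping'' (or Schur complement) formula for Jacobi matrices, now on a tree. The plan is to compute the diagonal Green's function entry $m_Y(z) = \langle(\mathcal{J}-z)^{-1}\delta^{(Y)},\delta^{(Y)}\rangle$ by relating the resolvent of $\mathcal{J}$ to the resolvents of the subtree operators $\mathcal{J}_{\mathcal{T}_{[Y,Y_j]}}$ obtained after deleting the edges incident to $Y$. First I would set up the orthogonal decomposition $\ell^2(\mathcal{V}) = \mathbb{C}\delta^{(Y)} \oplus \bigoplus_{j=1}^l \ell^2(\mathcal{T}_{[Y,Y_j]})$, and write $\mathcal{J}$ in block form with respect to it: the $(Y,Y)$-block is the scalar $V_Y$, the diagonal blocks are the $\mathcal{J}_{\mathcal{T}_{[Y,Y_j]}}$, and the only off-diagonal couplings are the rank-one terms $W_{Y_j,Y}^{1/2}$ connecting $\delta^{(Y)}$ to $\delta^{(Y_j)}$ (the single neighbor of $Y$ lying in $\mathcal{T}_{[Y,Y_j]}$). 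This uses only the definition \eqref{Isad2} of $\mathcal{J}$ and the fact that deleting the edges $(Y,Y_j)$ disconnects the tree into these pieces.

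Next I would apply the Schur complement / Feshbach formula for the $(Y,Y)$-entry of the inverse of a block operator. Concretely, for $z\in\mathbb{C}^+$ the resolvent $(\mathcal{J}-z)^{-1}$ exists and is bounded, and the $(1,1)$-entry of the inverse of a matrix written in $2\times 2$ block form $\left(\begin{smallmatrix} A & B \\ C & D\end{smallmatrix}\right)$ is $(A - B D^{-1} C)^{-1}$, provided the relevant inverses exist. Here $A-z$ is the scalar $V_Y - z$, $D - z = \bigoplus_j (\mathcal{J}_{\mathcal{T}_{[Y,Y_j]}}-z)$ is invertible on $\mathbb{C}^+$ since each $\mathcal{J}_{\mathcal{T}_{[Y,Y_j]}}$ is a bounded self-adjoint operator, and the off-diagonal blocks $B, C$ are the vectors with entries $W_{Y_j,Y}^{1/2}$ in the $\delta^{(Y_j)}$ slots. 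Computing $B D^{-1} C = \sum_{j=1}^l W_{Y_j,Y} \langle (\mathcal{J}_{\mathcal{T}_{[Y,Y_j]}}-z)^{-1}\delta^{(Y_j)},\delta^{(Y_j)}\rangle = \sum_{j=1}^l W_{Y_j,Y} m_{[Y,Y_j]}(z)$ by \eqref{mly3}, one obtains
\[
m_Y(z) = \frac{1}{V_Y - z - \sum_{j=1}^l W_{Y_j,Y} m_{[Y,Y_j]}(z)},
\]
which is \eqref{Isad13}.

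The one point requiring a little care — the main (mild) obstacle — is justifying the Schur complement manipulation at the operator level rather than for finite matrices: one must check that the scalar $V_Y - z - \sum_j W_{Y_j,Y} m_{[Y,Y_j]}(z)$ is nonzero for $z\in\mathbb{C}^+$, so that the formula makes sense. This follows because each $m_{[Y,Y_j]}(z)$ is a Herglotz function (it is $\int (x-z)^{-1}\,d\sigma_{[Y,Y_j]}(x)$ with $\sigma_{[Y,Y_j]}\geq 0$), hence $\Im m_{[Y,Y_j]}(z) > 0$ for $z\in\mathbb{C}^+$, and $W_{Y_j,Y} > 0$ by \eqref{Isad1}; therefore $\Im\big(V_Y - z - \sum_j W_{Y_j,Y} m_{[Y,Y_j]}(z)\big) = -\Im z - \sum_j W_{Y_j,Y}\Im m_{[Y,Y_j]}(z) < 0$, so the denominator never vanishes. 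An equivalent and perhaps cleaner route is to verify the identity directly: let $g := (\mathcal{J}-z)^{-1}\delta^{(Y)}$, apply $(\mathcal{J}-z)$ and read off the $Y$-component and the components in each subtree; solving the subtree equations gives $g|_{\mathcal{T}_{[Y,Y_j]}} = -g_Y W_{Y_j,Y}^{1/2} (\mathcal{J}_{\mathcal{T}_{[Y,Y_j]}}-z)^{-1}\delta^{(Y_j)}$, and substituting back into the $Y$-component equation $V_Y g_Y + \sum_j W_{Y_j,Y}^{1/2} g_{Y_j} - z g_Y = 1$ yields $(V_Y - z - \sum_j W_{Y_j,Y} m_{[Y,Y_j]}(z)) g_Y = 1$, i.e. \eqref{Isad13} since $m_Y(z) = g_Y$. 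I would likely present this direct computation as the cleanest self-contained argument.
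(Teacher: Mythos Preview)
Your proposal is correct, and the ``direct computation'' you describe at the end (setting $g=(\mathcal J-z)^{-1}\delta^{(Y)}$, solving the subtree equations for $g|_{\mathcal T_{[Y,Y_j]}}$, and substituting into the $Y$-component) is exactly the argument the paper gives. The Schur complement framing you outline first is an equivalent repackaging of the same computation; the paper simply presents the direct version without invoking that language.
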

\begin{proof}
Let \( f:=(\mathcal{J}-z)^{-1}\delta^{(Y)}\). Clearly, \( \mathcal Jf= zf+\delta^{(Y)} \), that is,
\begin{equation}
\label{Isad9}
(\mathcal Jf)_X = \left\{
\begin{array}{l}
V_Xf_X + \sum_{Z\sim X} W^{1/2}_{Z,X}f_X = zf_X, \quad X\neq Y, \bigskip \\
V_Yf_Y + \sum_{j=1}^l W^{1/2}_{Y_j,Y}f_{Y_j} = zf_Y+1, \quad X=Y.
\end{array}
\right.
\end{equation}
Set \( f^{(j)} := -\big(W_{Y,Y_j}^{1/2}f_Y\big)^{-1}f_{|\mathcal V_{[Y,Y_j]}} \), which is a renormalized restriction of \( f \) to the set of vertices \( \mathcal V_{[Y,Y_j]} \) of \( \mathcal T_{[Y,Y_j]} \). Observe that
\begin{equation}
\label{mly1}
\left(\mathcal J_{\mathcal T_{[Y,Y_j]}}f^{(j)}\right)_X = \left\{
\begin{array}{l}
\left( \mathcal Jf^{(j)}\right)_X = zf_X^{(j)}, \quad X\neq Y_j, \bigskip \\
V_{Y_j}f_{Y_j}^{(j)} + \sum_{Z\sim Y_j,Z\neq Y} W^{1/2}_{Z,Y_j}f_Z^{(j)} = zf_{Y_j}^{(j)} + 1, \quad X=Y_j, 
\end{array}
\right.
\end{equation}
where both relations follow from the first line of \eqref{Isad9} (for the second relation we need to separate the summand corresponding to \( Z=Y \), bring it to the other side of the equation, and then divide by it). It follows immediately from \eqref{mly1} that
\[
\mathcal J_{\mathcal T_{[Y,Y_j]}}f^{(j)} = zf^{(j)} + \delta^{Y_j} \quad \Rightarrow \quad f^{(j)} = (\mathcal{J}_{\mathcal{T}_{[Y,Y_j]}}-z)^{-1}\delta^{(Y_j)}.
\]
The claim of the lemma follows from the second equality in \eqref{Isad9} since \( f_Y= \langle(\mathcal{J}-z)^{-1}\delta^{(Y)},\delta^{(Y)}\rangle = m_Y(z)\) and similarly \( f_{Y_j} = -\big(W_{Y,Y_j}^{1/2}f_Y\big) f_{Y_j}^{(j)} =-W^{1/2}_{Y_j,Y}m_Y(z)m_{[Y,Y_j]}(z)\).
\end{proof}

\noindent {\bf Remark.} The recursion relations for $m$-functions, such as the one in formula \eqref{Isad13}, are well-known and have been used previously, e.g., \cite{ao,d2,kl}.

\medskip
Let us now return to the operators \( \mathcal J=\mathcal{L}_c^{(l)} \), \( l\in\{1,2\} \).  Take any vertex $Y\neq O$. Deleting the edge $(Y,Y_{(p)})$ leaves us with two subtrees. As before, we denote by $\mathcal{T}_{[Y,Y_{(p)}]}$ the one containing $Y_{(p)}$, and let \( m_Y^{(l)}(z) \) and \( m_{[Y,Z]}^{(l)}(z) \) to be given by \eqref{mly2} and \eqref{mly3}, respectively (with \( \mathcal J=\mathcal{L}_c^{(l)} \)).

\begin{lemma}\label{Isad20}
For every $Y\neq O$, the function $m^{(l)}_{[Y,Y_{(p)}]}(z)$ is meromorphic in $\overline\C\setminus(\Delta_{c,1}\cup\Delta_{c,2})$ and the function $m_Y^{(l)}(z)$ is analytic there.
\end{lemma}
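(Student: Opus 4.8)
The plan is to argue by induction on the distance $\dist(O,Y)$, using the structure of the operators $\mathcal{L}_c^{(l)}$ together with the recursion relation \eqref{Isad13}. First I would record the base of the induction: when $Y$ is a child of $O$, i.e., $\dist(O,Y)=1$, the subtree $\mathcal{T}_{[Y,Y_{(p)}]}=\mathcal{T}_{[Y,O]}$ is such that $\mathcal{L}^{(l)}_{c,\mathcal{T}_{[Y,O]}}$ acts on a rooted tree with root $O$, and by the construction of the ``types'' (the remark after \eqref{sad6}, and the way $\mathcal{L}_c^{(1)}$ and $\mathcal{L}_c^{(2)}$ were built by assigning types to vertices and edges) this restricted operator is unitarily equivalent to $\mathcal{L}_c^{(1)}$ or $\mathcal{L}_c^{(2)}$. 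Hence $m^{(l)}_{[Y,O]}(z)=m_{I}(z)$ or $m_{II}(z)$, and by the explicit formulas $m_I(z)=-1/(\chi_c^{(0)}(z)-B_{c,1})$, $m_{II}(z)=-1/(\chi_c^{(0)}(z)-B_{c,2})$ together with properties \textbf{(A)} and \textbf{(B)}, these are analytic (indeed pole-free) on $\overline\C\setminus(\Delta_{c,1}\cup\Delta_{c,2})$; in particular they are meromorphic there. This gives the claim for $m^{(l)}_{[Y,Y_{(p)}]}$ at distance $1$.

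Next, for the inductive step, fix $Y$ with $\dist(O,Y)=n\geq2$ and suppose the meromorphy statement for $m^{(l)}_{[Z,Z_{(p)}]}$ holds for all $Z$ with $\dist(O,Z)<n$. Consider the parent $Y_{(p)}$. Deleting the edge $(Y,Y_{(p)})$ and applying \eqref{Isad13} \emph{inside the subtree} $\mathcal{T}_{[Y,Y_{(p)}]}$ with the vertex $Y_{(p)}$: the neighbors of $Y_{(p)}$ within this subtree are $Y_{(p)(p)}$ (the grandparent of $Y$, unless $Y_{(p)}=O$) and those children of $Y_{(p)}$ other than $Y$. Each of the corresponding $m$-functions $m^{(l)}_{[Y_{(p)},Y_{(p)(p)}]}$ and $m^{(l)}_{[Y_{(p)},\text{sibling of }Y]}$ is, by the induction hypothesis (for the grandparent direction) or by the base case analysis iterated (for the sibling-children directions, which correspond to subtrees again unitarily equivalent to $\mathcal{L}_c^{(1)}$ or $\mathcal{L}_c^{(2)}$), meromorphic on $\overline\C\setminus(\Delta_{c,1}\cup\Delta_{c,2})$, with non-positive imaginary part on $\C^+$ (being themselves $-m$-functions up to sign, i.e., Herglotz-type) and real boundary values off $\Delta_{c,1}\cup\Delta_{c,2}$. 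Plugging these into \eqref{Isad13} expresses $m^{(l)}_{[Y,Y_{(p)}]}(z) = \big(V_{Y_{(p)}} - \sum_j W_{\cdot,Y_{(p)}} m^{(l)}_{[Y_{(p)},\cdot]}(z) - z\big)^{-1}$, which is a meromorphic function on $\overline\C\setminus(\Delta_{c,1}\cup\Delta_{c,2})$: the denominator is meromorphic there, hence so is its reciprocal, which completes the induction. Finally, $m_Y^{(l)}(z)$ itself is handled by one more application of \eqref{Isad13} at the vertex $Y$ in the \emph{full} tree: all $l$ neighbor-directed $m$-functions $m^{(l)}_{[Y,Y_j]}$ are now known to be meromorphic on $\overline\C\setminus(\Delta_{c,1}\cup\Delta_{c,2})$, so $m_Y^{(l)}$ is the reciprocal of a meromorphic function, hence meromorphic; to upgrade ``meromorphic'' to ``analytic'' for $m_Y^{(l)}$ I would invoke that $m_Y^{(l)}$ is a Herglotz function associated with a measure supported on $\Delta_{c,1}\cup\Delta_{c,2}$ (Spectral Theorem, and boundedness of $\mathcal{L}_c^{(l)}$), so it can have no poles on $\overline\C\setminus(\Delta_{c,1}\cup\Delta_{c,2})$, and likewise each $m^{(l)}_{[Y,Y_j]}$ has this property, so in fact the reciprocal is pole-free precisely where the Herglotz argument forbids poles.

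The main obstacle I anticipate is the bookkeeping needed to justify that each restricted operator arising in the deletion process is again unitarily equivalent to one of $\mathcal{L}_c^{(1)}$, $\mathcal{L}_c^{(2)}$ (or, more precisely, that the relevant $m$-functions $m^{(l)}_{[Y,Y_j]}$ only take finitely many distinct values, namely $m_I$ and $m_{II}$), which relies on the self-similar ``finite cone type'' structure of the tree built in Section~\ref{ss:2.2}. Concretely, one must check that deleting an edge of type $1$ or type $2$ at a vertex of type $\iota_Y$ leaves a rooted subtree whose type-labelling, when the detached vertex is taken as the new root, reproduces exactly the labelling defining $\mathcal{L}_c^{(1)}$ or $\mathcal{L}_c^{(2)}$. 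Once this self-similarity is in place the rest is the routine Herglotz/analyticity argument above. A secondary, minor point is to be careful that the ``meromorphic'' conclusion from reciprocating \eqref{Isad13} does not accidentally introduce poles that are cancelled only after the Herglotz argument is invoked; stating the Herglotz property (absolutely continuous spectral measure supported on $\Delta_{c,1}\cup\Delta_{c,2}$, which follows from \textbf{(A)}, \textbf{(B)}, and Lemma~\ref{sad13}) uniformly along the induction resolves this.
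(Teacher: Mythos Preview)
Your overall strategy (induction on $\dist(O,Y)$ using the recursion \eqref{Isad13} and the self-similarity of the tree) is exactly the paper's, and your inductive step is correct. However, your base case contains a genuine error: when $Y$ is a child of $O$, the subtree $\mathcal{T}_{[Y,O]}$ contains $O$ as a vertex of degree~$1$ (its only neighbor is the sibling $Z$ of $Y$), so the restricted operator is \emph{not} unitarily equivalent to $\mathcal{L}_c^{(1)}$ or $\mathcal{L}_c^{(2)}$, and $m^{(l)}_{[Y,O]}$ is \emph{not} equal to $m_I$ or $m_{II}$. The fix is precisely what you already do correctly in the inductive step: apply \eqref{Isad13} at the vertex $O$ \emph{inside} the subtree $\mathcal{T}_{[Y,O]}$, obtaining $m^{(l)}_{[Y,O]}(z)=\big(B_{c,l}-A_{c,3-\iota}\,m^{(l)}_{[O,Z]}(z)-z\big)^{-1}$, and \emph{then} observe that $m^{(l)}_{[O,Z]}$ equals $m_I$ or $m_{II}$ (since $\mathcal{T}_{[O,Z]}$, not $\mathcal{T}_{[Y,O]}$, is the full binary tree rooted at $Z$). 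This is how the paper handles the base case.

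There is also a circularity in your final step. To pass from ``meromorphic'' to ``analytic'' for $m_Y^{(l)}$, you invoke that $\sigma_Y^{(l)}$ is absolutely continuous and supported on $\Delta_{c,1}\cup\Delta_{c,2}$; but that is the content of Lemma~\ref{sad14}, whose proof \emph{uses} Lemma~\ref{Isad20}. All you actually need here is that $m_Y^{(l)}$ has no poles, and for that it suffices to note that a pole of $m_Y^{(l)}$ would force a point mass in $\sigma_Y^{(l)}$, hence an eigenvalue of $\mathcal{L}_c^{(l)}$, which is excluded by Lemma~\ref{sad13}. That is the paper's argument, and it avoids the circularity.
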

\begin{proof} Recall that the functions $m_I(z)$ and $m_{II}(z)$ are in fact analytic in $\overline\C\setminus(\Delta_{c,1}\cup\Delta_{c,2})$. We shall prove the desired claims inductively on \( n \), the distance from $Y$ to the root $O$. Assume first that $n=1$. Let \( \iota \) be the type of \( Y \). Formula \eqref{Isad13} applied at the vertex \( O \) to the operator \( \mathcal{L}_c^{(l)} \) restricted to the subtree \( \mathcal T_{[Y,O]} \) gives
\[
m^{(l)}_{[Y,O]}(z) = \frac1{B_{c,l}-A_{c,3-\iota}m_{[O,Z]}^{(l)}(z)-z},
\]
where \( Z \) is the other ``child'' of \( O \) and we used an obvious fact that the restriction of \( \mathcal{L}_c^{(l)} \)  from \( \mathcal T_{[Y,O]} \) to the subtree \( \mathcal T_{[O,Z]} \) is the same as the restriction of \( \mathcal{L}_c^{(l)} \)  from \( \mathcal T \) to \( \mathcal T_{[O,Z]} \). Since the restriction of \( \mathcal{L}_c^{(l)} \) to \( T_{[O,Z]} \) is \( \mathcal{L}_c^{(3-\iota)} \), $m_{[O,Z]}^{(l)}(z) \) is equal to either \( m_I(z) \) when \( \iota=2 \) or \( m_{II}(z) \) when \( \iota=1 \). In any case, $m_{[Y,O]}^{(l)}(z)$ is meromorphic outside  $\Delta_{c,1}\cup\Delta_{c,2}$.

Suppose now that the claims are true for all vertices up to the distance $n$. Consider any $Y$ such that its distance from the root is $n+1$. Let \( \iota \) be the type of \( Y \). As in the first part of the proof, apply \eqref{Isad13} at the vertex \( Y_{(p)} \) of the subtree \( \mathcal T_{[Y,Y_{(p)}]} \) to get
\[
m_{[Y,Y_{(p)}]}^{(l)}(z) = \frac1{B_{c,\iota_{p}} - A_{c,\iota_{(p)}}m^{(l)}_{[Y_{(p)},(Y_{(p)})_{(p)}]}(z)  - A_{c,3-\iota}m^{(l)}_{[Y_{(p)}, Z]}(z) -z}.
\]
where \( \iota_{(p)} \) is the type of \( Y_{(p)} \) and \( Z \) is the ``sibling'' of \( Y \). The first function in the denominator is meromorphic outside $\Delta_{c,1}\cup\Delta_{c,2}$ by the inductive assumption and the other one is either $m_I(z)$ or $m_{II}(z)$. Thus, $m_{[Y,Y_{(p)}]}^{(l)}$ is also meromorphic  outside $\Delta_{c,1}\cup\Delta_{c,2}$. This way we get the claim for $n+1$ and so we proved the first statement of the lemma. 

Now, apply \eqref{Isad13} to $m_Y^{(l)}(z)$. The functions involved are $m_{[Y,Y_{(ch),j}]}(z)$, \(j\in\{1,2\} \),  and $m_{[Y,Y_{(p)}]}(z)$. The first two are $m_I(z),m_{II}(z)$ and they are analytic in the considered domain. The third one is meromorphic there by the first statement of the lemma. Notice  that $m^{(l)}_Y(z)$ can not have poles by Lemma \ref{sad13} thus it is analytic outside $\Delta_{c,1}\cup\Delta_{c,2}$.
\end{proof}

\begin{lemma} 
\label{sad14}
Let $Y\in \mathcal{V}$ and $c\in (0,1)$. If $\sigma_Y^{(l)}$ is the spectral measure of $Y$ with respect to $\mathcal{L}_c^{(l)}$, \( l\in\{1,2\} \), then it is absolutely continuous and its support is equal to $\Delta_{c,1}\cup\Delta_{c,2}$.
\end{lemma}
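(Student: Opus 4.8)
The plan is to prove the statement by induction on $n := \dist(Y,O)$, using Lemma~\ref{Isad20} together with the Herglotz--Nevanlinna machinery recalled in property \textbf{(B)} above. The base case $n=0$, i.e.\ $Y=O$, is already known: we established right before Lemma~\ref{sad13} that $\sigma_O^{(l)}$ is purely absolutely continuous with support $\Delta_{c,1}\cup\Delta_{c,2}$, since $m_I(z)=-1/(\chi_c^{(0)}(z)-B_{c,1})$ and $m_{II}(z)=-1/(\chi_c^{(0)}(z)-B_{c,2})$ have boundary values on $\R$ that are real off $\Delta_{c,1}\cup\Delta_{c,2}$ and have strictly positive imaginary part on $\Delta_{c,1}^\circ\cup\Delta_{c,2}^\circ$ by the conformality and symmetry properties of $\chi_c$.

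For the inductive step, take $Y\neq O$ with $\dist(Y,O)=n+1$, let $\iota$ be its type, and apply the recursion \eqref{Isad13} at $Y$ to $\mathcal J = \mathcal L_c^{(l)}$:
\[
m_Y^{(l)}(z) = \frac1{B_{c,\iota} - A_{c,\iota}\,m_{[Y,Y_{(p)}]}^{(l)}(z) - \sum_{j\in\{1,2\}} A_{c,j}\,m_{[Y,Y_{(ch),j}]}^{(l)}(z) - z},
\]
where $m_{[Y,Y_{(ch),1}]}^{(l)}$ and $m_{[Y,Y_{(ch),2}]}^{(l)}$ are $m_I$ and $m_{II}$ (the restrictions to the subtrees hanging below $Y$ are copies of $\mathcal L_c^{(1)}$ or $\mathcal L_c^{(2)}$), while $m_{[Y,Y_{(p)}]}^{(l)}$ is meromorphic off $\Delta_{c,1}\cup\Delta_{c,2}$ by Lemma~\ref{Isad20}. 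All four $m$-functions in the denominator are Herglotz, hence so is the whole denominator $g(z):=B_{c,\iota}-A_{c,\iota}m_{[Y,Y_{(p)}]}^{(l)}(z)-\sum_j A_{c,j}m_{[Y,Y_{(ch),j}]}^{(l)}(z)-z$ (a negative multiple of a Herglotz function shifted by a constant, minus $z$, is again Herglotz up to sign, and $m_Y^{(l)}=-1/(-g)$ is Herglotz). Since $m_{[Y,Y_{(p)}]}^{(l)}$ and $m_I,m_{II}$ extend continuously to $\R\setminus(\Delta_{c,1}\cup\Delta_{c,2})$ with real values there, and $m_Y^{(l)}$ is in fact analytic there by Lemma~\ref{Isad20}, the measure $\sigma_Y^{(l)}$ has no mass outside $\Delta_{c,1}\cup\Delta_{c,2}$; moreover, since $\sigma_Y^{(l)}$ charges no point (no poles, by Lemma~\ref{sad13} there are no eigenvalues), it has no singular discrete part, and one reads off absolute continuity from the fact that $\Im m_Y^{(l)}{}^+(x)$ exists and is finite for a.e.\ $x$ because the boundary values of $m_I,m_{II},m_{[Y,Y_{(p)}]}^{(l)}$ all exist and are finite a.e.\ on $\Delta_{c,1}^\circ\cup\Delta_{c,2}^\circ$. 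It remains to identify the support: on $\Delta_{c,1}^\circ\cup\Delta_{c,2}^\circ$ we have $\Im m_I^+>0$ and $\Im m_{II}^+>0$, so $\Im g^+(x) = -A_{c,\iota}\Im m_{[Y,Y_{(p)}]}^{(l)}{}^+(x) - \sum_j A_{c,j}\Im m_{[Y,Y_{(ch),j}]}^{(l)}{}^+(x) <0$ (all imaginary parts of $m$-functions are nonnegative on the upper edge, and the children's are strictly positive), whence $\Im m_Y^{(l)}{}^+(x) = \Im g^+(x)/|g^+(x)|^2 <0$ wherever $g^+(x)$ is finite and nonzero — which is a.e.\ — so the a.c.\ density of $\sigma_Y^{(l)}$ is positive a.e.\ on $\Delta_{c,1}\cup\Delta_{c,2}$, giving support exactly $\Delta_{c,1}\cup\Delta_{c,2}$.

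The one delicate point, which I would treat with a little care, is to rule out that $g^+(x)$ vanishes on a set of positive measure in $\Delta_{c,1}^\circ\cup\Delta_{c,2}^\circ$ (which would be needed to conclude $\Im m_Y^{(l)}{}^+>0$ a.e.) and to ensure the boundary values of $m_{[Y,Y_{(p)}]}^{(l)}$ are finite a.e.\ — but the former cannot happen because $\Im g^+(x)<0$ strictly wherever the children's $m$-functions have positive imaginary part and $m_{[Y,Y_{(p)}]}^{(l)}$ has a finite boundary value (which is a.e.), so $g^+(x)\neq 0$ there automatically; and the latter follows since a Herglotz function on $\C^+$ has finite non-tangential boundary values Lebesgue-a.e. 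Thus the induction closes and the lemma follows for every $Y\in\mathcal V$ and every $c\in(0,1)$, $l\in\{1,2\}$.
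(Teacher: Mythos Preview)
Your argument has a genuine gap at the step where you conclude absolute continuity. You write that ``one reads off absolute continuity from the fact that $\Im m_Y^{(l)}{}^+(x)$ exists and is finite for a.e.\ $x$'', but this implication is false: \emph{every} Herglotz function has finite nontangential boundary values Lebesgue-a.e., regardless of whether the representing measure has a singular continuous part. So a.e.\ existence of finite boundary values is automatic and cannot be used to exclude singular continuous spectrum. You do rule out point masses via Lemma~\ref{sad13}, but that eliminates only the discrete singular part.

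The fix is already implicit in what you wrote and is precisely what the paper does: from $m_Y^{(l)}=1/g$ one has $|m_Y^{(l)}(E+\ic\epsilon)| \le 1/|\Im g(E+\ic\epsilon)|$, and since $-\Im g(E+\ic\epsilon) \ge A_{c,j}\,\Im m_{[Y,Y_{(ch),j}]}^{(l)}(E+\ic\epsilon)$ for each child $j$, with $m_{[Y,Y_{(ch),j}]}^{(l)}\in\{m_I,m_{II}\}$ having continuous and strictly positive boundary imaginary part on $\Delta_{c,1}^\circ\cup\Delta_{c,2}^\circ$, one obtains a \emph{uniform} bound $\sup_{E\in I,\,0<\epsilon<1}\Im m_Y^{(l)}(E+\ic\epsilon)<\infty$ on every compact $I\subset\Delta_{c,1}^\circ\cup\Delta_{c,2}^\circ$. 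It is this uniform bound---not the a.e.\ existence of boundary values---that forces $\sigma_Y^{(l)}$ to be absolutely continuous on $I$. Two smaller points: there is a sign slip, since $\Im(1/g)=-\Im g/|g|^2$, so $\Im m_Y^{(l)}{}^+(x)>0$ on the interior (your conclusion about positive density is correct, but the displayed sign should be flipped); and your induction on $\dist(Y,O)$ is never actually invoked in the inductive step---the argument for each $Y$ uses only Lemma~\ref{Isad20}, Lemma~\ref{sad13}, and properties of $m_I,m_{II}$---so the induction framing is superfluous.
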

\begin{proof}
The measure $\sigma_O^{(l)}$ is purely absolutely continuous and is supported on $\Delta_{c,1}\cup\Delta_{c,2}$ as explained before Lemma~\ref{sad13}. Fix $Y\neq O$ and let \( \iota_Y \) be the type of \( Y \). Further, let \( m_Y^{(l)}(z) \) and \( m_{[Y,Z]}^{(l)}(z) \) be given by \eqref{mly2} and \eqref{mly3}, respectively, with $\mathcal{J}=\mathcal{L}_c^{(l)}$. Then is follows from \eqref{sad6} and \eqref{Isad13} that
\begin{multline*}
\Im m_Y^{(l)}(E+\ic\epsilon)=  \frac{A_{c,\iota_Y}\Im m^{(l)}_{[Y,Y_{(p)}]}(E+\ic\epsilon)+\sum_{i=1}^2 A_{c,i}\Im {m}^{(l)}_{[Y,Y_{(ch),i}]}(E+\ic\epsilon)+\epsilon}{|B_{c,\iota_Y}- A_{c,\iota_Y} m^{(l)}_{[Y,Y_{(p)}]}(E+\ic\epsilon)-\sum_{i=1}^2 A_{c,i} {m}^{(l)}_{[Y,Y_{(ch),i}]}(E+\ic\epsilon)  -(E+\ic\epsilon)|^2} \\
\le \frac1{A_{c,\iota_Y}\Im m^{(l)}_{[Y,Y_{(p)}]}(E+\ic\epsilon)+\sum_{i=1}^2 A_{c,i}\Im {m}^{(l)}_{[Y,Y_{(ch),i}]}(E+\ic\epsilon) + \epsilon} \le \frac1{A_{c,l}\Im {m}^{(l)}_{[Y,Y_{(ch),l}]}(E+\ic\epsilon)},
\end{multline*}
because the imaginary parts of all $m$-functions are positive in $\C^+$. Notice now that the restriction of \( \mathcal L_c^{(l)} \) to any subtree of the type $\mathcal{T}_{[Z_{(p)},Z]}$ is in fact equal to either \( \mathcal L_c^{(1)} \) or \( \mathcal L_c^{(2)} \). Therefore, $m^{(l)}_{[Y,Y_{(ch),l}]}$ is either $m_I$ or $m_{II}$. The properties {\bf (A)} and {\bf (B)} of $m_I$ and $m_{II}$ listed above can be now applied to get
\[
\sup_{E\in I,0<\epsilon<1}|\Im m_Y^{(l)}(E+\ic\epsilon)|<\infty
\]
for every interval $I\subset \Delta_{c,1}\cup\Delta_{c,2}$. This implies that $\sigma_Y^{(l)}$ is purely absolutely continuous on $I$. By Lemma \ref{Isad20}, the measure $\sigma_Y^{(l)}$ is supported inside $\Delta_{c,1}\cup\Delta_{c,2}$ and Lemma~\ref{sad13} implies that it has no mass points. Therefore, we conclude that $\sigma_Y^{(l)}$ is purely absolutely continuous, as claimed.
\end{proof}

\begin{theorem}
\label{sad15}
We have that \( \sigma(\mathcal{L}_c^{(l)})=\sigma_{\rm ess}(\mathcal{L}_c^{(l)})=\Delta_{c,1}\cup\Delta_{c,2} \), \( l\in\{1,2\} \), where, as before, we understand that  $\Delta_{0,1}:=\{\alpha_1\}$ and $\Delta_{1,2}:=\{\beta_2\}$.
\end{theorem}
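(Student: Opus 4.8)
The plan is to treat the three cases $c\in(0,1)$, $c=0$, and $c=1$ somewhat separately, although the case $c\in(0,1)$ carries the main content. Throughout, recall that by Lemma~\ref{sad14} the spectral measure $\sigma_O^{(l)}$ of $\delta^{(O)}$ with respect to $\mathcal L_c^{(l)}$ is purely absolutely continuous with support exactly $\Delta_{c,1}\cup\Delta_{c,2}$; the same holds for $\sigma_Y^{(l)}$ for every $Y\in\mathcal V$. Since $\bigcup_{Y\in\mathcal V}\supp\sigma_Y^{(l)}$ spans a cyclic family whose union of supports determines $\sigma(\mathcal L_c^{(l)})$, and since each $\sigma_Y^{(l)}$ is purely absolutely continuous, we immediately get $\sigma(\mathcal L_c^{(l)})=\overline{\bigcup_Y\supp\sigma_Y^{(l)}}=\Delta_{c,1}\cup\Delta_{c,2}$ and, because a purely absolutely continuous spectrum has no isolated points and no eigenvalues (Lemma~\ref{sad13}), every point of it is in the essential spectrum. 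Hence $\sigma(\mathcal L_c^{(l)})=\sigma_{\rm ess}(\mathcal L_c^{(l)})=\Delta_{c,1}\cup\Delta_{c,2}$ for $c\in(0,1)$.

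For the marginal cases $c=0$ and $c=1$ one cannot invoke Lemma~\ref{sad14} directly (it assumes $c\in(0,1)$, and indeed $\mathcal L_0^{(l)}$, $\mathcal L_1^{(l)}$ are not genuine Jacobi matrices on the full tree because one of $A_{c,1},A_{c,2}$ vanishes). Consider $c=0$, so $A_{0,1}=0$, $A_{0,2}=\big[(\beta_2-\alpha_2)/4\big]^2>0$, $B_{0,1}=B_{0,2}+\varphi_2(\alpha_1)$, and $\Delta_{0,1}:=\{\alpha_1\}$, $\Delta_{0,2}=\Delta_2$. Because $A_{0,1}=0$, an edge of type $1$ contributes no coupling, so $\mathcal L_0^{(l)}$ decouples: every maximal connected piece of the tree whose internal edges are all of type $2$ is an invariant subspace, and along such a piece the operator acts as a half-line Jacobi matrix with constant off-diagonal weight $\sqrt{A_{0,2}}=(\beta_2-\alpha_2)/4$ and constant diagonal entry equal to $B_{0,2}$ (at interior vertices of type $2$) except possibly at the one endpoint vertex where the type-$1$ edge is attached, whose diagonal entry is $B_{0,1}$ or, at the root in the case $l$, is $B_{0,l}$. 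The plan is to identify $\sigma_{\rm ess}$ of each such half-line block. The bulk is the free half-line Jacobi matrix with spectrum $[B_{0,2}-2\sqrt{A_{0,2}},B_{0,2}+2\sqrt{A_{0,2}}] = [\alpha_2,\beta_2]=\Delta_2$; a finite-rank (here rank one) boundary perturbation does not change the essential spectrum by Weyl's theorem, so $\sigma_{\rm ess}$ of each block is $\Delta_2$. Taking the (orthogonal) direct sum over all blocks gives $\sigma_{\rm ess}(\mathcal L_0^{(l)})=\Delta_2 = \Delta_{0,1}\cup\Delta_{0,2}$ (the singleton $\{\alpha_1\}$ is absorbed since $\alpha_1\in\Delta_2$? — no: here one must double-check that $\alpha_1\notin\Delta_2$, so $\Delta_{0,1}\cup\Delta_{0,2}=\{\alpha_1\}\cup\Delta_2$, and one has to also locate the point spectrum contributed by the rank-one boundary terms). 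This is where the argument needs care: one shows that the boundary-perturbed half-line block may have at most one eigenvalue outside $\Delta_2$, and that a careful computation of the associated $m$-function — using $m_I(z)=-1/(\chi_0^{(0)}(z)-B_{0,1})$ and the explicit $\varphi_2$ from Proposition~\ref{prop:angelesco}, together with the limits $S_c^{(k)}$, $\chi_c\to\chi_0$ — produces exactly the eigenvalue at $\pi(\boldsymbol\alpha_1)=\alpha_1$, i.e. the point $B_{0,1}+\varphi_2(\alpha_1)$-shifted location collapses to $\alpha_1$; equivalently the singleton $\Delta_{0,1}=\{\alpha_1\}$ is the residual eigenvalue and thus lies in $\sigma(\mathcal L_0^{(l)})$, hence (being an accumulation point of the blocks' eigenvalues as one runs over the infinitely many blocks) it is in $\sigma_{\rm ess}$. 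The case $c=1$ is symmetric with the roles of the two intervals and of $\alpha_1,\beta_2$ interchanged.

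Concretely, the steps I would carry out are: (i) invoke Lemma~\ref{sad14} and the spectral theorem to get $\sigma(\mathcal L_c^{(l)})=\Delta_{c,1}\cup\Delta_{c,2}$ with purely a.c.\ spectrum when $c\in(0,1)$, and then use absence of isolated points to conclude $\sigma=\sigma_{\rm ess}$; (ii) for $c=0$, decompose $\ell^2(\mathcal V)$ into the invariant type-$2$ half-line blocks, recognize each bulk block as the free Jacobi matrix of spectrum $\Delta_2$, apply Weyl's theorem to absorb the rank-one boundary term, and take the direct sum; (iii) for the isolated eigenvalue, analyze the relevant $m$-function via the explicit formula $m_I(z)=-1/(\chi_0^{(0)}(z)-B_{0,1})$ and the convergence results of Proposition~\ref{prop:angelesco} and~\ref{prop:szego} to show the extra eigenvalue is precisely $\alpha_1$, so it belongs to $\Delta_{0,1}$; (iv) observe that since there are infinitely many such blocks the eigenvalue $\alpha_1$ is not isolated in the spectrum and is therefore in $\sigma_{\rm ess}$; (v) repeat for $c=1$ by symmetry. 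The main obstacle is step (iii)–(iv) in the marginal cases: one must verify that the boundary perturbation does not create a \emph{spurious} point outside $\Delta_{0,1}\cup\Delta_{0,2}$ and that it does create the point at $\alpha_1$; this is where the precise asymptotics of $\chi_c$ and of the Szeg\H{o} functions established earlier in the paper, rather than soft operator theory, are actually needed, and where the slightly delicate identity $B_{0,1}=B_{0,2}+\varphi_2(\alpha_1)$ of Proposition~\ref{prop:angelesco} plays its role.
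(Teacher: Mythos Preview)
Your approach matches the paper's: for $c\in(0,1)$ you and the paper both invoke Lemma~\ref{sad14} to conclude the spectrum is purely absolutely continuous and equal to $\Delta_{c,1}\cup\Delta_{c,2}$; for $c\in\{0,1\}$ you both decouple the operator along the vanishing edges into a direct sum of half-line Jacobi matrices and locate the extra eigenvalue at $\alpha_1$ (resp.\ $\beta_2$).

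Two points deserve correction. First, your step~(iii) is needlessly convoluted and partly ill-posed: $\chi_0^{(0)}$ is not defined (the surface $\RS_c$ degenerates at $c=0$), and the Szeg\H{o} functions of Proposition~\ref{prop:szego} play no role here. The paper does this computation entirely with classical one-dimensional tools. Writing $\mathcal L_0^{(2)}=\mathcal A_1\oplus\bigoplus_{n\ge1}\mathcal A_2$, where $\mathcal A_1$ is the free half-line Jacobi matrix with diagonal $B_{0,2}$ and off-diagonal $\sqrt{A_{0,2}}$, and $\mathcal A_2$ differs only in its $(1,1)$ entry being $B_{0,1}$, one has the explicit formula $\widehat m_1(z)=\big(B_{0,2}-z+w_2(z)\big)/(2A_{0,2})$ and then, by the recursion \eqref{Isad13}, $\widehat m_2(z)=-\big(A_{0,2}\widehat m_1(z)+z-B_{0,1}\big)^{-1}$. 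The unique pole $\widecheck x$ of $\widehat m_2$ solves $A_{0,2}\widehat m_1(\widecheck x)+\widecheck x-B_{0,1}=0$, which rewrites as $\varphi_2(\widecheck x)=B_{0,1}-B_{0,2}=\varphi_2(\alpha_1)$ by \eqref{AngPar2}, whence $\widecheck x=\alpha_1$. No limits in $c$ are needed.

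Second, your step~(iv) contains an actual error of reasoning: you claim $\alpha_1$ is ``not isolated in the spectrum and is therefore in $\sigma_{\rm ess}$''. In fact $\alpha_1$ \emph{is} isolated in $\sigma(\mathcal L_0^{(l)})=\{\alpha_1\}\cup\Delta_2$, since $\beta_1<\alpha_2$. The correct argument is that $\alpha_1$ is an eigenvalue of \emph{infinite multiplicity} (one copy from each of the infinitely many $\mathcal A_2$ blocks in the direct sum), and eigenvalues of infinite multiplicity belong to $\sigma_{\rm ess}$ by definition.
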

\begin{proof}
If $c\in (0,1)$, Lemma~\ref{sad14} shows that $\delta^{(Y)}$ belongs to the absolutely continuous subspace of $\mathcal{L}_c^{(l)}$ for all $Y$. Since all linear combinations of $\delta^{(Y)}$ must belong to this subspace and are dense in $\ell^2(\mathcal{V})$, this subspace is in fact the whole space $\ell^2(\mathcal{V})$. Thus, \( \sigma(\mathcal{L}_c^{(l)})=\sigma_{\rm ess}(\mathcal{L}_c^{(l)}) \) and it is equal to \( \Delta_{c,1}\cup\Delta_{c,2} \) by Lemma~\ref{sad14} and the Spectral Theorem.

Let $c\in \{0,1\}$. We shall consider $\mathcal{L}_0^{(2)}$ only, other cases  can be handled similarly. By \eqref{AngPar2}, we have $A_{0,1}=0$ and $A_{0,2}>0$. Thus, the operator $\mathcal{L}^{(2)}_0$ decouples into the following direct sum 
\begin{equation}
\label{sad5}
\mathcal{L}^{(2)}_0=\mathcal{A}_1\bigoplus \left(\bigoplus_{n=1}^\infty \mathcal{A}_2\right)
\end{equation}
where $\mathcal{A}_1$ is one-sided Jacobi matrix
\[
\mathcal{A}_1:=\left(\begin{matrix}
B_{0,2} &  \sqrt{A_{0,2}} & 0&0\\
\sqrt{A_{0,2}}& B_{0,2}  &\sqrt{A_{0,2}}&0\\
0& \sqrt{A_{0,2}}& B_{0,2}  &\sqrt{A_{0,2}}\\
\ldots &\ldots &\ldots &\ldots
\end{matrix}\right)
\]
and $\mathcal{A}_2$ is one-sided Jacobi matrix given by
\[
\mathcal{A}_2:=\left(\begin{matrix}
B_{0,1} &  \sqrt{A_{0,2}} & 0&0\\
\sqrt{A_{0,2}}& B_{0,2}  &\sqrt{A_{0,2}}&0\\
0& \sqrt{A_{0,2}}& B_{0,2}  &\sqrt{A_{0,2}}\\
\ldots &\ldots &\ldots &\ldots
\end{matrix}\right)\,.
\]
This direct sum decomposition implies that \( \sigma(\mathcal{L}^{(2)}_0)=\sigma(\mathcal{A}_1)\cup\sigma(\mathcal{A}_2) \). It is well known that $\sigma(\mathcal{A}_1)=[B_{0,2}-2\sqrt{A_{0,2}},B_{0,2}+ 2\sqrt{A_{0,2}}]=[\alpha_2,\beta_2]$, see \eqref{AngPar2} for the second equality, and that
\[
\widehat m_1(z) := \langle (\mathcal{A}_1-z)^{-1}\delta^{(0)},\delta^{(0)}\rangle =\frac{B_{0,2}-z+\sqrt{(z-B_{0,2})^2-4A_{0,2}}}{2A_{0,2}}=\frac{B_{0,2}-z+w_2(z)}{2A_{0,2}}.
\]
Furthermore, since the restriction of \( \mathcal A_2 \) from \( \ell^2(\ZZ) \) to \( \ell^2(\N) \) is equal to \( \mathcal A_1 \) and therefore \( m_{[0,1]}(z)=\widehat m_1(z) \) in the notation of \eqref{mly3}, we get from \eqref{Isad13} that
\[
\widehat m_2(z) := \langle (\mathcal{A}_2-z)^{-1}\delta^{(0)},\delta^{(0)}\rangle =\frac{-1}{A_{0,2}\widehat m_1(z)+z-B_{0,1}},
\]
where $w_2(z)$ was introduced in the Proposition  \ref{prop:angelesco}. One can readily check that $\Im \widehat m_2(x)>0$ for $x\in(\alpha_2,\beta_2)$, $\Im \widehat m_2(x)=0$ for \( x\not\in[\alpha_2,\beta_2] \), and that \( \widehat m_2(z) \) has the unique pole at a point $\widecheck x\in \R$ given by 
\begin{equation}
\label{mly4}
A_{0,2}\widehat m_1(\widecheck x)+\widecheck x-B_{0,1}=0
\end{equation}
which implies that $\widecheck x=\alpha_1$ thanks to \eqref{AngPar2}. In other words, $\sigma(\mathcal{A}_2)=\alpha_1\cup[\alpha_2,\beta_2]$. Now, the statement about the spectrum and essential spectrum follows from direct sum decomposition \eqref{sad5}.
\end{proof}

\end{document}